\DeclareSymbolFont{symbolsC}{U}{pxsyc}{m}{n}
\DeclareMathSymbol{\coloneqq}{\mathrel}{symbolsC}{"42}
\title{The Moduli Space of marked supersingular Enriques surfaces}
\author{Kai Behrens \\ Zentrum Mathematik M11 \\ Technische Universit\"at M\"unchen \\ behrensk@ma.tum.de}
\newcommand{\CA}{{\mathcal {A}}}
\newcommand{\CE}{{\mathcal {E}}}
\newcommand{\CF}{{\mathcal {F}}}
\newcommand{\CL}{{\mathcal {L}}}
\newcommand{\CM}{{\mathcal {M}}}
\newcommand{\CO}{{\mathcal {O}}}
\newcommand{\CP}{{\mathcal {P}}}
\newcommand{\CQ}{{\mathcal {Q}}}
\newcommand{\CR}{{\mathcal {R}}}
\newcommand{\CS}{{\mathcal {S}}}
\newcommand{\CX}{{\mathcal {X}}}
\newcommand{\CY}{{\mathcal {Y}}}
\newcommand{\Aut}{{\mathrm{Aut}}}
\newcommand{\Pic}{\mathrm{Pic}}
\newcommand{\Spec}{{\mathrm{Spec}}\,}
\newcommand{\lra}{\longrightarrow}
\newcommand{\po}{\ar@{}[dr]|{\text{\pigpenfont R}}}
\newcommand{\pb}{\ar@{}[dr]|{\text{\pigpenfont J}}}
\newcommand{\claim}{\par\noindent{\sc Claim.}\quad}
\newcommand{\prfclaim}{\par\noindent{\sc Proof of claim.}\quad}
\newtheorem{theorem}{Theorem}[section]
\newtheorem{proposition}[theorem]{Proposition}
\newtheorem{lemma}[theorem]{Lemma}
\newtheorem{corollary}[theorem]{Corollary}
\newtheorem*{theorem*}{Theorem}
\newtheorem*{proposition*}{Proposition}
\theoremstyle{definition}
\newtheorem{definition}[theorem]{Definition}
\newtheorem*{definition*}{Definition}
\newtheorem{remark}[theorem]{Remark}
\newtheorem*{remark*}{Remark}
\numberwithin{equation}{section}
\tikzset{join/.code=\tikzset{after node path={%
\ifx\tikzchainprevious\pgfutil@empty\else(\tikzchainprevious)%
edge[every join]#1(\tikzchaincurrent)\fi}}}
\tikzset{>=stealth',every on chain/.append style={join},
         every join/.style={->}}
\tikzstyle{labeled}=[execute at begin node=$\scriptstyle,
\let\save@mathaccent\mathaccent
\newcommand*\if@single[3]{%
  \setbox0\hbox{${\mathaccent"0362{#1}}^H$}%
  \setbox2\hbox{${\mathaccent"0362{\kern0pt#1}}^H$}%
  \ifdim\ht0=\ht2 #3\else #2\fi
  }
\newcommand*\rel@kern[1]{\kern#1\dimexpr\macc@kerna}
\newcommand*\widebar[1]{\@ifnextchar^{{\wide@bar{#1}{0}}}{\wide@bar{#1}{1}}}
\newcommand*\wide@bar[2]{\if@single{#1}{\wide@bar@{#1}{#2}{1}}{\wide@bar@{#1}{#2}{2}}}
\newcommand*\wide@bar@[3]{%
  \begingroup
  \def\mathaccent##1##2{%
    \let\mathaccent\save@mathaccent
    \if#32 \let\macc@nucleus\first@char \fi
    \setbox\z@\hbox{$\macc@style{\macc@nucleus}_{}$}%
    \setbox\tw@\hbox{$\macc@style{\macc@nucleus}{}_{}$}%
    \dimen@\wd\tw@
    \advance\dimen@-\wd\z@
    \divide\dimen@ 3
    \@tempdima\wd\tw@
    \advance\@tempdima-\scriptspace
    \divide\@tempdima 10
    \advance\dimen@-\@tempdima
    \ifdim\dimen@>\z@ \dimen@0pt\fi
    \rel@kern{0.6}\kern-\dimen@
    \if#31
      \overline{\rel@kern{-0.6}\kern\dimen@\macc@nucleus\rel@kern{0.4}\kern\dimen@}%
      \advance\dimen@0.4\dimexpr\macc@kerna
      \let\final@kern#2%
      \ifdim\dimen@<\z@ \let\final@kern1\fi
      \if\final@kern1 \kern-\dimen@\fi
    \else
      \overline{\rel@kern{-0.6}\kern\dimen@#1}%
    \fi
  }%
  \macc@depth\@ne
  \let\math@bgroup\@empty \let\math@egroup\macc@set@skewchar
  \mathsurround\z@ \frozen@everymath{\mathgroup\macc@group\relax}%
  \macc@set@skewchar\relax
  \let\mathaccentV\macc@nested@a
  \if#31
    \macc@nested@a\relax111{#1}%
  \else
    \def\gobble@till@marker##1\endmarker{}%
    \futurelet\first@char\gobble@till@marker#1\endmarker
    \ifcat\noexpand\first@char A\else
      \def\first@char{}%
    \fi
    \macc@nested@a\relax111{\first@char}%
  \fi
  \endgroup
}
\begin{document}
 \address{Kai Behrens \\ Technische Universit\"at M\"unchen \\ Zentrum Mathematik M11 \\ Boltzmannstra\ss e 3 \\ 85748 Garching \\ Germany}
 \email{behrensk@ma.tum.de}
\begin{abstract} We construct a moduli space of adequately marked Enriques surfaces that have a supersingular K3 cover over fields of characteristic $p \geq 3$. We show that this moduli space exists as a scheme locally of finite type over $\mathbb{F}_p$. Moreover, there exists a period map from this moduli space to a period scheme and we obtain a Torelli theorem for supersingular Enriques surfaces.
\end{abstract}
\maketitle
\section*{Introduction}
Over the complex numbers there exists a Torelli theorem for K3 surfaces in terms of Hodge cohomology. Moreover, for K3 surfaces together with a polarization of fixed even degree there is a coarse moduli space which is a quasi-projective variety of dimension $19$ over $\mathbb{C}$ \cite{MR0284440}, \cite{MR0447635}.

Over a field of characteristic $p \neq 2$, Enriques surfaces are precisely the quotients of K3 surfaces by fixed point free involutions. Using this connection between Enriques surfaces and K3 surfaces, Namikawa proved a Torelli theorem for complex Enriques surfaces and showed that there is a $10$-dimensional quasi-projective variety which is a coarse moduli space for complex Enriques surfaces \cite{MR771979}. If $Y$ is an Enriques surface, then its Neron-Severi group $\mathrm{NS}(Y)$ is isomorphic to the lattice $\Gamma' = \Gamma \oplus \mathbb{Z}/2\mathbb{Z}$ with $\Gamma= U_2 \oplus E_8(-1)$. By the Torelli theorem for complex K3 surfaces, fixed point free involutions of a K3 surface $X$ can then be characterized in terms of certain embeddings $\Gamma(2) \hookrightarrow \mathrm{NS}(X)$.

Now we turn to characteristic $p >2$. For Enriques surfaces that are quotients of ordinary K3 surfaces over perfect fields of positive characteristic, that is K3 surfaces $X$ with $h(X)=1$, Laface and Tirabassi recently proved a Torelli theorem \cite{laface2019ordinary}. 

For supersingular K3 surfaces over an algebraically closed field of characteristic at least $3$, crystalline cohomology plays a role similar to the role of Hodge cohomology in characteristic zero. Ogus proved a Torelli theorem for supersingular K3 surfaces \cite{MR717616} which shows that supersingular K3 surfaces are determined by their corresponding K3 crystals. For a K3 lattice $N$, an \emph{$N$-marking} of a supersingular K3 surface $X$ is an embedding of lattices $\gamma \colon N \hookrightarrow \mathrm{NS}(X)$. Supersingular K3 surfaces are stratified by the Artin invariant $\sigma$, where $-p^{2\sigma}$ is the discriminant of $\mathrm{NS}(X)$. We always have $1 \leq \sigma \leq 10$ \cite{MR0371899}.

A version of Ogus' Torelli theorem states that for families of $N$-marked supersingular K3 surfaces of Artin invariant at most $\sigma$ there exists a fine moduli space $\CS_{\sigma}$ which is a smooth scheme of dimension $\sigma-1$, locally of finite type, but not separated. There is an étale surjective period map $\pi_{\sigma} \colon \CS_{\sigma} \lra \CM_{\sigma}$ from $\CS_{\sigma}$ to a period scheme $\CM_{\sigma}$. The latter is smooth and projective of dimension $\sigma-1$ and is a moduli space for marked K3 crystals. The functors represented by $\CS_{\sigma}$ and $\CM_{\sigma}$ have interpretations in terms of so-called \emph{characteristic subspaces} of $pN^{\vee}/pN$. If $X$ is a supersingular K3 surface over an algebraically closed field of characteristic $p \geq 3$ and $\iota \colon X \rightarrow X$ is a fixed point free involution, we write $G=\langle \iota \rangle$ for the cyclic group of order $2$ which is generated by $\iota$.
\begin{definition*} A \textit{supersingular Enriques surface} is a surface $Y$ that is the quotient of a supersingular K3 surface $X$ by the action of a fixed point free involution $\iota$. The \emph{Artin invariant} of a supersingular Enriques surface $Y$ is the Artin invariant of the supersingular K3 surface $X$ that universally covers $Y$. \end{definition*}
\begin{remark*} We should remark that our definition of supersingular Enriques surfaces is \emph{not} connected to the classical terminology over fields of characteristic $2$ where a supersingular Enriques surface is an Enriques surface $Y$ with $h^1(\CO_Y) = 1$. Since we always assume the characteristic of the ground field to be odd, no confusion can arise.
\end{remark*}
In this article we construct a fine moduli space for marked supersingular Enriques surfaces. More precisely, writing $\CA_{\mathbb{F}_p}$ for the category of algebraic spaces over $\mathbb{F}_p$, $N_{\sigma}$ for a fixed K3 lattice of Artin invariant $\sigma$ and $\Gamma' = \Gamma \oplus \mathbb{Z}/2\mathbb{Z}$ as above, we study the functor
\begin{align*} \underline{\CE}_{\sigma} \colon \CA_{\mathbb{F}_p}^{\mathrm{op}} &\lra \left(\text{Sets} \right) \\
S &\longmapsto \left\{\begin{array}{l}\text{Isomorphism classes of tuples $(\tilde{f}, \tilde{\gamma}, \CL, \mu)$, where} \\ \text{$\tilde{f} \colon \CY \rightarrow S$ is a family of supersingular Enriques surfaces} \\  \text{with $\Gamma'$-marking $\tilde{\gamma} \colon \Gamma' \rightarrow \Pic_{\CY/S}$ and canonizing datum $(\CL, \mu)$} \\
\text{such that the canonical K3 cover $\CX \rightarrow \CY$} \\
\text{fppf locally admits an $N_{\sigma}$-marking} \end{array} \right\}.\end{align*}
Using the supersingular Torelli theorem, we attack this moduli problem by starting with the moduli space for $N$-marked supersingular K3 surfaces. Similar to the construction in the complex case by Namikawa \cite{MR771979}, we regard Enriques surfaces as equivalence classes of certain embeddings of $\Gamma(2)$ into the Néron-Severi lattice of a K3 surface. Over the complex numbers this means that Namikawa obtains the moduli space of Enriques surfaces by taking a certain open subscheme of the moduli space of K3 surfaces and then taking the quotient by a group action. 

However, the supersingular case is more complicated than the situation over the complex numbers. One of the main problems we face is the fact that in our situation we are, morally speaking, dealing with several moduli spaces $\CS_i$ nested in each other, with group actions on these subspaces. We use different techniques from \cite{MR3572553} and \cite{MR3084720} concerning pushouts of algebraic spaces and quotients of algebraic spaces by group actions, and finally obtain the following result.
\begin{theorem*}[Theorem \ref{main}] The functor $\underline{\CE}_{\sigma}$ is represented by a scheme $\gls{E_s}$ which is locally of finite type over $\mathbb{F}_p$ and there exists an étale surjective morphism $\pi^E_{\sigma} \colon \CE_{\sigma} \rightarrow \CQ_{\sigma}$ to a separated AF finite type $\mathbb{F}_p$-scheme $\CQ_{\sigma}$.  \end{theorem*}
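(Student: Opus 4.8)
The plan is to realise $\CE_\sigma$ and $\CQ_\sigma$ as the end of a short chain of geometric operations applied to the fine moduli scheme $\CS_\sigma$ of $N_\sigma$-marked supersingular K3 surfaces and to its projective period scheme $\CM_\sigma$: restriction to open subschemes, disjoint unions over finite index sets, quotients by group actions in the sense of \cite{MR3084720}, and Ferrand-type pushouts of algebraic spaces in the sense of \cite{MR3572553}. At every step the étale surjective period map $\pi_\sigma\colon\CS_\sigma\to\CM_\sigma$ is carried along, so that $\pi^E_\sigma$ appears at the end. The translation between Enriques surfaces and K3 data rests on Ogus' Torelli theorem \cite{MR717616}: a family in $\underline{\CE}_\sigma$ is, fppf locally on the base, a family of $N_\sigma$-marked supersingular K3 surfaces whose induced $\Gamma(2)$-marking is ample with no $(-2)$-vector in its orthogonal complement in $\NS$, the deck involution being reconstructed from the marking, and conversely such a K3 datum produces a family of supersingular Enriques surfaces.

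The first step is, for each of the finitely many $\mathrm{O}(N_\sigma)$-orbits of embeddings $\gamma\colon\Gamma(2)\hookrightarrow N_\sigma$, to use the open subschemes $\CS'_\gamma\supset\CS''_\gamma$ of $\CS_\sigma$ --- where the induced $\Gamma(2)$-marking is ample, respectively where in addition its orthogonal complement contains no $(-2)$-vector --- together with their open period images in $\CM_\sigma$, which already form a tower of schemes with an étale surjective period map. On $\CS''_\gamma$ the Torelli theorem furnishes the canonical fixed-point-free involution, so forgetting the $N_\sigma$-marking and retaining only the sublattice $\CR=\gamma(\Gamma(2))\subset\Pic$ defines the stack $\tilde{\mathfrak{S}}''_\gamma$; the group being divided out is the stabiliser of $\CR$ in $\mathrm{O}(N_\sigma)$, which acts with finite quotient, so \cite{MR3084720} produces a coarse moduli scheme $\widetilde{\CS}''_\sigma$, and the parallel construction on the period side yields $\tilde{\mathfrak{M}}''_\gamma$ together with its coarse scheme and an étale surjective map between the two --- projectivity of $\CM_\sigma$ guaranteeing that the scheme downstairs stays separated, AF and of finite type.

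A supersingular K3 surface admitting an $N_\sigma$-marking can have any Artin invariant $\sigma'\le\sigma$, and on the stratum $\CS^{\sigma'}_\sigma$ the set of such markings is governed by the embedding sets $R_{\sigma',\sigma}$; to assemble all these strata into the single object $\tilde{\mathfrak{E}}_\sigma$ parametrising the covers that admit an $N_\sigma$-marking one glues the pieces along the closed immersions induced by $R_{\sigma',\sigma}$, which I would realise as a pushout of algebraic spaces \cite{MR3572553}, checking on the projective period side that a separated AF finite-type scheme $\CQ_\sigma$ results. A final quotient --- by the group, finite after the preceding reductions, that relates the various $\Gamma(2)$- and $\Gamma'$-markings and absorbs the deck involution, with the canonizing datum $(\CL,\mu)$ rigidifying the torsion so that no automorphisms remain --- passes from the K3-theoretic object to families of $\Gamma'$-marked supersingular Enriques surfaces and, again by \cite{MR3084720}, produces the scheme $\CE_\sigma$. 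That $\CE_\sigma$ represents $\underline{\CE}_\sigma$ is then a descent argument: a family in $\underline{\CE}_\sigma(S)$ maps fppf locally into one of the $\CS''_\gamma$, the choices it involves --- the $N_\sigma$-marking, the embedding $\gamma$, the Artin stratum --- are exactly what the successive quotients and pushouts have identified, and the $\Gamma'$-marking together with the canonizing datum makes the descent datum effective over $\CE_\sigma$. Finally $\pi^E_\sigma\colon\CE_\sigma\to\CQ_\sigma$ is étale and surjective, $\CE_\sigma$ is locally of finite type because $\CS_\sigma$ is, and $\CQ_\sigma$ is separated, AF and of finite type because $\CM_\sigma$ is projective and every operation in the chain preserves these properties and has been performed compatibly upstairs and downstairs.

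I expect the main obstacle, and the reason the supersingular situation is genuinely harder than Namikawa's over $\BC$ where a single open subscheme and a single quotient suffice, to be the bookkeeping in the third step. Since $\CS_\sigma$ is non-separated and the strata $\CS^{\sigma'}_\sigma$ are nested rather than disjoint, the pushouts that glue the Artin strata and the quotients that remove the marking ambiguities must be interleaved in an order that keeps every intermediate object an honest scheme with the separated/AF property required for the next quotient or pushout to exist as a scheme, rather than drifting into algebraic stacks; and all of this must be synchronised with the parallel construction over the period scheme so that étaleness of the period map survives to the end. Checking, stage by stage, that the hypotheses of \cite{MR3084720} and \cite{MR3572553} are met is the technical heart of the argument.
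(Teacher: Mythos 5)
Your overall architecture coincides with the paper's: the open loci $\CS'_{\gamma}\supseteq\CS''_{\gamma}$ cut out of $\CS_{\sigma}$, quotients by the finite groups $O(N_{\sigma},\gamma)$ in the sense of \cite{MR3084720}, Ferrand-type pushouts along the Artin strata in the sense of \cite{MR3572553} and \cite{ryd2011}, all carried out in parallel over the period schemes so that an \'etale surjection onto a separated AF finite type scheme $\CQ_{\sigma}$ comes out at the end. But two steps of your plan do not survive contact with the details. First, the passage from $\Gamma(2)$-marked K3 families to $\Gamma'$-marked Enriques families with canonizing datum is not ``a final quotient by a finite group'': Proposition \ref{EandK} shows the two moduli stacks are \emph{isomorphic} --- the deck involution is already determined by the $\Gamma(2)$-marking via Ogus' Torelli theorem, a $\Gamma$-marking extends uniquely to a $\Gamma'$-marking because $\underline{\mathbb{Z}/2\mathbb{Z}}$ has no nontrivial automorphisms (Lemma \ref{equivgamma}), and the canonizing datum $(\CL,\mu)$ is precisely the data of the \'etale double cover. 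A further quotient at this stage is either the identity or it identifies non-isomorphic marked families; showing it is the identity is exactly the content you would have to supply.

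The more serious gap is that you never say why the glued object \emph{represents} the functor, i.e.\ why it is a scheme and not just a Deligne--Mumford stack with an \'etale map to $\CQ_{\sigma}$. The intermediate functor $\underline{\tilde{\CS}}''_{\gamma}$ (forget the basis of $N_{\sigma}$, remember the sublattice $\underline{\CR}$) is \emph{not} representable by an algebraic space, because $O(N_{\sigma},\gamma)$ acts with nontrivial stabilizers; so your hope of keeping every intermediate object ``an honest scheme'' by interleaving the operations cleverly cannot be realized --- the paper instead works with the quotient stacks $\tilde{\mathfrak{S}}''_{\gamma}=[\CS''_{\gamma}/O(N_{\sigma},\gamma)]$, glues them to the stack $\tilde{\mathfrak{E}}_{\sigma}$, and only \emph{a posteriori} proves that forgetting $\underline{\CR}$ kills all automorphisms and that the pushout of the coarse schemes represents the functor. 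This is the real work: the uniqueness of the fppf-local lift of a $\Gamma(2)$-marked family to $\tilde{\mathfrak{S}}''_{\gamma}$ on the maximal Artin stratum (using separatedness of $\Pic_{\CX/S}$ and a reduction trick for non-reduced bases), reducedness of the pushout, faithfulness, the finiteness statements fed by Lemma \ref{sweet}, the triviality of stabilizers on a fixed Artin stratum (Proposition \ref{coar}), and finally the comparison $\eta_{\sigma}\colon\tilde{\mathfrak{E}}_{\sigma}\lra\tilde{\CE}_{\sigma}$ being finite, bijective and an isomorphism on a dense open stratum, hence an isomorphism (Theorems \ref{theorem1} and \ref{theorem2}). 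Your single sentence invoking ``a descent argument'' names none of these, and the auxiliary pushouts $\mathfrak{B}_{atl}$, $\mathfrak{B}_{cov}$, $\mathfrak{B}_{uni}$, which provide the scheme atlas and the universal family over the glued object, have no counterpart in your outline; without some substitute, the claim that $\underline{\CE}_{\sigma}$ is represented by a \emph{scheme} is unsupported.
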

Here, a scheme $X$ is called AF, if every finite subset of $X$ is contained in an affine open subscheme of $X$.

The geometry of the scheme $\CE_{\sigma}$ is complicated in general, but we have some results on the number of its connected and irreducible components. In short, these numbers depend on properties of the lattice $N_{\sigma}$ and we refer to Section \ref{geom} for details.

Since the scheme $\CQ_{\sigma}$ in the theorem above was constructed from the scheme $\CM_{\sigma}$, we also obtain a Torelli theorem for Enriques quotients of supersingular K3 surfaces.

\begin{theorem*}[Theorem \ref{torelli}] Let $Y_1$ and $Y_2$ be supersingular Enriques surfaces. Then $Y_1$ and $Y_2$ are isomorphic if and only if  $\pi_{\sigma}^E(Y_1)=\pi_{\sigma}^E(Y_2)$ for some $\sigma \leq 5$. \end{theorem*}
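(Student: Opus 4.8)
The plan is to obtain the Torelli statement formally from the construction of the period morphism $\pi^E_\sigma \colon \CE_\sigma \to \CQ_\sigma$ of Theorem~\ref{main}, using as the essential input the fact, built into that construction, that $\pi^E_\sigma$ realises $\CQ_\sigma$ as the coarse moduli space of supersingular Enriques surfaces whose canonical K3 cover admits an $N_\sigma$-marking — equivalently, as the quotient of the fine moduli scheme $\CE_\sigma$ by the action of the group of changes of marking data. I would work throughout over an algebraic closure $\overline k$ of the ground field ($p\geq 3$); isomorphism of surfaces may be tested there, and on $\overline k$-points $\pi^E_\sigma$ then sends the isomorphism class of a marked Enriques surface to the isomorphism class of the underlying unmarked one. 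In particular $\pi^E_\sigma(Y)$ is well defined for any unmarked $Y$ whose K3 cover admits an $N_\sigma$-marking, and two such surfaces have the same image under $\pi^E_\sigma$ exactly when they are isomorphic. Granting this, both implications are short.

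For the forward implication, suppose $Y_1\cong Y_2$. Passing to canonical K3 covers is functorial, so $X_1\cong X_2$ and the two surfaces share an Artin invariant $\sigma_0$. From the lattice-theoretic part of the paper, a $\Gamma'$-marking of an Enriques surface induces an embedding $\Gamma(2)\hookrightarrow\NS(X)$ whose orthogonal complement is a root-free negative definite lattice of rank $12$; comparing its discriminant form with the $p$-elementary form of $\NS(X)$ of length $2\sigma_0$ via Nikulin's criteria yields the bound $\sigma_0\leq 5$. Hence each $Y_i$ lies in $\CE_{\sigma_0}$, and transporting a choice of marking data for $Y_1$ along the given isomorphism (and the induced isomorphism of K3 covers) shows that $Y_1$ and $Y_2$ may be represented by the same $\overline k$-point of $\CE_{\sigma_0}$; applying $\pi^E_{\sigma_0}$ gives $\pi^E_{\sigma_0}(Y_1)=\pi^E_{\sigma_0}(Y_2)$, so the condition holds with the admissible value $\sigma=\sigma_0\leq 5$.

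For the converse, assume $\pi^E_\sigma(Y_1)=\pi^E_\sigma(Y_2)$ for some $\sigma\leq 5$. By definition both $Y_i$ have a K3 cover admitting an $N_\sigma$-marking, and their common image is a single $\overline k$-point of $\CQ_\sigma$; since $\CQ_\sigma$ is the coarse moduli space of such surfaces and $\overline k$ is algebraically closed, its $\overline k$-points biject with isomorphism classes, so $Y_1\cong Y_2$. Unwinding this a little: the common period point records the periods of the K3 covers $X_1,X_2$ in a single $\Aut(N_\sigma)$-orbit on $\CM_\sigma$, so by Ogus' Torelli theorem there is an isomorphism $X_1\cong X_2$ compatible with the $N_\sigma$-markings, hence with the induced $\Gamma(2)$-markings; as the covering involution is recovered from the $\Gamma(2)$-marking (it is the unique Enriques involution associated to that sublattice), this isomorphism conjugates $\iota_1$ to $\iota_2$ and therefore descends to $Y_1\cong Y_2$.

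The substance of the argument lies entirely in the structural input invoked in the first paragraph. The delicate point — and the one I expect to be the main obstacle — is that the construction of $\CQ_\sigma$ from $\CE_\sigma$, carried out via the pushouts and group quotients of algebraic spaces of \cite{MR3572553} and \cite{MR3084720}, really is a geometric quotient by the marking-change group, so that its $\overline k$-points are precisely the orbits, with no accidental identification of non-isomorphic Enriques surfaces; this is exactly where Ogus' Torelli theorem for supersingular K3 surfaces enters, guaranteeing that an equality of Enriques periods cannot force more than an isomorphism compatible with the involution-encoding $\Gamma(2)$-markings. Finally, the restriction to $\sigma\leq 5$ in the statement is nothing more than the lattice bound $\sigma_0\leq 5$ on the Artin invariant of a supersingular Enriques surface; it is used only to exhibit an admissible $\sigma$ in the forward direction, the converse being valid for the given $\sigma$ without restriction.
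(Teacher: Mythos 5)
There is a genuine gap, and it is essentially circularity in the structural premise on which you hang everything. You take as input that $\pi^E_{\sigma}\colon\CE_{\sigma}\to\CQ_{\sigma}$ exhibits $\CQ_{\sigma}$ as a coarse moduli space of \emph{unmarked} supersingular Enriques surfaces, so that on $\overline{k}$-points two surfaces have the same image if and only if they are isomorphic. But nothing in the construction of $\CQ_{\sigma}$ gives this: $\CQ_{\sigma}$ is built (Proposition \ref{per}) by pushing out the schemes $\widetilde{\CM}''_{\gamma}$, which are quotients of the period spaces only by the finite groups $O(N_{\sigma},\gamma)$ of isometries \emph{fixing} the chosen embedding $\gamma\colon\Gamma(2)\hookrightarrow N_{\sigma}$; it is not a quotient of $\CE_{\sigma}$ by ``the group of changes of marking data'', and the statement that its $\overline{k}$-points biject with isomorphism classes of unmarked surfaces is exactly the content of Theorem \ref{torelli} (the paper only asserts the coarse-moduli interpretation \emph{after} proving it). Concretely, what is missing is the marking-independence of the period, i.e.\ Proposition \ref{nomark}: if $\tilde{\gamma}_1,\tilde{\gamma}_2$ are two $\Gamma$-markings of the same $Y$, the induced embeddings $\gamma_1,\gamma_2\colon\Gamma(2)\hookrightarrow\NS(X)$ differ by an isometry of the sublattice $\gamma_1(\Gamma(2))$, and one must show this extends to an isometry of $\NS(X)$ (equivalently of $N_{\sigma}$); this uses Nikulin's extension theorem \cite[Theorem 1.14.2]{1980IzMat..14..103N} to complete it by an isometry of the orthogonal complement. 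Without this step the quantity $\pi^E_{\sigma}(Y)$ for an unmarked $Y$ is not even well defined, so neither direction of the theorem as stated is available; your forward-direction ``transport the marking'' argument only compares two specific marked points and does not substitute for it.

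Your ``unwinding'' of the converse is, in outline, the paper's actual proof: equal period points let you adjust the $N_{\sigma}$-markings by an element of $O(N_{\sigma},\gamma)$ so the K3 periods agree in $\CM_{\sigma}$, Ogus' crystalline Torelli theorem \cite{MR717616} then produces an isomorphism $\Psi\colon X_1\to X_2$ carrying $\gamma_1(\Gamma(2))$ to $\gamma_2(\Gamma(2))$, hence conjugating the Enriques involutions, and $\Psi$ descends to $Y_1\cong Y_2$. So that half would be fine if written out (with the orbit description of the strata of $\CQ_{\sigma}$ from Proposition \ref{per} made explicit, and with a word about ample cones when invoking Ogus). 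To repair the proposal you should demote the ``coarse moduli'' claim from input to consequence: prove marking-independence via the Nikulin extension argument, deduce the ``only if'' direction from it, and keep your unwinding as the proof of the ``if'' direction.
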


 The period map $\pi_{\sigma}^E$ is defined in the following way: the scheme $\CQ_{\sigma}$  represents the functor that associates to a smooth scheme $S$ the set of isomorphism classes of families of K3 crystals $H$ over $S$ together with maps $\gamma \colon \Gamma(2) \hookrightarrow T_H \hookrightarrow H$ that are compatible with intersection forms and such that there exists a factorization $\gamma \colon \Gamma(2) \hookrightarrow N_{\sigma} \hookrightarrow T_H \hookrightarrow H$ without $(-2)$-vectors in the orthogonal complement $\gamma(\Gamma(2))^{\perp} \subset N_{\sigma}$. For a supersingular Enriques surface $Y$ we can choose a $\Gamma$-marking $\gamma \colon \Gamma \rightarrow \mathrm{NS}(Y)$ and this induces a point $\pi_{\sigma}^E(Y, \gamma) \in \CQ_{\sigma}$. We show that $\pi_{\sigma}(Y, \gamma)$ is independent of the choice of $\gamma$ and set $\pi_{\sigma}^E(Y)= \pi_{\sigma}^E(Y, \gamma)$. This construction justifies calling $\pi_{\sigma}^E(Y)$ the \emph{period of $Y$} and we call $\CQ_{\sigma}$ the \emph{period space} of supersingular Enriques surfaces of Artin invariant at most $\sigma$.
 
It remains to mention characteristic p=2. Here, there are three types of Enriques surfaces and a moduli space in this case has two components \cite{MR491720} \cite{MR3393362}. For the component corresponding to simply-connected Enriques surfaces, Ekedahl, Hyland and Shepherd-Barron \cite{ekedahl2012moduli} constructed a period map and established a Torelli theorem. In their work, however, the K3-like cover is not smooth and the covering is not étale, which is why the theory there has a slightly different flavor. 
\section*{Funding}  This work was supported by the ERC Consolidator Grant 681838 “K3CRYSTAL”.
\section*{Acknowledgements} I thank my doctoral advisor Christian Liedtke for his extensive support of my work. I am indebted to David Rydh for giving me access to an unpublished preprint, pointing to another well-hidden preprint of his and taking the time to explain some details to me, all concerning pushouts of Deligne-Mumford stacks. I also thank Paul Hamacher for many helpful discussions. Further thanks go to Roberto Laface and Gebhard Martin for helpful comments on an earlier version of this article. 

\section{Prerequisites and notation}\label{overview}
In this section we fix some notation and recall known results on supersingular K3 surfaces.

Let $k$ be an algebraically closed field of characteristic $p \geq 3$. A K3 surface $X$ over $k$ is called \emph{supersingular} if and only if $\mathrm{rk}(\mathrm{NS}(X)) = 22$. This definition of supersingularity is due to Shioda. There is a second definition for supersingularity due to Artin. Namely, a K3 surface $X$ over $k$ is called \emph{Artin supersingular} if and only if its formal Brauer group $\Phi^2_X$ is of infinite height. It follows from the Tate conjecture that, over any perfect field $k$, a K3 surface is Artin supersingular if and only if it is Shioda supersingular \cite{MR3265555}. Charles first proved the Tate conjecture over fields of characteristic at least $5$ \cite{MR3103257}. Using the Kuga-Satake construction, Madapusi Pera gave a proof of the Tate conjecture over fields of characteristic at least $3$ \cite{MR3370622}. Over fields of characteristic $p=2$, the Tate conjecture was proved by Kim and Madapusi Pera \cite{MR3569319}. By a \emph{lattice} $(L, \langle\cdot,\cdot\rangle)$ we mean a free $\mathbb{Z}$-module $L$ of finite rank together with a nondegenerate symmetric bilinear form $\langle \cdot, \cdot \rangle \colon L \times L \rightarrow \mathbb{Z}$.

Most of the following content is due to Ogus \cite{MR563467}\cite{MR717616}. A strong inspiration for our treatment in this section and a good source for the interested reader is \cite{MR3524169}.
\subsection{K3 crystals}
For the definition of $F$-crystals and their slopes we refer to \cite[Chapter I.1]{MR563463}. Given a supersingular K3 surface $X$, it turns out that a lot of information is encoded in its second crystalline cohomology. We say that $H^2_{\mathrm{crys}}(X/W)$ is a \emph{supersingular K3 crystal} of rank $22$ in the sense of the following definition, due to Ogus \cite{MR563467}.
\begin{definition} Let $k$ be a perfect field of positive characteristic $p$ and let $W=W(k)$ be its Witt ring with lift of Frobenius $\sigma \colon W \rightarrow W$. A \emph{supersingular K3 crystal of rank $n$} over $k$ is a free $W$-module $H$ of rank $n$ together with an injective  $\sigma$-linear map 
\begin{align*} \varphi \colon H \rightarrow H, \end{align*}
 i.e.\ $\varphi(a \cdot m)= \sigma(a) \cdot \varphi(m)$ for all $a \in W$ and $m \in H$, and a symmetric bilinear form
 \begin{align*}
 \langle -,- \rangle \colon H \times H \rightarrow W, 
 \end{align*}
such that
\begin{enumerate}
\item $p^2H \subseteq \mathrm{im}(\varphi)$,
\item the map $\varphi \otimes_W k$ is of rank $1$,
\item $\langle -,- \rangle$ is a perfect pairing,
\item $\langle \varphi(x), \varphi(y) \rangle= p^2\sigma \left(\langle x,y \rangle \right)$, and
\item the $F$-crystal $(H,\varphi)$ is purely of slope $1$.
\end{enumerate}
\end{definition}
The \emph{Tate module} $T_H$ of a K3 crystal $H$ is the $\mathbb{Z}_p$-module 
$$ T_H \coloneqq \{x \in H \mid \varphi(x)= px\}.$$
One can show that if $H=H^2_{\mathrm{crys}}(X/W)$ is the second crystalline cohomology of a supersingular K3 surface $X$ and $c_1 \colon \mathrm{Pic}(X) \rightarrow H^2_{\mathrm{crys}}(X/W)$ is the first crystalline Chern class map, we have $c_1(\mathrm{Pic}(X)) \subseteq T_H$. If $X$ is defined over a finite field, the Tate conjecture is known, see \cite{MR3103257} \cite{MR3370622}, and it follows that we even have the equality $c_1(\mathrm{NS}(X)) \otimes \mathbb{Z}_p= T_H$. The following proposition on the structure of the Tate module of a supersingular K3 crystal is due to Ogus \cite{MR563467}.
\begin{proposition}
Let $(H,\varphi, \langle -,- \rangle)$ be a supersingular K3 crystal and let $T_H$ be its Tate module. Then $\mathrm{rk}_W H= \mathrm{rk}_{\mathbb{Z}_p} T_H$ and the bilinear form $(H, \langle -,- \rangle)$ induces a non-degenerate form $T_H \times T_H \rightarrow \mathbb{Z}_p$ via restriction to $T_H$ which is not perfect. More precisely, we find
\begin{enumerate}
\item $\mathrm{ord}_p(T_H)=2 \sigma$ for some positive integer $\sigma$,
\item $(T_H, \langle -,-\rangle)$ is determined up to isometry by $\sigma$,
\item $\mathrm{rk}_W H \geq 2 \sigma$ and
\item there exists an orthogonal decomposition
\begin{align*}
(T_H, \langle -,- \rangle) \cong (T_0,p \langle -,- \rangle) \perp (T_1, \langle -,- \rangle),
\end{align*}
where $T_0$ and $T_1$ are $\mathbb{Z}_p$-lattices with perfect bilinear forms and of ranks $\mathrm{rk}T_0= 2 \sigma$ and $\mathrm{rk}T_1= \mathrm{rk}_WH-2 \sigma$.
\end{enumerate}
\end{proposition}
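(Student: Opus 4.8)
The plan is to use the slope-$1$ purity of $(H,\varphi)$ to realise $T_H$ as a full-rank $\mathbb{Z}_p$-lattice inside $H$, and then to extract each assertion from the defining axioms (1)--(5) of a supersingular K3 crystal. It is harmless to assume $k$ algebraically closed, and I will do so.

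First I would prove the rank equality and that $\langle-,-\rangle$ restricts to $T_H$. By the Dieudonn\'e--Manin classification, purity of slope $1$ gives an isomorphism of isocrystals $(H\otimes\mathbb{Q},\varphi)\cong(W\otimes\mathbb{Q})^{\oplus n}$ with $n=\mathrm{rk}_W H$, on which $\varphi$ acts as $p$ times the coordinatewise Frobenius; hence $T_H\otimes\mathbb{Q}_p$ is the $\mathbb{Q}_p$-span of the corresponding basis, of dimension $n$, so $T_H=(T_H\otimes\mathbb{Q}_p)\cap H$ is a $\mathbb{Z}_p$-lattice of rank $n$ and $A:=T_H\otimes_{\mathbb{Z}_p}W\hookrightarrow H$ is injective with finite-length cokernel. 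For $x,y\in T_H$ axiom (4) gives $p^2\langle x,y\rangle=\langle\varphi x,\varphi y\rangle=p^2\sigma(\langle x,y\rangle)$, so $\langle x,y\rangle\in W^{\sigma=\mathrm{id}}=\mathbb{Z}_p$; the restricted form is non-degenerate because it is perfect on $H$, which contains $A$ as a finite-index sublattice. Comparing discriminants, $\mathrm{ord}_p(T_H)=2\,\mathrm{length}_W(H/A)=:2\sigma$, which is even; and $\sigma\geq 1$ — so that $T_H\subsetneq T_H^\vee$ and the form is not perfect — because $A=H$ would give $\varphi(H)=pH$, i.e.\ $\varphi\otimes k=0$, contradicting axiom (2). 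This yields the rank equality and (1).

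The core is the $p$-adic structure of $T_H$. Using axiom (4) one checks that $\varphi(A^\vee)\subseteq pA^\vee$ (self-duality of $H$ gives the chain $A\subseteq H\subseteq A^\vee$), so $\psi:=p^{-1}\varphi$ stabilises both $A$ and $A^\vee$; then axiom (1), in the form $pH\subseteq\psi(H)$, together with axiom (2) — which forces $\psi(H)\subseteq p^{-1}H$ and $\dim_k\bigl((\psi(H)+H)/H\bigr)=1$ — is used to show that the cokernel $H/A$, equivalently $A^\vee/A\cong(T_H^\vee/T_H)\otimes_{\mathbb{Z}_p}W$, is killed by $p$. Consequently $T_H^\vee/T_H$ is an $\mathbb{F}_p$-vector space, necessarily of dimension $2\sigma$, so $\mathrm{rk}_{\mathbb{Z}_p}T_H\geq 2\sigma$, which is (3). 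The $p$-adic Jordan decomposition (here $p$ is odd) then splits $T_H$ orthogonally into modular sublattices, and the condition $pT_H^\vee\subseteq T_H$ permits only a unimodular and a $1$-modular component, i.e.\ $(T_H,\langle-,-\rangle)\cong(T_1,\langle-,-\rangle)\perp(T_0,p\langle-,-\rangle)$ with $T_0,T_1$ unimodular; a discriminant count forces $\mathrm{rk}\,T_0=2\sigma$ and $\mathrm{rk}\,T_1=n-2\sigma$, which is (4). Assertion (2) then follows from the classification of lattices over $\mathbb{Z}_p$, the discriminant invariants of the two Jordan blocks being pinned down by those of the unimodular crystal $H$.

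The main obstacle is proving that $H/(T_H\otimes W)$ is annihilated by $p$, equivalently $pT_H^\vee\subseteq T_H$: this is exactly the point where axioms (1) and (2) must be used together, through an analysis of $H$ as a $\varphi$-module relative to the $\psi$-stable pair $A\subseteq A^\vee$, essentially Ogus's study of the characteristic subspace of a supersingular K3 crystal in \cite{MR563467}. Everything after that is formal lattice theory over $\mathbb{Z}_p$.
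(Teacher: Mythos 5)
The paper itself gives no proof of this proposition: it is recalled verbatim from Ogus \cite{MR563467} (see also Liedtke's survey), so the only comparison available is with Ogus's original argument, which your outline indeed tracks for most of the statement. Your treatment of the rank equality, the $\mathbb{Z}_p$-valuedness of the form via axiom (4), non-degeneracy, the evenness $\mathrm{ord}_p(T_H)=2\,\mathrm{length}_W(H/T_H\otimes W)$, and the deduction of $\sigma\geq 1$ from $\mathrm{rank}(\varphi\otimes k)=1$ are all correct, and the reduction of (3) and (4) to the single claim $pT_H^{\vee}\subseteq T_H$ plus the $p$-adic Jordan splitting is exactly the right skeleton. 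However, that central claim is only asserted: you say axioms (1) and (2) ``are used to show'' that $H/(T_H\otimes W)$ is killed by $p$ and then defer to Ogus. Since you yourself identify this as the main obstacle, the proposal does not yet contain a proof of (3) and (4); it contains a correct reduction to the hard step.

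The more serious problem is your justification of (2). You claim the discriminant invariants of the two Jordan blocks are ``pinned down by those of the unimodular crystal $H$''. This cannot work: $k$ is algebraically closed and $p$ is odd, so every unit of $W=W(k)$ is a square, hence $W^{\times}/(W^{\times})^{2}$ is trivial and the perfectness of $\langle-,-\rangle$ on $H$ imposes no condition whatsoever on $\mathrm{disc}(T_0)$ and $\mathrm{disc}(T_1)$ in $\mathbb{Z}_p^{\times}/(\mathbb{Z}_p^{\times})^{2}$; base change from $\mathbb{Z}_p$ to $W$ (or to $\mathrm{Frac}(W)$, whose Brauer group is trivial) destroys exactly the invariants you would need. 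Ogus's actual argument for the isometry classification is finer: it uses the $\varphi$-structure, namely that $K=\ker\left(T_H\otimes_{\mathbb{Z}_p}k\rightarrow H\otimes_{\mathbb{Z}_p}k\right)$ is a characteristic subspace of the $2\sigma$-dimensional $\mathbb{F}_p$-quadratic space $pT_H^{\vee}/pT_H\cong T_0\otimes\mathbb{F}_p$, and the existence of such a (strictly) characteristic subspace forces this $\mathbb{F}_p$-form to be non-neutral, which is what fixes the relevant discriminant class (this non-neutrality is also exactly what the paper needs later, when it endows $pN^{\vee}/pN$ with a non-neutral form). Mere discriminant bookkeeping against $H$, as in your last step, proves nothing here, so assertion (2) is genuinely unproved in your proposal even granting the deferred $p$-elementarity.
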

The positive integer $\sigma$ is called the \emph{Artin invariant} of the K3 crystal $H$ \cite{MR563467}. When $H$ is the second crystalline cohomology of a supersingular K3 surface $X$, we have $1 \leq \sigma(H) \leq 10$.
\subsection{K3 lattices}
The previous subsection indicates that the Néron-Severi lattice $\mathrm{NS}(X)$ of a supersingular K3 surface $X$ plays an important role in the study of supersingular K3 surfaces via the first Chern class map. We say that $\mathrm{NS}(X)$ is a \emph{supersingular K3 lattice} in the sense of the following definition due to Ogus \cite{MR563467}.
\begin{definition} A \emph{supersingular K3 lattice} is an even lattice $(N,\langle -,-\rangle)$ of rank $22$ such that
\begin{enumerate}
\item the discriminant $d(N)$ is $-1$ in $\mathbb{Q}^{\ast}/{\mathbb{Q}^{\ast}}^2$,
\item the signature of $N$ is $(1,21)$, and
\item the lattice $N$ is $p$-elementary for some prime number $p$.
\end{enumerate}
\end{definition} 
When $N$ is the Néron-Severi lattice of a supersingular K3 surface $X$, then the prime number $p$ in the previous definition turns out to be the characteristic of the base field. One can show that if $N$ is a supersingular K3 lattice, then its discriminant is of the form $d(N)=-p^{2 \sigma}$ for some integer $\sigma$ such that $1 \leq \sigma \leq 10$. The integer $\sigma$ is  called the \emph{Artin invariant} of the lattice $N$. If $X$ is a supersingular K3 surface, we call $\sigma(\mathrm{NS}(X))$ the \emph{Artin invariant} of the supersingular K3 surface $X$ and we find that $\sigma(\mathrm{NS}(X))=\sigma(H^2_{\mathrm{crys}}(X/W))$. The following theorem is due to Rudakov and Shafarevich \cite[Section 1]{MR633161}.
\begin{theorem} The Artin invariant $\sigma$ determines a supersingular K3 lattice up to isometry.
\end{theorem}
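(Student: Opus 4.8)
The plan is to deduce the statement from Nikulin's classification of indefinite even lattices in terms of their signature and discriminant quadratic form. Throughout, the prime $p \geq 3$ is fixed (it is the characteristic appearing in the ambient setup), so the statement is understood among $p$-elementary lattices. \textbf{Step 1: reduction to the discriminant form.} Let $N$ be a supersingular K3 lattice of Artin invariant $\sigma$. By the structure result recalled above, $d(N) = -p^{2\sigma}$, and by $p$-elementariness the discriminant group $A_N := N^{\vee}/N$ is annihilated by $p$; since $|A_N| = |d(N)| = p^{2\sigma}$, this forces $A_N \cong (\BZ/p\BZ)^{2\sigma}$, so $\ell(A_N) = 2\sigma$, where $\ell$ denotes the minimal number of generators. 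Because $\sigma \leq 10$ we have $\rank N = 22 \geq \ell(A_N) + 2 = 2\sigma + 2$, and the signature $(t_{+}, t_{-}) = (1,21)$ is indefinite with $t_{\pm} \geq 1$. These are precisely the hypotheses of Nikulin's uniqueness theorem for indefinite even lattices; since $p$ is odd there are no extra local conditions at the prime $2$ to check, and the borderline value $\sigma = 10$ (where $\rank N = \ell(A_N)+2$) is still covered. Hence $N$ is determined up to isometry by the triple $(t_{+}, t_{-}, q_N)$, where $q_N \colon A_N \to \BQ/2\BZ$ is the discriminant quadratic form. As the signature is already fixed, it remains only to show that $q_N$ is the same for all such $N$.

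\textbf{Step 2: $q_N$ is the same for every supersingular K3 lattice of Artin invariant $\sigma$.} Since $p$ is odd, every nondegenerate finite quadratic form on $(\BZ/p\BZ)^{2\sigma}$ is an orthogonal sum of rank-one forms, and two of them are isomorphic exactly when they have the same rank and the same discriminant in $\BF_p^{\times}/(\BF_p^{\times})^2 \cong \{\pm 1\}$; so there are precisely two candidates for $q_N$. Replacing one rank-one summand by its twist by a non-square multiplies the associated Gauss sum by $-1$, i.e.\ shifts $\mathrm{sign}(q)$ by $4$ modulo $8$, so the two candidates have distinct signatures modulo $8$. On the other hand, Milgram's formula forces $\mathrm{sign}(q_N) \equiv \mathrm{sign}(N) = t_{+} - t_{-} = -20 \equiv 4 \pmod{8}$. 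Hence at most one candidate can occur as a $q_N$, so $q_N$ is independent of $N$. Combined with Step 1, any two supersingular K3 lattices of Artin invariant $\sigma$ are isometric. (Conversely, fed the unique admissible form, the existence half of Nikulin's theorem produces such an $N$ for each $1 \leq \sigma \leq 10$.)

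\textbf{Main obstacle.} The step needing the most care is verifying that the two candidate discriminant forms genuinely have different signatures modulo $8$ — the Gauss-sum bookkeeping, where the classical evaluation of quadratic Gauss sums contributes a Legendre symbol and a factor $1$ or $i$ according as $p \equiv 1$ or $3 \pmod 4$ — together with nailing down the exact hypotheses of Nikulin's uniqueness criterion in the odd-$p$ case, so that no invariants at the prime $2$ intervene and the borderline case $\sigma = 10$ is still covered; since $N$ is given to exist, only the uniqueness half of the theorem is invoked. A more explicit alternative, in the spirit of Rudakov--Shafarevich, would be to exhibit concrete models (lattices of the form ``unimodular $\perp$ $p$-scaled hyperbolic'' of the appropriate ranks) realizing each Artin invariant and to prove uniqueness by a direct gluing argument over $\BZ_p$; the route via Nikulin is shorter but less hands-on.
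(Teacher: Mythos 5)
Your argument is correct, but it is a genuinely different route from the paper's: the paper does not prove this statement at all — it quotes it from Rudakov--Shafarevich, whose Section 1 establishes the classification of such $p$-elementary hyperbolic even lattices by a direct, hands-on analysis (essentially the ``explicit models plus gluing over $\mathbb{Z}_p$'' alternative you sketch at the end). Your route instead packages everything into Nikulin's machinery: identify $A_N \cong (\mathbb{Z}/p\mathbb{Z})^{2\sigma}$ from $d(N) = -p^{2\sigma}$ and $p$-elementariness, note $\mathrm{rk}\,N = 22 \geq 2 + \ell(A_N)$ (with equality only at $\sigma = 10$, which the non-strict inequality in Nikulin's uniqueness criterion does cover, and with no conditions at $2$ since $A_N$ is a $p$-group with $p$ odd), and then pin down the discriminant form among the two candidate $\mathbb{F}_p$-forms of rank $2\sigma$ by the Gauss--Milgram constraint $\mathrm{sign}(q_N) \equiv 1 - 21 \equiv 4 \pmod 8$, the two candidates differing by $4 \pmod 8$ because twisting one rank-one summand by a non-square flips the sign of the corresponding Gauss sum. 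All of these steps are sound (the classification of nondegenerate quadratic forms over $\mathbb{F}_p$ by rank and discriminant, and Milgram's formula, are standard), and using only the uniqueness half of Nikulin's theorem correctly sidesteps the existence conditions. What the comparison buys: your proof is shorter and fits the lattice-theoretic toolkit the paper already uses elsewhere (it cites Nikulin's Theorem 1.14.2 later for extending isometries), whereas the Rudakov--Shafarevich approach is more explicit and also produces concrete representatives of $N_\sigma$, which is occasionally useful in computations; the only point demanding care in your version is exactly the one you flag, namely quoting Nikulin's uniqueness criterion in a form that visibly allows the borderline case $\mathrm{rk} = \ell(A) + 2$.
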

\subsection{Characteristic subspaces and K3 crystals}
In this subsection we introduce characteristic subspaces. These objects yield another way to describe K3 crystals, a little closer to classic linear algebra in flavor. For this subsection we fix a prime $p > 2$ and a perfect field $k$ of characteristic $p$ with Frobenius $F \colon k \rightarrow k$, $x \mapsto x^p$.
\begin{definition} Let $\sigma$ be a non-negative integer and let $V$ be a $2\sigma$-dimensional $\mathbb{F}_p$-vector space. A non-degenerate quadratic form
\begin{align*}
\langle -,- \rangle \colon V \times V \rightarrow \mathbb{F}_p.
\end{align*}
on $V$ is called \emph{non-neutral} if there exists no $\sigma$-dimensional isotropic subspace of $V$.
\end{definition}
\begin{definition}
Let $\sigma$ be a non-negative integer and let $V$ be a $2\sigma$-dimensional $\mathbb{F}_p$-vector space together with a non-degenerate and non-neutral quadratic form
\begin{align*}
\langle -,- \rangle \colon V \times V \rightarrow \mathbb{F}_p.
\end{align*}
Set $\varphi \coloneqq \mathrm{id}_V \otimes F \colon V \otimes k \rightarrow V \otimes k$. A $k$-subspace $G \subset V \otimes k$ is called \emph{characteristic} if
\begin{enumerate}
\item $G$ is a totally isotropic subspace of dimension $\sigma$, and
\item $G + \varphi(G)$ is of dimension $\sigma+1$.
\end{enumerate}
A \emph{strictly characteristic subspace} is a characteristic subspace $G$ such that
\begin{align*}
V \otimes k = \sum_{i=0}^{\infty} \varphi^i(G)
\end{align*}
holds true.
\end{definition}
We can now introduce the categories
\begin{align*}
\mathrm{K}3(k) &\coloneqq \left\{\begin{array}{l}\text{Supersingular K3 crystals over $k$} \end{array} \right\}
\intertext{and}
\mathbb{C}3(k) &\coloneqq \left\{\begin{array}{l}\text{Pairs $(T,G)$, where $T$ is a supersingular} \\ \text{K3 lattice over $\mathbb{Z}_p$, and $G\subseteq T_0 \otimes_{\mathbb{Z}_p} k$} \\
\text{is a strictly characteristic subspace}\end{array} \right\}
\end{align*}
with only isomorphisms as morphisms.
\begin{remark} We note that while $T_0$ and $T_1$ are not functorial in $T$, the quotients $T_0 \otimes \mathbb{F}_p$ and $T_1 \otimes \mathbb{F}_p$ are functorial in $T$ \cite[Remark 3.16]{MR563467}. Hence, the definition of $\mathbb{C}3(k)$ makes sense.
\end{remark}
It turns out that over an algebraically closed field these two categories are equivalent.
\begin{theorem}\cite[Theorem 3.20]{MR563467} Let $k$ be an algebraically closed field of characteristic $p > 2$. Then the functor
\begin{align*}
\mathrm{K}3(k) &\lra \mathbb{C}3(k), \\
\left(H, \varphi, \langle -,- \rangle\right) & \longmapsto \left(T_H, \ker\left(T_H \otimes_{\mathbb{Z}_p} k \rightarrow H \otimes_{\mathbb{Z}_p} k\right) \subset T_0 \otimes_{\mathbb{Z}_p} k\right)
\end{align*}
defines an equivalence of categories.
\end{theorem}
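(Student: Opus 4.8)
This is Ogus's theorem \cite[Theorem 3.20]{MR563467}; the plan is to construct an explicit quasi-inverse functor and check that both composites are naturally isomorphic to the identity. Since every morphism in $\mathrm{K}3(k)$ and in $\mathbb{C}3(k)$ is an isomorphism, this yields the asserted equivalence. Write $K=W[1/p]$. For a supersingular K3 crystal $H$, the inclusion $T_H\hookrightarrow H$ extends to an inclusion of $W$-lattices $T_H\otimes_{\mathbb{Z}_p}W\hookrightarrow H$ inside $H\otimes_W K$, and purity of slope $1$ (with $k$ algebraically closed, via Dieudonné--Manin) gives $T_H\otimes_{\mathbb{Z}_p}K=H\otimes_W K$; comparing discriminants ($-p^{2\sigma}$ on $T_H$, a unit on $H$) shows $[H:T_H\otimes W]=p^{\sigma}$. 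Reduction modulo $p$ then produces the kernel $G=\ker(T_H\otimes_{\mathbb{Z}_p}k\to H/pH)$, of dimension $\sigma$.

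First I would check the functor is well defined on objects. Pairing a representative $x/p\in H$ of a class of $G$ against the unimodular summand $T_1$ and using that $\langle-,-\rangle$ is $W$-valued on $H$ forces the $T_1$-component of $x$ to be divisible by $p$, so $G\subseteq T_0\otimes_{\mathbb{Z}_p}k$; the analogous estimate on $\langle x/p,x'/p\rangle\in W$, together with the fact that the form on the $T_0$-summand is $p$ times the unimodular form whose reduction defines the quadratic form on $T_0\otimes\mathbb{F}_p$, shows $G$ is totally isotropic of dimension $\sigma$. Because $\varphi$ acts as $p(\mathrm{id}\otimes\sigma)$ on $T_H\otimes W$, a short computation identifies the rank-one condition on $\varphi\otimes k$ with the equality $\dim(G+\varphi(G))=\sigma+1$, so $G$ is characteristic, and shows that if $\sum_i\varphi^i(G)$ were a proper subspace of $T_0\otimes k$ then $(H,\varphi)$ would acquire a sub-isocrystal of slope $\neq 1$, so $G$ is strictly characteristic. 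Functoriality is immediate since crystal isomorphisms preserve $T_H$ and $\varphi$, and $T_0\otimes\mathbb{F}_p$ is functorial in $T_H$ by \cite[Remark 3.16]{MR563467}.

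Next I would build the inverse. Given $(T,G)$ with orthogonal decomposition $T=T_0\perp T_1$ and $G\subset T_0\otimes_{\mathbb{Z}_p}k$ strictly characteristic, set $V=T\otimes_{\mathbb{Z}_p}K$ and $\varphi:=p(\mathrm{id}\otimes\sigma)$, let $\widetilde G\subset T_0\otimes_{\mathbb{Z}_p}W$ be the preimage of $G$ under reduction, and put
\[
H:=(T_1\otimes_{\mathbb{Z}_p}W)\;\oplus\;\tfrac{1}{p}\widetilde G\;\subset\;V .
\]
One then checks directly: $\varphi(H)\subseteq H$ and $p^{2}H\subseteq\varphi(H)$, using $p(T_0\otimes W)\subseteq\widetilde G\subseteq T_0\otimes W$; the reduction of $\varphi$ has rank $\dim\bigl((G+\varphi G)/G\bigr)=1$, which is exactly where the characteristic condition enters; the pairing is $W$-valued because $G$ is isotropic and unimodular because the index of $T\otimes W$ in $H$ is $p^{\sigma}$ while the $T_0$-form was scaled by $p$; finally $\langle\varphi x,\varphi y\rangle=p^{2}\sigma\left(\langle x,y\rangle\right)$ and purity of slope $1$ are automatic from $\varphi=p(\mathrm{id}\otimes\sigma)$. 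So $H$ is a supersingular K3 crystal. Moreover $T_H=H^{\mathrm{id}\otimes\sigma=1}=T_1\oplus\tfrac{1}{p}(\widetilde G\cap T_0)$, and here the strict characteristic condition is essential: a nonzero vector of $(T_0\otimes\mathbb{F}_p)\cap G$ would be fixed by $\varphi$, hence lie in $\bigcap_i\varphi^i(G)$, hence be orthogonal to $\sum_i\varphi^i(G)=T_0\otimes k$, contradicting non-degeneracy of the form on $T_0\otimes k$; thus $\widetilde G\cap T_0=pT_0$ and $T_H=T$. Since also $\ker(T\otimes k\to H/pH)=(\widetilde G\cap(T_0\otimes W))/p(T_0\otimes W)=G$, this $H$ is sent back to $(T,G)$.

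It remains to see the two constructions are mutually inverse on morphisms. Starting from a crystal $H$, one shows $\tfrac1p\widetilde G\subseteq H$ directly from the definition of $G$, and then the index count ($p^{\sigma}$ on both sides) gives $H=(T_1\otimes W)\oplus\tfrac1p\widetilde G$, so the reconstruction recovers $H$. Faithfulness holds because a crystal isomorphism is $W$-linear and $T_H\otimes W$ spans $H\otimes_W K$, so it is determined by its restriction to $T_H$; fullness holds because an isometry $T_{H_1}\to T_{H_2}$ sending $G_1$ to $G_2$ respects the functorial datum $T_0\otimes\mathbb{F}_p$, hence sends $\widetilde G_1$ to $\widetilde G_2$ and $\tfrac1p\widetilde G_1$, $T_1\otimes W$ to their counterparts, so its $K$-linear extension carries $H_1$ isometrically onto $H_2$ and, being $\mathbb{Z}_p$-linear, commutes with $\varphi=p(\mathrm{id}\otimes\sigma)$. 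The main obstacle is the bookkeeping concentrated in the middle two paragraphs: pinning down the precise sandwiching $T_H\otimes W\subseteq H\subseteq p^{-1}(T_H\otimes W)$ compatibly with $T_H=T_0\perp T_1$, and matching the five crystal axioms term by term with ``characteristic'' and ``strictly characteristic'' --- in particular the equivalence of rank one of $\varphi\otimes k$ with $\dim(G+\varphi G)=\sigma+1$, and of purity of slope one (equivalently, $T_H$ being the full reconstructed lattice $T$) with $\sum_i\varphi^i(G)$ exhausting $T_0\otimes k$.
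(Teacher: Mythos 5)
The paper itself offers no argument for this statement; it is quoted verbatim from Ogus (\cite[Theorem 3.20]{MR563467}), so there is no in-paper proof to compare against. Your proposal is essentially a reconstruction of Ogus's own proof: identify $T_H\otimes_{\mathbb{Z}_p}K$ with $H\otimes_W K$ using purity of slope $1$ over an algebraically closed field, read off the sandwich $pH\subseteq T_H\otimes W\subseteq H$, translate the five crystal axioms into the conditions ``totally isotropic of dimension $\sigma$'', ``characteristic'' and ``strictly characteristic'' on $G$, and invert by $H=(T_1\otimes W)\oplus\tfrac1p\widetilde G$. That structure, and almost all of the individual verifications (isotropy from $W$-valuedness of the form, perfectness from the discriminant count, rank one of $\varphi\otimes k$ versus $\dim(G+\varphi G)=\sigma+1$, the index argument showing the reconstruction returns $H$, and full faithfulness), are correct and are the standard route.

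There is, however, one genuinely wrong step: in the forward direction you justify strict characteristicness by claiming that if $\sum_i\varphi^i(G)$ were a proper subspace of $T_0\otimes k$ then $(H,\varphi)$ would acquire a sub-isocrystal of slope $\neq 1$. This cannot work: once $T_H\otimes K=H\otimes K$ and $\varphi=p(\mathrm{id}\otimes\sigma)$ on $T_H\otimes W$, the isocrystal is purely of slope $1$ no matter what $G$ is, so no slope obstruction can appear. The correct mechanism is the one you in fact use later, but run in the other direction: if $W_\infty:=\sum_i\varphi^i(G)$ were proper, then $\varphi(W_\infty)=W_\infty$ (Frobenius is injective and $k$ is perfect), so $W_\infty$ and hence $W_\infty^{\perp}$ are defined over $\mathbb{F}_p$; since $G$ is maximal isotropic, $W_\infty^{\perp}\subseteq G^{\perp}=G$, so $G$ would contain a nonzero $\mathbb{F}_p$-rational vector $\bar v$. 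Lifting $v\in T_0$ with $v/p\in H$ gives $\varphi(v/p)=p\,(v/p)$, i.e.\ $v/p\in T_H$, contradicting the fact that $T_H$ is by definition the full Tate module (equivalently, contradicting $\widetilde G\cap T_0=pT_0$). So the strictness claim is true, but it must be derived from the Tate-module computation (non-strict $\Rightarrow$ rational vector in $G$ $\Rightarrow$ $T_H$ strictly larger), not from slopes; as written, that step is a gap, though a local and repairable one.
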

If we denote by $\mathbb{C}3(k)_{\sigma}$ the subcategory of $\mathbb{C}3(k)$ consisting of objects $(T,G)$ where $T$ is a supersingular K3 lattice of Artin invariant $\sigma$, there is a coarse moduli space.
\begin{theorem}\cite[Theorem 3.21]{MR563467} Let $k$ be an algebraically closed field of characteristic $p > 2$. We denote by $\mu_n$ the cyclic group of $n$-th roots of unity. There exists a canonical bijection
\begin{align*}
\left(\mathbb{C}3(k)_{\sigma}/\simeq\right) \lra \mathbb{A}^{\sigma-1}_k(k)/\mu_{p^{\sigma}+1}(k).
\end{align*}

\end{theorem}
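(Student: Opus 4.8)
The plan is to turn the statement into a question about the orbits of an orthogonal group on a space of subspaces, and to analyse those orbits by means of an explicit normal form built from the field $\mathbb{F}_{p^{2\sigma}}$. The first step is a reduction to a group quotient. By the structure result for Tate modules recalled above, the underlying $\mathbb{Z}_{p}$-lattice of any object of $\mathbb{C}3(k)_{\sigma}$ is, up to isometry, a fixed lattice $N$, which we equip once and for all with an orthogonal decomposition $N\cong(N_{0},p\langle-,-\rangle)\perp(N_{1},\langle-,-\rangle)$ with $N_{0}$ unimodular of rank $2\sigma$; write $V:=N_{0}\otimes\mathbb{F}_{p}$, a $2\sigma$-dimensional $\mathbb{F}_{p}$-space carrying the non-degenerate non-neutral form Ogus attaches to it, which up to isometry depends only on $\sigma$. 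Since $N_{0}\otimes\mathbb{F}_{p}$ is functorial in $N$ (\cite{MR563467}), there is a homomorphism $O(N)\to O(V)$, and restricted to $O(N_{0})\subseteq O(N)$ it is the reduction map; the latter is surjective because the orthogonal group scheme of the unimodular $\mathbb{Z}_{p}$-lattice $N_{0}$ is smooth ($p$ being odd) and $\mathbb{Z}_{p}$ is henselian. Hence every object of $\mathbb{C}3(k)_{\sigma}$ is isomorphic to a pair $(N,G)$ with $G\subseteq V\otimes_{\mathbb{F}_{p}}k$ strictly characteristic, and $(N,G)\cong(N,G')$ precisely when $G,G'$ lie in one $O(V)$-orbit. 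It suffices, therefore, to produce a $\mu_{p^{\sigma}+1}(k)$-invariant surjection from $\mathbb{A}^{\sigma-1}_{k}(k)$ onto the set of strictly characteristic subspaces modulo $O(V)$ whose fibres are the $\mu_{p^{\sigma}+1}(k)$-orbits.

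The second step introduces the field model and the relevant torus. One realises $V$ as $\mathbb{F}_{p^{2\sigma}}$ with the form $x\mapsto\mathrm{Tr}_{\mathbb{F}_{p^{2\sigma}}/\mathbb{F}_{p}}(\delta x\bar{x})$, where $\bar{x}:=x^{p^{\sigma}}$ and $\delta\in\mathbb{F}_{p^{2\sigma}}^{\times}$ is chosen to make the form non-neutral. In this model $\mu_{p^{\sigma}+1}(k)=\{y\in\mathbb{F}_{p^{2\sigma}}^{\times}:y\bar{y}=1\}$ acts on $V$ by multiplication, preserves the form, lies in $SO(V)$, and is exactly the stabiliser in $O(V)$ of the $\mathbb{F}_{p^{2\sigma}}$-module structure — a non-split maximal torus of $SO(V)$. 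After base change $V\otimes_{\mathbb{F}_{p}}k\cong\prod_{i\in\mathbb{Z}/2\sigma}k$, the operator $\varphi=\mathrm{id}_{V}\otimes F$ becomes an explicit Frobenius-twisted cyclic shift, and ``characteristic'' and ``strictly characteristic'' translate into explicit polynomial conditions on coordinates. The core of the argument is then to show that every strictly characteristic $G$ can be moved by an element of $O(V)$ into a standard shape $G(a)$, adapted to the $\mathbb{F}_{p^{2\sigma}}$-structure and depending on $\sigma-1$ free parameters $a=(a_{1},\dots,a_{\sigma-1})\in k^{\sigma-1}$. Concretely, one checks that for a strictly characteristic $G$ the filtration $G\subseteq G+\varphi G\subseteq G+\varphi G+\varphi^{2}G\subseteq\cdots$ grows by exactly one dimension at each step until it fills $V\otimes k$ (and dually for $G\supseteq G\cap\varphi G\supseteq\cdots$); this regularity singles out a distinguished isotropic line attached to $G$, and Witt's extension theorem for the $\mathbb{F}_{p}$-rational group $O(V)$ then lets one transport $G$ so that its $\varphi$-translates occupy prescribed coordinate subspaces, leaving a cyclic generator whose remaining coordinates are the $a_{i}$. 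An affine space rather than a projective one appears because strict characteristicity forces the normalising coordinate to be invertible, so after rescaling it to $1$ precisely $\sigma-1$ coordinates remain unconstrained.

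The third step identifies the residual symmetry and assembles the bijection: one shows that $G(a)$ and $G(a')$ are $O(V)$-equivalent if and only if $a'=y\cdot a$ for some $y\in\mu_{p^{\sigma}+1}(k)$, the torus action on $k^{\sigma-1}$ being read off from its multiplication action on the cyclic generator through the $\mathbb{F}_{p^{2\sigma}}$-structure. Then $a\mapsto[(N,G(a))]$ is a well-defined surjection $\mathbb{A}^{\sigma-1}_{k}(k)\to(\mathbb{C}3(k)_{\sigma}/\simeq)$ with fibres the $\mu_{p^{\sigma}+1}(k)$-orbits, so it descends to a bijection, and its inverse is the desired canonical bijection. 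It is canonical in the sense that it depends only on the chosen $\mathbb{F}_{p^{2\sigma}}$-structure on $V$, and any two admissible choices are intertwined precisely by the $\mu_{p^{\sigma}+1}(k)$-action, which is exactly why one must pass to that quotient.

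The main obstacle is the normal-form analysis of the second step. The subspaces $G$ and the twist $\varphi$ live over the algebraically closed field $k$, while the group at our disposal consists only of the $\mathbb{F}_{p}$-rational isometries of $V$, so transitivity statements that would be automatic over $k$ must be proved for this much smaller group. Establishing the regularity of the $\varphi$-span filtration, extracting a genuinely canonical cyclic representative from it, and — the delicate point — showing that the leftover stabiliser is precisely the non-split torus $\mu_{p^{\sigma}+1}$ and nothing larger, is where the real work lies; by comparison the reduction of the first step is routine $p$-adic lattice theory and the final assembly is formal.
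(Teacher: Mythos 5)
The theorem you are proving is one the paper itself does not prove: it is quoted from Ogus \cite{MR563467}, so the benchmark is Ogus's argument, whose overall shape your sketch does mirror. Your step 1 is fine: since the Tate module is determined by $\sigma$, and $O(N_0)\to O(V)$ is surjective (smoothness of the orthogonal group scheme of a unimodular $\mathbb{Z}_p$-lattice for $p$ odd, plus Hensel), isomorphism classes in $\mathbb{C}3(k)_{\sigma}$ are exactly $O(V)$-orbits of strictly characteristic subspaces of $V\otimes k$. The invariants you aim for (a distinguished line, a normalized generator, $\sigma-1$ residual parameters, residual group $\mu_{p^{\sigma}+1}$ acting through twisted characters) are also the right ones, and the paper itself uses this structure later, in the proof of Proposition \ref{coar}, where the line $\bigcap_{i=0}^{\sigma-1}\varphi^{-i}(G)$ and the basis $x_0,\varphi(x_0),\dots,\varphi^{2\sigma-1}(x_0)$ appear.

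The genuine gap is in your central step. You propose that ``Witt's extension theorem for the $\mathbb{F}_p$-rational group $O(V)$'' lets you transport $G$ so that its $\varphi$-translates occupy prescribed coordinate subspaces. This cannot work as stated: Witt's theorem extends isometries between subspaces defined over the ground field (or, applied over $k$, produces elements of $O(V\otimes k)$, not of $O(V)$), whereas $G$ and its distinguished line are $k$-subspaces that are never defined over $\mathbb{F}_p$; moreover $O(V)(\mathbb{F}_p)$ is a finite group and so cannot move the positive-dimensional family of lines $\bigcap_i\varphi^{-i}(G)$ into any fixed position. The mechanism that actually yields transitivity is Frobenius-equivariance: choose a generator $e$ of the distinguished line normalized by $\langle e,\varphi^{\sigma}e\rangle=1$; then $e,\varphi e,\dots,\varphi^{2\sigma-1}e$ is a basis whose full pairing table is determined (via $\langle\varphi x,\varphi y\rangle=\langle x,y\rangle^{p}$) by the invariants $a_i=\langle e,\varphi^{\sigma+i}e\rangle$, and if two subspaces have the same invariants, the $k$-linear map matching the two canonical bases is an isometry commuting with $\varphi=\mathrm{id}\otimes F$, hence automatically defined over $\mathbb{F}_p$, i.e.\ an element of $O(V)$ carrying one subspace to the other; this is what replaces your appeal to Witt. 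Relatedly, your map $a\mapsto[(N,G(a))]$ presupposes that for every $a\in k^{\sigma-1}$ the standard shape $G(a)$ really is a strictly characteristic subspace of the fixed non-neutral space $V\otimes k$; this realization statement is nowhere addressed and is part of the content of the theorem — one must either verify it by explicit computation in your trace-form model or, as Ogus does, build the abstract $\varphi$-module with the prescribed pairing table, descend it to an $\mathbb{F}_p$-form, and use that the existence of a characteristic subspace forces non-neutrality together with uniqueness of the $2\sigma$-dimensional non-neutral form to identify it with $V$. Your steps 1 and 3 are sound in outline, but the normal-form step, which you yourself flag as where the real work lies, is asserted rather than proved, and the one tool you name for it would fail.
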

The previous theorem concerns characteristic subspaces defined on closed points with algebraically closed residue field. Next, we consider families of characteristic subspaces. 
\begin{definition} Let $\sigma$ be a non-negative integer and let $(V, \langle -,- \rangle)$ be a $2\sigma$-dimensional $\mathbb{F}_p$-vector space together with a non-neutral quadratic form. If $A$ is an $\mathbb{F}_p$-algebra, a direct summand $G \subset V \otimes A$ is called a \emph{geneatrix} if $\mathrm{rk}(G)= \sigma$ and $\langle -,- \rangle$ vanishes when restricted to $G$. A \emph{characteristic geneatrix} is a geneatrix $G$ such that $G + F_A(G)$ is a direct summand of rank $\sigma+ 1$ in $V \otimes A$. We write $\underline{M}_V(A)$ for the set of characteristic geneatrices in $V \otimes A$.
\end{definition}
It turns out that there exists a moduli space for characteristic geneatrices.
\begin{proposition}\cite[Proposition 4.6]{MR563467} The functor
\begin{align*}
(\text{$\mathbb{F}_p$-algebras})^{\mathrm{op}} &\lra (\text{Sets}), \\
A &\longmapsto \underline{M}_V(A)
\end{align*}
is representable by an $\mathbb{F}_p$-scheme $M_V$ which is smooth, projective and of dimension $\sigma-1$.
\end{proposition}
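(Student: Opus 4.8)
The plan is to realise $\underline{M}_V$ as a well-behaved subfunctor of an orthogonal Grassmannian, read off representability and quasi-projectivity, establish smoothness and the dimension count by a deformation-theoretic computation, and obtain properness — hence projectivity — from the valuative criterion. First I would fix the ambient space. Let $\mathrm{Gr} = \mathrm{Gr}(\sigma, V)$ be the Grassmannian parametrising rank-$\sigma$ direct summands of $V$; it is smooth and projective over $\mathbb{F}_p$ of dimension $\sigma^2$, with universal subbundle $\CG \subset V \otimes \CO_{\mathrm{Gr}}$. The vanishing of $\langle -,- \rangle$ on $\CG$ is a closed condition and cuts out a closed subscheme $\mathrm{OGr} = \mathrm{OGr}(\sigma, V) \subseteq \mathrm{Gr}$, the Grassmannian of maximal isotropic subspaces, again projective over $\mathbb{F}_p$. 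Over $\mathrm{OGr}$ I would form the Frobenius pullback $\CG^{(p)}$ and the morphism of vector bundles $\psi \colon \CG \oplus \CG^{(p)} \to V \otimes \CO_{\mathrm{OGr}}$, which on $A$-valued points is precisely $G \oplus G^{(p)} \to V \otimes A$, so that $\Coker \psi$ specialises to $(V \otimes A)/(G + G^{(p)})$.

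For representability, note that a rank-$\sigma$ isotropic direct summand $G \subset V \otimes A$ is a characteristic geneatrix exactly when $G + G^{(p)}$ is a direct summand of rank $\sigma + 1$, i.e.\ when $\Coker \psi$ pulls back to a locally free sheaf of rank $\sigma - 1$. By a standard argument with upper semicontinuity of fibre dimension and Nakayama's lemma (or Mumford's flattening stratification applied to $\Coker \psi$ on $\mathrm{OGr}$), the subfunctor of $\underline{\mathrm{OGr}}$ defined by this condition is represented by a locally closed subscheme $M_V \subseteq \mathrm{OGr}$, which is therefore an $\mathbb{F}_p$-scheme, a priori quasi-projective. (In fact $M_V$ is closed in $\mathrm{OGr}$: at any geometric point the dimension of $G \cap G^{(p)}$ is $\leq \sigma$ and $\equiv \sigma - 1 \pmod 2$ — because non-neutrality forces $G^{(p)} = \Frob(G)$ into the spinor component of $\mathrm{OGr}$ opposite to that of $G$ — so the non-characteristic locus $\{\dim(G \cap G^{(p)}) \leq \sigma - 3\}$ is open and $M_V$ is its closed complement; this already gives projectivity, though I indicate below how to bypass the spinor combinatorics.)

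For smoothness and the dimension I would use deformation theory. At a $\kappa$-point $G$, deformations of $G$ inside $\mathrm{OGr}$ are identified — via the perfect pairing and $V/G \cong G^\vee$ — with alternating forms on $G$, so the tangent space of $\mathrm{OGr}$ at $G$ is $\wedge^2 G^\vee$, of dimension $\binom{\sigma}{2}$. The decisive observation is that for a square-zero extension $A' \twoheadrightarrow A$ with kernel $I$ the semilinear operator $\mathrm{id}_V \otimes F_{A'}$ annihilates $V \otimes I$ (as $p \geq 2$ and $I^2 = 0$), so $\CG^{(p)}$ does not deform to first order: the subspace $\varphi(G)$ is infinitesimally rigid. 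Hence the characteristic condition linearises to the demand that the first-order deformation of $G$ keep $\dim(G + \varphi(G))$ equal to $\sigma + 1$; this is a linear condition on $\wedge^2 G^\vee$, and computing in a basis of $G$ adapted to the flag $G \cap \varphi(G) \subset G \subset G + \varphi(G)$ one finds that its solution space has dimension $\sigma - 1$. The same rigidity of $\varphi(G)$ lets one prove that $\underline{M}_V$ is formally smooth: lift $G$ first to a rank-$\sigma$ summand over $A'$ by smoothness of $\mathrm{Gr}$, correct the symmetric part of $\langle -,- \rangle|_{G'}$ using $p \neq 2$ to make $G'$ isotropic, and then, if needed, a further correction by an element of $\wedge^2 G^\vee \otimes I$ restores $G' + G'^{(p)}$ to a rank-$(\sigma + 1)$ summand. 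Combined with finite presentation this shows $M_V \to \Spec \mathbb{F}_p$ is smooth, and the tangent space computation then forces $M_V$ to be of pure dimension $\sigma - 1$.

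To get projectivity without the spinor combinatorics, I would verify the valuative criterion of properness for $M_V \to \Spec \mathbb{F}_p$; since $M_V$ is quasi-projective and immerses into $\mathrm{Gr}$, a proper monomorphism into $\mathrm{Gr}$ is a closed immersion, so properness upgrades to projectivity. After a faithfully flat extension of discrete valuation rings we may take $R$ complete with algebraically closed residue field $k$ and fraction field $K$, and a characteristic geneatrix $G_K \subset V \otimes K$. Put $G_R \coloneqq G_K \cap (V \otimes R)$: this is the unique rank-$\sigma$ direct summand of $V \otimes R$ extending $G_K$, and it is isotropic because $G_R \subseteq G_K$. The generic fibre of $G_R + G_R^{(p)}$ is $G_K + G_K^{(p)}$, of rank $\sigma + 1$, so $G_R + G_R^{(p)}$ is a rank-$(\sigma + 1)$ submodule whose reduction $\overline{G} + \overline{G}^{(p)}$ in $V \otimes k$ has dimension at most $\sigma + 1$; it cannot have dimension $\sigma$, for otherwise $\overline{G} = \overline{G}^{(p)}$ would be a Frobenius-stable $k$-subspace of $V \otimes k$ and hence descend to a $\sigma$-dimensional isotropic $\mathbb{F}_p$-subspace of $V$, contradicting non-neutrality of $\langle -,- \rangle$. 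So the reduction has dimension exactly $\sigma + 1$, $G_R + G_R^{(p)}$ is a direct summand of rank $\sigma + 1$, the limit is characteristic, and the criterion is satisfied. The step I expect to be the main obstacle is the smoothness and dimension count: even granted the infinitesimal rigidity of $\varphi(G)$, one must carry out the linear-algebra verification that the linearised characteristic condition is transverse and cuts the tangent space down to exactly $\sigma - 1$ dimensions, and must check that the corrected isotropic lift is genuinely characteristic — this analysis of the filtration $G \cap \varphi(G) \subset G \subset G + \varphi(G)$ is the one ingredient that is not formal.
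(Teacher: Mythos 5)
This proposition is not proved in the paper at all---it is quoted directly from Ogus \cite{MR563467}, so there is no internal proof to compare against. Your argument is correct and is essentially Ogus's original one: realise $\underline{M}_V$ inside the orthogonal Grassmannian, use the fact that Frobenius annihilates square-zero ideals to get formal smoothness and the tangent-space dimension $\sigma-1$ (the linearised condition being $B|_{\wedge^2(G\cap\varphi(G))}=0$), and use non-neutrality---no Frobenius-stable maximal isotropic subspace---in the valuative criterion to get properness and hence projectivity.
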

If $N$ is a supersingular K3 lattice with Artin invariant $\sigma$, then $N_0= pN^{\vee}/pN$ is a $2\sigma$-dimensional $\mathbb{F}_p$-vector space together with a non-degenerate and non-neutral quadratic form induced from the bilinear form on $N$.
\begin{definition} We set $\CM_{\sigma} \coloneqq M_{N_0}$ and call this scheme the \emph{moduli space of $N$-rigidified K3 crystals}. \end{definition}
\subsection{Ample cones} Next, we will need to enlarge $\CM_{\sigma}$ by equipping $N$-rigidified K3 crystals with ample cones. For the rest of this section we fix a prime $p \geq 3$.
\begin{definition} 
Let $N$ be a supersingular K3 lattice. The set $\Delta_N \coloneqq \{l \in N \mid l^2=-2\}$ is called the set of \emph{roots of $N$}. The \emph{Weyl group $W_N$} of $N$ is the subgroup of the orthogonal group $O(N)$ generated by all automorphisms of the form $s_l \colon x \mapsto x + \langle x,l \rangle l$ with $l \in \Delta_N$. We denote by $\pm W_N$ the subgroup of $O(N)$ generated by $W_N$ and $\pm \mathrm{id}$. Further, we define
\begin{align*}
V_N \coloneqq \{x \in N \otimes \mathbb{R} \mid x^2 > 0 \text{ and } \langle x,l \rangle \neq 0 \text{ for all } l \in \Delta_N\}.
\end{align*}
The set $V_N$ is an open subset of $N \otimes \mathbb{R}$ and each of its connected components meets $N$. The connected components of $V_N$ are called the \emph{ample cones of $N$} and we denote by $C_N$ the set of ample cones of $N$.
\end{definition}
\begin{remark}
The group $\pm W_N$ operates simply and transitively on $C_N$ \cite{MR717616}.
\end{remark}
\begin{definition}
Let $N$ be a supersingular K3 lattice of Artin invariant $\sigma$ and let $S$ be an algebraic space over $\mathbb{F}_p$. For a characteristic geneatrix $G \in \CM_{\sigma}(S)$ and any point $s \in S$ we define
\begin{align*}
\Lambda(s) &\coloneqq N_0 \cap G(s), \\
N(s) &\coloneqq \{x \in N \otimes \mathbb{Q} \mid px \in N \text{ and } \overline{px} \in \Lambda(s)\}, \\
\Delta(s) &\coloneqq \{l \in N(s) \mid l^2=-2\}.
\end{align*}
An \emph{ample cone for $G$} is an element $\alpha \in \prod_{s \in S} C_{N(s)}$ such that $\alpha(s) \subseteq \alpha(t)$ whenever $s \in \overline{\{t\}}$.
\end{definition}
\section{Moduli spaces of \texorpdfstring{$N_{\sigma}$-marked}{marked} supersingular K3 surfaces}\label{secclasmark}
This section discusses the moduli spaces for lattice-marked K3 surfaces that were introduced in \cite{MR717616}.
 
We fix a prime $p \geq 3$ and for each integer $\sigma$ with $1 \leq \sigma \leq 10$ we fix a K3 lattice $N_{\sigma}$ with $\sigma(N_{\sigma})=\sigma$. A \textit{family of supersingular K3 surfaces} is a smooth and proper morphism $f \colon \CX \rightarrow S$  of algebraic spaces over $\mathbb{F}_p$ such that for each field $k$ and each $k$-valued point $\Spec k \rightarrow S$ the fiber $\CX_k \rightarrow \Spec k$ is a projective supersingular K3 surface. By \cite[Theorem 3.1.1]{MR2263236}, \cite[Theorem 2.1]{SB_1961-1962__7__221_0} the relative Picard functor $\underline{\Pic}_{\CX/S}$ is representable by a separated algebraic space $\Pic_{\CX/S}$ over $S$. An \emph{$N_{\sigma}$-marking of a family of supersingular K3 surfaces} $f \colon \CX \rightarrow S$ is a morphism $\psi \colon \underline{N}_{\sigma} \rightarrow \Pic_{\CX/S}$ of group objects in the category of algebraic spaces that is compatible with intersection forms. There is an obvious notion of morphisms of families $N_{\sigma}$-marked K3 surfaces. From now on we will write \gls{A_p} for the category of algebraic spaces over $\mathbb{F}_p$. We consider the following moduli problem
\begin{align*} \underline{\CS}_{\sigma} \colon \CA_{\mathbb{F}_p}^{\mathrm{op}} &\lra \left(\text{Sets} \right) \\
S &\longmapsto \left\{\begin{array}{l}
\text{Isomorphism classes of tuples $(f, \psi)$ such that} \\ 
\text{$f \colon \CX \rightarrow S$ is a family of supersingular K3 surfaces} \\
\text{and $\psi \colon \underline{N}_{\sigma} \rightarrow \Pic_{\CX/S}$ is an $N_{\sigma}$-marking} \end{array} \right\}.\end{align*}
It is a classical result of Ogus that the functor $\underline{\CS}_{\sigma}$ is representable by an $\mathbb{F}_p$-scheme $\gls{S_s}$ that is smooth of dimension $\sigma-1$ and locally of finite type over $\mathbb{F}_p$ \cite{MR717616}. Further, $\CS_{\sigma}$ satisfies the valuative criterion for universal closedness. However, $\CS_{\sigma}$ is in general neither quasi-compact nor separated. 

Via the period map the functor $\underline{\CS}_{\sigma}$ is canonically isomorphic to a functor $\underline{\CP}_{\sigma}$ \cite{MR717616} which is defined to be
\begin{align*} \underline{\CP}_{\sigma} \colon  \CA_{\mathbb{F}_p}^{\mathrm{op}} &\lra \left(\text{Sets} \right) \\
S &\longmapsto \left\{\begin{array}{l}\text{characteristic generatrices $G \subseteq pN_{\sigma}^{\vee}/pN_{\sigma} \otimes \CO_S$} \\ \text{together with an ample cone} \end{array} \right\}.  \end{align*}
Ogus originally proved that the period morphism $\pi \colon \CS_{\sigma} \lra \CP_{\sigma}$ is an isomorphism over fields of characteristic at least $5$, but Bragg and Lieblich recently showed that Ogus' results also hold true in characteristic $3$ \cite[Section 5.1]{2018arXiv180407282B}.

If we consider the functor
\begin{align*} \underline{\CM}_{\sigma} \colon \CA_{\mathbb{F}_p}^{\mathrm{op}} &\lra \left(\text{Sets} \right) \\
S &\longmapsto \left\{\text{characteristic generatrices $G \subseteq pN_{\sigma}^{\vee}/pN_{\sigma} \otimes \CO_S$}\right\}, \end{align*}
then there is a canonical surjection of functors $\pi_{\sigma} \colon \underline{\CS}_{\sigma} \rightarrow \underline{\CM}_{\sigma}$ which is given by forgetting the choice of an ample cone. The functor $\underline{\CM}_{\sigma}$ is representable by a smooth connected projective scheme $\gls{M_s}$ of dimension $\sigma-1$ and the morphism of schemes $\pi_{\sigma}$ is étale. For further details on the functor $\underline{\CM}_{\sigma}$ we refer the interested reader to \cite{MR563467} and for further details on the functor $\underline{\CS}_{\sigma}$ we refer to \cite{MR717616}. 

Now let $\sigma' < \sigma$ be positive integers with $\sigma \leq 10$. In our construction of the moduli space of marked Enriques surfaces we will use an inductive argument. Therefore, we begin with an observation on the relation between the schemes $\CS_{\sigma}$ and $\CS_{\sigma'}$. There exists an embedding of lattices $j \colon N_{\sigma} \hookrightarrow N_{\sigma'}$ which makes $N_{\sigma'}$ into an overlattice of $N_{\sigma}$. We say that two such embeddings $j_1$ and $j_2$ are \emph{isomorphic} embeddings if there exists an automorphism \mbox{$\alpha \colon N_{\sigma'} \rightarrow N_{\sigma'}$} such that $\alpha \circ j_1 = j_2$. 

By \cite[Proposition 1.4.1]{1980IzMat..14..103N} there are only finitely many isomorphism classes of such  embeddings $j \colon N_{\sigma} \hookrightarrow N_{\sigma'}$. For each isomorphism class we choose a representative $j$ and denote by \gls{R_ss} the set of these representatives. An embedding $j \colon N_{\sigma} \hookrightarrow N_{\sigma'}$ induces a morphism of $\mathbb{F}_p$-schemes
\begin{align*} \Phi_j \colon \CS_{\sigma'} &\lra \CS_{\sigma}  
\intertext{by mapping} 
(f \colon \CX \rightarrow S, \psi \colon \underline{N}_{\sigma'} \rightarrow \Pic_{\CX/S}) &\longmapsto (f \colon \CX \rightarrow S, \psi \circ j \colon \underline{N}_{\sigma}\rightarrow \Pic_{\CX/S}) \end{align*}
on $S$-valued points. Similarly, we also obtain a morphism $\Psi_j \colon \CM_{\sigma'} \rightarrow \CM_{\sigma}$. It follows from \cite[Remark 4.8]{MR563467} that the $\Psi_j$ are closed immersions. Analogously, we see that the finite union $\gls{M_ss} = \bigcup_{j \in R_{\sigma',\sigma}} \Psi_j(\CM_{\sigma'})$ is the closed subscheme in $\CM_{\sigma}$ corresponding to characteristic subspaces $G$ of $N_{\sigma}$ with Artin invariant $\sigma(G) \leq \sigma'$. We now want to show that the morphisms $\Phi_j$ are also closed immersions.
\begin{lemma}\label{cartphi} The commutative diagrams
\begin{align*} \xymatrix{\CS_{\sigma'} \ar[r]^{\Phi_j} \ar[d]_{\pi_{\sigma'}} & \CS_{\sigma} \ar[d]^{\pi_{\sigma}}\\
   \CM_{\sigma'} \ar[r]_{\Psi_j}& \CM_{\sigma}}\end{align*}
are cartesian.
\end{lemma}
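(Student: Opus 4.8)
The plan is to verify that the square is cartesian by an argument on $S$-valued points, using the modular interpretation of all four schemes and the period isomorphism $\pi_\sigma \colon \CS_\sigma \xrightarrow{\sim} \CP_\sigma$. Since all four objects are schemes and hence sheaves, it suffices to produce, for every $\mathbb{F}_p$-algebraic space $S$, a bijection between $\CS_{\sigma'}(S)$ and the fibre product $\CS_\sigma(S) \times_{\CM_\sigma(S)} \CM_{\sigma'}(S)$, compatibly with restriction. An element of the latter is a triple consisting of an $N_\sigma$-marked family $(f \colon \CX \to S, \psi \colon \underline{N}_\sigma \to \Pic_{\CX/S})$, a characteristic generatrix $G' \subseteq pN_{\sigma'}^\vee/pN_{\sigma'} \otimes \CO_S$, and an identification in $\CM_\sigma(S)$ of the generatrix $\pi_\sigma(f,\psi)$ underlying $(f,\psi)$ with $\Psi_j(G')$. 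I want to upgrade the $N_\sigma$-marking $\psi$ to an $N_{\sigma'}$-marking extending it along $j$, and then check this extended datum is the unique preimage.

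\textbf{First} I would recall that via the period map $\pi_\sigma$ an $N_\sigma$-marked family $(f,\psi)$ corresponds to the pair consisting of a characteristic generatrix $G \subseteq pN_\sigma^\vee/pN_\sigma \otimes \CO_S$ together with an ample cone $\alpha$. The condition that $\pi_\sigma(f,\psi) = \Psi_j(G')$ in $\CM_\sigma(S)$ says exactly that $G$ lies in the image of the closed immersion $\Psi_j$ and is identified with $G'$; by the description of $\Psi_j$ from \cite[Remark 4.8]{MR563467}, this identification provides the extra lattice data needed to factor the marking $\psi$ through $N_{\sigma'}$. Concretely, the overlattice embedding $j \colon N_\sigma \hookrightarrow N_{\sigma'}$ together with the fact that $G$ comes from a generatrix on $N_{\sigma'}$ forces $\psi(\underline{N}_\sigma) \subseteq \Pic_{\CX/S}$ to extend (uniquely, because $\Pic_{\CX/S}$ is separated and the extension is forced pointwise by saturation conditions on Néron–Severi lattices of the geometric fibres) to an $N_{\sigma'}$-marking $\psi' \colon \underline{N}_{\sigma'} \to \Pic_{\CX/S}$ with $\psi' \circ j = \psi$. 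This produces the desired element $(f, \psi') \in \CS_{\sigma'}(S)$, and one checks $\Phi_j(f,\psi') = (f,\psi)$ and $\pi_{\sigma'}(f,\psi')$ maps to $G'$ under $\Psi_j$, so the constructed map lands correctly.

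\textbf{Next} I would verify this assignment is a two-sided inverse to the canonical map $\CS_{\sigma'}(S) \to \CS_\sigma(S) \times_{\CM_\sigma(S)} \CM_{\sigma'}(S)$. Surjectivity onto the fibre product is the construction just described. For injectivity one uses that an $N_{\sigma'}$-marking is determined by its restriction along $j$ together with the underlying generatrix in $\CM_{\sigma'}$: two $N_{\sigma'}$-markings with the same image $(f,\psi)$ in $\CS_\sigma$ and the same period in $\CM_{\sigma'}$ must agree because $j$ has finite cokernel and $\Pic_{\CX/S}$ is separated, so any two extensions of $\psi$ are equal after a fppf or even Zariski localization where they become equal on sections. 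Compatibility with pullback along $S' \to S$ is immediate from functoriality of all the constructions involved.

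\textbf{The main obstacle} I anticipate is the uniqueness of the extension $\psi'$ of the $N_\sigma$-marking to an $N_{\sigma'}$-marking over a general base $S$: one has to argue that the generatrix-theoretic condition ``$G$ comes from $N_{\sigma'}$'' really does produce an extension of the marking into the relative Picard space, and not merely a formal/period-theoretic extension. This is where the full force of Ogus' period isomorphism $\pi_\sigma$ being an isomorphism of schemes is used — it lets one transport the factorization that exists on the period side back to an honest factorization of markings on the moduli side. Once the square is shown cartesian, the promised consequence that $\Phi_j$ is a closed immersion follows formally: $\Psi_j$ is a closed immersion, closed immersions are stable under base change, and $\pi_\sigma$ is (faithfully flat, even étale) so closedness of $\Phi_j$ can be checked after the étale base change $\pi_\sigma$, or more directly $\Phi_j$ is the base change of $\Psi_j$ along $\pi_\sigma$ composed with the isomorphism $\CS_{\sigma'} \cong \CM_{\sigma'} \times_{\CM_\sigma} \CS_\sigma$ that the cartesian square provides.
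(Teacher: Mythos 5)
Your overall skeleton matches the paper's: $\Phi_j$ is a monomorphism, so the whole content is the existence part, i.e.\ showing that an $S$-point $(f,\psi)\in\CS_{\sigma}(S)$ whose period lands in $\Psi_j(\CM_{\sigma'})$ actually comes from $\CS_{\sigma'}(S)$. But this is exactly the step your argument asserts rather than proves. You justify the extension of $\psi$ to an $N_{\sigma'}$-marking $\psi'$ with ``the extension is forced pointwise by saturation conditions on the Néron--Severi lattices of the geometric fibres'' together with separatedness of $\Pic_{\CX/S}$. Pointwise (geometric-fibre) extendability plus separatedness gives at best \emph{uniqueness} of an extension; it does not produce the extension over a general base. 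Concretely, for $x\in N_{\sigma'}$ one has $px\in j(N_{\sigma})$, so $\psi'(x)$ would have to be a ``division by $p$'' of the section $\psi(px)$ of $\Pic_{\CX/S}$, and the fact that such a division exists on every geometric fibre does not by itself yield a section over $S$ -- this is precisely delicate over non-reduced or reducible bases, which the functor $\underline{\CS}_{\sigma}$ must accommodate. Your appeal to ``the full force of the period isomorphism'' gestures at a possible fix (one could try to prove cartesianness entirely on the period side, where one only has to match ample-cone data for $G'$ and $\Psi_j(G')$), but you do not carry that out, and as written the surjectivity step has no actual argument behind it.

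The paper closes this gap differently: it proves the scheme-theoretic equality $\Phi_j(\CS_{\sigma'})=\pi_{\sigma}^{-1}(\Psi_j(\CM_{\sigma'}))$ inside $\CS_{\sigma}$, using that the two subschemes have the same underlying points (this is where the fibrewise statement enters) and that $\pi_{\sigma}^{-1}(\Psi_j(\CM_{\sigma'}))$ is \emph{reduced}, because $\pi_{\sigma}$ is étale and $\Psi_j(\CM_{\sigma'})$ is reduced. Once this equality of subschemes is in hand, any $S$-valued point $z\in\CS_{\sigma}(S)$ with $\pi_{\sigma}(z)=\Psi_j(y)$ factors through $\Phi_j(\CS_{\sigma'})\cong\CS_{\sigma'}$ for completely formal reasons, with no need to extend the marking by hand over $S$; this is how the argument remains valid for arbitrary (in particular non-reduced) test spaces. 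If you want to salvage your direct approach, you should either reproduce this reducedness/étaleness argument or genuinely work on the period side, proving that the square of period functors is cartesian by identifying $N_{\sigma}(s)$ with $N_{\sigma'}(s)$ and transferring ample cones, and only then invoking the period isomorphisms; the intermediate position you take -- extending $\psi$ inside $\Pic_{\CX/S}$ by fibrewise considerations -- does not go through as stated.
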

\begin{proof}
It is easy to see that the $\Phi_j$ are monomorphisms of functors. So we only need to check the existence part in the definition of fiber products. To this end, we claim that there is an equality $\Phi_j(\CS_{\sigma'})= \pi_{\sigma}^{-1}(\Psi_j(\CM_{\sigma'}))$. Indeed, the inclusion $\Phi_j(\CS_{\sigma'}) \subseteq \pi_{\sigma}^{-1}(\Psi_j(\CM_{\sigma'}))$ is clear by definition and we easily see that the two subschemes have the same underlying topological space, that is, we have an equality of sets $\{x \in \pi_{\sigma}^{-1}(\Psi_j(\CM_{\sigma}))\} = \{x \in \Phi_j(\CS_{\sigma'})\}$. The scheme $\pi_{\sigma'}^{-1}(\Psi_j(\CM_{\sigma'}))$ is reduced since $\pi_{\sigma}$ is an étale morphism and $\Psi_j(\CM_{\sigma'})$ is reduced. Hence, we obtain the desired equality of subschemes. 

Thus, given an $\mathbb{F}_p$-scheme $S$ and $S$-valued points $y \in \CM_{\sigma'}(S)$ and $z \in \CS_{\sigma}(S)$ such that $\Psi_j(y)= \pi_{\sigma}(z)$, we find that $z \in \Phi_j(\CS_{\sigma'}(S))$. If we let $x$ be the preimage of $z$ under $\Phi_j(S)$, then $\Phi_j(x)=z$ and $\pi_{\sigma'}(x)=y$ which shows the claim.
\end{proof}
\begin{proposition} The morphisms of functors $\Phi_j \colon \underline{\CS}_{\sigma'} \rightarrow \underline{\CS}_{\sigma}$ are closed immersions of functors and the subfunctor $\underline{\CS}_{\sigma}^{\sigma'} \hookrightarrow \underline{\CS}_{\sigma}$ which is defined to be
\begin{align*} \underline{\CS}_{\sigma}^{\sigma'} \colon  \CA_{\mathbb{F}_p}^{\mathrm{op}} &\lra \left(\text{Sets} \right) \\
S &\longmapsto \left\{\begin{array}{l}\text{Isomorphism classes of tuples $(f, \psi)$ such that} \\ 
\text{$f \colon \CX \rightarrow S$ is a family of supersingular K3 surfaces} \\
\text{and $\psi \colon \underline{N}_{\sigma} \rightarrow \Pic_{\CX/S}$ is an $N_{\sigma}$-marking} \\
\text{such that each fiber $\CX_s$ has $\sigma(\CX_s) \leq \sigma'$}\end{array} \right\} \end{align*}
is representable by the closed subscheme $\gls{S_ss}= \bigcup_{j \in R_{\sigma', \sigma}} \Phi_j(\CS_{\sigma'}) \subseteq \CS_{\sigma}$. \end{proposition}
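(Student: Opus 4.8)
The plan is to deduce both assertions from Lemma~\ref{cartphi}, using that each $\Psi_j$ is a closed immersion and that $\pi_\sigma$ is étale. First, since the square in Lemma~\ref{cartphi} is cartesian, the morphism $\Phi_j \colon \CS_{\sigma'} \to \CS_\sigma$ is the base change of $\Psi_j \colon \CM_{\sigma'} \to \CM_\sigma$ along the period map $\pi_\sigma \colon \CS_\sigma \to \CM_\sigma$. As closed immersions are stable under base change, each $\Phi_j$ is a closed immersion of schemes, hence also a closed immersion of functors (it is in particular a monomorphism, as already observed in the proof of Lemma~\ref{cartphi}).

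For the subfunctor $\underline{\CS}_\sigma^{\sigma'}$ I would introduce the analogous subfunctor $\underline{\CM}_\sigma^{\sigma'} \hookrightarrow \underline{\CM}_\sigma$ of those characteristic generatrices $G$ with $\sigma(G(s)) \leq \sigma'$ at every point $s$; by the discussion preceding Lemma~\ref{cartphi} this subfunctor is represented by the reduced closed subscheme $\CM^{\sigma'}_\sigma = \bigcup_{j \in R_{\sigma',\sigma}} \Psi_j(\CM_{\sigma'}) \subseteq \CM_\sigma$. Since $\pi_\sigma$ is étale, the preimage $\pi_\sigma^{-1}(\CM^{\sigma'}_\sigma)$ is a (reduced) closed subscheme of $\CS_\sigma$. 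I claim it represents $\underline{\CS}_\sigma^{\sigma'}$: an $S$-valued point $(f,\psi)$ of $\underline{\CS}_\sigma$ factors through $\pi_\sigma^{-1}(\CM^{\sigma'}_\sigma)$ if and only if its period $G = \pi_\sigma(f,\psi) \in \underline{\CM}_\sigma(S)$ factors through $\CM^{\sigma'}_\sigma$, and by the compatibility of Ogus' period map with Artin invariants — the Artin invariant of the K3 fiber $\CX_s$ agrees with that of the characteristic subspace $G(s)$ \cite{MR717616}\cite{MR563467} — this holds exactly when $\sigma(\CX_s) \leq \sigma'$ for all $s \in S$, i.e.\ when $(f,\psi) \in \underline{\CS}_\sigma^{\sigma'}(S)$.

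It remains to identify $\pi_\sigma^{-1}(\CM^{\sigma'}_\sigma)$ with $\bigcup_{j \in R_{\sigma',\sigma}} \Phi_j(\CS_{\sigma'})$. Taking preimages commutes with the finite union, so $\pi_\sigma^{-1}(\CM^{\sigma'}_\sigma) = \bigcup_j \pi_\sigma^{-1}(\Psi_j(\CM_{\sigma'}))$, and the proof of Lemma~\ref{cartphi} already established the equality of closed subschemes $\pi_\sigma^{-1}(\Psi_j(\CM_{\sigma'})) = \Phi_j(\CS_{\sigma'})$ (both being reduced with the same underlying space, using étaleness of $\pi_\sigma$). Since each $\Phi_j$ is a closed immersion by the first part, this exhibits $\CS^{\sigma'}_\sigma = \bigcup_{j \in R_{\sigma',\sigma}} \Phi_j(\CS_{\sigma'})$ as a closed subscheme of $\CS_\sigma$ representing $\underline{\CS}_\sigma^{\sigma'}$.

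The step I expect to be the main obstacle is the functorial identification in the second paragraph: one must check that the condition ``every fiber has Artin invariant $\leq \sigma'$'' cuts out \emph{precisely} the reduced closed subscheme $\pi_\sigma^{-1}(\CM^{\sigma'}_\sigma)$, rather than some nilpotent thickening of it. This relies on Ogus' description of the period map and on the reducedness of $\CM^{\sigma'}_\sigma$ together with the étaleness of $\pi_\sigma$; once these inputs are granted, the remaining manipulations are formal.
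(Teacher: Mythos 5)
Your proposal is correct and follows essentially the same route as the paper: deduce that each $\Phi_j$ is a closed immersion from Lemma \ref{cartphi} (base change of the closed immersion $\Psi_j$ along $\pi_{\sigma}$), and identify $\bigcup_{j \in R_{\sigma',\sigma}} \Phi_j(\CS_{\sigma'})$ with $\pi_{\sigma}^{-1}\bigl(\bigcup_{j \in R_{\sigma',\sigma}} \Psi_j(\CM_{\sigma'})\bigr)$ using the equality established in the proof of that lemma. You merely spell out the functorial identification via the Artin-invariant interpretation of $\CM^{\sigma'}_{\sigma}$, which the paper leaves implicit from the preceding discussion.
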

\begin{proof} 
We already mentioned that the morphisms $\Psi_j$ are closed immersions, and thus Lemma \ref{cartphi} implies that the morphisms $\Phi_j$ are closed immersions as well. The assertion on the functor represented by the union $\bigcup_{j \in R_{\sigma',\sigma}} \Phi_j(\CS_{\sigma'})$ is a consequence of the equality 
\begin{align*} \bigcup_{j \in R_{\sigma',\sigma}} \Phi_j(\CS_{\sigma'}) = \pi^{-1}\left(\bigcup_{j \in R_{\sigma',\sigma}} \Psi_j(\CM_{\sigma'})\right), \end{align*} 
which follows from the proof of Lemma \ref{cartphi}.  \end{proof}
\section{Auxiliary functors and moduli spaces}\label{aux}
In this section we will introduce some auxiliary functors which we will then use to construct the main functor in the subsequent section.

In the following, we let $\sigma \leq 10$ be a positive integer. We consider the lattice \mbox{$\gls{Gamma} = U_2 \oplus E_8(-1)$}, which is up to isomorphism the unique unimodular, even lattice of signature $(1,9)$. The Picard group of any Enriques surface is isomorphic to $\Gamma \oplus \mathbb{Z}/2\mathbb{Z}$. Our idea is as follows: if $Y$ is an Enriques surface with a supersingular covering K3 surface $X$, then we can see the quotient map $f \colon X \rightarrow Y$ as a primitive embedding of lattices $\gamma \colon \Gamma(2) \hookrightarrow \mathrm{NS}(X)$ such that $\Gamma(2)$ contains an ample divisor and such that there is no $(-2)$-vector in $\gamma(\Gamma(2))^{\perp} \subseteq \mathrm{NS}(X)$ \cite{2013arXiv1301.1118J}. If we also admit embeddings $\gamma \colon \Gamma(2) \hookrightarrow \mathrm{NS}(X)$ such that there is a $(-2)$-vector in $\gamma(\Gamma(2))^{\perp} \subset \mathrm{NS}(X)$, then we talk about quotients $X \rightarrow Y'$ of $X$ by an involution that maybe has a non-trivial fixed point locus.

We will therefore define various functors of $\Gamma(2)$-marked K3 surfaces and in Section \ref{enr} we then show that the main functor $\tilde{\CE}_{\sigma}$ of $\Gamma(2)$-marked K3 surfaces from Section \ref{secmark} is isomorphic to a functor of $\Gamma$-marked Enriques surfaces.
 \subsection{The functor $\CS'_{\gamma}$}
By Corollary 2.4. in \cite{MR3350105}, there exists a primitive embedding of lattices $\gamma \colon \Gamma(2) \hookrightarrow N_{\sigma}$ such that $\gamma(\Gamma(2))^{\perp} \subset N_{\sigma}$ contains no vector of self-intersection number $-2$ if and only if $\sigma \leq 5$, and further there are only finitely many isomorphism classes $[\gamma \colon \Gamma(2) \hookrightarrow N_{\sigma}]$ of such embeddings (where again two embeddings $\gamma_1 \colon \Gamma(2) \hookrightarrow N_{\sigma}$ and $\gamma_2 \colon \Gamma(2) \hookrightarrow N_{\sigma}$ are called isomorphic if there exists an automorphism $\alpha$ of $N_{\sigma}$ such that $\alpha \circ \gamma_1 = \gamma_2$). We fix for each such isomorphism class a representative $\gamma$ and denote by \gls{R_s} the finite set formed by these elements. For $\sigma > 5$ we have $R_{\sigma} = \emptyset$. For an embedding $\gamma \in R_{\sigma}$ we consider the subfunctor $\underline{\CS}'_{\gamma} \subset \underline{\CS}_{\sigma}$ which is defined to be
\begin{align*} \underline{\CS}'_{\gamma}\colon \CA_{\mathbb{F}_p}^{\mathrm{op}} &\lra \left(\text{Sets} \right) \\
S &\longmapsto \left\{\begin{array}{l}\text{Isomorphism classes of tuples $(f, \psi)$ such that} \\ 
\text{$f \colon \CX \rightarrow S$ is a family of supersingular K3 surfaces} \\
\text{and $\psi \colon \underline{N}_{\sigma} \rightarrow \Pic_{\CX/S}$ is an $N_{\sigma}$-marking}\\
\text{such that for each geometric point $s \in S$} \\
\text{the sublattice $\gamma_s(\Gamma(2)) \hookrightarrow \mathrm{NS}(\CX_s)$ } \\
\text{$\gamma'_s(\Gamma(2))^{\perp} \hookrightarrow \mathrm{NS}(\CX_s)$ contains no $(-2)$-vector}\end{array} \right\}.  \end{align*}
We are interested in the representability of the functor $\underline{\CS}'_{\gamma}$.
\begin{definition} We write $R_{\gamma}$ for the set of representatives of all isomorphism classes of embeddings $j \colon N_{\sigma} \hookrightarrow N_{\sigma'}$ such that $j \left( \gamma(\Gamma(2))\right)^{\perp} \subseteq N_{\sigma'}$ contains a $(-2)$-vector.
\end{definition} 
\begin{proposition}\label{open1} The functor $\underline{\CS}'_{\gamma}$ is representable by an open subscheme $\gls{S'_g}$ of $\CS_{\sigma}$. \end{proposition}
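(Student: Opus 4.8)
The plan is to exhibit $\underline{\CS}'_{\gamma}$ as the complement inside $\CS_{\sigma}$ of the closed subscheme of K3 surfaces for which the orthogonal complement of $\gamma(\Gamma(2))$ does acquire a $(-2)$-vector, and to recognize that closed locus as a finite union of the subschemes $\Phi_j(\CS_{\sigma'})$ studied in Section~\ref{secclasmark}. First I would observe that for a family $(f,\psi) \in \CS_{\sigma}(S)$ with geometric point $s$, the induced embedding $\gamma_s \colon \Gamma(2) \hookrightarrow \mathrm{NS}(\CX_s)$ has an orthogonal complement containing a $(-2)$-vector precisely when $\mathrm{NS}(\CX_s)$ is a strict overlattice $N_{\sigma'}$ of $N_{\sigma}$ (so $\sigma(\CX_s) = \sigma' < \sigma$) via some embedding $j \colon N_{\sigma} \hookrightarrow N_{\sigma'}$ with $j(\gamma(\Gamma(2)))^{\perp} \subseteq N_{\sigma'}$ containing a root; this is exactly the condition defining $R_{\gamma}$. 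Hence on underlying point sets the complement of $\underline{\CS}'_{\gamma}$ is $\bigcup_{\sigma' < \sigma}\bigcup_{j \in R_{\gamma}} \Phi_j(\CS_{\sigma'})$, which by the Proposition preceding this one is a closed subscheme of $\CS_{\sigma}$ (a finite union of the closed immersions $\Phi_j$). Therefore its set-theoretic complement is open, and giving it the open subscheme structure we define $\CS'_{\gamma}$.

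Next I would check that this open subscheme $\CS'_{\gamma}$ actually represents $\underline{\CS}'_{\gamma}$, i.e.\ that an $S$-point $(f,\psi)$ of $\CS_{\sigma}$ factors through $\CS'_{\gamma}$ if and only if it lies in $\underline{\CS}'_{\gamma}(S)$. Since $\CS'_{\gamma} \hookrightarrow \CS_{\sigma}$ is an open immersion, $(f,\psi)$ factors through it iff the image of $|S| \to |\CS_{\sigma}|$ misses the closed set $\bigcup_j \Phi_j(\CS_{\sigma'})$, equivalently iff no geometric fiber $\CX_s$ has $\gamma_s(\Gamma(2))^{\perp}$ containing a $(-2)$-vector — which is the defining condition of $\underline{\CS}'_{\gamma}(S)$. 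The key input making the geometric-point analysis work is the classification of primitive embeddings $\Gamma(2) \hookrightarrow N_{\sigma}$ and of overlattice embeddings $N_{\sigma} \hookrightarrow N_{\sigma'}$ from \cite{1980IzMat..14..103N}, together with the fact (used already to define $R_\sigma$ via \cite{MR3350105}) that $\mathrm{NS}$ of the cover is always one of the finitely many $N_{\sigma'}$.

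The main obstacle I anticipate is the scheme-theoretic (as opposed to merely topological) identification: one must be sure the complement of $\bigcup_j \Phi_j(\CS_{\sigma'})$ carries the \emph{reduced} induced structure that agrees with the functorial $\underline{\CS}'_{\gamma}$, and that openness is not spoiled by non-separatedness of $\CS_{\sigma}$. Here I would lean on the smoothness of $\CS_{\sigma}$ (hence reducedness) and on the cartesian square of Lemma~\ref{cartphi}, exactly as in the proof of the preceding Proposition: $\bigcup_j \Phi_j(\CS_{\sigma'}) = \pi_\sigma^{-1}\!\big(\bigcup_j \Psi_j(\CM_{\sigma'})\big)$ is a genuine closed subscheme because the $\Psi_j$ are closed immersions into the projective scheme $\CM_\sigma$ and $\pi_\sigma$ is étale. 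Once that is in hand, openness of the complement and its representability of $\underline{\CS}'_{\gamma}$ follow formally, and the proof is complete.
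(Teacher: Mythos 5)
Your proposal is correct and follows essentially the same route as the paper: the paper also defines $\CS'_{\gamma}$ as the open complement in $\CS_{\sigma}$ of the finite union $\bigcup_{j \in R_{\gamma}} \Phi_j(\CS_{\sigma'})$ of closed subschemes and asserts that it represents $\underline{\CS}'_{\gamma}$. Your point-set identification of that union with the locus where $\gamma_s(\Gamma(2))^{\perp}$ acquires a root, and the resulting representability check, simply spell out what the paper leaves as ``clear'' (your worry about the scheme structure is moot, since an open subscheme structure is canonical).
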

\begin{proof} The set $R_{\gamma}$ is a subset of the finite set $\bigcup_{\sigma' < \sigma} R_{\sigma',\sigma}$. For each $j$, the subscheme $\Phi_j(\CS_{\sigma'}) \subseteq \CS_{\sigma}$ is closed, and it is clear that the open subscheme 
\begin{align*} \CS'_{\gamma} = \CS'_{\gamma} \backslash \left(\bigcup_{j \in R_{\gamma}} \Phi_j(\CS_{\sigma'})\right) \end{align*}
represents the functor $\underline{\CS}'_{\gamma}$. \end{proof}
We also find an open subscheme of $\CM_{\sigma}$ that lies under $\CS'_{\gamma}$. Namely, we define $\CM'_{\gamma}$ as the image of $\CS'_{\gamma}$ under $\pi_{\sigma}$.
\begin{proposition} The $\mathbb{F}_p$-scheme $\gls{M'_g}$ is quasi-projective and the canonical surjective morphism of algebraic spaces $\pi'_{\gamma} \colon \CS'_{\gamma} \rightarrow \CM'_{\gamma}$ is étale. \end{proposition}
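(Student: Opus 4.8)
The plan is to identify $\CM'_{\gamma}$ with an explicit open subscheme of $\CM_{\sigma}$ and then read off both assertions from base change. First I would invoke the proof of Lemma \ref{cartphi}, which gives the scheme-theoretic identity $\Phi_j(\CS_{\sigma'}) = \pi_{\sigma}^{-1}\bigl(\Psi_j(\CM_{\sigma'})\bigr)$ inside $\CS_{\sigma}$ for every embedding $j$. Since $R_{\gamma}$ is a subset of the finite set $\bigcup_{\sigma' < \sigma} R_{\sigma',\sigma}$ and each $\Psi_j$ is a closed immersion, the subset
\begin{align*} U \coloneqq \CM_{\sigma} \setminus \bigcup_{j \in R_{\gamma}} \Psi_j(\CM_{\sigma'}) \end{align*}
is an open subscheme of $\CM_{\sigma}$. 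Combining the displayed identity with the description of $\CS'_{\gamma}$ in Proposition \ref{open1} yields $\CS'_{\gamma} = \pi_{\sigma}^{-1}(U)$.

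Since the period morphism $\pi_{\sigma}$ is surjective, we then get $\CM'_{\gamma} = \pi_{\sigma}(\CS'_{\gamma}) = \pi_{\sigma}\bigl(\pi_{\sigma}^{-1}(U)\bigr) = U$, so that $\CM'_{\gamma}$ inherits the structure of an open subscheme of $\CM_{\sigma}$. Because $\CM_{\sigma}$ is projective over $\mathbb{F}_p$, hence quasi-projective, and an open subscheme of a quasi-projective $\mathbb{F}_p$-scheme is again quasi-projective, the first assertion follows at once. For the second assertion, note that $\pi'_{\gamma}$ is simply the restriction $\pi_{\sigma}|_{\pi_{\sigma}^{-1}(U)} \colon \pi_{\sigma}^{-1}(U) \to U$, i.e.\ the base change of $\pi_{\sigma}$ along the open immersion $U \hookrightarrow \CM_{\sigma}$. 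As étaleness and surjectivity are stable under base change and $\pi_{\sigma}$ is étale and surjective, $\pi'_{\gamma}$ is étale and surjective; and since $\CS'_{\gamma}$ is an open subscheme of the scheme $\CS_{\sigma}$, this is in fact a morphism of schemes.

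The argument is essentially formal, so there is no serious obstacle; the only point that needs genuine care is the \emph{scheme-theoretic} (not merely set-theoretic) equality $\CS'_{\gamma} = \pi_{\sigma}^{-1}(U)$. This is precisely what the reducedness step in the proof of Lemma \ref{cartphi} supplies (the preimage of a reduced closed subscheme under the étale morphism $\pi_{\sigma}$ is reduced), together with the bookkeeping that $R_{\gamma}$ is finite so that $U$ is genuinely open and the finite-union arguments of the previous section apply.
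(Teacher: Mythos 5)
Your argument is correct, but it is not the route the paper takes: the paper's proof is a one-liner observing that $\pi_{\sigma}$ is universally open (being étale), so that $\CM'_{\gamma}$, defined as the image $\pi_{\sigma}(\CS'_{\gamma})$ of an open subscheme, is automatically an open subscheme of the projective scheme $\CM_{\sigma}$, whence quasi-projectivity, and étaleness of $\pi'_{\gamma}$ follows by restriction. You instead avoid openness of $\pi_{\sigma}$ altogether: using the scheme-theoretic identity $\Phi_j(\CS_{\sigma'})=\pi_{\sigma}^{-1}(\Psi_j(\CM_{\sigma'}))$ from the proof of Lemma \ref{cartphi} you rewrite $\CS'_{\gamma}$ as $\pi_{\sigma}^{-1}(U)$ with $U=\CM_{\sigma}\setminus\bigcup_{j\in R_{\gamma}}\Psi_j(\CM_{\sigma'})$, and then surjectivity of $\pi_{\sigma}$ gives $\CM'_{\gamma}=\pi_{\sigma}(\pi_{\sigma}^{-1}(U))=U$, after which everything is base change along the open immersion $U\hookrightarrow\CM_{\sigma}$. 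Both proofs are sound; the paper's is shorter and applies verbatim to the image of any open subscheme of $\CS_{\sigma}$, while yours buys an explicit identification of $\CM'_{\gamma}$ as the complement of the closed union of the $\Psi_j(\CM_{\sigma'})$ for $j\in R_{\gamma}$, which is genuinely useful later (it matches the functorial description of $\underline{\CM}'_{\gamma}$ in terms of generatrices with $\gamma(\Gamma(2))^{\perp}\cap\Delta_{N_{\sigma}(s)}=\emptyset$) and makes surjectivity of $\pi'_{\gamma}$ a formal base-change statement rather than a tautology about images. Your closing caveat is also well placed: the only non-formal input is the scheme-theoretic (not just set-theoretic) equality $\CS'_{\gamma}=\pi_{\sigma}^{-1}(U)$, which indeed rests on the reducedness argument in Lemma \ref{cartphi} and the finiteness of $R_{\gamma}$.
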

\begin{proof} This is clear since the morphism $\pi_{\sigma} \colon \CS_{\sigma} \rightarrow \CM_{\sigma}$ is universally open. \end{proof}
\begin{remark} It is not clear to us whether the functors $\underline{\CS}'_{\gamma}$ and $\CS_{\sigma}$ are equal in general. However, we think this should not be true. The lattice theoretic question we have to answer is 
\begin{center}
\emph{Do there exist embeddings $j \colon N_{\sigma} \hookrightarrow N_{\sigma'}$ and $\gamma \colon \Gamma(2) \hookrightarrow N_{\sigma}$ such that \\
$\gamma(\Gamma(2))^{\perp}$ contains no $-2$-vector, but $j( \gamma(\Gamma(2)))^{\perp}$ contains a $-2$-vector?}
\end{center}
Assuming the answer to this question is \emph{yes}, we could see Proposition \ref{open1} as a supersingular analogue to the fact that the period map of Enriques surfaces in characteristic zero maps to a quotient of the moduli space of K3 surfaces minus a divisor \cite[Theorem 1.14]{MR771979}. We removed a divisor or the empty set in each sub moduli space $\CS_{\sigma'} \subseteq \CS_{\sigma}$. \end{remark}
\subsection{The functor $\CS''_{\gamma}$}
We next consider the subfunctor $\underline{\CS}''_{\gamma}$ of $\underline{\CS}'_{\gamma}$ that only allows markings which contain an ample line bundle in every fiber. That means
\begin{align*} \underline{\CS}''_{\gamma}\colon \CA_{\mathbb{F}_p}^{\mathrm{op}} &\lra \left(\text{Sets} \right) \\
S &\longmapsto \left\{\begin{array}{l}\text{Isomorphism classes of tuples $(f, \psi)$ such that} \\ 
\text{$f \colon \CX \rightarrow S$ is a family of supersingular K3 surfaces} \\
\text{and $\psi \colon \underline{N}_{\sigma} \rightarrow \Pic_{\CX/S})$ is an $N_{\sigma}$-marking}\\
\text{such that for each geometric point $s \in S$} \\
\text{the sublattice $\gamma_s(\Gamma(2)) \hookrightarrow \mathrm{NS}(\CX_s)$ } \\
\text{contains an ample line bundle} \\
\text{and $\gamma'_s(\Gamma(2))^{\perp} \hookrightarrow \mathrm{NS}(\CX_s)$ contains no $(-2)$-vector}\end{array} \right\}.  \end{align*}
It follows from the following lattice theoretic lemma that the induced embedding of lattices $\gamma_s \colon \Gamma(2) \hookrightarrow \mathrm{NS}(\CX_s)$ is primitive even on the locus where the $N_{\sigma}$-marking $\psi \colon \underline{N}_{\sigma} \rightarrow \Pic_{\CX/S}$ is not an isomorphism.
\begin{lemma}\label{primi} Let $\gamma \colon \Gamma(2) \hookrightarrow N_{\sigma}$ be a primitive embedding and let $j \colon N_{\sigma} \hookrightarrow N_{\sigma-1}$ be an embedding of K3 lattices. Then the composition $j \circ \gamma \colon \Gamma(2) \hookrightarrow N_{\sigma-1}$ is a primitive embedding. \end{lemma}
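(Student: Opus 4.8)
The plan is to argue purely lattice-theoretically, using the characterization that a sublattice $L\subseteq M$ is primitive precisely when $M/L$ is torsion-free, i.e. $L=(L\otimes\mathbb{Q})\cap M$. Identifying $N_{\sigma}$ with its image $j(N_{\sigma})\subseteq N_{\sigma-1}$ and writing $M:=\gamma(\Gamma(2))\subseteq N_{\sigma}$, which is a primitive sublattice by hypothesis, it suffices to show that the saturation $M':=(M\otimes\mathbb{Q})\cap N_{\sigma-1}$ of $M$ inside $N_{\sigma-1}$ equals $M$, since $M'=(j\circ\gamma)(\Gamma(2))^{\mathrm{sat}}$ is the primitive closure of the image of $j\circ\gamma$.

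First I would record the index of the overlattice. Since $j$ is an injective, form-preserving map between lattices of the same rank $22$, the image $N_{\sigma}$ has finite index in $N_{\sigma-1}$, and from $d(M_1)=[M_2:M_1]^2 d(M_2)$ for finite-index sublattices of equal rank we get $[N_{\sigma-1}:N_{\sigma}]^2 = |d(N_{\sigma})|/|d(N_{\sigma-1})| = p^{2\sigma}/p^{2(\sigma-1)} = p^{2}$, hence $[N_{\sigma-1}:N_{\sigma}]=p$ and in particular $pN_{\sigma-1}\subseteq N_{\sigma}$. Now for any $y\in M'$ one has $py\in pN_{\sigma-1}\subseteq N_{\sigma}$ and $py\in M\otimes\mathbb{Q}$, so $py\in (M\otimes\mathbb{Q})\cap N_{\sigma}=M$ by primitivity of $M$ in $N_{\sigma}$. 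Thus $M'/M$ is annihilated by $p$, so it is a finite $\mathbb{F}_p$-vector space and $[M':M]=p^{k}$ for some $k\ge 0$.

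To conclude, I would again apply $|d(M)|=[M':M]^{2}\,|d(M')|$, which gives $p^{2k}\mid |d(M)|$. But $M\cong\Gamma(2)$ with $\Gamma=U\oplus E_8(-1)$ unimodular of rank $10$, so $d(M)=d(\Gamma(2))=2^{10}d(\Gamma)=\pm 2^{10}$; since $p\ge 3$, the integer $|d(M)|=2^{10}$ is coprime to $p$, forcing $k=0$. Hence $M'=M$, i.e. $j\circ\gamma$ is a primitive embedding.

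The argument is essentially formal, and I do not expect a genuine obstacle; the one substantive input — and the reason the statement is true at all (it would fail if $p$ divided $d(\Gamma(2))$) — is the coprimality of $p$ with $|d(\Gamma(2))|=2^{10}$, which I would state explicitly. I would also note that the conclusion uses the embedding $j$ only through the index $p=[N_{\sigma-1}:N_{\sigma}]$, so it is independent of the choice of $j$, and that the only care needed is that ``embedding of K3 lattices'' is taken in its evident sense (injective and form-preserving), so that $N_{\sigma}$ is genuinely an index-$p$ overlattice sitting inside $N_{\sigma-1}$.
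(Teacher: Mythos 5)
Your proof is correct, and it takes a mildly different route from the paper's. The paper also works with the saturation $\Gamma(2)^{\mathrm{sat}}$ inside $N_{\sigma-1}$, but instead of computing indices it plays the two elementariness properties against each other: since $\Gamma(2)$ is $2$-elementary, $2\cdot\Gamma(2)^{\mathrm{sat}}\subseteq\Gamma(2)$, so $N_{\sigma}+\Gamma(2)^{\mathrm{sat}}$ is an overlattice of $N_{\sigma}$ whose quotient is killed by $2$; since $N_{\sigma}$ is $p$-elementary with $p$ odd, any overlattice quotient is $p$-torsion, forcing $N_{\sigma}+\Gamma(2)^{\mathrm{sat}}=N_{\sigma}$, and primitivity of $\gamma$ in $N_{\sigma}$ then gives $\Gamma(2)^{\mathrm{sat}}=\Gamma(2)$. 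You instead pin down $[N_{\sigma-1}:N_{\sigma}]=p$ via the discriminant formula, deduce that the saturation quotient $M'/M$ is $p$-torsion using primitivity of $M$ in $N_{\sigma}$, and then exclude nontrivial $p$-torsion because $[M':M]^{2}$ divides $|d(\Gamma(2))|=2^{10}$. Both arguments rest on the same arithmetic input ($p\neq 2$ against the $2$-power discriminant group of $\Gamma(2)$); yours yields the explicit index $[N_{\sigma-1}:N_{\sigma}]=p$ as a by-product, while the paper's version avoids any discriminant computation and argues directly with discriminant groups. One small point worth making explicit in your write-up: $M'$ is an integral lattice (it sits inside the integral lattice $N_{\sigma-1}$ and is nondegenerate), so $d(M')$ is a nonzero integer and the divisibility $[M':M]^{2}\mid|d(M)|$ is legitimate.
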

\begin{proof} We write $\Gamma(2)^{\mathrm{sat}}$ for the saturation of $\Gamma(2)$ in $N_{\sigma-1}$. Then we have an inclusion $2 \cdot \Gamma(2)^{\mathrm{sat}} \subset \Gamma(2)$, because the lattice $\Gamma(2)$ is $2$-elementary. On the other hand, we find that $N_{\sigma} + \Gamma(2)^{\mathrm{sat}}$ is an overlattice of $N_{\sigma}$ with $2 \cdot (N_{\sigma} + \Gamma(2)^{\mathrm{sat}}) \subset N_{\sigma}$. Since the lattice $N_{\sigma}$ is $p$-elementary and we have $p \neq 2$, it follows that $N_{\sigma} + \Gamma(2)^{\mathrm{sat}} = N_{\sigma}$. Thus we have an equality $\Gamma(2)=\Gamma(2)^{\mathrm{sat}}$.\end{proof} 
For the rest of the discussion, we will always assume an embedding of $\Gamma(2)$ into some lattice to be primitive. The next thing we are interested in, is the representability of the functor $\underline{\CS}''_{\gamma}$ for some fixed $\gamma \in R_{\sigma}$. The following result is probably known to experts, but we
report it for convenience to the reader.
\begin{proposition}\label{amp} The functor $\underline{\CS}''_{\gamma}$ is an open subfunctor of $\underline{\CS}'_{\gamma}$. \end{proposition}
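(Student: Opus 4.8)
The plan is to exhibit $\underline{\CS}''_{\gamma}$ as the open subfunctor of $\underline{\CS}'_{\gamma}$ obtained by imposing an ampleness condition on the universal family. Since $\underline{\CS}'_{\gamma}$ is represented by the scheme $\CS'_{\gamma}$ of Proposition \ref{open1}, I would fix the universal family $f \colon \CX \to \CS'_{\gamma}$ together with its marking $\psi \colon \underline{N}_{\sigma} \to \Pic_{\CX/\CS'_{\gamma}}$, write $\gamma_s \coloneqq \psi_s \circ \gamma \colon \Gamma(2) \hookrightarrow \NS(\CX_s)$ for the embedding induced on a geometric point $s$, and let $U \subseteq \CS'_{\gamma}$ be the locus of geometric points $s$ for which $\gamma_s(\Gamma(2))$ contains an ample line bundle on $\CX_s$. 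The goal is then to show that $U$ is open and that the corresponding open subscheme represents $\underline{\CS}''_{\gamma}$.

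For openness, the observation I would use is that any ample line bundle lying in $\gamma_s(\Gamma(2))$ is of the form $\psi_s(\gamma(v))$ for some $v \in \Gamma(2)$, so that $U = \bigcup_{v \in \Gamma(2)} U_v$ with $U_v \coloneqq \{\, s \in \CS'_{\gamma} \mid \psi_s(\gamma(v)) \text{ is ample on } \CX_s \,\}$. For each fixed $v$, the class $\psi(\gamma(v)) \in \Pic_{\CX/\CS'_{\gamma}}(\CS'_{\gamma})$ is fppf-locally on the base represented by an honest line bundle; invoking the classical openness of relative ampleness in a proper family (EGA IV, 9.6.4) on such an fppf cover, and using that relative ampleness is an fppf-local condition on the base, one gets that $U_v$ is open. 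An arbitrary union of open sets is open, so $U$ is open; no finiteness of the union is needed, although in fact over every point of $U$ one has a whole chamber of ample classes inside $\gamma(\Gamma(2)) \otimes \mathbb{R}$.

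To finish, I would identify the functor represented by $U$. The condition defining $\underline{\CS}''_{\gamma}$ only involves geometric fibres, hence is stable under base change, so $\underline{\CS}''_{\gamma}$ really is a subfunctor of $\underline{\CS}'_{\gamma}$. Given an algebraic space $S$ and $(f', \psi') \in \underline{\CS}'_{\gamma}(S)$ classified by $g \colon S \to \CS'_{\gamma}$, the geometric fibre of $(f', \psi')$ over $s \in S$ coincides with the universal fibre over $g(s)$, so $(f', \psi') \in \underline{\CS}''_{\gamma}(S)$ if and only if $g(s) \in U$ for all geometric points $s$, i.e. if and only if $g$ factors through the open subscheme $U$. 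This shows $\underline{\CS}''_{\gamma}$ is represented by $U \subseteq \CS'_{\gamma}$, which is exactly the asserted statement.

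I expect the only point requiring genuine care — rather than formal unwinding — to be the openness of each $U_v$: one must keep in mind that $\psi(\gamma(v))$ is a section of the relative Picard algebraic space and not a priori a line bundle on $\CX$, so the openness of relative ampleness must be applied after an fppf base change and then descended. The remaining steps are bookkeeping with the moduli functors.
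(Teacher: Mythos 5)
Your proof is correct, but it follows a genuinely different route from the paper. You reduce everything to the classical openness of the relative ample locus: writing the ample locus as $\bigcup_{v \in \Gamma(2)} U_v$ with $U_v$ the locus where the single class $\psi(\gamma(v))$ is fibrewise ample, passing to an fppf cover where this section of $\Pic_{\CX/S}$ is an honest line bundle, and descending (which works since ampleness on fibres is insensitive to field extension and fppf maps are submersive); one small precision is that the total space of the family is a priori only an algebraic space, so you need the algebraic-space version of openness of relative ampleness rather than EGA IV 9.6.4 verbatim. The paper instead uses Lemma \ref{primi} to build the involution $\iota^{\ast}_{\gamma}$ of $\Pic_{\CX/S}$ (identity on $\gamma(\Gamma(2))$, $-\mathrm{id}$ on the complement), invokes Ogus' crystalline Torelli theorem together with Jang's Lemma 4.3 to identify the ample locus with the locus where $\iota^{\ast}_{\gamma}$ is induced by an involution $\iota_{\gamma}$ of the family, and then spreads this out from $\Spec \CO_{S,s}$ to an open neighbourhood by a limit argument on automorphism groups. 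Your argument is more elementary and self-contained for the proposition as stated; the paper's argument does extra work that is reused later, since the family involution $\iota_{\gamma}$ constructed in its proof is exactly what is quoted in the proof of Proposition \ref{EandK} to form the Enriques quotient, so with your route that existence statement would still need a separate (essentially the paper's) argument.
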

\begin{proof} By definition, we have to show that for any $\mathbb{F}_p$-scheme $S$ and any isomorphism class 
$x=(f \colon \CX \rightarrow S, \psi \colon \underline{N}_{\sigma} \hookrightarrow \Pic_{\CX/S}) \in \underline{\CS}'_{\gamma}(S)$
 the locus $S_x \subseteq S$ such that $\gamma_s(\Gamma(2))$ contains an ample line bundle for all geometric points $s \in S_x$ is an open subscheme of $S$. 

Given an $\mathbb{F}_p$-scheme $S$ and an $S$-valued point $x=(f \colon \CX \rightarrow S, \psi \colon \underline{N}_{\sigma} \hookrightarrow \Pic_{\CX/S}) \in \underline{\CS}'_{\gamma}(S)$, using Lemma \ref{primi}, we obtain a unique involution $\iota^{\ast}_{\gamma} \colon \Pic_{\CX/S} \rightarrow \Pic_{\CX/S}$ which is induced from $\iota^{\ast}_{\gamma}|_{\Gamma(2)}= \mathrm{id}_{\Gamma(2)}$ and $\iota^{\ast}_{\gamma}|_{\Gamma(2)^{\perp}} = -\mathrm{id}_{\Gamma(2)^{\perp}}$, cf.\ \cite[Proposition 2.1.1.]{MR2452829}. By Ogus' Torelli theorem \cite{MR717616} and the argument in \cite[Lemma 4.3.]{2013arXiv1301.1118J}, the automorphism $\iota^{\ast}_{\gamma}$ is induced from an automorphism of $S$-algebraic spaces $\iota_{\gamma} \colon \CX \rightarrow \CX$ if and only if $\gamma(\Gamma(2)) \hookrightarrow \Pic_{\CX/S}$ intersects the ample cone in $\mathrm{NS}(\CX_s)$ for all points $s \in S$. 

Now, if there is no point $s \in S$ such that $\gamma_s(\Gamma(2)) \hookrightarrow \mathrm{NS}(\CX_s)$ contains an ample line bundle, then $S_x = \emptyset$ is the empty scheme, which is an open subscheme of $S$. Else, let $s \in S$ be a point such that $\gamma_s(\Gamma(2)) \hookrightarrow \mathrm{NS}(\CX_s)$ contains an ample line bundle. Let $\CO_{S,s}$ be the local ring of $S$ at $s$, then $(f \colon \CX_{\Spec \CO_{S,s}} \rightarrow \Spec \CO_{S,s}, \psi \colon N_{\sigma} \hookrightarrow \Pic_{\CX_{\Spec \CO_{S,s}}/\Spec \CO_{S,s}})$ is also an an element of $\underline{\CS}''_{\gamma}(\Spec \CO_{S,s})$ by the discussion in \cite[pages 373-374]{MR717616}. If $\{U_i\}_{i \in I}$ is the directed system of all open subschemes of $S$ such that $s \in U_i$, then $\Spec \CO_{S,s} = \mathrm{lim} U_i$ and we consider the commutative diagram
\begin{align*} \xymatrix{\mathrm{colim}\left(\Aut_{U_i}(\CX_{U_i})\right) \ar[r] \ar[d]_{\cong} & \mathrm{colim} \left(\Aut(\Pic_{\CX_{U_i}/U_i}) \right) \ar[d]_{\cong}\\
   \Aut_{\Spec \CO_{S,s}}(\CX_{\Spec \CO_{S,s}}) \ar[r]& \Aut(\Pic_{\CX_{\Spec \CO_{S,s}}/\Spec \CO_{S,s}}).}\end{align*}
The morphisms $\CX \rightarrow S$ and $\Pic_{\CX/S} \rightarrow S$ are locally of finite presentation, and it follows from \cite[Proposition 31.6.1.]{stacks-project} that the vertical arrows in the diagram are isomorphisms. Further, the horizontal arrows are injective by the Torelli theorem \cite{MR717616} and the fact that filtered colimits of sets preserve injections. Since the automorphism $\iota^{\ast}_{\gamma}|_{\Spec \CO_{S,s}}$ is induced from an automorphism $\iota \in  \Aut_{\Spec \CO_{S,s}}(\CX_{\Spec \CO_{S,s}})$ it follows that there exists an open neighborhood $U(s)$ of $s$ such that $\iota^{\ast}_{\gamma}|_{U(s)}$ is induced from an automorphism $\iota \in \Aut(\Pic_{\CX_{U^s}/U(s)})$. 

Thus, the sublattice $\gamma_{\tilde{s}}(\Gamma(2)) \hookrightarrow \mathrm{NS}(\CX_{\tilde{s}})$ contains an ample line bundle for all $\tilde{s} \in U(s)$. Now let $A$ be the set of all $s \in S$ such that $\gamma_s(\Gamma(2)) \hookrightarrow \mathrm{NS}(\CX_{s})$ contains an ample line bundle. Then $S_x = \bigcup_{s \in A} U(s)$ which is an open subscheme of $S$. \end{proof}
\begin{corollary} The functor $\underline{\CS}''_{\gamma}$ is representable by an open subscheme $\gls{S''_g}$ of $\CS'_{\gamma}$ and the induced morphism $\pi_{\gamma}'' \colon \CS''_{\gamma} \rightarrow \CM'_{\gamma}$ is étale and surjective. \end{corollary}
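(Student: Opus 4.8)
The representability is immediate from what precedes. By Proposition~\ref{open1} the scheme $\CS'_{\gamma}$ represents $\underline{\CS}'_{\gamma}$, and by Proposition~\ref{amp} the subfunctor $\underline{\CS}''_{\gamma} \subseteq \underline{\CS}'_{\gamma}$ is open; hence it is represented by an open subscheme $\CS''_{\gamma} \subseteq \CS'_{\gamma}$. The morphism $\pi''_{\gamma}$ is then nothing but the restriction of $\pi'_{\gamma} \colon \CS'_{\gamma} \to \CM'_{\gamma}$ to the open subscheme $\CS''_{\gamma}$, and the restriction of an étale morphism to an open subscheme is again étale. So only the surjectivity of $\pi''_{\gamma}$ requires an argument.

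Since $\pi''_{\gamma}$ is étale it is open, so its image is an open subset of $\CM'_{\gamma}$; and $\CM'_{\gamma}$, being quasi-projective over $\mathbb{F}_p$, is Jacobson. It therefore suffices to show that $\pi''_{\gamma}$ hits every closed point, and for this I would argue on geometric points. Let $P \in \CM'_{\gamma}(k)$ with $k$ algebraically closed. Since $\CM'_{\gamma}$ is by definition the image of $\CS'_{\gamma}$ under $\pi_{\sigma}$, there is a geometric point of $\CS'_{\gamma}$ over $P$, i.e.\ a supersingular K3 surface $X/k$ with an $N_{\sigma}$-marking $\psi$ inducing $\gamma \colon \Gamma(2) \hookrightarrow \mathrm{NS}(X)$ such that $\gamma(\Gamma(2))^{\perp} \subseteq \mathrm{NS}(X)$ contains no $(-2)$-vector. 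The fibre of $\pi_{\sigma}$ through this point consists of all marked supersingular K3 surfaces with the same underlying K3 crystal, equivalently of all choices of an ample cone of $\mathrm{NS}(X)$, on which $\pm W_{\mathrm{NS}(X)}$ acts simply transitively; changing the ample cone leaves the sublattice $\gamma(\Gamma(2))^{\perp}$ — and hence membership in $\CS'_{\gamma}$ — untouched. So it is enough to exhibit one ample cone $\alpha$ of $\mathrm{NS}(X)$ meeting $\gamma(\Gamma(2)) \otimes \mathbb{R}$: by Ogus' Torelli theorem \cite{MR717616} the marked K3 surface corresponding to $\alpha$ lies over $P$, and, $\gamma(\Gamma(2))$ being a full lattice in its $\mathbb{R}$-span, $\gamma(\Gamma(2)) \cap \alpha$ contains an integral class, which is then ample; thus that point belongs to $\CS''_{\gamma}(k)$ and maps to $P$.

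It remains to produce the cone $\alpha$. Since $\Gamma(2)$ has signature $(1,9)$, the span $\gamma(\Gamma(2)) \otimes \mathbb{R}$ contains a vector $v_0$ of positive square, and the locus of such vectors is open in $\gamma(\Gamma(2)) \otimes \mathbb{R}$. Within the positive cone of $\mathrm{NS}(X)$ the hyperplanes $l^{\perp}$ with $l \in \Delta_{\mathrm{NS}(X)}$ form a locally finite family, so $v_0$ has a neighbourhood meeting only finitely many of them; none of these finitely many hyperplanes can contain all of $\gamma(\Gamma(2)) \otimes \mathbb{R}$, since that would force the corresponding $(-2)$-vector into $\gamma(\Gamma(2))^{\perp}$, contrary to assumption. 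Hence $v_0$ can be perturbed inside $\gamma(\Gamma(2)) \otimes \mathbb{R}$ to a positive vector $v$ avoiding every $l^{\perp}$, and $v$ then lies in a unique connected component $\alpha$ of $V_{\mathrm{NS}(X)}$, i.e.\ in an ample cone, which by construction meets $\gamma(\Gamma(2)) \otimes \mathbb{R}$. This gives the desired $\alpha$ and completes the surjectivity.

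The step I expect to be the main obstacle is this last one together with its interface with Torelli: making precise that changing the ample cone genuinely produces a new geometric point of $\CS'_{\gamma}$ lying over the same point of $\CM'_{\gamma}$ (this is exactly the non-separatedness of $\CS_{\sigma}$ recalled in Section~\ref{secclasmark}), and identifying the combinatorial ample cone of the period picture with the honest ample cone of the surface, so that an integral class of $\gamma(\Gamma(2))$ lying in $\alpha$ really is an ample line bundle on the corresponding K3 surface.
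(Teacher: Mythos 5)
Your proposal is correct and follows essentially the same route as the paper: representability from the openness of $\underline{\CS}''_{\gamma}$ in $\underline{\CS}'_{\gamma}$, étaleness by restriction, and surjectivity by producing, at each geometric point of $\CM'_{\gamma}$, an ample cone meeting $\gamma(\Gamma(2))$ — using the signature $(1,9)$ and the absence of $(-2)$-vectors in $\gamma(\Gamma(2))^{\perp}$ — and then invoking the period isomorphism $\CS_{\sigma} \cong \CP_{\sigma}$ to lift $(G,\alpha)$ to a point of $\CS''_{\gamma}$. Your perturbation/local-finiteness argument merely spells out the existence of a positive vector in $\gamma(\Gamma(2))$ avoiding all root hyperplanes, which the paper asserts in one line.
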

\begin{proof} The representability is a direct consequence of Proposition \ref{amp}. The morphism $\pi'_{\gamma}$ is étale because being étale is local on the source.

Now, if $s \in \CM'_{\gamma}(k)$ represents a characteristic generatrix $G$ in $pN_{\sigma}^{\vee}/pN_{\sigma} \otimes k$, we can choose an ample cone $\alpha$ for $G$, such that $\gamma_s(\Gamma(2)) \cap \alpha \neq \emptyset$. Indeed, since the signature of $\gamma_s(\Gamma(2))$ is $(1,9)$ and $\gamma_s(\Gamma(2))^{\perp} \cap \Delta = \emptyset$, there exists a $v \in \gamma_s(\Gamma(2)) \cap V_{N_{\sigma}}$. We can then choose $\alpha$ to be the connected component of $V_{N_{\sigma}}$ that contains $v$.

Using the period isomorphism $\CS_{\sigma} \stackrel{\sim}\lra \CP_{\sigma}$, we find a preimage of $s$ in $\CS''_{\gamma}(k)$ from $(G, \alpha) \in \CP_{\sigma}(k)$. Hence $\pi''_{\gamma}$ is surjective. \end{proof}
\subsubsection{The image of $\CS''_{\gamma}$ under the period isomorphism}
It is clear that the image of  $\underline{\CS}''_{\gamma}$ under the period isomorphism $\underline{\CS}_{\sigma} \lra \underline{\CP}_{\sigma}$ is the functor
\begin{align*} {\underline{\CP}''_{\gamma}}^{} \colon \CA_{\mathbb{F}_p}^{\mathrm{op}} &\lra \left(\text{Sets} \right) \\
S &\longmapsto \left\{\begin{array}{l}\text{Tuples $(G,\alpha)$, where} \\ \text{$G \subseteq pN_{\sigma}^{\vee}/pN_{\sigma} \otimes \CO_S$ is a characteristic subspace} \\
\text{and $\alpha \in \prod_{s \in S} C_{N_{\sigma}(s)}$ is the choice of an ample cone} \\
\text{such that $\gamma(\Gamma(2))^{\perp} \cap \Delta_{N_{\sigma}(s)} = \emptyset$} \\
\text{and $\gamma(\Gamma(2)) \cap \alpha(s) \neq \emptyset$ for all $s \in S$} \end{array} \right\} \end{align*}
and $\CM'_{\gamma}$ represents the functor
\begin{align*} {\underline{\CM}'_{\gamma}}^{} \colon \CA_{\mathbb{F}_p}^{\mathrm{op}} &\lra \left(\text{Sets} \right) \\
S &\longmapsto \left\{\begin{array}{l} \text{characteristic generatrices $G \subseteq pN_{\sigma}^{\vee}/pN_{\sigma} \otimes \CO_S$ } \\
\text{such that $\gamma(\Gamma(2))^{\perp} \cap \Delta_{N_{\sigma}(s)} = \emptyset$} \end{array} \right\} \end{align*}
We are interested in an alternative description for an ample cone that meets $\gamma_s(\Gamma(2))$ for each $s \in S$ of an element $G \in{\underline{\CM}'_{\gamma}}^{}(S)$. If $N$ is a supersingular K3 lattice with a $\Gamma(2)$-marking $\gamma$, we write $C_{N}^{\gamma}$ for the set of connected components of $V_N$ that meet $\gamma(\Gamma(2))$.
\begin{lemma}
Let $G \in{\underline{\CM}'_{\gamma}}^{}(S)$ and $s \in S$. There is a natural bijection $c \colon C_{\Gamma(2)} \lra C_{N_{\sigma}(s)}^{\gamma}$.
\end{lemma}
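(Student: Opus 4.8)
The plan is to exhibit the bijection $c$ by relating ample cones in $N_\sigma(s)$ that meet $\gamma(\Gamma(2))$ to ample cones in $\Gamma(2)$ via the embedding $\gamma$, using the hypothesis that $\gamma(\Gamma(2))^\perp$ contains no $(-2)$-vector in $N_\sigma(s)$. First I would unwind the definitions: by construction of $N_\sigma(s)$ from $\Lambda(s) = N_0 \cap G(s)$, the embedding $\gamma \colon \Gamma(2) \hookrightarrow N_\sigma$ induces (after tensoring with $\mathbb{Q}$ and using $p$-divisibility) a primitive embedding $\gamma_s \colon \Gamma(2) \hookrightarrow N_\sigma(s)$, essentially because $\Gamma(2)$ is $2$-elementary and $N_\sigma(s)$ is $p$-elementary with $p \neq 2$, mimicking Lemma \ref{primi}. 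Then $\Gamma(2) \otimes \mathbb{R}$ sits inside $N_\sigma(s) \otimes \mathbb{R}$ as a sublattice of signature $(1,9)$, and since $\gamma_s(\Gamma(2))^\perp$ contains no $(-2)$-vector, every root $l \in \Delta_{N_\sigma(s)}$ pairs nontrivially with some vector of $\gamma_s(\Gamma(2)) \otimes \mathbb{R}$ of positive square; hence the hyperplanes $l^\perp$ do not contain the positive cone of $\gamma_s(\Gamma(2)) \otimes \mathbb{R}$.

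Next I would define the map $c$ as follows: given $D \in C_{\Gamma(2)}$, a connected component of $V_{\Gamma(2)}$, pick a vector $v \in D \cap \gamma_s(\Gamma(2))$; I claim $v \in V_{N_\sigma(s)}$, i.e.\ $\langle v, l\rangle \neq 0$ for all $l \in \Delta_{N_\sigma(s)}$. This is where the no-$(-2)$-vector hypothesis enters: if $\langle v, l\rangle = 0$ for all $v \in \gamma_s(\Gamma(2))$ then $l \in \gamma_s(\Gamma(2))^\perp$, contradiction; but one must rule out $\langle v, l\rangle = 0$ for the \emph{particular} $v$ while $\langle v', l \rangle \neq 0$ for some other $v'$ in the same component — this cannot happen for a positive-square vector $v$ in an open cone, because $l^\perp$ meeting $D$ would force $l^\perp$ to separate $D$, but $l^\perp \cap (\Gamma(2)\otimes\mathbb{R})$ is either empty or a hyperplane of $\Gamma(2)\otimes\mathbb{R}$, and since $l \notin \gamma_s(\Gamma(2))^\perp$ the restriction of $l$ to $\Gamma(2)\otimes\mathbb{R}$ is a nonzero functional whose kernel is a hyperplane of signature $(1,8)$ or $(0,9)$; if signature $(0,9)$ it misses the positive cone entirely, and the no-root condition rules out the $(1,8)$ case being a wall that actually meets a component meeting $\Gamma(2)$. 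I would argue that the component of $V_{N_\sigma(s)}$ containing $v$ is independent of the choice of $v \in D$: any two such choices lie in $D$, which is connected and avoids all the walls $l^\perp$ (by the argument just given), so the straight-line path between them stays in $V_{N_\sigma(s)}$. This gives a well-defined $c(D) \in C_{N_\sigma(s)}^\gamma$, clearly landing in the subset of components meeting $\gamma(\Gamma(2))$.

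For the inverse, given $E \in C_{N_\sigma(s)}^\gamma$, by definition $E \cap \gamma_s(\Gamma(2)) \neq \emptyset$; pick $w$ in this intersection, note $w$ has positive square and $\langle w, l\rangle \neq 0$ for roots $l$ of $\Gamma(2)$ (which are also roots of $N_\sigma(s)$), so $w \in V_{\Gamma(2)}$, and send $E$ to the component of $V_{\Gamma(2)}$ containing $w$; again independence of the choice of $w$ follows from connectedness of $E$ and the fact that $E$ avoids all walls $l^\perp$ for $l \in \Delta_{N_\sigma(s)}$, in particular for $l \in \Delta_{\Gamma(2)}$. A short check that these two maps are mutually inverse — essentially because both are induced by the inclusion $\gamma_s(\Gamma(2))\otimes\mathbb{R} \subset N_\sigma(s)\otimes\mathbb{R}$ on the level of chosen vectors — finishes the bijectivity. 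Naturality in $s$ (compatibility with specialization, matching the condition $\alpha(s) \subseteq \alpha(t)$ for $s \in \overline{\{t\}}$) follows because $c$ is defined by the fixed embedding $\gamma$ and the specialization maps on $N_\sigma(\cdot)$ are compatible with $\gamma$. I expect the main obstacle to be the careful verification that a chosen positive-square vector in a component of $V_{\Gamma(2)}$ is not accidentally orthogonal to some root of the larger lattice $N_\sigma(s)$ — handling this requires the no-$(-2)$-vector hypothesis together with a signature/connectedness argument showing a genuine wall of $N_\sigma(s)$ cannot cut through a chamber of the sublattice $\Gamma(2)$ without producing a root in the orthogonal complement.
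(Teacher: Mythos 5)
Your strategy is the same as the paper's: send a component of $V_{\Gamma(2)}$ to the component of $V_{N_{\sigma}(s)}$ containing the image under $\gamma$ of a chosen positive vector, and invert via the orthogonal projection onto $\gamma(\Gamma(2))\otimes\mathbb{R}$, the hypothesis $\gamma(\Gamma(2))^{\perp}\cap\Delta_{N_{\sigma}(s)}=\emptyset$ guaranteeing that every root has nonzero projection; the construction of the two maps, the independence-of-choice arguments and the naturality remark all match the paper. The gap is the step you yourself flag as the main obstacle: that a positive vector of $\gamma(\Gamma(2))\otimes\mathbb{R}$ is orthogonal to no root of $N_{\sigma}(s)$. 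Note first that $\Gamma(2)$ contains no vectors of square $-2$ at all (its squares lie in $4\mathbb{Z}$), so $\Delta_{\Gamma(2)}=\emptyset$, $V_{\Gamma(2)}$ is the full positive cone and $C_{\Gamma(2)}$ has exactly two elements; there are no $\Gamma(2)$-walls to play against, and the lemma is precisely the assertion that no wall of $N_{\sigma}(s)$ cuts through the positive cone of $\gamma(\Gamma(2))\otimes\mathbb{R}$. Your signature dichotomy settles only part of this. Writing a root $w=w'+w''$ with $w'$ the projection to $\gamma(\Gamma(2))\otimes\mathbb{Q}$ and $w''\in K\otimes\mathbb{Q}$, $K=\gamma(\Gamma(2))^{\perp}$, integrality gives $w'\in\tfrac{1}{2}\gamma(\Gamma(2))$, hence $w'^{2}\in\mathbb{Z}$ and $w'^{2}\geq -2$ since $K$ is negative definite. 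If $w'^{2}\geq 0$ your argument works; the case $w'^{2}=-2$ forces $w''=0$ and dies by saturatedness of $\gamma(\Gamma(2))$ in $N_{\sigma}(s)$ (the argument of Lemma \ref{primi}) together with $\Delta_{\Gamma(2)}=\emptyset$. But the case $w'^{2}=-1$, i.e.\ $w=\tfrac{1}{2}(\gamma(m)+k)$ with $m\in\Gamma(2)$, $m^{2}=-4$, $k\in K$, $k^{2}=-4$, is not excluded by the hypothesis: such a $w$ lies neither in $\gamma(\Gamma(2))$ nor in $K$, and its wall meets the interior of the positive cone of $\gamma(\Gamma(2))\otimes\mathbb{R}$ along $m^{\perp}$, so if such a root were present, points of $\gamma(\Gamma(2))$ on the two sides of $m^{\perp}$ would lie in different components of $V_{N_{\sigma}(s)}$ and no bijection of the asserted kind could exist. (Geometrically these are classes of $(-2)$-curves on the covering K3 lying over a nodal class of the quotient whose preimage splits.)

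So the missing idea is an argument that, for the lattices occurring here, $N_{\sigma}(s)$ contains no root of the form $\tfrac{1}{2}(\gamma(m)+k)$ as above; this is an arithmetic statement about the glue between $\gamma(\Gamma(2))$ and $K$ inside $N_{\sigma}(s)$, not a consequence of the signature/connectedness considerations you invoke, and your sentence that ``the no-root condition rules out the $(1,8)$ case'' asserts exactly the fact that needs proof. In fairness, the paper's own proof makes the same leap: after writing $w=a(w'+w'')$ and observing $w'\neq 0$, it simply asserts $(\gamma(v),aw')\neq 0$, which is automatic only when $w'^{2}\geq 0$. Your proposal therefore reproduces the paper's argument at the paper's level of detail, but as a self-contained proof it does not close the decisive step.
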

\begin{proof}
Since any $v \in \gamma(\Gamma(2))$ maps to zero in $pN_{\sigma}^{\vee}/pN_{\sigma}$, we find that $\frac{1}{p} \gamma(\Gamma(2))$ is a sublattice of $N_{\sigma}(s)$ for any $s \in S$. Hence, we get an inclusion $V_{\Gamma(2)} \hookrightarrow V_{N_{\sigma}(s)}^{\gamma}$ that meets every connected component of $ V_{N_{\sigma}(s)}^{\gamma}$. This induces a surjection  $C_{\Gamma(2)} \twoheadrightarrow C_{N_{\sigma}(s)}^{\gamma}$. Writing $K$ for the orthogonal complement of $\gamma(\Gamma(2))$ in $N_{\sigma}(s)$, we have $N_{\sigma}(s) \otimes \mathbb{R} = (\gamma(\Gamma(2)) \oplus K) \otimes \mathbb{R}$ and projection to the first component gives a surjection $N_{\sigma}(s) \otimes \mathbb{R} \twoheadrightarrow \Gamma(2) \otimes \mathbb{R}$. We claim that the image of $V_{N_{\sigma}(s)}$ under this map is $V_{\Gamma(2)}$. Indeed, let $v \in V_{\Gamma(2)}$. Then $\gamma(v)^2 = v^2 > 0$. If $w \in \Delta_{N_{\sigma}}(s)$ is an element with $w^2 = -2$, then $w = a(w' + w'')$ with $a \in \mathbb{Q}$, $w' \in \gamma(\Gamma(2))$, $w'' \in K$ and $w' \neq 0$, because $\Delta_{N_{\sigma}}(s) \cap K = \emptyset$. We find $(\gamma(v), w) = (\gamma(v), aw') \neq 0$ and this proves the claim. We therefore get a surjection  $C_{N_{\sigma}(s)}^{\gamma}\twoheadrightarrow C_{\Gamma(2)}$ that is inverse to the previous surjection.
\end{proof}
If $G \in{\underline{\CM}'_{\gamma}}^{}(S)$, we call an element
\begin{align*}
    \alpha \in C_{\Gamma(2)}^{|S|}
\end{align*}
an \emph{ample cone for $G$} if for each pair $s, t \in S$ such that $s$ is a specialization of $t$ we have $c(\alpha(s)) \subseteq c(\alpha(t))$. By the above lemma this is equivalent to our earlier definition of ample cones. This leads us to the following description of ${\underline{\tilde{\CP}}''_{\gamma}}^{}$.
\begin{proposition}[and Definition]\label{amplec}
The functor ${\underline{\tilde{\CP}}''_{\gamma}}^{}$ is canonically isomorphic to the functor
\begin{align*} \CA_{\mathbb{F}_p}^{\mathrm{op}} &\lra \left(\text{Sets} \right) \\
S &\longmapsto \left\{\begin{array}{l}\text{Tuples $(G,\alpha)$, where} \\ \text{$G \subseteq pN_{\sigma}^{\vee}/pN_{\sigma}  \otimes \CO_S$ is a characteristic subspace} \\
\text{such that $\gamma(\Gamma(2))^{\perp} \cap \Delta_{N_{\sigma}(s)} = \emptyset$ for all $s \in S$} \\
\text{and $\alpha \in C_{\Gamma(2)}^{|S|}$ is the choice of an ample cone} \end{array} \right\} \end{align*}
and we identify ${\underline{\tilde{\CP}}''_{\gamma}}^{}$ with this functor.
\end{proposition}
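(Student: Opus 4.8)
The plan is to build the claimed isomorphism point by point from the natural bijection $c$ of the preceding lemma, and then to check that the three conditions appearing in the two functors — absence of $(-2)$-vectors in $\gamma(\Gamma(2))^{\perp}$, the ample-cone compatibility under specialization, and (on the $\underline{\tilde{\CP}}''_{\gamma}$ side) that the chosen cone meets $\gamma(\Gamma(2))$ — match up under $c$. So I would first fix $S$ and a characteristic subspace $G \subseteq pN_{\sigma}^{\vee}/pN_{\sigma} \otimes \CO_S$ of the type occurring on either side; on both sides one imposes $\gamma(\Gamma(2))^{\perp} \cap \Delta_{N_{\sigma}(s)} = \emptyset$ for every $s \in S$, which is exactly the hypothesis under which the preceding lemma applies at $s$. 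Thus for each $s \in S$ we obtain a bijection $c_s \colon C_{\Gamma(2)} \stackrel{\sim}{\lra} C_{N_{\sigma}(s)}^{\gamma}$, where $C_{N_{\sigma}(s)}^{\gamma} \subseteq C_{N_{\sigma}(s)}$ is the set of ample cones of $N_{\sigma}(s)$ meeting $\gamma(\Gamma(2))$; recall from the proof of that lemma that $c_s$ is induced by the inclusion $\tfrac1p\gamma \colon V_{\Gamma(2)} \hookrightarrow V_{N_{\sigma}(s)}$ (which makes sense because $\tfrac1p\gamma(\Gamma(2))$ is always a sublattice of $N_{\sigma}(s)$), sending a component $\delta$ of $V_{\Gamma(2)}$ to the unique component of $V_{N_{\sigma}(s)}$ containing its image.

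Next I would write down the candidate natural transformation in both directions: send a pair $(G,\alpha)$ with $\alpha \in C_{\Gamma(2)}^{|S|}$ to $(G,\beta)$ with $\beta(s) \coloneqq c_s(\alpha(s)) \in C_{N_{\sigma}(s)}$, and conversely send $(G,\beta) \in \underline{\tilde{\CP}}''_{\gamma}(S)$ to $(G,\alpha)$ with $\alpha(s) \coloneqq c_s^{-1}(\beta(s))$. The latter is well defined because $\underline{\tilde{\CP}}''_{\gamma}$ requires $\gamma(\Gamma(2)) \cap \beta(s) \neq \emptyset$, i.e. $\beta(s) \in C_{N_{\sigma}(s)}^{\gamma}$, which is the domain of $c_s^{-1}$; in the other direction $\beta(s) = c_s(\alpha(s))$ lies in $C_{N_{\sigma}(s)}^{\gamma}$ by construction, so $\gamma(\Gamma(2)) \cap \beta(s) \neq \emptyset$ is automatic, and the condition $\gamma(\Gamma(2))^{\perp} \cap \Delta_{N_{\sigma}(s)} = \emptyset$ is literally the same on both sides. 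Since each $c_s$ is a bijection, the two assignments are mutually inverse, and being defined separately at each $s \in S$ they are visibly natural in $S$. The only remaining point is that the correspondence preserves the "ample cone" condition on each side.

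This last point is the substantive one, and it is precisely the statement (already remarked just before the proposition) that $c_s$ identifies ample cones for $G$ in the sense of $C_{\Gamma(2)}^{|S|}$ with ample cones for $G$ in the original sense that meet $\gamma(\Gamma(2))$ everywhere. Since $\beta(s) = c_s(\alpha(s))$, the two compatibility requirements "$c_s(\alpha(s)) \subseteq c_t(\alpha(t))$" and "$\beta(s) \subseteq \beta(t)$" for $s \in \overline{\{t\}}$ are verbatim the same inequality; what has to be checked is that this is a sensible and non-vacuous condition, i.e. that the family $\{c_s\}_{s \in S}$ is compatible with specialization. For $s \in \overline{\{t\}}$ one has $\Lambda(t) \subseteq \Lambda(s)$ — already implicit in the definition of ample cone for $G$ — hence $N_{\sigma}(t) \subseteq N_{\sigma}(s)$, hence $\Delta_{N_{\sigma}(t)} \subseteq \Delta_{N_{\sigma}(s)}$ and $V_{N_{\sigma}(s)} \subseteq V_{N_{\sigma}(t)}$; moreover the composite $V_{\Gamma(2)} \hookrightarrow V_{N_{\sigma}(s)} \hookrightarrow V_{N_{\sigma}(t)}$ is the inclusion $\tfrac1p\gamma \colon V_{\Gamma(2)} \hookrightarrow V_{N_{\sigma}(t)}$. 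Therefore $c_s(\delta)$, a connected subset of $V_{N_{\sigma}(s)} \subseteq V_{N_{\sigma}(t)}$ that meets $c_t(\delta)$ (both contain $\tfrac1p\gamma(\delta)$), is contained in $c_t(\delta)$; in particular every constant section of $C_{\Gamma(2)}$ is an ample cone, so the condition is non-vacuous, and it transports correctly in both directions. I expect this specialization-compatibility of the family $\{c_s\}$ — not the pointwise bijection, which is the content of the preceding lemma — to be essentially the only obstacle; everything else is bookkeeping. Combining with the description of $\underline{\tilde{\CP}}''_{\gamma}$ recalled above then yields the stated isomorphism, and the right-hand functor is declared to be $\underline{\tilde{\CP}}''_{\gamma}$.
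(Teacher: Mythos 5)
Your proposal is correct and follows essentially the same route as the paper, which treats the proposition as an immediate consequence of the preceding lemma together with the redefinition of an ample cone as an element of $C_{\Gamma(2)}^{|S|}$ satisfying $c(\alpha(s)) \subseteq c(\alpha(t))$ for specializations $s \in \overline{\{t\}}$; under the pointwise bijections $c_s$ the two descriptions match verbatim, exactly as you argue. Your additional verification that the family $\{c_s\}$ is compatible with specialization (via $\Lambda(t) \subseteq \Lambda(s)$, hence $V_{N_{\sigma}(s)} \subseteq V_{N_{\sigma}(t)}$ and $c_s(\delta) \subseteq c_t(\delta)$) is a correct fleshing-out of a point the paper leaves implicit in its remark that the new definition "is equivalent to our earlier definition of ample cones."
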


\subsection{The functor $\tilde{\CS}''_{\gamma}$ }
We next want to be able to forget about the choice of a basis for $N_{\sigma}$ in the definition of $\underline{\CS}''_{\gamma}$. To do so, we consider the functor
\begin{align*} {\underline{\tilde{\CS}}''_{\gamma}}^{ps} \colon \CA_{\mathbb{F}_p}^{\mathrm{op}} &\lra \left(\text{Sets} \right) \\
S &\longmapsto \left\{\begin{array}{l}\text{Isomorphism classes of tuples $(f,\underline{\CR}, \gamma')$, where} \\ \text{$f \colon \CX \rightarrow S$ is a family of supersingular K3 surfaces,} \\
\text{$\underline{\CR} \subseteq \Pic_{\CX/S}$ is a subsheaf of lattices and} \\
\text{$\gamma' \colon \underline{\Gamma}(2) \hookrightarrow \underline{\CR}$ is an embedding such that} \\
\text{there exists $\alpha \colon \underline{N}_{\sigma} \cong \underline{\CR}$ with $\alpha \circ \gamma = \gamma'$ and} \\
\text{such that for each geometric point $s \in S$} \\
\text{the sublattice $\gamma'_s(\Gamma(2)) \hookrightarrow \mathrm{NS}(\CX_s)$ } \\
\text{contains an ample line bundle} \\
\text{and $\gamma'_s(\Gamma(2))^{\perp} \hookrightarrow \mathrm{NS}(\CX_s)$ contains no $(-2)$-vector}\end{array} \right\} \end{align*}
and its sheafification with regard to the fppf topology
\begin{align*} {\underline{\tilde{\CS}}''_{\gamma}} \colon \CA_{\mathbb{F}_p}^{\mathrm{op}} &\lra \left(\text{Sets} \right) \\
S &\longmapsto \left\{\begin{array}{l}\text{Isomorphism classes of tuples $(f,\underline{\CR}, \gamma')$, where} \\ \text{$f \colon \CX \rightarrow S$ is a family of supersingular K3 surfaces,} \\
\text{$\underline{\CR} \subseteq \Pic_{\CX/S}$ is a subsheaf of lattices and} \\
\text{$\gamma' \colon \underline{\Gamma}(2) \hookrightarrow \underline{\CR}$ is an embedding such that fppf locally} \\
\text{there exists $\alpha \colon \underline{N}_{\sigma} \cong \underline{\CR}$ with $\alpha \circ \gamma = \gamma'$ and} \\
\text{such that for each geometric point $s \in S$} \\
\text{the sublattice $\gamma'_s(\Gamma(2)) \hookrightarrow \mathrm{NS}(\CX_s)$ } \\
\text{contains an ample line bundle} \\
\text{and $\gamma'_s(\Gamma(2))^{\perp} \hookrightarrow \mathrm{NS}(\CX_s)$ contains no $(-2)$-vector}\end{array} \right\} \end{align*}
We are again interested in the representability of the functor $\underline{\tilde{\CS}}''_{\gamma}$. Consider the group $O(N_{\sigma},\gamma) = \{\varphi \in O(N_{\sigma}) \mid \varphi \circ \gamma = \gamma\}$ of isometries of $N_{\sigma}$ that preserve the embedding $\gamma$. Since the canonical homomorphism $O(N_{\sigma}) \rightarrow O(\gamma(\Gamma(2)) \oplus \gamma(\Gamma(2))^{\perp})$ is injective, the group $O(N_{\sigma}, \gamma)$ is a subgroup of $O(\gamma(\Gamma(2))^{\perp})$  and the latter group is finite because the lattice $\gamma(\Gamma(2))^{\perp}$ is negative definite. Hence it follows that $O(N_{\sigma}, \gamma)$ is a finite group. There is a group action of $O(N_{\sigma}, \gamma)$ on the functor ${\underline{\tilde{\CS}}''_{\gamma}}^{ps}$ which is given on $S$-valued points for connected schemes $S$ via 
\begin{align*} \varphi \cdot (f \colon \CX \rightarrow S, \psi \colon \underline{N}_{\sigma} \rightarrow \Pic_{\CX/S}) = (f \colon \CX \rightarrow S, \psi \circ \varphi \colon \underline{N}_{\sigma} \rightarrow \Pic_{\CX/S}). \end{align*}
\begin{proposition}\label{quoti1} There is a canonical isomorphism of functors $F \colon \underline{\CS}''_{\gamma} / O(N_{\sigma}, \gamma) \rightarrow {\underline{\tilde{\CS}}''_{\gamma}}^{ps}$. \end{proposition}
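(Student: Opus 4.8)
The plan is to write down $F$ explicitly on $S$-valued points and verify it is a natural bijection. Given a class represented by $(f\colon\CX\to S,\ \psi\colon\underline{N}_{\sigma}\to\Pic_{\CX/S})\in\underline{\CS}''_{\gamma}(S)$, I would send it to the class of the triple $(f,\ \underline{\CR},\ \gamma')$ with $\underline{\CR}:=\psi(\underline{N}_{\sigma})\subseteq\Pic_{\CX/S}$ the image subsheaf and $\gamma':=\psi\circ\gamma\colon\underline{\Gamma}(2)\hookrightarrow\underline{\CR}$. Since $\psi$ is compatible with the nondegenerate intersection form on $N_{\sigma}$ it is a monomorphism of sheaves, so $\underline{\CR}$ is a subsheaf of lattices and $\psi$ factors as an isometry $\alpha\colon\underline{N}_{\sigma}\to\underline{\CR}$ with $\alpha\circ\gamma=\gamma'$; thus the defining clause of ${\underline{\tilde{\CS}}''_{\gamma}}^{ps}$ holds, while the two fibrewise conditions (an ample class inside $\gamma'_{s}(\Gamma(2))$, no $(-2)$-vector in $\gamma'_{s}(\Gamma(2))^{\perp}$) are literally the same in both functors. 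Functoriality in $S$ is immediate, as $\underline{\CR}$ and $\gamma'$ are formed by taking image and composition, which commute with base change.

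First I would check $F$ descends to the quotient presheaf. For $\varphi\in O(N_{\sigma},\gamma)$ the marking $\psi\circ\varphi$ has the same image $\psi(\underline{N}_{\sigma})=\underline{\CR}$ (as $\varphi$ is an automorphism of $\underline{N}_{\sigma}$) and satisfies $(\psi\circ\varphi)\circ\gamma=\psi\circ(\varphi\circ\gamma)=\psi\circ\gamma=\gamma'$ by the defining property of $O(N_{\sigma},\gamma)$; hence $\varphi\cdot(f,\psi)$ and $(f,\psi)$ have the same $F$-image, and $F$ is well defined on $\underline{\CS}''_{\gamma}(S)/O(N_{\sigma},\gamma)$ (for disconnected $S$ one lets $O(N_{\sigma},\gamma)$ act through its associated constant sheaf, which does not affect the argument). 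Surjectivity is built into the definition of ${\underline{\tilde{\CS}}''_{\gamma}}^{ps}$: given $(f,\underline{\CR},\gamma')$, pick an isometry $\alpha\colon\underline{N}_{\sigma}\to\underline{\CR}$ with $\alpha\circ\gamma=\gamma'$ and compose with $\underline{\CR}\hookrightarrow\Pic_{\CX/S}$ to obtain an $N_{\sigma}$-marking $\psi$; it lies in $\underline{\CS}''_{\gamma}(S)$ because the fibrewise conditions transfer, and $F$ sends its class to $[(f,\underline{\CR},\gamma')]$.

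For injectivity, suppose $(f_{1},\psi_{1})$ and $(f_{2},\psi_{2})$ have $F$-images related by an isomorphism of triples, i.e.\ an $S$-isomorphism $g\colon\CX_{1}\to\CX_{2}$ whose pullback $g^{\ast}\colon\Pic_{\CX_{2}/S}\to\Pic_{\CX_{1}/S}$ carries $\underline{\CR}_{2}$ onto $\underline{\CR}_{1}$ and satisfies $g^{\ast}\circ\gamma'_{2}=\gamma'_{1}$. Writing $\psi_{i}$ also for the induced isometry $\underline{N}_{\sigma}\to\underline{\CR}_{i}$, set $\varphi:=\psi_{1}^{-1}\circ g^{\ast}\circ\psi_{2}$; this is an isometry of $N_{\sigma}$, and $\varphi\circ\gamma=\psi_{1}^{-1}\circ g^{\ast}\circ\psi_{2}\circ\gamma=\psi_{1}^{-1}\circ g^{\ast}\circ\gamma'_{2}=\psi_{1}^{-1}\circ\gamma'_{1}=\gamma$, so $\varphi\in O(N_{\sigma},\gamma)$. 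The identity $g^{\ast}\circ\psi_{2}=\psi_{1}\circ\varphi$ then says exactly that $g$ is an isomorphism in $\underline{\CS}''_{\gamma}(S)$ from $\varphi\cdot(f_{1},\psi_{1})$ to $(f_{2},\psi_{2})$, so the two classes coincide in the quotient.

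I expect the only real care to be needed in the injectivity step: keeping the variance of the various pullback maps straight, and checking that an isomorphism of triples genuinely produces an element of $O(N_{\sigma},\gamma)$ together with an isomorphism of $N_{\sigma}$-marked families. There is also the (largely cosmetic) point that the statement concerns presheaves, so $O(N_{\sigma},\gamma)$ acts componentwise via its constant sheaf and no fppf-sheafification intervenes here; that passage is handled separately when going from ${\underline{\tilde{\CS}}''_{\gamma}}^{ps}$ to $\underline{\tilde{\CS}}''_{\gamma}$.
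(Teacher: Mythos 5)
Your proposal is correct and follows essentially the same route as the paper's proof: define $F'$ by sending $(f,\psi)$ to $(f,\psi(\underline{N}_{\sigma}),\psi\circ\gamma)$, check $O(N_{\sigma},\gamma)$-invariance, get surjectivity by choosing an isometry $\alpha\colon\underline{N}_{\sigma}\to\underline{\CR}$ compatible with $\gamma$, and get injectivity by exhibiting the discrepancy between two preimages as an element of $O(N_{\sigma},\gamma)$. The only difference is that you carry out the injectivity step a bit more carefully, tracking an isomorphism of tuples $g$ and its pullback rather than assuming equal representatives, which is a harmless refinement of the paper's argument.
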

\begin{proof} 
There is a canonical morphism of functors $F' \colon \underline{\CS}''_{\gamma} \rightarrow {\underline{\tilde{\CS}}''_{\gamma}}^{ps}$ which is given on $S$-valued points via
\begin{align*} (f \colon \CX \rightarrow S, \psi\colon \underline{N}_{\sigma} \rightarrow \Pic_{\CX/S}) \longmapsto (f \colon \CX \rightarrow S, \psi(\underline{N}_{\sigma}) \subseteq \Pic_{\CX/S}, \psi \circ \gamma \colon \Gamma(2) \hookrightarrow \psi(\underline{N}_{\sigma})).  \end{align*}
This morphism is invariant under the action of $O(N_{\sigma},\gamma)$ on $\underline{\CS}''_{\gamma}$ and therefore it descends to a morphism of functors $F \colon \underline{\CS}''_{\gamma}/O(N_{\sigma},\gamma) \rightarrow {\underline{\tilde{\CS}}''_{\gamma}}^{ps}$. We want to show that $F$ is an isomorphism of functors by checking that for any $\mathbb{F}_p$-scheme $S$ the induced map of sets $F(S)$ is a bijection. 

a) Surjectivity: It suffices to show that the map $F'(S) \colon \underline{\CS}'_{\gamma}(S) \rightarrow {\underline{\tilde{\CS}}''_{\gamma}}^{ps}(S)$ is surjective. To this end, we consider an element 
$s= (f, \underline{\CR}, \gamma') \in {\underline{\tilde{\CS}}''_{\gamma}}^{ps}(S)$ and we choose an isomorphism of lattice embeddings $\psi \colon (\gamma \colon \underline{\Gamma}(2) \hookrightarrow \underline{N}_{\sigma}) \stackrel{\sim}\lra (\gamma' \colon \underline{\Gamma}(2) \hookrightarrow \underline{\CR})$. Then the pair $s'=(f, \psi) \in \underline{\CS}''_{\gamma}(S)$ is a preimage of $s$ under $F'$. 

b) Injectivity: For an element $s =  (f, \underline{\CR}, \gamma') \in {\underline{\tilde{\CS}}''_{\gamma}}^{ps}(S)$ we have to show that any two preimages $s'$ and $s''$ in $\underline{\CS}_{\sigma}(S)$ only differ by some isometry $\varphi \in O(N_{\sigma}, \gamma)$. To this end, we write $s' = (f, \psi')$ and $s''= (f , \psi'')$. We find that $\psi'^{-1}|_{\underline{\CR}} \circ \psi'' \in O(N_{\sigma},\gamma)$ and we obtain the equality $(\psi'^{-1}|_{\underline{\CR}} \circ \psi'') \cdot s' = s''$. 
\end{proof}
The action of $O(N_{\sigma},\gamma)$ on $\underline{\CS}''_{\gamma}$ in general has non-trivial stabilizers, so the functor $\underline{\tilde{\CS}}''_{\gamma}$ is not representable by an algebraic space. We introduce the corresponding stack
\begin{align*} {\gls{St''_g}} \colon \CA_{\mathbb{F}_p}^{\mathrm{op}} &\lra \left(\text{Groupoids} \right) \\
S &\longmapsto \left\{\begin{array}{l}\text{Tuples $(f,\underline{\CR}, \gamma')$, where} \\ \text{$f \colon \CX \rightarrow S$ is a family of supersingular K3 surfaces,} \\
\text{$\underline{\CR} \subseteq \Pic_{\CX/S}$ is a subsheaf of lattices and} \\
\text{$\gamma' \colon \underline{\Gamma}(2) \hookrightarrow \underline{\CR}$ is an embedding such that fppf locally} \\
\text{there exists $\alpha \colon \underline{N}_{\sigma} \cong \underline{\CR}$ with $\alpha \circ \gamma = \gamma'$ and} \\
\text{such that for each geometric point $s \in S$} \\
\text{the sublattice $\gamma'_s(\Gamma(2)) \hookrightarrow \mathrm{NS}(\CX_s)$ } \\
\text{contains an ample line bundle} \\
\text{and $\gamma'_s(\Gamma(2))^{\perp} \hookrightarrow \mathrm{NS}(\CX_s)$ contains no $(-2)$-vector}\end{array} \right\}. \end{align*}

\begin{proposition}\label{quoti}
The canonical morphism $\CS''_{\gamma} \lra \tilde{\mathfrak{S}}''_{\gamma}$ is surjective, finite and étale. In particular $\tilde{\mathfrak{S}}''_{\gamma}$ is a smooth Deligne-Mumford stack.
\end{proposition}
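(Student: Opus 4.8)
The plan is to identify the stack $\tilde{\mathfrak{S}}''_{\gamma}$ with the quotient stack $[\CS''_{\gamma}/G]$, where $G\coloneqq O(N_{\sigma},\gamma)$ is the finite group acting on $\CS''_{\gamma}$ by $\varphi\cdot(f,\psi)=(f,\psi\circ\varphi)$, and then to read off the three properties from the standard behaviour of the presentation $X\to[X/G]$ of a quotient by a finite group. The only facts about $G$ that will enter are that it is finite and that, being a constant group scheme over $\mathbb{F}_p$, it is \'etale over $\mathbb{F}_p$; consequently every $G$-torsor over an $\mathbb{F}_p$-scheme is representable by a scheme and finite \'etale (note that $|G|$ need not be prime to $p$). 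Recall also that $\CS''_{\gamma}$ is an open subscheme of $\CS_{\sigma}$, hence a scheme which is smooth and locally of finite type over $\mathbb{F}_p$.

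The first step is to promote Proposition \ref{quoti1} from presheaves to stacks. There is a canonical $1$-morphism $q\colon\CS''_{\gamma}\to\tilde{\mathfrak{S}}''_{\gamma}$ sending $(f,\psi)$ to $(f,\psi(\underline{N}_{\sigma}),\psi\circ\gamma)$; since $\varphi\circ\gamma=\gamma$ for all $\varphi\in G$ while $\psi\circ\varphi$ has the same image as $\psi$, the morphism $q$ is strictly $G$-invariant and therefore factors as $\CS''_{\gamma}\to[\CS''_{\gamma}/G]\xrightarrow{\bar q}\tilde{\mathfrak{S}}''_{\gamma}$. To see that $\bar q$ is an equivalence, take an $\mathbb{F}_p$-scheme $S$ and an object $(f\colon\CX\to S,\underline{\CR},\gamma')\in\tilde{\mathfrak{S}}''_{\gamma}(S)$ and form the sheaf
\begin{align*}
  \underline{P}\ \coloneqq\ \underline{\Isom}_{S}\!\left((\underline{\Gamma}(2)\xrightarrow{\gamma}\underline{N}_{\sigma}),\ (\underline{\Gamma}(2)\xrightarrow{\gamma'}\underline{\CR})\right)
\end{align*}
of isomorphisms $\alpha\colon\underline{N}_{\sigma}\xrightarrow{\sim}\underline{\CR}$ with $\alpha\circ\gamma=\gamma'$. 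By the very definition of $\tilde{\mathfrak{S}}''_{\gamma}$ such an $\alpha$ exists fppf locally on $S$, and any two local sections of $\underline{P}$ differ by a unique section of $G$; hence $\underline{P}$ is an fppf $G$-torsor over $S$, in particular a finite \'etale $S$-scheme. The forgetful, $G$-equivariant map $\underline{P}\to\CS''_{\gamma}$, $\alpha\mapsto(f,\ \underline{N}_{\sigma}\xrightarrow{\alpha}\underline{\CR}\hookrightarrow\Pic_{\CX/S})$, then defines an object of $[\CS''_{\gamma}/G](S)$ which $\bar q$ carries to $(f,\underline{\CR},\gamma')$, giving essential surjectivity (this can anyway be checked fppf locally, where $\underline{P}$ is trivial). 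Full faithfulness reduces to comparing the $\Isom$-sheaves on the two sides, each of which is a torsor under the appropriate copy of $G$ and which $\bar q$ matches up; this is precisely the sheafified form of the bijectivity argument of Proposition \ref{quoti1}.

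Once $\tilde{\mathfrak{S}}''_{\gamma}\simeq[\CS''_{\gamma}/G]$ is established, the proposition is formal. The projection $\CS''_{\gamma}\to[\CS''_{\gamma}/G]$ is a $G$-torsor, and since $G$ is finite \'etale over $\mathbb{F}_p$ this morphism is representable by schemes, surjective, finite and \'etale \cite{stacks-project}; this is the claim about $\CS''_{\gamma}\to\tilde{\mathfrak{S}}''_{\gamma}$. Since $\CS''_{\gamma}$ is a scheme and $G$ is finite, $[\CS''_{\gamma}/G]$ has finite diagonal, and it carries an \'etale surjection from the scheme $\CS''_{\gamma}$; hence it is a Deligne-Mumford stack, and it is smooth over $\mathbb{F}_p$ because smoothness of $\CS''_{\gamma}$ descends along this \'etale cover.

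The step I expect to be the crux is the identification $\tilde{\mathfrak{S}}''_{\gamma}\simeq[\CS''_{\gamma}/G]$: one must check carefully that the fppf-local existence of the marking isomorphism $\alpha$ built into the definition of $\tilde{\mathfrak{S}}''_{\gamma}$, together with the finiteness of $G=O(N_{\sigma},\gamma)$ recorded just before the proposition, really exhibits $\underline{P}$ as a torsor under a \emph{finite \'etale} group — and hence that $\tilde{\mathfrak{S}}''_{\gamma}$ is the genuine stack quotient and not merely a stack with the same presheaf of isomorphism classes. Once that is secured, ``surjective, finite, \'etale'' and ``smooth Deligne-Mumford'' are citations to the standard theory of quotients of algebraic spaces by finite groups.
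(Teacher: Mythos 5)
Your proof is correct and follows essentially the same route as the paper: the paper's own argument is simply to cite Proposition \ref{quoti1} for the identification $\tilde{\mathfrak{S}}''_{\gamma}\simeq[\CS''_{\gamma}/O(N_{\sigma},\gamma)]$ and then invoke the standard properties of the presentation of a quotient stack by a finite constant (hence \'etale) group. Your torsor argument just spells out the promotion of Proposition \ref{quoti1} from isomorphism classes to the stack level, which the paper leaves implicit.
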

\begin{proof}
Proposition \ref{quoti1} implies that ${\tilde{\mathfrak{S}}''_{\gamma}}$ is isomorphic the quotient stack $[\underline{\CS}''_{\gamma}/O(N_{\sigma},\gamma)]$.
\end{proof}
The action of $O(N_{\sigma}, \gamma)$ on $\underline{\CS}''_{\gamma}$ descents to an action on $\underline{\CM}'_{\gamma}$ in the following way: if $G \in \underline{\CM}'_{\gamma}(S)$ and $\varphi \in O(N_{\sigma}, \gamma)$, then $\varphi \cdot G = \varphi(G) \subseteq N_{\sigma} \otimes \CO_S$. Analogously, there is another description of the action on $\underline{\CS}''_{\gamma}$ via the period isomorphism to $\underline{\tilde{\CP}}''_{\gamma}$: let $(G, \alpha) \in \underline{\tilde{\CP}}''_{\gamma}(S)$, then $\varphi \cdot (G, \alpha) = (\varphi(G), \alpha)$. It follows from Proposition \ref{amplec} that this notation makes sense.

We define stacks
\begin{align*} {\gls{Mft''_g}} \colon \CA_{\mathbb{F}_p}^{\mathrm{op}} &\lra \left(\text{Groupoids} \right) \\
S &\longmapsto \left\{\begin{array}{l} \text{equivalence classes $[G]$ of} \\
\text{characteristic generatrices $G \subseteq pN_{\sigma}^{\vee}/pN_{\sigma} \otimes \CO_S$}\\
\text{under the $O(N_{\sigma}, \gamma)$-action} \\
\text{such that $\gamma(\Gamma(2))^{\perp} \cap \Delta_{N_{\sigma}(s)} = \emptyset$} \end{array} \right\} \intertext{and}
\tilde{\mathfrak{P}}''_{\gamma} \colon \CA_{\mathbb{F}_p}^{\mathrm{op}} &\lra \left(\text{Groupoids} \right) \\
S &\longmapsto \left\{\begin{array}{l}\text{Tuples $([G],\alpha)$, where} \\ 
\text{$[G]$ is an equivalence class of characteristic generatrices} \\
\text{$G \subseteq pN_{\sigma}^{\vee}/pN_{\sigma}  \otimes \CO_S$ under the $O(N_{\sigma}, \gamma)$-action} \\
\text{such that $\gamma(\Gamma(2))^{\perp} \cap \Delta_{N_{\sigma}(s)} = \emptyset$ for all $s \in S$} \\
\text{and $\alpha \in C_{\Gamma(2)}^{|S|}$ is the choice of an ample cone} \end{array} \right\}. \end{align*}

\begin{proposition}
The stacks ${\tilde{\mathfrak{M}}''_{\gamma}}$ and $\tilde{\mathfrak{P}}''_{\gamma}$ are smooth Deligne-Mumford stacks, the canonical morphism ${\tilde{\mathfrak{S}}''_{\gamma}} \lra {\tilde{\mathfrak{M}}''_{\gamma}}$ is an étale surjection and the canonical morphism ${\tilde{\mathfrak{S}}''_{\gamma}} \lra \tilde{\mathfrak{P}}''_{\gamma}$ is an isomorphism.
\end{proposition}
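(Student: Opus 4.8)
The plan is to realise all three stacks as quotients of schemes by the finite group $O(N_{\sigma},\gamma)$ and then read off the assertions by descent. By Proposition~\ref{quoti} we already know $\tilde{\mathfrak{S}}''_{\gamma}\simeq[\CS''_{\gamma}/O(N_{\sigma},\gamma)]$, so first I would record the analogous presentations for the other two stacks. Arguing exactly as in Propositions~\ref{quoti1} and~\ref{quoti}, the presheaf quotient of $\underline{\CM}'_{\gamma}$ by the action $\varphi\cdot G=\varphi(G)$ is the presheaf $S\mapsto\{\text{orbits }[G]\}$ cut out by the condition $\gamma(\Gamma(2))^{\perp}\cap\Delta_{N_{\sigma}(s)}=\emptyset$, so its fppf stackification $\tilde{\mathfrak{M}}''_{\gamma}$ is canonically $[\CM'_{\gamma}/O(N_{\sigma},\gamma)]$; likewise, using the translation of $\underline{\tilde{\CP}}''_{\gamma}$ from Proposition~\ref{amplec} and the action $\varphi\cdot(G,\alpha)=(\varphi(G),\alpha)$, the stack $\tilde{\mathfrak{P}}''_{\gamma}$ is $[\CP''_{\gamma}/O(N_{\sigma},\gamma)]$, where $\CP''_{\gamma}$ is the open subscheme of $\CP_{\sigma}$ corresponding to $\CS''_{\gamma}$ under the period isomorphism $\CS_{\sigma}\xrightarrow{\sim}\CP_{\sigma}$. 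Since $\CM'_{\gamma}$ is a smooth quasi-projective scheme and $O(N_{\sigma},\gamma)$ is finite, $[\CM'_{\gamma}/O(N_{\sigma},\gamma)]$ is a smooth (even separated) Deligne--Mumford stack: quotients of schemes by finite groups are Deligne--Mumford, and smoothness descends along the étale atlas $\CM'_{\gamma}\to\tilde{\mathfrak{M}}''_{\gamma}$. The corresponding statement for $\tilde{\mathfrak{P}}''_{\gamma}$ will be automatic once it is identified with $\tilde{\mathfrak{S}}''_{\gamma}$, which is a smooth Deligne--Mumford stack by Proposition~\ref{quoti}.

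For the morphism $\tilde{\mathfrak{S}}''_{\gamma}\to\tilde{\mathfrak{M}}''_{\gamma}$, note that it is the one induced by $\pi''_{\gamma}\colon\CS''_{\gamma}\to\CM'_{\gamma}$, which is $O(N_{\sigma},\gamma)$-equivariant because $\pi''_{\gamma}$ is a restriction of the period map $\pi_{\sigma}$ and forgetting the ample cone commutes with precomposing the $N_{\sigma}$-marking by an isometry. The atlas $\CS''_{\gamma}\to\tilde{\mathfrak{S}}''_{\gamma}$ is an $O(N_{\sigma},\gamma)$-torsor, hence finite étale and surjective, and the composite $\CS''_{\gamma}\xrightarrow{\pi''_{\gamma}}\CM'_{\gamma}\to\tilde{\mathfrak{M}}''_{\gamma}$ agrees with $\CS''_{\gamma}\to\tilde{\mathfrak{S}}''_{\gamma}\to\tilde{\mathfrak{M}}''_{\gamma}$ and is the composite of the étale surjective morphism $\pi''_{\gamma}$ (the corollary to Proposition~\ref{amp}) with the finite étale surjective atlas, hence étale and surjective. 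Since being étale and being surjective descend along an étale surjective morphism on the source, I conclude that $\tilde{\mathfrak{S}}''_{\gamma}\to\tilde{\mathfrak{M}}''_{\gamma}$ is étale and surjective.

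For the last assertion I would use the period isomorphism: $\CS_{\sigma}\xrightarrow{\sim}\CP_{\sigma}$ restricts to an isomorphism $\CS''_{\gamma}\xrightarrow{\sim}\CP''_{\gamma}$, and this isomorphism is $O(N_{\sigma},\gamma)$-equivariant. Indeed, in the description of $\underline{\tilde{\CP}}''_{\gamma}$ from Proposition~\ref{amplec} the ample cone lies in $C_{\Gamma(2)}^{|S|}$, and every $\varphi\in O(N_{\sigma},\gamma)$ restricts to the identity on $\gamma(\Gamma(2))$, so $\varphi$ fixes $\alpha$ and acts only by $G\mapsto\varphi(G)$, matching the action on $\CS''_{\gamma}$. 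Passing to quotient stacks gives $\tilde{\mathfrak{S}}''_{\gamma}\simeq[\CS''_{\gamma}/O(N_{\sigma},\gamma)]\simeq[\CP''_{\gamma}/O(N_{\sigma},\gamma)]\simeq\tilde{\mathfrak{P}}''_{\gamma}$, which is the claimed isomorphism and in particular shows that $\tilde{\mathfrak{P}}''_{\gamma}$ is a smooth Deligne--Mumford stack.

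The genuinely routine inputs here are standard: a quotient of a scheme by a finite group is a Deligne--Mumford stack, smoothness, étaleness and surjectivity descend along the atlas, and an $O(N_{\sigma},\gamma)$-torsor is finite étale. I expect the only real obstacle to be the bookkeeping around the group action, namely verifying carefully that $\pi''_{\gamma}$ and the period isomorphism are honestly $O(N_{\sigma},\gamma)$-equivariant (with the action of $\varphi$ normalised consistently, replacing $\varphi$ by $\varphi^{-1}$ if needed), and matching the informal \emph{equivalence-class} descriptions of $\tilde{\mathfrak{M}}''_{\gamma}$ and $\tilde{\mathfrak{P}}''_{\gamma}$ with the honest quotient stacks precisely as in the proof of Proposition~\ref{quoti1}.
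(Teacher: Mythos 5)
Your argument is correct and is essentially the paper's own: the paper simply observes that $\tilde{\mathfrak{M}}''_{\gamma}$ and $\tilde{\mathfrak{P}}''_{\gamma}$ are the quotient stacks for the $O(N_{\sigma},\gamma)$-action on $\CM'_{\gamma}$ and $\CP''_{\gamma}$, exactly the presentations you set up, with the smoothness, étale surjectivity and the identification via the equivariant period isomorphism then following as you describe. You have merely spelled out the standard quotient-stack facts that the paper leaves implicit.
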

\begin{proof}
This is clear since ${\tilde{\mathfrak{M}}''_{\gamma}}$ and $\tilde{\mathfrak{P}}''_{\gamma}$ are the quotient stacks associated to the action of $O(N_{\sigma}, \gamma)$ on ${\tilde{\CM}''_{\gamma}}$ and ${\tilde{\CP}''_{\gamma}}$ respectively.
\end{proof}
\begin{proposition}
There exists a quasi-projective $\mathbb{F}_p$-scheme $\gls{Mt''_g}$ and a finite $O(N_{\sigma}, \gamma)$-equivariant morphism $q \colon \CM_{\gamma}'  \lra \widetilde{\CM}_{\gamma}''$ such that $q$ is a strongly geometric quotient for the action of $O(N_{\sigma}, \gamma)$ on $\CM_{\gamma}'$ in the sense of \cite[Definition 2.2]{MR3084720}. The pair $(\widetilde{\CM}_{\gamma}'',q)$ is unique upto unique isomorphism.
\end{proposition}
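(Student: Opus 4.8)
The plan is to construct $\widetilde{\CM}_{\gamma}''$ as the geometric quotient of $\CM_{\gamma}'$ by the finite group $G := O(N_{\sigma},\gamma)$, combining the general existence theory for quotients by finite group actions (in the form used in \cite{MR3084720}) with the classical fact that quotients of quasi-projective schemes over a field by finite groups remain quasi-projective. Recall from the preceding results that $G$ is finite and that $\CM_{\gamma}'$ is a quasi-projective $\mathbb{F}_p$-scheme carrying a $G$-action.

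First I would fix an ample line bundle $\CL_0$ on $\CM_{\gamma}'$ and replace it by $\CL := \bigotimes_{\varphi\in G}\varphi^{\ast}\CL_0$, which is still ample and now carries a natural $G$-linearization. Using $G$-invariant sections of suitable powers of $\CL$ — or, equivalently, the fact that a finite set of points of a quasi-projective scheme is contained in an affine open, whose $G$-translates one intersects — every $G$-orbit of $\CM_{\gamma}'$ lies in a $G$-invariant affine open $\Spec A$, on which the quotient is $\Spec A^{G}$; by Noether's theorem $A$ is a finite $A^{G}$-module and $A^{G}$ is of finite type over $\mathbb{F}_p$. Gluing these charts produces a separated $\mathbb{F}_p$-scheme $\widetilde{\CM}_{\gamma}''$ of finite type with a finite, surjective, $G$-invariant morphism $q\colon \CM_{\gamma}' \to \widetilde{\CM}_{\gamma}''$. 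This is exactly the ``enough invariant affines'' situation, so \cite{MR3084720} applies and shows that $q$ is a strongly geometric quotient in the sense of \cite[Definition 2.2]{MR3084720}; the conditions beyond being merely a geometric quotient hold because $q$ is affine and affine quotients by finite groups are stable under arbitrary base change.

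It then remains to see that $\widetilde{\CM}_{\gamma}''$ is quasi-projective and that the pair is unique. For quasi-projectivity I would argue as in classical geometric invariant theory: the $G$-invariant sections of powers of the ample $G$-linearized bundle $\CL$ have $G$-invariant affine non-vanishing loci covering $\CM_{\gamma}'$ and descend to sections of an ample line bundle $\overline{\CL}$ on $\widetilde{\CM}_{\gamma}''$ with $q^{\ast}\overline{\CL}$ a power of $\CL$; since $q$ is finite and surjective this forces $\overline{\CL}$ to be ample, and a separated finite-type $\mathbb{F}_p$-scheme with an ample line bundle is quasi-projective. A point special to characteristic $p$ is that one may have to pass from $\CL$ to $\CL^{\otimes|G|}$ so that the linearization acts trivially on the fibres at points with non-trivial stabilizer — the stabilizer of a point acts on the line $\CL_x$ through a character of order prime to $p$, hence dividing $|G|$ — which is what lets the bundle itself, not only its linearization, descend to the coarse quotient. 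For uniqueness, a strongly geometric quotient is in particular a categorical quotient for $G$-invariant morphisms, which pins down $(\widetilde{\CM}_{\gamma}'',q)$ up to unique isomorphism.

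The main obstacle I expect is not a single hard step but careful bookkeeping: confirming that the glued scheme satisfies \emph{every} clause of \cite[Definition 2.2]{MR3084720} and not just ``geometric quotient'', which comes down to affineness of $q$ and its behaviour under base change; and performing the descent of ampleness correctly, since the $G$-linearized sheaf $\CL$ descends a priori only to the quotient stack $[\CM_{\gamma}'/G]$, and it is the stabilizer-character computation above that is needed to push a power of it down to the coarse quotient $\widetilde{\CM}_{\gamma}''$ itself in positive characteristic.
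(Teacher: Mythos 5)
Your construction is correct in substance, but it takes a genuinely more hands-on route than the paper, which disposes of the statement purely by citation: uniqueness because strongly geometric quotients are always unique when they exist, existence by \cite[Theorem 4.4]{MR3084720}, and quasi-projectivity plus finiteness of $q$ by \cite[Proposition 4.7]{MR3084720}. You instead rebuild the underlying classical theory: since $\CM'_{\gamma}$ is quasi-projective (hence separated and AF) and $O(N_{\sigma},\gamma)$ is finite, every orbit lies in an invariant affine open, the quotient is glued from $\Spec A^{G}$ charts with $A$ finite over $A^{G}$ by Noether, and quasi-projectivity of the quotient is obtained by averaging an ample bundle, killing the stabilizer characters on fibres by a power, and descending it along $q$. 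This buys a self-contained argument that makes visible exactly which properties of $\CM'_{\gamma}$ (quasi-projectivity, finiteness of the group) are used, at the cost of redoing material that the cited reference packages; the paper's version is shorter and defers all verifications, including the precise clauses of the definition of a strongly geometric quotient, to \cite{MR3084720}.

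One caveat: your stated reason for the ``strong'' part of the definition --- that ``affine quotients by finite groups are stable under arbitrary base change'' --- is not correct as a general principle. Formation of invariants commutes with flat base change, but not with arbitrary base change (already $\mathbb{Z}[x]^{\mathbb{Z}/2}=\mathbb{Z}[x^{2}]$ fails to base change correctly to $\mathbb{F}_{2}$), and this is particularly delicate here since $p$ may divide $|O(N_{\sigma},\gamma)|$. So whatever conditions \cite[Definition 2.2]{MR3084720} imposes beyond being a geometric quotient cannot be checked by that slogan. Since at that point you in any case invoke the existence theorem of \cite{MR3084720} for actions admitting invariant affine covers --- which is exactly what the paper does --- the conclusion stands, but the extra justification should be dropped or replaced by the verification actually required by that definition.
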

\begin{proof}
When a strongly geometrical quotient exists, it is always unique. The existence follows from \cite[Theorem 4.4]{MR3084720}. The scheme $\widetilde{\CM}''_{\gamma}$ is quasi-projective and $q$ is finite by \cite[Proposition 4.7.]{MR3084720}.
\end{proof}
We will use the scheme $\widetilde{\CM}''_{\gamma}$ later to construct the period scheme of supersingular Enriques surfaces.
\begin{remark}\label{struct}
The scheme $\CP''_{\gamma}$ can be described in the following way.  Choose a totally isotropic subspace $\Lambda \subseteq pN_{\sigma}^{\vee}/pN_{\sigma}$. Let $G_{\CM'_{\gamma}}^{\Lambda}$ be the open subset of $G_{\CM'_{\gamma}}$ corresponding to generatrices $G$ with $G \cap pN_{\sigma}^{\vee}/pN_{\sigma} \subseteq \Lambda$. For each $\alpha$ of the universal generatrix on $G_{\CM'_{\gamma}}^{\Lambda}$ let $G_{\CM'_{\gamma}}^{\Lambda, \alpha}$ be a copy of $G_{\CM'_{\gamma}}^{\Lambda}$. For $\Lambda' \subseteq \Lambda$ one has $G_{\CM'_{\gamma}}^{\Lambda'} \subseteq G_{\CM'_{\gamma}}^{\Lambda}$. Then $\CP''_{\gamma}$ is the scheme obtained from the disjoint union of the $G_{\CM'_{\gamma}}^{\Lambda, \alpha}$ by identifying open $G_{\CM'_{\gamma}}^{\Lambda', \alpha'}$ with the open subsets $G_{\CM'_{\gamma}}^{\Lambda', \alpha} \subseteq G_{\CM'_{\gamma}}^{\Lambda, \alpha}$ if $\alpha$ and $\alpha'$ agree there. For details, we refer to the proof of \cite[Proposition 1.16]{MR717616}.

In particular, there is an open cover $\CP''_{\gamma} = \bigcup_{\Lambda, \alpha} {\CM'_{\gamma}}^{\Lambda, \alpha}$ by open subsets of $G_{\CM'_{\gamma}}$. For a fixed ample cone $\alpha$ the union ${\CM'_{\gamma}}^{\alpha} \coloneqq \bigcup_{\Lambda} {\CM'_{\gamma}}^{\Lambda, \alpha}$ is an open subscheme of ${\CM'_{\gamma}}$ and each of the ${\CM'_{\gamma}}^{\alpha}$ is closed under the action of $O(N_{\sigma}, \gamma)$ on $\CP''_{\gamma}$.

Taking the corresponding quotient stacks by the $O(N_{\sigma}, \gamma)$-action, we also get a cover of $\tilde{\mathfrak{P}}''_{\gamma}$ via open immersions ${{\tilde{\mathfrak{M}}}_{\gamma}}^{\prime \prime \alpha} \lra \tilde{\mathfrak{P}}''_{\gamma}$.
\end{remark}

\begin{proposition}
There exists an $\mathbb{F}_p$-scheme $\gls{Stc''_s}$ locally of finite type and a $O(N_{\sigma}, \gamma)$-equivariant morphism $q \colon \CS_{\gamma}'' \lra \tilde{\CS}_{\gamma}''$ such that $q$ is a strongly geometric quotient for the action of $O(N_{\sigma}, \gamma)$ on $\CS_{\gamma}''$ in the sense of \cite[Definition 2.2]{MR3084720}. The pair $(\tilde{\CS}_{\gamma}'',q)$ is unique upto unique isomorphism and the induced morphism $\tilde{\pi}''_{\gamma} \colon \tilde{\CS}_{\gamma}'' \lra \widetilde{\CM}''_{\gamma}$ is étale and surjective.
\end{proposition}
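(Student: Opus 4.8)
The plan is as follows. Uniqueness is formal: a strongly geometric quotient, when it exists, is unique up to unique isomorphism, being a general feature of (strongly) geometric quotients \cite[Definition 2.2]{MR3084720}. So everything reduces to constructing the quotient and verifying the stated properties.

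The construction starts from the open cover recorded in Remark \ref{struct}. For each index $\alpha$ occurring there write $U_{\alpha}:={\CM'_{\gamma}}^{\alpha}$; then $\CS''_{\gamma}\cong\CP''_{\gamma}=\bigcup_{\alpha}U_{\alpha}$, each $U_{\alpha}$ is an $O(N_{\sigma},\gamma)$-invariant open subscheme of the quasi-projective scheme $\CM'_{\gamma}$, and on the piece $U_{\alpha}$ of $\CP''_{\gamma}$ the period morphism restricts to an $O(N_{\sigma},\gamma)$-equivariant open immersion $j_{\alpha}\colon U_{\alpha}\hookrightarrow\CM'_{\gamma}$. That the $U_{\alpha}$ are invariant uses that $O(N_{\sigma},\gamma)$ fixes $\gamma(\Gamma(2))$ and therefore acts trivially on $C_{\Gamma(2)}$, cf.\ Proposition \ref{amplec}; in particular every $O(N_{\sigma},\gamma)$-orbit on $\CS''_{\gamma}$ is contained in a single $U_{\alpha}$.

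Next I would produce the quotient by gluing the quotients of the pieces $U_{\alpha}$. By the construction of $\widetilde{\CM}''_{\gamma}$ (the preceding proposition) there is a finite strongly geometric quotient $q_{\CM}\colon\CM'_{\gamma}\to\widetilde{\CM}''_{\gamma}$ onto a quasi-projective scheme. Since $q_{\CM}$ is a geometric quotient, the image $\widetilde{U}_{\alpha}:=q_{\CM}(U_{\alpha})$ of the invariant open $U_{\alpha}$ is open in $\widetilde{\CM}''_{\gamma}$, and by compatibility of strongly geometric quotients with restriction to invariant opens \cite[Definition 2.2]{MR3084720} the map $q_{\alpha}:=q_{\CM}|_{U_{\alpha}}\colon U_{\alpha}\to\widetilde{U}_{\alpha}$ is again a strongly geometric quotient, onto a quasi-projective scheme. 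On an overlap $U_{\alpha}\cap U_{\alpha'}$, which is an invariant open, both $\widetilde{U}_{\alpha}$ and $\widetilde{U}_{\alpha'}$ restrict to the strongly geometric quotient of that overlap and are therefore canonically identified over it, and the cocycle condition on triple overlaps follows from the same uniqueness. Gluing the $\widetilde{U}_{\alpha}$ along these identifications, with the same combinatorics by which $\CS''_{\gamma}$ is assembled from the $U_{\alpha}$, yields an $\mathbb{F}_p$-scheme $\tilde{\CS}''_{\gamma}$, and the $q_{\alpha}$ glue to a morphism $q\colon\CS''_{\gamma}\to\tilde{\CS}''_{\gamma}$. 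The scheme $\tilde{\CS}''_{\gamma}$ is locally of finite type since each $\widetilde{U}_{\alpha}$ is (the invariant ring of a finite type $\mathbb{F}_p$-algebra under a finite group is again of finite type), and $q$ is a strongly geometric quotient because each condition of \cite[Definition 2.2]{MR3084720} is local on the target and holds for each $q_{\alpha}$. Alternatively the gluing can be bypassed altogether: every orbit lies in an invariant quasi-projective --- hence in an invariant affine --- open, so existence of $q$ follows directly from \cite[Theorem 4.4]{MR3084720}.

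Finally, applying the same localization property to the equivariant open immersions $j_{\alpha}$ produces open immersions $\widetilde{U}_{\alpha}\hookrightarrow\widetilde{\CM}''_{\gamma}$ compatible on overlaps, which glue to a morphism $\tilde{\pi}''_{\gamma}\colon\tilde{\CS}''_{\gamma}\to\widetilde{\CM}''_{\gamma}$ with $\tilde{\pi}''_{\gamma}\circ q=q_{\CM}\circ\pi''_{\gamma}$. As $\tilde{\pi}''_{\gamma}$ restricts to an open immersion on each member $\widetilde{U}_{\alpha}$ of an open cover of its source, it is étale; and it is surjective because the $U_{\alpha}$ cover $\CM'_{\gamma}$ and $q_{\CM}$ is surjective, so the $\widetilde{U}_{\alpha}$ cover $\widetilde{\CM}''_{\gamma}$. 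I expect the genuinely delicate point to be the bookkeeping around \cite[Definition 2.2]{MR3084720} --- namely that forming a strongly geometric quotient is compatible with restriction to invariant opens and that being such a quotient can be tested Zariski-locally on the target, which is exactly what legitimizes the gluing.
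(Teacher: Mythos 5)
Your proposal is essentially correct, and for the existence/uniqueness part it coincides with the paper: both rest on the covering of $\CS''_{\gamma} \cong \CP''_{\gamma}$ by the $O(N_{\sigma},\gamma)$-invariant quasi-projective opens ${\CM'_{\gamma}}^{\alpha}$ from Remark \ref{struct}, fed into \cite[Theorem 4.4, Proposition 4.7]{MR3084720} (your hand-gluing of the quotients $q(U_{\alpha})$ is an unpacked version of the same thing, and your "invariant quasi-projective, hence invariant affine" aside needs the standard step of intersecting the finitely many translates of an affine open inside the separated ${\CM'_{\gamma}}^{\alpha}$). Where you genuinely diverge is the étaleness of $\tilde{\pi}''_{\gamma} \colon \tilde{\CS}''_{\gamma} \to \widetilde{\CM}''_{\gamma}$: the paper does not re-examine the local structure of the quotient but instead verifies that $\pi''_{\gamma}$ is \emph{fixed-point reflecting} (if $\varphi$ fixes a period $G$ then it fixes the pair $(G,\alpha)$, since the action leaves the $C_{\Gamma(2)}$-label untouched) and then invokes the \emph{descent condition} \cite[Definition 3.6]{MR3084720} for $\CM'_{\gamma} \to \widetilde{\CM}''_{\gamma}$ to descend the étale morphism to the quotients; you instead exhibit $\tilde{\CS}''_{\gamma}$ as glued from opens $q(U_{\alpha})$ of $\widetilde{\CM}''_{\gamma}$, so that $\tilde{\pi}''_{\gamma}$ is even a local isomorphism. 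Both arguments turn on the same geometric input — triviality of the $O(N_{\sigma},\gamma)$-action on the ample-cone label, i.e.\ invariance of the pieces ${\CM'_{\gamma}}^{\alpha}$ — but your route buys a more explicit, self-contained picture of $\tilde{\CS}''_{\gamma}$ at the cost of the unverified (though standard) claims that the notion of strongly geometric quotient in \cite[Definition 2.2]{MR3084720} is Zariski-local on the target and compatible with restriction to invariant opens; the paper's citation of the fixed-point-reflecting/descent-condition formalism is exactly the reference's packaged form of that bookkeeping, which you correctly identified as the delicate point.
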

\begin{proof}
It follows from Remark \ref{struct} that we can apply \cite[Theorem 4.4]{MR3084720} and \cite[Proposition 4.7.]{MR3084720} again. It remains to show that $\tilde{\pi}''_{\gamma} \colon \tilde{\CS}_{\gamma}'' \lra \widetilde{\CM}''_{\gamma}$ is étale.

We claim that the morphism $\pi_{\gamma}''$ is \emph{fixed-point reflecting} in the sense of \cite{MR3084720}. That means for each $x \in \CS''_{\gamma}$ and $\varphi \in O(N_{\sigma},\gamma)$ we have that $\varphi \cdot x = x$ if and only if $\varphi \cdot \pi''_{\gamma}(x)= \pi''_{\gamma}(x)$. Indeed, let $x \in \CS''_{\gamma}(k)$ and $\varphi \in O(N_{\sigma},\gamma)$ such that $x$ corresponds to a tuple $(G, \alpha) \in \CP_{\sigma}(k)$ where $G$ is a characteristic subspace and $\alpha$ is an ample cone. We need to verify that if $\varphi \cdot G = G$, then we also have $\varphi \cdot (G, \alpha) = (G, \alpha)$. But this is clear from the previous discussion. 

Since the quotient $\CM'_{\gamma} \rightarrow \widetilde{\CM}''_{\gamma}$ satisfies the \textit{descent condition} in the sense of \cite[Definition 3.6]{MR3084720} we are done.
\end{proof}

\begin{definition}
We write $\underline{\CS}''_{\gamma, \sigma}$ for the open subfunctor of $\underline{\CS}''_{\gamma}$ such that
\begin{align*}
    \underline{\CS}''_{\gamma, \sigma}(S) = \{(f \colon \CX \rightarrow S,\psi) \in \underline{\CS}''_{\gamma}(s) \mid \sigma(\mathrm{NS}(\CX_s)) = \sigma \text{ for all $s \in S$}\}
\end{align*}
and define $\CS''_{\gamma, \sigma}$, $\underline{\tilde{\CS}}''_{\gamma, \sigma}$, $\CM''_{\gamma, \sigma}$ etc.\ analogously.
\end{definition}

\begin{proposition}\label{coar}
The open subscheme $q(\CS''_{\gamma, \sigma})$ in $\tilde{\CS}''_{\gamma}$ represents  $\underline{\tilde{\CS}}''_{\gamma, \sigma}$. 
\end{proposition}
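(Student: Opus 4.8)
The plan is to reduce the statement to the already-established machinery of strongly geometric quotients together with Proposition \ref{quoti1}. First I would note that the condition $\sigma(\mathrm{NS}(\CX_s))=\sigma$ depends only on $f$ and not on the marking, so the open subfunctor $\underline{\CS}''_{\gamma,\sigma}$ is stable under the $O(N_\sigma,\gamma)$-action; hence $\CS''_{\gamma,\sigma}$ is an $O(N_\sigma,\gamma)$-invariant open subscheme of $\CS''_\gamma$. Since $q\colon\CS''_\gamma\to\tilde{\CS}''_\gamma$ is a geometric quotient it is submersive, so $q(\CS''_{\gamma,\sigma})$ is open in $\tilde{\CS}''_\gamma$ with $q^{-1}(q(\CS''_{\gamma,\sigma}))=\CS''_{\gamma,\sigma}$, and the restriction $q|\colon\CS''_{\gamma,\sigma}\to q(\CS''_{\gamma,\sigma})$ is again a strongly geometric quotient for the $O(N_\sigma,\gamma)$-action (restriction of a strongly geometric quotient to an invariant open is one). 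Restricting the canonical isomorphism of Proposition \ref{quoti1} to the Artin-invariant-$\sigma$ locus, which is respected on both sides, gives $\underline{\tilde{\CS}}''^{ps}_{\gamma,\sigma}\cong\underline{\CS}''_{\gamma,\sigma}/O(N_\sigma,\gamma)$ as presheaves, so $\underline{\tilde{\CS}}''_{\gamma,\sigma}$ is the fppf sheafification of $S\mapsto\underline{\CS}''_{\gamma,\sigma}(S)/O(N_\sigma,\gamma)$. Thus it suffices to show that $q|$ is an fppf cover whose associated equivalence relation $\CS''_{\gamma,\sigma}\times_{q(\CS''_{\gamma,\sigma})}\CS''_{\gamma,\sigma}$ is (the fppf-sheaf image of) the orbit relation; then $q(\CS''_{\gamma,\sigma})$ is the coequalizer of $O(N_\sigma,\gamma)\times\CS''_{\gamma,\sigma}\rightrightarrows\CS''_{\gamma,\sigma}$ in fppf sheaves, which is precisely this sheafification.

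The crucial input is control of the stabilizers on $\CS''_{\gamma,\sigma}$. Recall the central involution $\iota^*_\gamma\in O(N_\sigma,\gamma)$ from the proof of Proposition \ref{amp}: since $\gamma(\Gamma(2))$ is $2$-elementary and $p$ is odd, the $p$-elementary part $pN_\sigma^\vee/pN_\sigma$ of the discriminant group comes entirely from $\gamma(\Gamma(2))^\perp$, on which $\iota^*_\gamma$ acts as $-\mathrm{id}$; hence $\iota^*_\gamma$ acts as $-\mathrm{id}$ on $pN_\sigma^\vee/pN_\sigma$ and therefore fixes every characteristic generatrix. Moreover over $\CS''_\gamma$ it is induced by the genuine automorphism $\iota_\gamma$, so $\langle\iota^*_\gamma\rangle$ acts trivially on $\underline{\CS}''_\gamma$; write $\bar G\coloneqq O(N_\sigma,\gamma)/\langle\iota^*_\gamma\rangle$ for the effectively acting quotient. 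On the locus $\CS''_{\gamma,\sigma}$ the $N_\sigma$-marking $\psi$ is a fibrewise isomorphism onto $\mathrm{NS}(\CX_s)$; combining this with the fixed-point-reflecting property of $\pi''_\gamma$ established in the previous proposition, the stabilizer of a geometric point is $\{\varphi\in O(N_\sigma,\gamma)\mid \varphi(G)=G\}$ for the corresponding generatrix $G$, equivalently the group of automorphisms of the K3 surface fixing $\gamma'(\Gamma(2))$ pointwise. The main obstacle I foresee is proving that this group is exactly $\{\mathrm{id},\iota_\gamma\}$ — i.e. that $\bar G$ acts freely on $\CS''_{\gamma,\sigma}$ — for which I would argue, using Ogus' Torelli theorem \cite{MR717616} and the fact that a generatrix of maximal Artin invariant $\sigma$ meets $pN_\sigma^\vee/pN_\sigma$ only in $0$, that no nontrivial element of $O(N_\sigma,\gamma)$ other than $\iota^*_\gamma$ can fix such a $G$; this is the point where one must rule out ``extra'' numerically trivial automorphisms of the supersingular Enriques quotients.

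Granting the freeness of the $\bar G$-action, $q|\colon\CS''_{\gamma,\sigma}\to q(\CS''_{\gamma,\sigma})$ is a $\bar G$-torsor for the fppf (indeed the étale) topology, hence an fppf cover whose associated equivalence relation is $\bar G\times\CS''_{\gamma,\sigma}$, which is exactly the orbit relation since $\langle\iota^*_\gamma\rangle$ acts trivially. Together with the first paragraph this identifies $q(\CS''_{\gamma,\sigma})$ with the fppf sheafification of $\underline{\CS}''_{\gamma,\sigma}/O(N_\sigma,\gamma)$, i.e. with $\underline{\tilde{\CS}}''_{\gamma,\sigma}$. Alternatively, one can bypass the explicit freeness statement and invoke directly the properties of strongly geometric quotients in \cite{MR3084720}, which are formulated so that such a quotient represents the fppf sheafification of the naive orbit-space presheaf; but the torsor argument above is the more self-contained route and is, I expect, what makes the restriction to the Artin-invariant-$\sigma$ locus necessary — it is precisely there that the quotient stack becomes an algebraic space.
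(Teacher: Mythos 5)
Your first reduction (invariance of the Artin-invariant-$\sigma$ locus, openness of $q(\CS''_{\gamma,\sigma})$, restriction of the strongly geometric quotient, and the identification of $\underline{\tilde{\CS}}''_{\gamma,\sigma}$ with the fppf sheafification of the orbit presheaf via Proposition \ref{quoti1}) is sound and parallels the paper, which invokes \cite[Theorem 2.16]{MR3084720} for exactly this step. The observation that $\iota^{\ast}_{\gamma}$ acts as $-\mathrm{id}$ on $pN_{\sigma}^{\vee}/pN_{\sigma}$ and hence trivially on the moduli functor is also correct. But the heart of the matter — controlling the stabilizers on $\CS''_{\gamma,\sigma}$, equivalently showing that the $O(N_{\sigma},\gamma)$-action factorizes over a free action — is precisely what you leave open: you flag it as ``the main obstacle'' and offer only a plan (``using Ogus' Torelli theorem and the fact that a generatrix of maximal Artin invariant meets $pN_{\sigma}^{\vee}/pN_{\sigma}$ only in $0$''). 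That plan does not contain the argument that actually closes the gap, and it aims at a stronger statement than is needed or likely true: you want the stabilizer of every geometric point to be exactly $\{\mathrm{id},\iota^{\ast}_{\gamma}\}$, whereas the stabilizer of a maximal-Artin-invariant generatrix is the full preimage in $O(N_{\sigma},\gamma)$ of $\{\pm\mathrm{id}\}\subseteq O(pN_{\sigma}^{\vee}/pN_{\sigma})$, which may well be larger (isometries of $\gamma(\Gamma(2))^{\perp}$ fixing the glue and acting trivially on the $p$-part of the discriminant are not excluded by anything you say). What one actually proves — and what the paper proves — is the weaker but sufficient statement that any $\varphi$ stabilizing a strictly characteristic generatrix $G$ acts as a scalar on $pN_{\sigma}^{\vee}/pN_{\sigma}\otimes k$ and hence fixes \emph{every} generatrix; the mechanism is the Frobenius-cyclic structure of $G$ (the line $\langle x_0\rangle=\bigcap_i F^{-i}(G)$, the basis $x_0,F(x_0),\dots,F^{2\sigma-1}(x_0)$, and the fact that $\varphi$ commutes with $F$, forcing $\varphi=a\cdot\mathrm{id}$ with $a=\pm1$). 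Without this linear-algebra input (or a genuine substitute), your torsor claim for $\bar{G}$ is unsupported, and the route through ``ruling out numerically trivial automorphisms of the Enriques quotients'' is not the relevant statement.

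A second, smaller but real error: your closing remark that one ``can bypass the explicit freeness statement and invoke directly the properties of strongly geometric quotients in \cite{MR3084720}, which are formulated so that such a quotient represents the fppf sheafification of the naive orbit-space presheaf'' is false. A strongly geometric quotient does not in general represent that sheafification — if it did, the same argument would make $\tilde{\CS}''_{\gamma}$ represent $\underline{\tilde{\CS}}''_{\gamma}$, which the paper explicitly rules out because of nontrivial stabilizers. The restriction to the Artin-invariant-$\sigma$ locus is not optional decoration; it is exactly where the factorization over a free action (the hypothesis of \cite[Theorem 2.16]{MR3084720}) becomes available, and that factorization is the step your proposal does not establish.
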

\begin{proof}
The open subscheme $\CS''_{\gamma, \sigma}$ is closed under the action of $O(N_{\sigma}, \gamma)$ on $\CS''_{\gamma}$. By \cite[Theorem 2.16]{MR3084720} it suffices to show that the induced action of $O(N_{\sigma}, \gamma)$ on $\CS''_{\gamma, \sigma}$ factorizes over a free group action. 

Let $G \in \underline{\CM}''_{\gamma, \sigma}(k)$. Using the description of the action of $O(N_{\sigma}, \gamma)$ on $\underline{\CS}''_{\gamma}$ via the period map, it suffices to show that if $\varphi \in O(N_{\sigma}, \gamma)$ is such that $\varphi(G) = G$, then we have $\varphi(G')=G'$ for all $G' \in \underline{\CM}''_{\gamma, \sigma}(S)$.

A generatrix $G \in \underline{\CM}''_{\gamma, \sigma}(k)$ is always a strictly characteristic generatrix of $pN_{\sigma}^{\vee}/pN_{\sigma} \otimes k$ and writing $F$ for the induced Frobenius morphism on $pN_{\sigma}^{\vee}/pN_{\sigma} \otimes k$ we can write $G=\{x_0, \ldots, x_{\sigma-1}\}$ with $\langle x_0 \rangle = \bigcap_{i=0}^{\sigma-1} F^{-i}(G)$ and $F(x_i) = x_{i+1}$ for all $i$. Then $F^{\sigma}(G)$ is also a strictly characteristic generatrix and we have $(F(x_i), x_j) \neq 0$ if and only if $i=j$. In particular it follows that $\{x_0, F(x_0), \ldots, F^{2 \sigma-1}(x_0)\}$ is a basis of $pN_{\sigma}^{\vee}/pN_{\sigma} \otimes k$.

We observe that for any $\varphi \in O(N_{\sigma}, \gamma)$ the induced automorphism $\varphi$ on $pN_{\sigma}^{\vee}/pN_{\sigma} \otimes k$ commutes with $F$ (since it comes from an automorphism of $pN_{\sigma}^{\vee}/pN_{\sigma}$). Let $\varphi \in O(N_{\sigma}, \gamma)$ with $\varphi(G) = G$. Then $x_0$ is an eigenvector of $\varphi$, say $\varphi(x_0) = a \cdot x_0$ and subsequently we find $\varphi(F^i(x_0)) = a \cdot F^i(x_0)$ for all $i$. Thus, $\varphi \colon pN_{\sigma}^{\vee}/pN_{\sigma} \otimes k \lra pN_{\sigma}^{\vee}/pN_{\sigma} \otimes k$ is just multiplication by $a$ (a postiori we even have $a \in \{\pm 1\}$) and thus fixes any $G' \in \underline{\CM}''_{\gamma, \sigma}(k)$. 
\end{proof}




\section{Moduli spaces of \texorpdfstring{$\Gamma(2)$-marked}{marked} supersingular K3 surfaces}\label{secmark}
Next, we want to get rid of having to make a choice of a sublattice $\underline{\CR}$ in $\Pic_{\CX/S}$. The idea is, that on an open dense subset of the moduli stack $\tilde{\mathfrak{S}}''_{\gamma}$ we do not have a choice anyways, and the closed complement of this open subspace can be contracted to the corresponding moduli stack for Artin invariant $\sigma-1$ by forgetting about the sublattice $\underline{\CR}$.

We now introduce the stack
\begin{align*} \gls{Et_s} \colon \CA_{\mathbb{F}_p}^{\mathrm{op}} &\lra \left(\text{Groupoids} \right) \\
S &\longmapsto \left\{\begin{array}{l}\text{Tuples $(f, \gamma)$, where} \\ \text{$f \colon \CX \rightarrow S$ is a family of supersingular K3 surfaces} \\
\text{that fppf locally admit an $N_{\sigma}$-marking} \\
\text{and $\gamma$ is an embedding $\gamma \colon \underline{\Gamma}(2) \hookrightarrow \Pic_{\CX/S}$} \\
\text{such that for each geometric point $s \in S$} \\
\text{the sublattice $\gamma_s(\Gamma(2)) \hookrightarrow \mathrm{NS}(\CX_s)$} \\
\text{contains an ample line bundle} \\
\text{and $\gamma_s(\Gamma(2))^{\perp} \hookrightarrow \mathrm{NS}(\CX_s)$ contains no $-2$-vector}\end{array} \right\}.  \end{align*}
We are again interested in the properties of the stack $\tilde{\mathfrak{E}}_{\sigma}$. The discussion will use an inductive argument, so we start by discussing the case $\sigma=1$.
\begin{proposition}\label{sigmaisone} The stack $\tilde{\mathfrak{E}}_{1}$ is representable by a zero-dimensional scheme $\tilde{\mathfrak{E}}_1$ locally of finite type over $\mathbb{F}_p$. \end{proposition}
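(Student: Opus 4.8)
The plan is to reduce $\tilde{\mathfrak{E}}_{1}$ to objects already constructed in Section~\ref{aux}, exploiting that for $\sigma=1$ everything in sight is zero-dimensional. First I would record that $\sigma=1$ is degenerate: there is no $\sigma'<1$, so every set $R_{\sigma',1}$ is empty and hence $R_{\gamma}=\emptyset$ for each $\gamma\in R_{1}$, where $R_{1}$ is non-empty (by Corollary~2.4 of \cite{MR3350105}, since $1\leq 5$) and finite. By Proposition~\ref{open1} this gives $\CS'_{\gamma}=\CS_{1}$ and $\CM'_{\gamma}=\CM_{1}$, and by Proposition~\ref{amp} the scheme $\CS''_{\gamma}$ is an open subscheme of $\CS_{1}$. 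By Ogus' theorem \cite{MR717616}, recalled in Section~\ref{secclasmark}, the scheme $\CS_{1}$ is smooth of dimension $0$ and locally of finite type over $\mathbb{F}_{p}$; being reduced and zero-dimensional it is a disjoint union of spectra of finite fields, and the same holds for the open subscheme $\CS''_{\gamma}$. Finally, an $N_{1}$-marking forces every geometric fibre to have Artin invariant $\leq 1$, hence exactly $1$, so $\CS''_{\gamma}=\CS''_{\gamma,1}$.

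The heart of the argument is to identify $\tilde{\mathfrak{E}}_{1}$ with the finite disjoint union $\coprod_{\gamma\in R_{1}}\tilde{\mathfrak{S}}''_{\gamma}$ via the functor forgetting the chosen sublattice, $(f,\underline{\CR},\gamma')\mapsto(f,\gamma')$; this lands in $\tilde{\mathfrak{E}}_{1}$ because an fppf-local isomorphism $\underline{N}_{1}\cong\underline{\CR}$ is in particular an $N_{1}$-marking. The key input is a lattice-theoretic rigidity special to Artin invariant $1$: if $X$ is a supersingular K3 surface with $\sigma(\mathrm{NS}(X))=1$, then any $N_{1}$-sublattice of $\mathrm{NS}(X)$ has discriminant $-p^{2}=d(\mathrm{NS}(X))$ and finite index, hence equals $\mathrm{NS}(X)$. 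Thus for $(f,\gamma)\in\tilde{\mathfrak{E}}_{1}(S)$ the sublattice $\underline{\CR}$ produced from any fppf-local $N_{1}$-marking must be the full relative N\'eron--Severi sheaf; the local choices therefore agree on overlaps and glue to a canonical subsheaf $\underline{\CR}\subseteq\Pic_{\CX/S}$, intrinsic to $(f,\gamma)$. Pulling $\gamma$ back through a local marking $\psi$ gives an embedding $\underline{\Gamma}(2)\hookrightarrow\underline{N}_{1}$ which on each geometric fibre is primitive with no $(-2)$-vector in its orthogonal complement --- transported via the isometry $\psi$ from the defining conditions on $\gamma_{s}(\Gamma(2))\subseteq\mathrm{NS}(\CX_{s})$ --- hence is isomorphic to a unique representative in $R_{1}$; as $R_{1}$ is finite and this class over a geometric point is independent of $\psi$, it is locally constant on $S$, which gives essential surjectivity onto $\coprod_{\gamma\in R_{1}}\tilde{\mathfrak{S}}''_{\gamma}$. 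Full faithfulness holds since, $\underline{\CR}$ being intrinsic, every isomorphism of the underlying pairs $(f,\gamma)$ automatically respects the sublattices; and distinct $\gamma\in R_{1}$ give disjoint images, a common object forcing two chosen representatives to be isomorphic.

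To conclude, by Proposition~\ref{quoti1} (see also Proposition~\ref{quoti}) one has $\tilde{\mathfrak{S}}''_{\gamma}\cong[\CS''_{\gamma}/O(N_{1},\gamma)]$ with $O(N_{1},\gamma)$ finite; since $\CS''_{\gamma}=\CS''_{\gamma,1}$, Proposition~\ref{coar} shows that the associated fppf sheaf is represented by the scheme $\tilde{\CS}''_{\gamma}=q(\CS''_{\gamma})$, which is locally of finite type over $\mathbb{F}_{p}$ and, being the image of the zero-dimensional $\CS''_{\gamma}$ under the finite morphism $q$, is zero-dimensional. Taking the finite disjoint union over $\gamma\in R_{1}$ then yields the asserted zero-dimensional scheme $\tilde{\mathfrak{E}}_{1}$, locally of finite type over $\mathbb{F}_{p}$. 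I expect the equivalence $\tilde{\mathfrak{E}}_{1}\cong\coprod_{\gamma\in R_{1}}\tilde{\mathfrak{S}}''_{\gamma}$ to be the only genuine point: it rests entirely on the rigidity "an $N_{1}$-sublattice of $\mathrm{NS}(X)$ is everything" and the consequent fppf-descent of the intrinsic $\underline{\CR}$, while the rest is bookkeeping with Ogus' dimension count and the propositions of Section~\ref{aux}.
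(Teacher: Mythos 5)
Your proposal is correct and follows essentially the same route as the paper: identify $\tilde{\mathfrak{E}}_{1}$ with $\coprod_{\gamma \in R_{1}} \tilde{\mathfrak{S}}''_{\gamma}$ via the forgetful map, using the fact that for $\sigma=1$ any $N_{1}$-sublattice $\underline{\CR}$ is forced to be all of $\Pic_{\CX/S}$, and then conclude representability by the zero-dimensional scheme $\coprod_{\gamma \in R_{1}} \tilde{\CS}''_{\gamma}$. Your discriminant argument for the rigidity of $\underline{\CR}$ and the zero-dimensionality via $\CS_{1}$ are exactly the justifications the paper leaves implicit (the latter appearing in the remark following the proposition).
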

\begin{proof} For each $\gamma \in R_1$ there is a canonical morphism of functors $\underline{\tilde{\CS}}''_{\gamma} \rightarrow \tilde{\mathfrak{E}}_1$ which is given on $S$-valued points by forgetting about the choice of a sublattice $\underline{\CR} \subseteq \Pic_{\CX/S}$. Since any such sublattice $\underline{\CR} \subseteq \Pic_{\CX/S}$ is actually already equal to $\Pic_{\CX/S}$, we see that this morphism is injective on $S$-valued points and it follows that $\coprod_{\gamma \in R_1} \underline{\tilde{\CS}}''_{\gamma} \lra \tilde{\mathfrak{E}}_1$ is an isomorphism of functors. Hence, the functor $\tilde{\mathfrak{E}}_1$ is represented by the scheme $\coprod_{\gamma \in R_1} \tilde{\CS}''_{\gamma}$. \end{proof}
\begin{remark} More precisely, since $\CS_{1}$ is isomorphic to a disjoint union of finitely many copies of $\Spec \mathbb{F}_{p^2}$ and $\tilde{\mathfrak{S}}''_{\gamma}$ is just an open subscheme of a quotient of an open subscheme of $\CS_{\sigma}$, we easily see that $\tilde{\mathfrak{E}}_{1}$ is just a disjoint union of finitely many copies of $\Spec \mathbb{F}_{p^2}$ as well. In particular $\tilde{\mathfrak{E}}_1$ is a separated $\mathbb{F}_p$-scheme. However, for the inductive argument later we will only need the properties from Proposition \ref{sigmaisone}. \end{remark}
We will need the following lemma which might be well-known, but we did not find it in the literature in full generality. That is, we do not require any assumptions on being a scheme, being noetherian or separatedness.
\begin{lemma}\label{sweet} Let $X, Y$ and $Z$ be algebraic spaces that are locally of finite type over a base scheme $S$ together with $S$-morphisms $f \colon X \rightarrow Y$ and $g \colon Y \rightarrow Z$ such that $g \circ f$ is proper (respectively finite) and $f$ is proper (respectively finite) and surjective. Then $g$ is proper (respectively finite). \end{lemma}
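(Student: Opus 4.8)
The plan is to verify, property by property, that $g$ is proper (respectively finite), using throughout the surjectivity of $f$ and, for the separatedness statement, the fact that $f$ is itself proper. Recall that a morphism of algebraic spaces is proper exactly when it is separated, of finite type, and universally closed.

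First I would dispose of the easy properties. That $g$ is universally closed is formal from surjectivity of $f$: after any base change $Z'\to Z$ and for any closed $C\subseteq Y'=Y\times_Z Z'$, surjectivity of the base change $f'$ gives $g'(C)=(g\circ f)'\bigl((f')^{-1}(C)\bigr)$, which is closed because $(g\circ f)'$ is; the same computation shows, more generally, that if $p\circ q$ is universally closed and $q$ is surjective then $p$ is universally closed. That $g$ is quasi-compact is analogous: for a quasi-compact open $V\subseteq Z$ the space $(g\circ f)^{-1}(V)=f^{-1}\bigl(g^{-1}(V)\bigr)$ is quasi-compact and surjects onto $g^{-1}(V)$. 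That $g$ is locally of finite type is where the hypothesis on $S$ is used: the graph $\Gamma_g\colon Y\to Y\times_S Z$ is a base change of the diagonal $\Delta_{Z/S}$, which is locally of finite type, while $Y\times_S Z\to Z$ is a base change of the locally-of-finite-type morphism $Y\to S$; composing, $g$ is locally of finite type and hence of finite type.

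The substantive step is separatedness, which is where properness of $f$ really enters. I would argue via the diagonal $\Delta_{Y/Z}\colon Y\to Y\times_Z Y$, which is always a monomorphism. The morphism $f\times_Z f\colon X\times_Z X\to Y\times_Z Y$ factors as a composite of two base changes of $f$, hence is proper and surjective, and $\Delta_{X/Z}\colon X\to X\times_Z X$ is a closed immersion because $g\circ f$, being proper, is separated. From $\Delta_{Y/Z}\circ f=(f\times_Z f)\circ\Delta_{X/Z}$ we read off that $\Delta_{Y/Z}\circ f$ is a composite of proper morphisms, hence universally closed; since $f$ is surjective, the cancellation observation above shows $\Delta_{Y/Z}$ is universally closed. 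A universally closed monomorphism of algebraic spaces is a closed immersion, so $g$ is separated (in particular quasi-separated), which together with the previous paragraph proves that $g$ is proper.

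For the finite case, a finite morphism is proper, so applying the proper case to $g\circ f$ and $f$ --- which are finite, hence proper, with $f$ surjective --- already gives that $g$ is proper. It then suffices to show $g$ is locally quasi-finite, after which Zariski's Main Theorem for algebraic spaces (a proper, locally quasi-finite morphism of algebraic spaces is finite) completes the argument. Local quasi-finiteness of $g$ holds because every fibre $g^{-1}(z)$ is the image under the surjection $f$ of the finite fibre $(g\circ f)^{-1}(z)$, and $g$ is locally of finite type. I expect the main obstacle to be not any individual computation but the bookkeeping needed to keep every step valid for arbitrary algebraic spaces locally of finite type over $S$, with no Noetherian or quasi-separatedness hypotheses; in particular one must cite, in the appropriate generality, that a universally closed monomorphism is a closed immersion and that a proper locally quasi-finite morphism of algebraic spaces is finite, for which the Stacks Project serves as reference.
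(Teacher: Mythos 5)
Your argument is correct, but it proceeds differently from the paper's. The paper proves only the finite case in detail (explicitly leaving the proper case to the reader): it gets that $g$ is locally of finite type, quasi-compact, quasi-finite and universally closed by the same formal arguments you use, and then obtains the crucial remaining property --- affineness --- from Chevalley's theorem for algebraic spaces (affineness descends along finite surjective morphisms, citing Rydh), so that $g$ is integral and locally of finite type, hence finite; separatedness never has to be addressed separately because affine morphisms are separated. You instead prove the proper case in full, supplying the one genuinely non-formal ingredient the paper omits: separatedness of $g$, via the factorization $\Delta_{Y/Z}\circ f=(f\times_Z f)\circ\Delta_{X/Z}$, the cancellation of universal closedness along the surjection $f$, and the fact that a universally closed (equivalently, proper, since you already know the diagonal is quasi-compact, separated and locally of finite type) monomorphism is a closed immersion; you then deduce the finite case from the proper case plus local quasi-finiteness and Zariski's Main Theorem for algebraic spaces, avoiding Chevalley altogether. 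Each route has its merits: the paper's is shorter for the finite statement and sidesteps ZMT, while yours actually establishes the proper statement (which the paper uses elsewhere but does not prove) and makes the finite case a corollary; your treatment of fibres (finite fibres plus locally of finite type gives local quasi-finiteness) is at the same level of care as the paper's own "finite discrete fibers" step, so no gap is introduced there that the paper does not also have.
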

\begin{proof} We prove that $g$ is finite when $f$ and $g \circ f$ are finite. We leave the proper case to the reader. Since $Y$ and $Z$ are locally of finite type, the morphism $g$ is locally of finite type \cite[Lemma 61.23.6]{stacks-project}. It is clear that $g$ has finite discrete fibers, because the fibers of $g \circ f$ surject onto the fibers of $g$. Further, the morphism $g$ is quasi-compact \cite[Lemma 61.8.6]{stacks-project}. It follows that $g$ is quasi-finite. Further, if $T \rightarrow Z$ is any morphism and $Q \subseteq Y_T$ is a closed subscheme, then the subscheme $g_T(Q)=g_T \circ f_T(f^{-1}_T(Q))$ is closed. This shows that $g$ is universally closed. Further, the fact that $g$ is affine follows from a version of Chevalley's theorem \cite[Theorem 8.1]{MR3272071}. All these properties together imply that $g$ is finite. \end{proof}
Since every family of supersingular K3 surfaces that admits an $N_{\sigma-1}$-marking also admits an $N_{\sigma}$-marking, the stack $\tilde{\mathfrak{E}}_{\sigma-1}$ is a substack of $\tilde{\mathfrak{E}}_{\sigma}$. For each positive integer $\sigma \leq 10$ there is a canonical morphism of stacks \begin{align*} p_{\sigma}: \coprod_{\gamma \in R_{\sigma}} \underline{\tilde{\mathfrak{S}}}''_{\gamma} \rightarrow \tilde{\mathfrak{E}}_{\sigma} \end{align*}
which is given on $S$-valued points by forgetting about the sublattice $\underline{\CR} \subseteq \Pic_{\CX/S}$. Then the preimage of the substack $\tilde{\mathfrak{E}}_{\sigma-1} \hookrightarrow \tilde{\mathfrak{E}}_{\sigma}$ under $p_{\sigma}$ is given by the closed substack \begin{align*} p_{\sigma}^{-1}(\tilde{\mathfrak{E}}_{\sigma-1}) = \coprod_{\gamma \in R_{\sigma}} \left(\left(\bigcup_{j \in R_{\sigma-1,\sigma}} q\left(\Phi_j(\CS_{\sigma-1})\cap \CS''_{\gamma} \right) \right) \backslash \left( \bigcup_{j \in R_{\gamma}} q\left(\Phi_j(\CS_{\sigma-1}) \cap \CS''_{\gamma} \right)\right) \right) \end{align*}  
of $\coprod_{\gamma \in R_{\sigma}} \tilde{\mathfrak{S}}''_{\gamma}$.
\begin{definition} For $\gamma \in R_{\sigma}$ and $j \in R_{\sigma-1, \sigma}$, we write $W^{\gamma}_{j}$ for the locally closed subspace of $\CS_{\sigma}$ defined to be
$$W^{\gamma}_{j} =  \left( \Phi_j(\CS_{\sigma-1}) \cap \CS''_{\gamma} \right) \backslash \left(\bigcup_{j' \in R_{\gamma}} \Phi_j(\CS_{\sigma-1}) \cap \CS''_{\gamma}\right).$$  \end{definition}
\begin{remark}\label{w} We have an equality of closed substacks $\bigcup_{\gamma \in R_{\sigma}, j \in R_{\sigma-1, \sigma}} q(W^{\gamma}_{j}) = p_{\sigma}^{-1}(\tilde{\mathfrak{E}}_{\sigma-1})$. Moreover, since $W^{\gamma}_{j}$ is a closed subspace of $\CS''_{\gamma}$, it follows from Proposition \ref{quoti} that the morphism $q|_{W^{\gamma}_{j}} \colon W^{\gamma}_{j} \lra \tilde{\mathfrak{S}}''_{\gamma}$ is finite.

Further, since $\Phi_j(\CS_{\sigma-1}) \cap \CS''_{\gamma}$ is canonically isomorphic to the open subscheme $\CS''_{j \circ \gamma}$ of $\CS_{\sigma-1}$, we also have a natural finite morphism $q \colon W^{\gamma}_j \lra \tilde{\mathfrak{S}}''_{j \circ \gamma}$. \end{remark}
\begin{lemma}\label{finite} Assume that $\tilde{\mathfrak{E}}_{\sigma-1}$ is a Deligne-Mumford stack that is locally of finite type over $\mathbb{F}_p$ and that the canonical morphism \mbox{$\coprod_{\gamma \in R_{\sigma}, j \in R_{\sigma-1, \sigma} } W^{\gamma}_{j} \rightarrow \tilde{\mathfrak{E}}_{\sigma-1}$} is finite. Then the restriction of $p_{\sigma}$ to $p_{\sigma}^{-1}(\tilde{\mathfrak{E}}_{\sigma-1})$ is a finite morphism . \end{lemma}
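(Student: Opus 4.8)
The plan is to realise $p_\sigma|_{p_\sigma^{-1}(\tilde{\mathfrak{E}}_{\sigma-1})}$ as the second leg of a composition to which Lemma \ref{sweet} applies. Put $X \coloneqq \coprod_{\gamma \in R_\sigma,\, j \in R_{\sigma-1,\sigma}} W^\gamma_j$ and $g \coloneqq p_\sigma|_{p_\sigma^{-1}(\tilde{\mathfrak{E}}_{\sigma-1})}$, and let $f \colon X \to p_\sigma^{-1}(\tilde{\mathfrak{E}}_{\sigma-1})$ be the morphism whose restriction to the $(\gamma,j)$-component is the corestriction of $q \colon W^\gamma_j \to \tilde{\mathfrak{S}}''_\gamma$ to the closed substack $p_\sigma^{-1}(\tilde{\mathfrak{E}}_{\sigma-1}) = \bigcup_{\gamma,j} q(W^\gamma_j) \subseteq \coprod_\gamma \tilde{\mathfrak{S}}''_\gamma$ (using Remark \ref{w}). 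Since the maps $q$ and $p_\sigma$ forget only the sublattice $\underline{\CR} \subseteq \Pic_{\CX/S}$, and since $W^\gamma_j$ parametrises families of Artin invariant at most $\sigma-1$, the composite $g \circ f$ is exactly the canonical morphism $\coprod_{\gamma,j} W^\gamma_j \to \tilde{\mathfrak{E}}_{\sigma-1}$ of the hypothesis; in particular $g \circ f$ is finite.

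I would then check that $f$ is finite and surjective. Surjectivity is immediate from the identity $p_\sigma^{-1}(\tilde{\mathfrak{E}}_{\sigma-1}) = \bigcup_{\gamma,j} q(W^\gamma_j)$ of Remark \ref{w}, which exhibits the target as the union of the images of the components of $f$. For finiteness, recall from Remark \ref{w} that each $q|_{W^\gamma_j} \colon W^\gamma_j \to \tilde{\mathfrak{S}}''_\gamma$ is finite, being the restriction of the finite morphism $\CS''_\gamma \to \tilde{\mathfrak{S}}''_\gamma$ of Proposition \ref{quoti} to the closed subspace $W^\gamma_j \subseteq \CS''_\gamma$. As this morphism factors through the closed immersion $p_\sigma^{-1}(\tilde{\mathfrak{E}}_{\sigma-1}) \hookrightarrow \coprod_\gamma \tilde{\mathfrak{S}}''_\gamma$, its corestriction $W^\gamma_j \to p_\sigma^{-1}(\tilde{\mathfrak{E}}_{\sigma-1})$ is again finite, being the base change of $q|_{W^\gamma_j}$ along that closed immersion. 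Since $R_\sigma$ and $R_{\sigma-1,\sigma}$ are finite, $f$ is a finite disjoint union of finite morphisms, hence finite.

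It then remains to invoke Lemma \ref{sweet} for $X \xrightarrow{f} p_\sigma^{-1}(\tilde{\mathfrak{E}}_{\sigma-1}) \xrightarrow{g} \tilde{\mathfrak{E}}_{\sigma-1}$: the source and target are locally of finite type over $\mathbb{F}_p$ (for $\tilde{\mathfrak{E}}_{\sigma-1}$ by hypothesis, for $p_\sigma^{-1}(\tilde{\mathfrak{E}}_{\sigma-1})$ because it is a closed substack of $\coprod_\gamma \tilde{\mathfrak{S}}''_\gamma$), $f$ is finite and surjective, and $g\circ f$ is finite, whence $g$ is finite. I expect the only real obstacle to be that Lemma \ref{sweet} is stated for algebraic spaces while $p_\sigma^{-1}(\tilde{\mathfrak{E}}_{\sigma-1})$ and $\tilde{\mathfrak{E}}_{\sigma-1}$ are a priori Deligne--Mumford stacks. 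I would resolve this by first checking that $p_\sigma$, and hence $g$, is \emph{representable} --- its fibre over a point of $\tilde{\mathfrak{E}}_\sigma$ is the finite set of sublattices of the relative Picard space that contain the given $\Gamma(2)$-marking and carry the discriminant form of $N_\sigma$ --- so that base change along an étale atlas $V \to \tilde{\mathfrak{E}}_{\sigma-1}$ by a scheme produces a triple of algebraic spaces locally of finite type over $V$; Lemma \ref{sweet} then applies verbatim, and finiteness descends because it is étale-local on the target. Alternatively, one observes that the proof of Lemma \ref{sweet} uses only being locally of finite type, quasi-compact, quasi-finite, universally closed and affine, and therefore goes through unchanged for Deligne--Mumford stacks.
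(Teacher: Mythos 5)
Your proposal is correct and is essentially the paper's own argument: the paper proves the lemma by applying Lemma \ref{sweet} to the composition $\coprod_{\gamma,j} W^{\gamma}_{j} \rightarrow p_{\sigma}^{-1}(\tilde{\mathfrak{E}}_{\sigma-1}) \rightarrow \tilde{\mathfrak{E}}_{\sigma-1}$, using Remark \ref{w} exactly as you do for finiteness and surjectivity of the first map. Your additional care about applying Lemma \ref{sweet} in the Deligne--Mumford setting (via representability of $p_{\sigma}$ or by noting the proof goes through unchanged) is a reasonable elaboration of a point the paper leaves implicit.
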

\begin{proof} This is a direct consequence of Lemma \ref{sweet}, and the previous remark. \end{proof}
\begin{theorem}\label{theorem1} Let $\sigma \leq 10$ be a positive integer.
\begin{enumerate}
\item The stack $\tilde{\mathfrak{E}}_{\sigma}$ is a Deligne-Mumford stack which is locally of finite type over $\mathbb{F}_p$ and quasi-separated.
\item  For each isomorphism class of primitive embeddings $\gamma \colon \Gamma(2) \hookrightarrow N_{\sigma+1}$ such that there is no $(-2)$-vector in $\gamma(\Gamma(2))^{\perp} \subset N_{\sigma+1}$ and each embedding of lattices $j \colon N_{\sigma+1} \hookrightarrow N_{\sigma}$ such that there is no $(-2)$-vector in $j(\gamma(\Gamma(2)))^{\perp} \subset N_{\sigma}$, the canonical morphism $W^{\gamma}_{j} \rightarrow \tilde{\mathfrak{E}}_{\sigma}$ is finite.
\end{enumerate}  \end{theorem}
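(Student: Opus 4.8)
The plan is to prove parts~(1) and~(2) simultaneously by induction on $\sigma$. For $\sigma \geq 6$ one has $R_{\sigma} = \emptyset$, and since every family in $\tilde{\mathfrak{E}}_{\sigma}$ carries a primitive $\Gamma(2)$-marking with no $(-2)$-vector in the orthogonal complement, each of its fibres has Artin invariant at most $5$ by Corollary~2.4 of \cite{MR3350105}; hence $\tilde{\mathfrak{E}}_{\sigma} = \tilde{\mathfrak{E}}_{\sigma-1}$ and both parts are immediate, part~(2) being vacuous. We may therefore assume $1 \leq \sigma \leq 5$. The base case $\sigma = 1$ is Proposition~\ref{sigmaisone}, where moreover $p_{1} \colon \coprod_{\gamma \in R_1} \tilde{\mathfrak{S}}''_{\gamma} \to \tilde{\mathfrak{E}}_{1}$ is an isomorphism, so that part~(2) for $\sigma=1$ follows from the same factorisation as in the inductive step below. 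For the inductive step we assume that $\tilde{\mathfrak{E}}_{\sigma-1}$ is a quasi-separated Deligne--Mumford stack, locally of finite type over $\mathbb{F}_p$, and that $\coprod_{\gamma \in R_{\sigma},\, j \in R_{\sigma-1,\sigma}} W^{\gamma}_{j} \to \tilde{\mathfrak{E}}_{\sigma-1}$ is finite, that is, that~(1) and~(2) hold for $\sigma-1$.

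For part~(1), I would exhibit $\tilde{\mathfrak{E}}_{\sigma}$ as a pushout. The forgetful morphism $p_{\sigma} \colon T \coloneqq \coprod_{\gamma \in R_{\sigma}} \tilde{\mathfrak{S}}''_{\gamma} \to \tilde{\mathfrak{E}}_{\sigma}$ is surjective: a geometric fibre of a family in $\tilde{\mathfrak{E}}_{\sigma}$ has $\NS \cong N_{\sigma'}$ with $\sigma' \leq \sigma$, the given $\Gamma(2)$-marking factors through a sublattice isometric to $N_{\sigma}$ (here one uses that $N_{\sigma}$ is $p$-elementary and $\Gamma(2)$ is $2$-elementary with $p \neq 2$, as in the proof of Lemma~\ref{primi}), and such a sublattice $\underline{\CR}$ can be chosen fppf locally, producing a lift to $T$. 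Over the open substack $\tilde{\mathfrak{E}}_{\sigma}^{\circ} \coloneqq \tilde{\mathfrak{E}}_{\sigma} \setminus \tilde{\mathfrak{E}}_{\sigma-1}$, on which every fibre has Artin invariant exactly $\sigma$, the subsheaf $\underline{\CR}$ is forced to be $\Pic_{\CX/S}$, so that $\tilde{\mathfrak{E}}_{\sigma}^{\circ}$ coincides with the disjoint union over $\gamma \in R_{\sigma}$ of the Artin-invariant-$\sigma$ loci of the stacks $\tilde{\mathfrak{S}}''_{\gamma}$; these are schemes by Proposition~\ref{coar} (the $O(N_{\sigma},\gamma)$-action being free there), so $p_{\sigma}$ is an isomorphism over the scheme $\tilde{\mathfrak{E}}_{\sigma}^{\circ}$. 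Over the closed complement $\tilde{\mathfrak{E}}_{\sigma-1}$, the morphism $p_{\sigma}$ restricts to $p_{\sigma}^{-1}(\tilde{\mathfrak{E}}_{\sigma-1}) = \bigcup_{\gamma,j} q(W^{\gamma}_{j}) \to \tilde{\mathfrak{E}}_{\sigma-1}$, which is finite by Remark~\ref{w} and Lemma~\ref{finite}. I would then argue (this is the crux, see below) that $\tilde{\mathfrak{E}}_{\sigma}$ is the pushout of $T$ along the closed immersion $p_{\sigma}^{-1}(\tilde{\mathfrak{E}}_{\sigma-1}) \hookrightarrow T$ and the finite morphism $p_{\sigma}^{-1}(\tilde{\mathfrak{E}}_{\sigma-1}) \to \tilde{\mathfrak{E}}_{\sigma-1}$; by the existence of pushouts of quasi-separated Deligne--Mumford stacks along closed immersions and affine morphisms (\cite{MR3572553} together with Rydh's unpublished results on pushouts of Deligne--Mumford stacks) the pushout is again a quasi-separated Deligne--Mumford stack, locally of finite type over $\mathbb{F}_p$, which gives~(1). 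Since the structure morphism of a pushout along a finite morphism is finite, $p_{\sigma} \colon T \to \tilde{\mathfrak{E}}_{\sigma}$ is finite as well.

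For part~(2), let $\gamma \colon \Gamma(2) \hookrightarrow N_{\sigma+1}$ and $j \colon N_{\sigma+1} \hookrightarrow N_{\sigma}$ be as in the statement. By Lemma~\ref{primi} the composite $j \circ \gamma \colon \Gamma(2) \hookrightarrow N_{\sigma}$ is primitive, and the hypothesis that $j(\gamma(\Gamma(2)))^{\perp} \subset N_{\sigma}$ contains no $(-2)$-vector says precisely that $j \circ \gamma$ represents a class in $R_{\sigma}$. The canonical morphism then factors as
\[
W^{\gamma}_{j} \xrightarrow{\ q\ } \tilde{\mathfrak{S}}''_{j \circ \gamma} \hookrightarrow T \xrightarrow{\ p_{\sigma}\ } \tilde{\mathfrak{E}}_{\sigma},
\]
where $q$ is finite by Remark~\ref{w}, the middle map is (up to isomorphism) the inclusion of a summand of the disjoint union $T$ — a clopen immersion, hence finite — and $p_{\sigma}$ is finite by the previous paragraph. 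Therefore $W^{\gamma}_{j} \to \tilde{\mathfrak{E}}_{\sigma}$ is finite, and hence so is $\coprod_{\gamma,j} W^{\gamma}_{j} \to \tilde{\mathfrak{E}}_{\sigma}$, which is exactly the input needed to run the inductive step at level $\sigma+1$. This closes the induction.

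The main obstacle will be the identification of the moduli stack $\tilde{\mathfrak{E}}_{\sigma}$ with the abstract pushout built from $T$, $\tilde{\mathfrak{E}}_{\sigma-1}$ and $p_{\sigma}^{-1}(\tilde{\mathfrak{E}}_{\sigma-1})$. The natural comparison morphism is bijective on points and restricts to an isomorphism over both the open stratum $\tilde{\mathfrak{E}}_{\sigma}^{\circ}$ and the closed stratum $\tilde{\mathfrak{E}}_{\sigma-1}$, but promoting this to an isomorphism of Deligne--Mumford stacks, rather than a universal homeomorphism, requires the local description of pushouts along closed immersions and integral morphisms (affine-locally, the gluing of $\Spec B$ and $\Spec C$ over $\Spec A$ with $A = B \times_{A'} C$) in its stacky refinement, together with a verification that the moduli functor $\tilde{\mathfrak{E}}_{\sigma}$ satisfies the corresponding universal property even over base schemes whose fibres mix the two Artin-invariant strata. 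A secondary, purely lattice-theoretic point is the surjectivity of $p_{\sigma}$ used above: every primitive embedding $\Gamma(2) \hookrightarrow N_{\sigma'}$ with $\sigma' \leq \sigma$ and no $(-2)$-vector in its orthogonal complement factors through a copy of $N_{\sigma}$.
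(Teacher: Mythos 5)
Your strategy is the same as the paper's: induct on $\sigma$, use Lemma \ref{finite} and Rydh's results to form the pushout of $\tilde{\mathfrak{E}}_{\sigma-1} \leftarrow p_{\sigma}^{-1}(\tilde{\mathfrak{E}}_{\sigma-1}) \rightarrow \coprod_{\gamma \in R_{\sigma}} \tilde{\mathfrak{S}}''_{\gamma}$, identify it with $\tilde{\mathfrak{E}}_{\sigma}$, and then get part (2) from the factorization $W^{\gamma}_{j} \rightarrow \tilde{\mathfrak{S}}''_{j\circ\gamma} \rightarrow \tilde{\mathfrak{E}}_{\sigma}$ of Remark \ref{w}. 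But the step you explicitly defer --- that the abstract pushout $\mathfrak{B}_{\sigma}$ \emph{is} the moduli stack $\tilde{\mathfrak{E}}_{\sigma}$ --- is not a verification to be quoted from the literature; it is the bulk of the paper's proof. There one first builds auxiliary pushouts ($\mathfrak{B}^{l}_{atl}$, $\mathfrak{B}^{l}_{cov}$, $\mathfrak{B}^{l}_{gen}$, $\mathfrak{B}^{l}_{uni}$) to equip $\mathfrak{B}_{\sigma}$ with an atlas that is a scheme and with a universal family of $\Gamma(2)$-marked supersingular K3 surfaces; the finiteness of $\coprod_{\gamma}\tilde{\mathfrak{S}}''_{\gamma} \rightarrow \mathfrak{B}_{\sigma}$ is deduced from this atlas and \cite[Theorem 6.6]{MR3572553} plus fppf-locality of finiteness, not from a blanket ``pushout along a finite morphism has finite structure map'' principle at the stack level. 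Then a comparison morphism $F\colon \tilde{\mathfrak{E}}_{\sigma}\rightarrow\mathfrak{B}_{\sigma}$ is constructed by hand (families with all fibres of Artin invariant $\leq\sigma-1$ are sent through $\tilde{\mathfrak{E}}_{\sigma-1}$; otherwise one produces an fppf-locally unique lift, uniqueness resting on separatedness of $\Pic_{\CX/S}$, with a separate argument for non-reduced bases via $S_{\mathrm{red}}$), essential surjectivity is proved by pulling back the universal family $\mathfrak{B}^{\sigma}_{uni}$, and one still needs reducedness of $\mathfrak{B}_{\sigma}$ and faithfulness of $F$. None of this appears in your proposal, so the crux of Theorem \ref{theorem1}(1) --- and hence the finiteness of $p_{\sigma}$ that part (2) rests on --- is a genuine gap.

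A second, more dangerous point is your surjectivity argument for $p_{\sigma}$ and the ``secondary lattice-theoretic point'' it relies on: that every primitive embedding $\Gamma(2)\hookrightarrow N_{\sigma'}$, $\sigma'\leq\sigma$, with no $(-2)$-vector in its orthogonal complement factors through an embedded copy of $N_{\sigma}$. This is exactly the question of whether $\underline{\CE}_{\sigma}=\underline{\CE}'_{\sigma}$, and the paper expects the answer to be \emph{no} for $\sigma>1$: the image of $p_{\sigma}$ is only the closed locus corresponding to $\tilde{\CE}'_{\sigma}$, and $\tilde{\CE}_{\sigma}$ is covered only by the union of all $\CE'_{\sigma'}$ with $\sigma'\leq\sigma$. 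The paper's proof is organised so as not to need your claim: the low-Artin-invariant locus maps to $\mathfrak{B}_{\sigma}$ through the closed immersion $\tilde{\mathfrak{E}}_{\sigma-1}\rightarrow\mathfrak{B}_{\sigma}$ available by induction, and surjectivity onto $\mathfrak{B}_{\sigma}$ is achieved by the combined atlas $\tilde{\mathfrak{E}}_{\sigma-1}\amalg\coprod_{\gamma}\tilde{\mathfrak{S}}''_{\gamma}$, not by $\coprod_{\gamma}\tilde{\mathfrak{S}}''_{\gamma}$ alone. If your identification of $\tilde{\mathfrak{E}}_{\sigma}$ with the pushout were made to depend on that factorization statement, it would likely fail; you should instead treat the two strata asymmetrically as the paper does. (Your treatment of $\sigma\geq 6$, the base case via Proposition \ref{sigmaisone}, the freeness on the Artin-invariant-$\sigma$ locus via Proposition \ref{coar}, and the deduction of part (2) from Remark \ref{w} all match the paper and are fine.)
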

\begin{proof}
 We do induction over $\sigma$. For $\sigma=1$, the theorem follows from Proposition \ref{sigmaisone} and its proof.
 
 We will now assume that the theorem holds for all integers lesser or equal then $\sigma-1$. By Lemma \ref{finite} the morphism $p_{\sigma} \colon p^{-1}_{\sigma}(\tilde{\mathfrak{E}}_{\sigma-1}) \rightarrow \tilde{\mathfrak{E}}_{\sigma-1}$ is finite, hence by \cite[Theorem A.4]{ryd2011}
 there exists a pushout $\mathfrak{B}_{\sigma}$ in the category of algebraic stacks fitting into the cartesian diagram
\begin{align*} \xymatrix{p^{-1}_{\sigma}(\tilde{\mathfrak{E}}_{\sigma-1}) \ar[r]_{\iota} \ar[d]_{p_{\sigma}}\po & \coprod_{\gamma \in R_{\sigma}} \tilde{\mathfrak{S}}''_{\gamma}\ar[d]\\
    \tilde{\mathfrak{E}}_{\sigma-1} \ar[r]& \mathfrak{B}_{\sigma}.}\end{align*}
The pushout $\mathfrak{B}_{\sigma}$ is a Deligne-Mumford stack locally of finite-type over $\mathbb{F}_p$. Further, the morphism $\coprod_{\gamma \in R_{\sigma}} \tilde{\mathfrak{S}}''_{\gamma} \lra \mathfrak{B}_{\sigma}$ is surjective (and we will see that it is finite) and the morphism $\tilde{\mathfrak{E}}_{\sigma-1} \lra \mathfrak{B}_{\sigma}$ is a closed immersion.

The topological space attached to $\mathfrak{B}_{\sigma}$ is just the pushout in the category of topological spaces, there exists a natural isomorphism of DM-stacks $p^{-1}_{\sigma}(\tilde{\mathfrak{E}}_{\sigma-1}) \cong \tilde{\mathfrak{E}}_{\sigma-1} \times_{\CP} \coprod_{\gamma \in R_{\sigma}} \tilde{\CS}''_{\gamma}$, the morphism $\left(\coprod_{\gamma \in R_{\sigma}} \tilde{\mathfrak{S}}''_{\gamma} \right) \backslash \left(p^{-1}_{\sigma}(\tilde{\mathfrak{E}}_{\sigma-1})\right) = U \rightarrow \CP$ is an open immersion of DM-stacks and we have an equality of sets $\left|\mathfrak{B}_{\sigma} \right| = \left|\tilde{\mathfrak{E}}_{\sigma-1}\right| \amalg \left|U\right|$. 

Similary, we construct three more auxiliary pushouts $\mathfrak{B}_{atl}^{\sigma}$, $\mathfrak{B}_{sep}^{\sigma}$ and $\mathfrak{B}_{uni}^{\sigma}$. The functors of these pushouts might be interesting in their own right, but we do not want to over complicate things and therefore abstain from describing them.

Each of those $\mathfrak{B}_i^{\sigma}$ for $i \in \{atl, cov, uni\}$ is constructed inductively in the following way: for $\sigma= 1$ we start with a DM-stack $\mathfrak{B}_i^1$, a morphism $\mathfrak{B}_i^1 \lra \tilde{\mathfrak{E}}_1$, some DM-stack $\mathfrak{A}_i^2$ and a morphism $\mathfrak{A}_i^2 \lra \coprod_{\gamma \in R_{\sigma}} \tilde{\mathfrak{S}}''_{\gamma}$ such that the pullback of $p_{\sigma} \colon p^{-1}_{\sigma}(\tilde{\mathfrak{E}}_1) \lra \tilde{\mathfrak{E}}_1$ to $\mathfrak{B}_i^1$ is a finite surjective morphism $p_{\sigma} \colon p^{-1}_{\sigma}(\mathfrak{B}_i^1) \lra \mathfrak{B}_i^1$ and the pullback of $\iota \colon p^{-1}_{\sigma}(\tilde{\mathfrak{E}}_1) \hookrightarrow \coprod_{\gamma \in R_{\sigma}} \tilde{\mathfrak{S}}''_{\gamma}$ is a closed immersion $\iota  \colon p^{-1}_{\sigma}(\mathfrak{B}_i^1) \hookrightarrow \mathfrak{A}_i^2$. We then take $\mathfrak{B}_i^2$ to be the pushout in the category of DM-stacks of the pushout datum $\mathfrak{B}_i^1 \leftarrow  p^{-1}_{\sigma}(\mathfrak{B}_i^1) \rightarrow \mathfrak{A}_i^2$. This induces a morphism of DM-stacks $\mathfrak{B}_i^2 \lra \mathfrak{B}_2$. Inductively, for $l \leq \sigma -1$, we may assume that $\mathfrak{B}_l \cong \tilde{\mathfrak{E}}_l$ and by choosing appropriate $\mathfrak{A}_i^l$ we can construct $B_i^{1}, \ldots, B_i^{\sigma}$.

We define the $\mathfrak{B}_i^l$ in the following way:
\begin{itemize}
    \item We set $\mathfrak{B}_{atl}^1 = \coprod_{\gamma \in R_{1}} \CS''_{\gamma}$ and $\mathfrak{A}_{atl}^l  = \coprod_{\gamma \in R_{l+1}} \CS''_{\gamma}$.
    \item Following Remark \ref{struct} we set 
    \begin{align*} \mathfrak{B}_{cov}^1 &= \coprod_{\gamma \in R_{1}} \left( \coprod_{\Lambda, \alpha} {\CM'_{\gamma}}^{\Lambda, \alpha}\right)
    \intertext{and} 
    \mathfrak{A}_{cov}^l& =  \coprod_{\gamma \in R_{l+1}} \left( \coprod_{\Lambda, \alpha} {\CM'_{\gamma}}^{\Lambda, \alpha}\right). 
    \intertext{We also write}
    \mathfrak{B}_{cov}^l &= \coprod_{\gamma \in R_{l+1}} \mathfrak{B}_{cov}^{l, \Lambda, \alpha}
    \end{align*}
    \item Writing $\CX_{\gamma}$ for the universal family over $\CS_{\gamma}''$ we set $\mathfrak{B}_{uni}^1 = \coprod_{\gamma \in R_{1}} \CX_{\gamma}$ and $\mathfrak{A}_{uni}^l  = \coprod_{\gamma \in R_{l+1}} \CX_{\gamma}$.
\end{itemize}
It is clear that for these choices of $\mathfrak{B}_i^l$ and $\mathfrak{A}_i^l$ the conditions in the construction are okay. The morphisms in the construction of $\mathfrak{B}_{cov}^1$ are compatible with the open affine coverings $\mathfrak{B}_{cov}^l \lra  \coprod_{\gamma \in R_{l}} \CM'_{\gamma}$. We therefore also get pushouts $\mathfrak{B}_{gen}^1$ with
\begin{itemize}
    \item $\mathfrak{B}_{gen}^1 = \coprod_{\gamma \in R_{1}} \CM'_{\gamma}$ and $\mathfrak{A}_{gen}^l = \coprod_{\gamma \in R_{l+1}} \CM'_{\gamma}$.
\end{itemize}

The $\mathfrak{B}_{cov}^l$ and $\mathfrak{B}_{gen}^l$ are separated AF-schemes locally of finite type over $\mathbb{F}_p$ by \cite[Theorem A.4]{ryd2011} and the $\mathfrak{B}_{uni}^l$ and $\mathfrak{B}_{atl}^l$ are quasi-separated algebraic spaces locally of finite type over $\mathbb{F}_p$ by \cite[Theorem A.4]{ryd2011}. 

We obtain morphisms
\begin{align*}
    \mathfrak{B}_{atl}^l &\lra \mathfrak{B}_{gen}^l, \\
    \mathfrak{B}_{cov}^l &\lra \mathfrak{B}_{atl}^l  \\
    \mathfrak{B}_{uni}^l &\lra \mathfrak{B}_{atl}^l  \\
        \intertext{and}
    \mathfrak{B}_{atl}^l &\lra \mathfrak{B}_l.
\end{align*}
induced from flat morphisms of pushout diagrams. Inductively, since these properties are preserved by pushouts, each of these morphisms is smooth and surjective.

The morphisms of algebraic spaces $\mathfrak{A}_i^l \lra \mathfrak{B}_{atl}^l$ are finite \cite[Theorem 6.6]{MR3572553} and since being finite is local in the fppf-topology it follows that $\coprod_{\gamma \in R_{\sigma}} \tilde{\mathfrak{S}}''_{\gamma} \lra \mathfrak{B}_{\sigma}$ is finite. 

\claim  $\mathfrak{B}_{uni}^l \lra \mathfrak{B}_{atl}^l$ is a family of supersingular K3 surfaces.
\prfclaim Using Lemma \ref{sweet} we also find inductively that $\mathfrak{B}_{uni}^l \lra \mathfrak{B}_{atl}^l$ is proper. Also inductively, since a geometric fiber of $\mathfrak{B}_{uni}^l \lra \mathfrak{B}_{atl}^l$ is a geometric fiber of $\mathfrak{B}_{uni}^{l-1} \lra \mathfrak{B}_{atl}^{l-1}$ or of $\mathfrak{A}_{uni}^{l} \lra \mathfrak{A}_{atl}^{l}$, any geometric fiber of $\mathfrak{B}_{uni}^l \lra \mathfrak{B}_{atl}^l$ is a supersingular K3 surface. Thus $\mathfrak{B}_{uni}^l \lra \mathfrak{B}_{atl}^l$ is a family of K3 surfaces and $\mathfrak{B}_{uni}^l$ is equipped with a canonical $\Gamma(2)$-marking and by the discussion in \cite[page 1522]{MR633161} admits an $N_{l}$-marking étale locally. 

\claim $\mathfrak{B}_{atl}^l$ is a scheme.
\prfclaim It suffices to show that $\mathfrak{B}_{cov}^{l, \Lambda, \alpha} \lra \mathfrak{B}_{atl}^l$ is an open immersion for any $\Lambda$ and $\alpha$. Using  \cite[Theorem 6.4]{MR3572553} the morphisms $\mathfrak{B}_{cov}^{l, \Lambda, \alpha} \lra  \mathfrak{B}_{gen}^l$ are open immersions of separated AF-schemes by induction. Then $\mathfrak{B}_{cov}^{l, \Lambda, \alpha} \lra \mathfrak{B}_{atl}^l$ is a monomorphism of finite presentation \cite[Lemma 65.12.2(1)]{stacks-project} and hence an open immersion \cite[Theorem 41.14.1]{stacks-project}.

We now show that the DM-stack $\mathfrak{B}_{\sigma}$ is isomorphic to the stack $\tilde{\mathfrak{E}}_{\sigma}$ and that the morphism of DM-stacks $\coprod_{\gamma \in R_{\sigma}} \tilde{\mathfrak{S}}''_{\gamma} \rightarrow \mathfrak{B}_{\sigma}$ corresponds to the canonical morphism $\coprod_{\gamma \in R_{\sigma}} \tilde{\mathfrak{S}}''_{\gamma} \rightarrow \tilde{\mathfrak{E}}_{\sigma}$.

\emph{Step 1: We define a morphism of stacks $F \colon \tilde{\mathfrak{E}}_{\sigma} \rightarrow \mathfrak{B}_{\sigma}$.}

The morphism we define will be compatible with taking subschemes (not necessarily closed or open). Hence, it suffices to define it on irreducible schemes, since for a reducible scheme $S$ with irreducible components $S_i$ we can take $F_S \colon \tilde{\mathfrak{E}}_{\sigma}(S) \rightarrow \mathfrak{B}_{\sigma}(S)$ to be the colimit of the $F_{S_i} \colon \tilde{\mathfrak{E}}_{\sigma}(S_i) \rightarrow \mathfrak{B}_{\sigma}(S_i)$.

If $S$ is an irreducible $\mathbb{F}_p$-scheme, we define the functor $F(S) \colon \tilde{\mathfrak{E}}_{\sigma}(S) \rightarrow \mathfrak{B}_{\sigma}(S)$ in the following way. If $x = (f \colon \CX \rightarrow S, \gamma \colon \Gamma(2) \hookrightarrow \Pic_{\CX/S}) \in \tilde{\mathfrak{E}}_{\sigma}(S)$ is such that for every $s \in S$ the fiber $\CX_s$ has Artin invariant $\sigma(\mathrm{NS}(\CX_s)) \leq \sigma-1$, then $x$ is an object of the full subcategory $\tilde{\mathfrak{E}}_{\sigma - 1}(S) \subset \tilde{\mathfrak{E}}_{\sigma}(S)$. In this case, we set $F(S)(x)$ to be the image of $x$ under the canonical functor $\tilde{\mathfrak{E}}_{\sigma - 1}(S) \rightarrow \mathfrak{B}_{\sigma}(S)$. Note, that by the commutativity of the pushout diagram, if $x$ lies in the image of $p_{\sigma}$, we equivalently could have chosen a preimage $x'$ of $x$ in $\tilde{\mathfrak{S}}''_{\gamma}(S)$ for some $\gamma'$ and set $F(S)(x)$ to be the image of $x'$ under the canonical functor $\tilde{\mathfrak{S}}''_{\gamma}(S) \rightarrow \mathfrak{B}_{\sigma}(S)$. 

If, on the other hand, $x$ is such that there exists an $s \in S$ with $\sigma(\mathrm{NS}(\CX_s))=\sigma$, then the subset $U \subseteq S$ where $\CX_s$ has Artin invariant $\sigma$ is open. We first assume that $S$ is reduced and fppf locally choose an arbitrary lift $x'=(f, \CR', \gamma')$ of $x$ to $\tilde{\mathfrak{S}}''_{\gamma}(S)$. We claim that this lift is unique. Indeed, let $x''=(f, \CR'', \gamma'')$ be another such lift. We take preimages $\tilde{x}'=(f, \psi')$ and $\tilde{x}''=(f, \psi'')$ in $\CS'_{\gamma}(S)$ and after applying an automorphism of $N_{\sigma}$ that preserves the embedding $\gamma \colon \Gamma(2) \hookrightarrow N_{\sigma}$, we may assume that $\psi'_U = \psi''_U$. But by \cite[Theorem 3.1.1.]{MR2263236} the morphism of algebraic spaces $\Pic_{\CX/S} \rightarrow S$ is separated and it therefore follows that $\psi'=\psi''$. Thus, we have an isomorphism $x' \cong x''$. Fppf locally we set $F(S)(x)$ to be the image of $x' \in \tilde{\mathfrak{S}}''_{\gamma}(S) $ under the canonical functor $\tilde{\mathfrak{S}}''_{\gamma}(S)  \rightarrow \mathfrak{B}_{\sigma}(S)$ and then glue these images to obtain an object in $\mathfrak{B}_{\sigma}(S)$.

If $S$ is not reduced, let $r \colon S_{\mathrm{red}} \rightarrow S$ be its reduction. Then we have a natural isomorphism $\Pic_{\CX_{\mathrm{red}}/S_{\mathrm{red}}} \cong r^{\ast}\Pic_{\CX/S}$. Since $r$ is a universal homeomorphism, the functors $r^{\ast} \colon Sh(S_{\text{étale}}) \rightarrow Sh({S_{\mathrm{red}}}_{\text{étale}})$ and $r_{\ast} \colon Sh({S_{\mathrm{red}}}_{\text{étale}}) \rightarrow Sh(S_{\text{étale}})$ are mutually quasi-inverse to each other \cite[Proposition 58.45.4]{stacks-project}. In particular, there is a canonical isomorphism $r_{\ast}\Pic_{\CX_{\mathrm{red}}/S_{\mathrm{red}}} \cong \Pic_{\CX/S}$. Let $y'=(r^{\ast}f, \CR, r^{\ast} \gamma)$ be the unique lift of $y=r^{\ast}x$ to $\tilde{\mathfrak{S}}''_{\gamma}(S_{\mathrm{red}})$. Then $x' = (f, r_{\ast}\CR, \gamma)$ is an object of $\tilde{\mathfrak{S}}''_{\gamma}(S)$ and we can take $F(S)(x)$ to be the image of $x' \in \tilde{\mathfrak{S}}''_{\gamma}(S)$ under the canonical functor $\tilde{\mathfrak{S}}''_{\gamma}(S) \rightarrow \CP(S)$

It is clear how the construction extends to morphisms in $\tilde{\mathfrak{E}}_{\sigma}(S)$ and that the class of functors $F(S)$ yields a morphism of stacks.

\emph{Step 2: The functors $F(S) \colon \tilde{\mathfrak{E}}_{\sigma}(S) \rightarrow \mathfrak{B}_{\sigma}(S)$ are (essentially) surjective.} 

Let $x \colon S \lra  \mathfrak{B}_{\sigma}$. Pullback of $x$ to $\mathfrak{B}_{uni}^{\sigma} \lra \mathfrak{B}_{\sigma}$ yields an object $(f_x, \gamma)$ in $\tilde{\mathfrak{E}}_{\sigma}(S)$ and going through the definition in step 1 shows that $F(S)(f_x, \gamma) = x$. An isomorphism $g \colon x \stackrel{\simeq}\lra x'$ corresponds to different choices of pullback to $\mathfrak{B}^{\sigma}_{uni}$ and the induced isomorphism $g^{\ast}$ between pullbacks is an isomorphism of objects in $\tilde{\mathfrak{E}}_{\sigma}(S)$. Again going through the construction, one finds $F(S)(g^{\ast}) = f$.

\emph{Step 3: The DM-stack $\mathfrak{B}_{\sigma}$ is reduced.}

Since for every $S$ any point in $\tilde{\mathfrak{E}}_{\sigma}(S)$ lifts fppf locally to a point in $\coprod_{\gamma \in R_{\sigma}} \tilde{\mathfrak{S}}''_{\gamma}(S)$ and $\tilde{\mathfrak{E}}_{\sigma}(S) \lra \mathfrak{B}_{\sigma}(S)$ is surjective, we obtain fppf locally a factorization 
\begin{equation*} \operatorname{id}_{\mathfrak{B}_{\sigma}} \colon \mathfrak{B}_{\sigma} \lra \coprod_{\gamma \in R_{\sigma}} \tilde{\mathfrak{S}}''_{\gamma} \lra \mathfrak{B}_{\sigma}.
\end{equation*} 
Being reduced is local in the fppf topology, so for the rest of the argument we will assume this split exists globally. Since $\coprod_{\gamma \in R_{\sigma}} \tilde{\mathfrak{S}}''_{\gamma} \lra \mathfrak{B}_{\sigma}$ is finite it follows that $\mathfrak{B}_{\sigma} \lra \coprod_{\gamma \in R_{\sigma}} \tilde{\mathfrak{S}}''_{\gamma}$ is a closed immersion and we are done.

\emph{Step 4: The functors $F(S) \colon \tilde{\mathfrak{E}}_{\sigma}(S) \rightarrow \mathfrak{B}_{\sigma}(S)$ are faithful.}

We may assume $S$ to be reduced (by the previous step) and irreducible. Assume we are given $x=(f \colon \CX \rightarrow S, \gamma)$ and an isomorphism $g \colon x \lra x$ with $F(S)(g) = \mathrm{id}_{F(S)(x)}$. If $x \in \mathfrak{E}_{\sigma-1}(S)$, then we are done. Else, we take fppf local lifts to $\coprod_{\gamma \in R_{\sigma}} \CS''_{\gamma}(S)$ and write $U$ for the open subset of $S$ where $\mathrm{NS}(\CX_s) = \sigma$. We write $Z = S \backslash U$. Being the identity morphism in $\tilde{\mathfrak{E}}_{\sigma}(S)$ is fppf local, so for the rest of the argument we assume that the lifts exist globally. Let $\tilde{x}=(f, \psi)$ and $\tilde{g}$  be those lifts. Since $g_U \in \left(\coprod_{\gamma \in R_{\sigma}} \tilde{\mathfrak{S}}''_{\gamma}\right) \backslash p_{\sigma}^{-1}\left(\mathfrak{E}_{\sigma-1} \right)(U)$ and $\left(\coprod_{\gamma \in R_{\sigma}} \tilde{\mathfrak{S}}''_{\gamma}\right) \backslash p_{\sigma}^{-1}\left( \mathfrak{E}_{\sigma-1}\right) \lra \mathfrak{B}_{\sigma}$ is an open immersion, we may assume that $\tilde{g}|_U = (\mathrm{id}_{\tilde{x}_{U}}, \mathrm{id}_{N_{\sigma}})$ and similarly $\tilde{g}|_Z = (\mathrm{id}_{\tilde{x}_{Z}}, \varphi)$ for some $\varphi \in O(N_{\sigma}, \gamma)$. We now consider the associated morphisms $\tilde{x}, g(\tilde{x}) \colon S \lra \coprod_{\gamma \in R_{\sigma}} \CS''_{\gamma}(S)$. Then we have for any $\Lambda, \alpha$ that $\tilde{x}^{-1}\left(\CM_{\gamma}^{\prime \Lambda, \alpha} \right) = \tilde{x}^{\prime -1}\left(\CM_{\gamma}^{\prime \Lambda, \alpha} \right)$ because the images of a point $s \in S$ under $\tilde{x}$ and $g(\tilde{x})$ only differ by the action of $O(N_{\sigma}, \gamma)$. We find $\tilde{x}|_{\tilde{x}^{-1}\left(\CM_{\gamma}^{\prime \Lambda, \alpha} \right)} = g(\tilde{x})|_{\tilde{x}^{-1}\left(\CM_{\gamma}^{\prime \Lambda, \alpha} \right)}$ since these morphisms agree on dense open subsets, ${\tilde{x}^{-1}\left(\CM_{\gamma}^{\prime \Lambda, \alpha} \right)}$ is reduced and $\CM_{\gamma}^{\prime \Lambda, \alpha}$ is separated. It follows that $\tilde{x} = g(\tilde{x})$ and since objects in $ \CS''_{\gamma}(S)$ only have trivial automorphisms it follows that $\tilde{g}= \operatorname{id}_{\tilde{x}}$ and successively $g= \operatorname{id}_x$.

Since we have shown that the canonical morphism $\coprod_{\gamma \in R_{\sigma}} \tilde{\mathfrak{S}}''_{\gamma} \lra \tilde{\mathfrak{E}}_{\sigma}$ is finite, it follows from Remark \ref{w} that for each $\gamma \in R_{\sigma+1}$ and $j \in R_{\sigma, \sigma+1}$ the canonical morphism $W^{\gamma}_j \lra \tilde{\mathfrak{E}}_{\sigma}$ is finite. \end{proof}
Again, there exists a coarse moduli scheme $\tilde{\CE}_{\sigma}$ for $\tilde{\mathfrak{E}}_{\sigma}$. 

\begin{proposition}
There exists a $\mathbb{F}_p$-scheme $\gls{Etc''_s}$ that is locally of finite type, a surjective morphism
\begin{align*}
    \eta_{\sigma} \colon \tilde{\mathfrak{E}}_{\sigma} \lra \tilde{\CE}_{\sigma}
\end{align*}
and closed subschemes $V_0, \ldots, V_{\sigma} \subseteq \tilde{\CE}_{\sigma}$ such that $\eta_{\sigma}^{-1}\left(V_l \backslash V_{l-1}\right)$ corresponds to the substack of $\mathfrak{E}_{\sigma}$ that parametrizes families of supersingular K3 surfaces with constant Artin invariant $l$ and the restrictions
\begin{equation*}
    \eta_{\sigma}|_{\eta_{\sigma}^{-1}\left(V_l \backslash V_{l-1}\right)} \colon \eta_{\sigma}^{-1}\left(V_l \backslash V_{l-1}\right) \lra V_l \backslash V_{l-1}
\end{equation*}
are isomorphisms.
\end{proposition}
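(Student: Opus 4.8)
The plan is to build $\tilde{\CE}_{\sigma}$ by induction on $\sigma$, running in parallel with the pushout construction of $\tilde{\mathfrak{E}}_{\sigma}\cong\mathfrak{B}_{\sigma}$ in the proof of Theorem~\ref{theorem1}. For $\sigma=1$ there is nothing to do: $\tilde{\mathfrak{E}}_{1}$ is already the scheme of Proposition~\ref{sigmaisone}, so I would take $\tilde{\CE}_{1}=\tilde{\mathfrak{E}}_{1}$, $\eta_{1}=\mathrm{id}$, $V_{0}=\emptyset$ and $V_{1}=\tilde{\CE}_{1}$. For the inductive step I would assume $\tilde{\CE}_{\sigma-1}$ has been constructed together with $\eta_{\sigma-1}$ and the chain $V_{0}\subseteq\cdots\subseteq V_{\sigma-1}$, and — so that the relevant pushouts stay schematic — I would strengthen the hypothesis to also record that $\tilde{\CE}_{\sigma-1}$ is an AF-scheme locally of finite type over $\mathbb{F}_{p}$. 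Recall that $\tilde{\mathfrak{E}}_{\sigma}$ is the pushout of $\tilde{\mathfrak{E}}_{\sigma-1}\xleftarrow{p_{\sigma}}p_{\sigma}^{-1}(\tilde{\mathfrak{E}}_{\sigma-1})\hookrightarrow\coprod_{\gamma\in R_{\sigma}}\tilde{\mathfrak{S}}''_{\gamma}$, with $p_{\sigma}$ finite (Lemma~\ref{finite}) and the second arrow a closed immersion. Passing to coarse spaces of the three terms gives $\tilde{\CE}_{\sigma-1}$, $\coprod_{\gamma}\tilde{\CS}''_{\gamma}$, and — since $p_{\sigma}^{-1}(\tilde{\mathfrak{E}}_{\sigma-1})=\bigcup_{\gamma,j}q(W^{\gamma}_{j})$ is a closed substack — a closed subscheme $\overline{W}_{\sigma}\subseteq\coprod_{\gamma}\tilde{\CS}''_{\gamma}$, with $p_{\sigma}$ descending to a finite morphism $\overline{W}_{\sigma}\to\tilde{\CE}_{\sigma-1}$. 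I would then \emph{define} $\tilde{\CE}_{\sigma}:=\tilde{\CE}_{\sigma-1}\amalg_{\overline{W}_{\sigma}}\coprod_{\gamma}\tilde{\CS}''_{\gamma}$, the pushout along a finite morphism and a closed immersion; exactly as for the auxiliary pushouts $\mathfrak{B}_{cov}^{\sigma}$, $\mathfrak{B}_{gen}^{\sigma}$ in the proof of Theorem~\ref{theorem1}, invoking \cite[Thm.~6.4 and~6.6]{MR3572553}, \cite{MR3084720} and the AF-property of $\tilde{\CS}''_{\gamma}$ and $\tilde{\CE}_{\sigma-1}$, this exists as an AF-scheme locally of finite type over $\mathbb{F}_{p}$, and $\tilde{\CE}_{\sigma-1}\to\tilde{\CE}_{\sigma}$ is a closed immersion.

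Next I would show $\tilde{\CE}_{\sigma}$ is a coarse moduli space for $\tilde{\mathfrak{E}}_{\sigma}$. The coarse-space morphisms $\tilde{\mathfrak{E}}_{\sigma-1}\to\tilde{\CE}_{\sigma-1}$ and $\coprod_{\gamma}\tilde{\mathfrak{S}}''_{\gamma}\to\coprod_{\gamma}\tilde{\CS}''_{\gamma}$ agree after restriction to $p_{\sigma}^{-1}(\tilde{\mathfrak{E}}_{\sigma-1})$ (each restricts there to the coarse space of that closed substack), so the universal property of the pushout $\mathfrak{B}_{\sigma}$ yields a morphism $\eta_{\sigma}\colon\tilde{\mathfrak{E}}_{\sigma}\to\tilde{\CE}_{\sigma}$. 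I would then check three things: $\eta_{\sigma}$ is surjective and proper (by Lemma~\ref{sweet} and the corresponding properties of the two pieces); $\eta_{\sigma}$ is bijective on geometric points (over a geometric point $\tilde{\mathfrak{E}}_{\sigma}$ is glued from the two pointwise-singleton coarse-space fibres along $\overline{W}_{\sigma}$); and $\eta_{\sigma}$ is initial among morphisms from $\tilde{\mathfrak{E}}_{\sigma}$ to algebraic spaces — such a morphism restricts uniquely through the coarse spaces of the two pieces in a way compatible over $\overline{W}_{\sigma}$, hence factors uniquely through the pushout. Together these identify $(\tilde{\CE}_{\sigma},\eta_{\sigma})$ as the coarse space.

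For the filtration I would put $V_{0}=\emptyset$ and, for $1\le l\le\sigma$, let $V_{l}\subseteq\tilde{\CE}_{\sigma}$ be the image of the closed immersion $\tilde{\CE}_{l}\hookrightarrow\tilde{\CE}_{\sigma-1}\hookrightarrow\tilde{\CE}_{\sigma}$ (with $V_{\sigma}=\tilde{\CE}_{\sigma}$), giving an increasing chain of closed subschemes. The key point is that, being a pushout along a closed immersion, the square defining $\mathfrak{B}_{\sigma}$ is also cartesian, so $\eta_{\sigma}^{-1}(V_{\sigma-1})=\tilde{\mathfrak{E}}_{\sigma-1}$ and $\eta_{\sigma}$ restricts over $V_{\sigma-1}$ to $\eta_{\sigma-1}$; hence for $l\le\sigma-1$ the assertions follow from the inductive hypothesis. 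For $l=\sigma$ one has $\eta_{\sigma}^{-1}(\tilde{\CE}_{\sigma}\setminus V_{\sigma-1})=\tilde{\mathfrak{E}}_{\sigma}\setminus\tilde{\mathfrak{E}}_{\sigma-1}$, which is the substack of families all of whose fibres have Artin invariant $\sigma$, and by the proof of Theorem~\ref{theorem1} this open substack equals $U=\coprod_{\gamma\in R_{\sigma}}\tilde{\mathfrak{S}}''_{\gamma,\sigma}$; by Proposition~\ref{coar} each $\tilde{\mathfrak{S}}''_{\gamma,\sigma}$ is the scheme $q(\CS''_{\gamma,\sigma})$, so $U$ is its own coarse space and $\eta_{\sigma}$ restricts to an isomorphism $U\xrightarrow{\sim}\tilde{\CE}_{\sigma}\setminus V_{\sigma-1}=V_{\sigma}\setminus V_{\sigma-1}$. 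This closes the induction.

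The step I expect to be the main obstacle is the compatibility of coarse-space formation with the Ferrand-type pushout — i.e. verifying that the pushout of the coarse spaces carries the universal property of the coarse space of $\mathfrak{B}_{\sigma}$ — together with the bookkeeping needed to keep this pushout a \emph{scheme} rather than merely an algebraic space, which is precisely where the auxiliary covers $\mathfrak{B}_{cov}^{\sigma}$, $\mathfrak{B}_{gen}^{\sigma}$ and the results of \cite{MR3572553} enter. A smaller technical point is the cartesianness of the defining pushout square along the closed immersion, which is what makes $\eta_{\sigma}^{-1}(V_{\sigma-1})=\tilde{\mathfrak{E}}_{\sigma-1}$ and thereby feeds the stratification through the induction.
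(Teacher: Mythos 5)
Your proposal follows essentially the same route as the paper: an inductive Ferrand pushout of $\tilde{\CE}_{\sigma-1} \leftarrow p_{\sigma}^{-1}(\tilde{\CE}_{\sigma-1}) \rightarrow \coprod_{\gamma \in R_{\sigma}} \tilde{\CS}''_{\gamma}$ (finite by Lemma \ref{sweet}, schematic by the same auxiliary-cover argument as in Theorem \ref{theorem1}), with $\eta_{\sigma}$ induced by the morphism of pushout data, $V_l = \tilde{\CE}_l$, and the stratification handled by Proposition \ref{coar}. The only difference is that you additionally verify a coarse-space universal property, which the statement does not require; otherwise the argument matches the paper's proof.
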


\begin{proof}
We first construct the $\tilde{\CE}_l$. We set $\tilde{\CE}_1 = \tilde{\mathfrak{E}}_1$ and consider the Ferrand pushout diagram $\tilde{\CE}_1 \leftarrow p_2^{-1}(\tilde{\CE}_1 ) \rightarrow \coprod_{\gamma \in R_2} \tilde{\CS}''_{\gamma}$ where $p_2^{-1}(\tilde{\CE}_1)$ is the closed image of $p_2^{-1}(\mathfrak{B}_{atl}^1)$ in $\coprod_{\gamma \in R_2} \tilde{\CS}''_{\gamma}$ (using the notation from the proof of Theorem \ref{theorem1}) and the morphism
\begin{equation*}
    p_2 \colon p_2^{-1}(\tilde{\CE}_1 )  \rightarrow \tilde{\CE}_1
\end{equation*}
comes from the $O(N_2, \gamma)$-equivariant morphism
\begin{equation*}
     p_2^{-1}(\mathfrak{A}_{atl}^1) \lra p_2^{-1}(\tilde{\mathfrak{E}}_1) \lra \tilde{\mathfrak{E}}_1 \lra \tilde{E}_1
\end{equation*}
and the fact that $p_2^{-1}(\mathfrak{A}_{atl}^1)  \lra  p_2^{-1}(\tilde{\CE}_1 )$ is a categorical quotient. Further, the morphism $p_2$ is finite by Lemma \ref{sweet}. Therefore, the pushout $\tilde{\CE}_2$ of $\tilde{\CE}_1 \leftarrow p_2^{-1}(\tilde{\CE}_1 ) \rightarrow \coprod_{\gamma \in R_2} \tilde{\CS}''_{\gamma}$ exists as an algebraic space locally of finite type over $\mathbb{F}_p$ \cite[Theorem A.4]{ryd2011} and by an argument analogous to the proof of the claim that $\mathfrak{B}_l$ was a scheme in the proof of Theorem \ref{theorem1} it follows that $\tilde{\CE}_2$ is a scheme. The surjective morphism of pushout diagrams
\begin{equation*}
    \left(\tilde{\mathfrak{E}}_1 \leftarrow p_2^{-1}(\tilde{\mathfrak{E}}_1 ) \rightarrow \coprod_{\gamma \in R_2} \tilde{\mathfrak{S}}''_{\gamma}\right) \lra   \left(\tilde{\CE}_1 \leftarrow p_2^{-1}(\tilde{\CE}_1 ) \rightarrow \coprod_{\gamma \in R_2} \tilde{\CS}''_{\gamma}\right)
\end{equation*}
induces the surjective morphism $\eta_2 \colon \tilde{\mathfrak{E}}_2 \lra \tilde{\CE}_2$.

Inductively, the construction of $\tilde{\CE}_l$ and $\eta_l$ goes by replacing any index $\iota$ in the previous paragraph by $\iota-2+l$. We set $V_0 = \emptyset$ and $V_l = \tilde{\CE}_l \hookrightarrow \tilde{\CE}_{\sigma}$. The remaining bit of the proposition then follows directly from Proposition \ref{coar}.
\end{proof}

\begin{proposition}
The DM-stack $\tilde{\mathfrak{E}}_{\sigma}$ over $\mathbb{F}_p$ is an algebraic space.
\end{proposition}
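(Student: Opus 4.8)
The plan is to reduce the statement to the vanishing of geometric automorphism groups and then to read this vanishing off the stratification established in the preceding proposition. Recall that $\tilde{\mathfrak{E}}_{\sigma}$ is a Deligne--Mumford stack by Theorem \ref{theorem1}, so its inertia $I = I_{\tilde{\mathfrak{E}}_{\sigma}} \to \tilde{\mathfrak{E}}_{\sigma}$ is representable and unramified, and it carries the identity section $e$. If $I \to \tilde{\mathfrak{E}}_{\sigma}$ is in addition universally injective, then its diagonal --- which is an open immersion because the morphism is unramified --- is also surjective, hence an isomorphism; so $I \to \tilde{\mathfrak{E}}_{\sigma}$ is a monomorphism, and a monomorphism admitting the section $e$ is an isomorphism. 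An algebraic stack whose inertia maps isomorphically to the base is an algebraic space, so this would finish the proof. Universal injectivity of $I \to \tilde{\mathfrak{E}}_{\sigma}$ amounts exactly to the assertion that for every algebraically closed field $k$ and every object $x = (f \colon \CX \to \Spec k, \gamma) \in \tilde{\mathfrak{E}}_{\sigma}(k)$ the automorphism group scheme $\underline{\mathrm{Aut}}_k(x)$ is trivial.

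To verify this, I would use the preceding proposition, which furnishes the coarse space $\tilde{\CE}_{\sigma}$, the morphism $\eta_{\sigma}$ and the closed subschemes $V_0 \subseteq \dots \subseteq V_{\sigma} \subseteq \tilde{\CE}_{\sigma}$. The supersingular K3 surface $\CX$ has a well-defined Artin invariant $l$ with $1 \le l \le \sigma$, so $x$ is an object of the full substack of $\tilde{\mathfrak{E}}_{\sigma}$ parametrising families of constant Artin invariant $l$. By the preceding proposition this substack equals $\eta_{\sigma}^{-1}(V_l \setminus V_{l-1})$, and $\eta_{\sigma}$ restricts to an isomorphism of it onto the $\mathbb{F}_p$-scheme $V_l \setminus V_{l-1}$; in particular it is an algebraic space. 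Since it is a full substack, the automorphism group of $x$ in $\tilde{\mathfrak{E}}_{\sigma}$ coincides with its automorphism group in this substack, which is trivial because objects of an algebraic space have no non-trivial automorphisms. This gives the required vanishing, and with it the proposition.

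I expect the only genuinely subtle point to be this last vanishing of geometric automorphism groups. One is tempted to argue lattice-theoretically --- an automorphism of $\CX$ fixing $\gamma$ acts by an isometry of $\mathrm{NS}(\CX)$ that fixes $\gamma(\Gamma(2))$ pointwise --- but such an isometry can act non-trivially on the orthogonal complement $\gamma(\Gamma(2))^{\perp}$, so Ogus' Torelli theorem by itself does not rule out non-trivial automorphisms. The vanishing really relies on the freeness of the $O(N_{\sigma},\gamma)$-action on each Artin-invariant stratum proved in Proposition \ref{coar}, which is precisely the input making $\eta_{\sigma}$ a stratawise isomorphism in the preceding proposition; granting that, the present argument is purely formal.
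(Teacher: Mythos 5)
Your proof is correct and follows essentially the same route as the paper: the paper likewise deduces triviality of the geometric automorphism groups from the preceding proposition (via the stratawise isomorphisms $\eta_{\sigma}|_{\eta_{\sigma}^{-1}(V_l \setminus V_{l-1})}$) and then concludes that a Deligne--Mumford stack with trivial geometric stabilizers is an algebraic space, citing \cite[Theorem 2.2.5]{conrad_2007} for that criterion. The only difference is that you prove the inertia criterion by hand rather than quoting it, and you spell out the reduction to the strata, both of which match the paper's (much terser) argument.
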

\begin{proof}
By the previous proposition the geometric points of $\tilde{\mathfrak{E}}_{\sigma}$ have trivial automorphism groups. The proposition then follows from \cite[Theorem 2.2.5]{conrad_2007}.
\end{proof}

We now consider the functor of sets
\begin{align*} \underline{\tilde{\CE}}_{\sigma}\colon \CA_{\mathbb{F}_p}^{\mathrm{op}} &\lra \left(\text{Sets} \right) \\
S &\longmapsto \left\{\begin{array}{l}\text{Isomorphism classes of tuples $(f, \gamma)$, where} \\ \text{$f \colon \CX \rightarrow S$ is a family of supersingular K3 surfaces} \\
\text{that fppf locally admit an $N_{\sigma}$-marking} \\
\text{and $\gamma$ is an embedding $\gamma \colon \underline{\Gamma}(2) \hookrightarrow \Pic_{\CX/S}$} \\
\text{such that for each geometric point $s \in S$} \\
\text{the sublattice $\gamma_s(\Gamma(2)) \hookrightarrow \mathrm{NS}(\CX_s)$} \\
\text{contains an ample line bundle} \\
\text{and $\gamma_s(\Gamma(2))^{\perp} \hookrightarrow \mathrm{NS}(\CX_s)$ contains no $-2$-vector}\end{array} \right\}.  \end{align*}

\begin{theorem}\label{theorem2}
The scheme $\tilde{\CE}_{\sigma}$ represents the functor $\underline{\tilde{\CE}}_{\sigma}$.
\end{theorem}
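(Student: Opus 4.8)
The plan is to deduce the statement formally from the two facts established just above, namely that $\tilde{\mathfrak{E}}_{\sigma}$ is an algebraic space and that $\tilde{\CE}_{\sigma}$ is its coarse moduli space. The key observation is that $\underline{\tilde{\CE}}_{\sigma}$ is, by construction, nothing other than the presheaf of isomorphism classes attached to the stack $\tilde{\mathfrak{E}}_{\sigma}$: comparing the two displayed definitions, the objects of $\tilde{\mathfrak{E}}_{\sigma}(S)$ and the elements of $\underline{\tilde{\CE}}_{\sigma}(S)$ are literally the same tuples $(f,\gamma)$, with $\underline{\tilde{\CE}}_{\sigma}$ recording only isomorphism classes, so $\underline{\tilde{\CE}}_{\sigma}(S)=\pi_0\bigl(\tilde{\mathfrak{E}}_{\sigma}(S)\bigr)$ for every $S$ in $\CA_{\mathbb{F}_p}$, i.e.\ $\underline{\tilde{\CE}}_{\sigma}=\pi_0\circ\tilde{\mathfrak{E}}_{\sigma}$ as presheaves.

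First I would record that $\tilde{\mathfrak{E}}_{\sigma}$ is a stack for the fppf topology (it is even Deligne--Mumford by Theorem~\ref{theorem1}) and that, by the two propositions preceding the statement, all of its objects have trivial automorphism groups: $\eta_{\sigma}$ restricts to an isomorphism onto each stratum $V_l\setminus V_{l-1}$, and these strata cover $\tilde{\CE}_{\sigma}$, so every geometric point of $\tilde{\mathfrak{E}}_{\sigma}$, and hence every object of $\tilde{\mathfrak{E}}_{\sigma}(S)$, has no nontrivial automorphisms. For a stack with this property the transition isomorphisms of any descent datum automatically satisfy the cocycle condition, so $\pi_0\circ\tilde{\mathfrak{E}}_{\sigma}$ is already an fppf sheaf; in particular no sheafification is needed in the definition of $\underline{\tilde{\CE}}_{\sigma}$.

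Next, a stack with trivial automorphism groups is the same thing as a sheaf of sets, namely the sheaf $S\mapsto\pi_0\bigl(\tilde{\mathfrak{E}}_{\sigma}(S)\bigr)$, and this sheaf is by hypothesis representable by the algebraic space $\tilde{\mathfrak{E}}_{\sigma}$; hence $\underline{\tilde{\CE}}_{\sigma}$ is represented by $\tilde{\mathfrak{E}}_{\sigma}$. Finally, an algebraic space coincides with its own coarse moduli space, so by the uniqueness of coarse moduli spaces the canonical morphism $\eta_{\sigma}\colon\tilde{\mathfrak{E}}_{\sigma}\to\tilde{\CE}_{\sigma}$ is an isomorphism (one can also see this directly from the stratification, as $\eta_{\sigma}$ is surjective and an isomorphism over each $V_l\setminus V_{l-1}$). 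Since $\tilde{\CE}_{\sigma}$ was already shown to be a scheme locally of finite type over $\mathbb{F}_p$, it follows that the scheme $\tilde{\CE}_{\sigma}$ represents $\underline{\tilde{\CE}}_{\sigma}$.

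The step requiring the most care is the identification of $\underline{\tilde{\CE}}_{\sigma}$ with the functor of points of the algebraic space: one must check that $\tilde{\mathfrak{E}}_{\sigma}(S)$ has no nontrivial automorphisms for \emph{every} $S$, not merely on geometric points, and that $\tilde{\mathfrak{E}}_{\sigma}$ satisfies effective fppf descent, so that passing from the groupoid-valued $\tilde{\mathfrak{E}}_{\sigma}$ to the set-valued $\underline{\tilde{\CE}}_{\sigma}$ loses no information and produces a sheaf rather than merely a separated presheaf. Once this is in place, the remaining assertions — that $\underline{\tilde{\CE}}_{\sigma}$ is represented by $\tilde{\mathfrak{E}}_{\sigma}$ and that $\eta_{\sigma}$ is an isomorphism — are formal consequences of the two propositions preceding the theorem.
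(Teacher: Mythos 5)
Your reduction of the statement to a claim about $\eta_{\sigma}$ is sound and is in fact how the paper proceeds: granting the two preceding propositions, the set-valued functor $\underline{\tilde{\CE}}_{\sigma}$ is the functor of points of the algebraic space $\tilde{\mathfrak{E}}_{\sigma}$, and the theorem becomes the assertion that $\eta_{\sigma} \colon \tilde{\mathfrak{E}}_{\sigma} \lra \tilde{\CE}_{\sigma}$ is an isomorphism of algebraic spaces. The problem is the step where you conclude this. Your primary route invokes ``uniqueness of coarse moduli spaces,'' but the proposition constructing $\tilde{\CE}_{\sigma}$ never establishes the universal property of a coarse moduli space: it produces $\tilde{\CE}_{\sigma}$ by an explicit Ferrand pushout of schemes and proves only that $\eta_{\sigma}$ is surjective and restricts to isomorphisms over the strata $V_l \backslash V_{l-1}$. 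The phrase ``coarse moduli scheme'' in the surrounding text is informal; no initiality of $\eta_{\sigma}$ among maps to algebraic spaces is proved, and if it had been, Theorem \ref{theorem2} would be an immediate formality rather than requiring the argument the paper gives. So the uniqueness argument assumes essentially what is to be shown.

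Your fallback --- ``$\eta_{\sigma}$ is surjective and an isomorphism over each stratum, hence an isomorphism'' --- is also not valid as stated: a morphism that is bijective and restricts to isomorphisms over the pieces of a locally closed stratification need not be an isomorphism, as the canonical map $\left(\mathbb{A}^1 \setminus \{0\}\right) \amalg \{0\} \lra \mathbb{A}^1$ shows. What is missing is a properness or finiteness statement that forbids such behaviour, and this is precisely the content of the paper's proof: by induction, the finite morphism $\tilde{\mathfrak{E}}_{\sigma-1} \amalg \coprod_{\gamma \in R_{\sigma}} \CS''_{\gamma} \lra \tilde{\CE}_{\sigma}$ factors through the finite surjection onto $\tilde{\mathfrak{E}}_{\sigma}$, so Lemma \ref{sweet} gives that $\eta_{\sigma}$ is finite; being moreover surjective and an isomorphism on fibers, it is a bijective closed immersion, and since it is an isomorphism over the dense open stratum $V_{\sigma}$ it is an isomorphism. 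Without this finiteness step (or some substitute for it) your argument does not close.
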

\begin{proof}
It suffices to show that the morphism $\eta_{\sigma} \colon \tilde{\mathfrak{E}}_{\sigma} \lra \tilde{\CE}_{\sigma}$ is an isomorphism of algebraic spaces. 

\claim $\eta_{\sigma}$ is finite.
\prfclaim Since the finite morphism $\coprod_{\gamma \in R_1} \CS''_{\gamma} \lra \tilde{\CE}_1$ factorizes over the finite surjection $\coprod_{\gamma \in R_1} \CS''_{\gamma} \lra \tilde{\mathfrak{E}}_1$ it follows from Lemma \ref{sweet} that $\eta_1$ is finite. Inductively, we may assume $\eta_{\sigma-1}$ is finite. Thus we have a finite morphism $\tilde{\mathfrak{E}}_{\sigma-1} \amalg \coprod_{\gamma \in R_{\sigma}} \CS_{\gamma}'' \lra \tilde{\CE}_{\sigma}$ that factorizes over a finite surjection $\tilde{\mathfrak{E}}_{\sigma-1} \amalg \coprod_{\gamma \in R_{\sigma}} \CS_{\gamma}'' \lra \tilde{\mathfrak{E}}_{\sigma}$ and by Lemma \ref{sweet} it follows that $\eta_{\sigma}$ is finite.

Since $\eta_{\sigma}$ is surjective and an isomorphism on fibers it follows that $\eta_{\sigma}$ is a bijective closed immersion. Since the restriction of $\eta_{\sigma}$ to the preimage of the open dense subset $V_{\sigma} \subseteq \tilde{\CE}_{\sigma}$ is an isomorphism it follows that $\eta_{\sigma}$ is an isomorphism.
\end{proof}

Étale under $\tilde{\CE}_{\sigma}$ there will again lie a nice scheme. However, this scheme may not be quasi-projective anymore and we introduce the following slightly weaker finiteness property. 
\begin{definition}\cite[Definition B.1]{MR3084720} A scheme $X$ is called an \emph{AF scheme} if for every finite subset $\{x_i\}$ of $X$ there exists an affine open subscheme $U$ in $X$ such that $\{x_i\}$ is contained in $U$.
\end{definition}
\begin{remark} Any quasi-projective scheme over a field $k$ is AF. Further, if $X$ is an AF scheme and $G$ is a finite group acting on $X$, then the quotient $X/G$ always exists as a scheme.
\end{remark}
\begin{remark} To our knowledge, the term \emph{AF scheme} was first used in \cite{MR3084720}. However, schemes with this property have been studied before \cite[Exp.\ V]{SGA3}, \cite[§4]{MR289501},\cite{MR2044495}. For more facts on AF schemes see \cite[Appendix B]{MR3084720}. \end{remark}
We introduce the following functors
    \begin{align*} {\underline{\CQ}_{\sigma}}^{str} \colon \CA_{\mathbb{F}_p}^{\mathrm{op}} &\lra \left(\text{Sets} \right) \\
S &\longmapsto \left\{\begin{array}{l} \text{Isomorphism classes of tuples $([G], \gamma)$, where} \\
\text{$\gamma \colon \Gamma(2) \hookrightarrow N_{\sigma}$ without $(-2)$-vector in the complement} \\
\text{and $[G]$ is an equivalence class of} \\
\text{strictly characteristic generatrices $G \subseteq pN_{\sigma}^{\vee}/pN_{\sigma} \otimes \CO_S$}\\
\text{under the $O(N_{\sigma}, \gamma)$-action} \\
\text{such that $\gamma(\Gamma(2))^{\perp} \cap \Delta_{N_{\sigma}(s)} = \emptyset$} \end{array} \right\}
\end{align*}
\begin{proposition}\label{per} There exists a separated $\mathbb{F}_p$-scheme $\gls{Q_s}$ which is of finite type and AF, and an étale surjective morphism $\pi^{\tilde{E}}_{\sigma} \colon \tilde{\CE}_{\sigma} \rightarrow \CQ_{\sigma}$ such that for $l \leq \sigma$ the image of $V_l \subseteq \tilde{\CE}_{\sigma}$ in $\CQ_{\sigma}$ is locally closed and represents the functor  ${\underline{\CQ}_{l}}^{str}$.\end{proposition}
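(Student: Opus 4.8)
The idea is to repeat, one level lower, the inductive Ferrand pushout construction that produced $\tilde{\mathfrak{E}}_{\sigma}$ and $\tilde{\CE}_{\sigma}$: I would replace each coarse space $\tilde{\CS}''_{\gamma}$ by the period scheme $\widetilde{\CM}''_{\gamma}$ that lies étale beneath it, and use the étale surjective period morphisms $\tilde{\pi}''_{\gamma}\colon\tilde{\CS}''_{\gamma}\to\widetilde{\CM}''_{\gamma}$ as glue. Start with $\CQ_{1}=\coprod_{\gamma\in R_{1}}\widetilde{\CM}''_{\gamma}$, so that $\coprod_{\gamma\in R_{1}}\tilde{\pi}''_{\gamma}$ together with Proposition~\ref{sigmaisone} supplies $\pi^{\tilde{E}}_{1}\colon\tilde{\CE}_{1}\to\CQ_{1}$; then inductively form $\CQ_{l}$ as the pushout of
\begin{align*}
\CQ_{l-1}\ \longleftarrow\ p_{l}^{-1}(\CQ_{l-1})\ \longrightarrow\ \coprod_{\gamma\in R_{l}}\widetilde{\CM}''_{\gamma},
\end{align*}
where $p_{l}^{-1}(\CQ_{l-1})=\bigcup_{\gamma,j}\tilde{\pi}''_{\gamma}\bigl(q(W^{\gamma}_{j})\bigr)$ is the closed substack obtained by pushing the closed substack $p_{l}^{-1}(\tilde{\CE}_{l-1})=\bigcup_{\gamma,j}q(W^{\gamma}_{j})$ forward along $\coprod_{\gamma}\tilde{\pi}''_{\gamma}$, the right-hand arrow is the resulting closed immersion, and the left-hand arrow is built from the $O(N_{l},\gamma)$-equivariant maps $W^{\gamma}_{j}\to\tilde{\CE}_{l-1}$ and the categorical quotient property of the $q$'s, exactly as in the construction of $\tilde{\CE}_{l}$ in the preceding proposition. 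The left-hand arrow is finite by Lemma~\ref{sweet}, applied to the finite surjection $p_{l}^{-1}(\tilde{\CE}_{l-1})\to p_{l}^{-1}(\CQ_{l-1})$ followed by $p_{l}$, so \cite[Theorem~A.4]{ryd2011} produces $\CQ_{l}$ as an algebraic space locally of finite type over $\mathbb{F}_{p}$ with $\CQ_{l-1}\hookrightarrow\CQ_{l}$ a closed immersion; since $\sigma\le 10$ this terminates at $\CQ_{\sigma}$.

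\textbf{$\CQ_{\sigma}$ is a separated finite-type AF scheme.} Here I would copy the auxiliary argument from the proof of Theorem~\ref{theorem1}: each $\widetilde{\CM}''_{\gamma}$ is quasi-projective, hence AF and of finite type; the affine-style open covers ${\CM'_{\gamma}}^{\Lambda,\alpha}$ of Remark~\ref{struct} are compatible with the finite quotients $\CM'_{\gamma}\to\widetilde{\CM}''_{\gamma}$ and descend to an affine-style open cover of $\coprod_{\gamma}\widetilde{\CM}''_{\gamma}$; and pushouts of separated AF schemes along a closed immersion with finite complementary leg are again separated AF schemes, with the new leg $\coprod_{\gamma}\widetilde{\CM}''_{\gamma}\to\CQ_{l}$ finite, by \cite[Theorem~A.4]{ryd2011} and \cite[Theorems~6.4 and~6.6]{MR3572553} (the ``scheme'' conclusion being obtained, as for $\mathfrak{B}_{l}$ in Theorem~\ref{theorem1}, by checking the relevant monomorphisms of finite presentation are open immersions). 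Finiteness of type is clear because there are finitely many $\gamma$ at each of the finitely many levels $l\le\sigma$.

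\textbf{The morphism $\pi^{\tilde{E}}_{\sigma}$: surjectivity and étaleness.} By functoriality of the Ferrand pushout, the triple consisting of $\pi^{\tilde{E}}_{l-1}$ (étale surjective by induction), $\coprod_{\gamma}\tilde{\pi}''_{\gamma}$ (étale surjective, this being exactly the statement that $\tilde{\pi}''_{\gamma}$ is étale and surjective established earlier for the strongly geometric quotient $\tilde{\CS}''_{\gamma}$), and the base change $p_{l}^{-1}(\tilde{\CE}_{l-1})\to p_{l}^{-1}(\CQ_{l-1})$ of the latter along the closed substack $p_{l}^{-1}(\CQ_{l-1})$ (hence étale), forms a morphism of pushout data compatible with the structure maps, and therefore induces $\pi^{\tilde{E}}_{l}\colon\tilde{\CE}_{l}\to\CQ_{l}$; for $l=\sigma$ this is the desired morphism, and its surjectivity is immediate by induction. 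For étaleness I would argue, exactly as smoothness and surjectivity of the analogous maps are argued in the proof of Theorem~\ref{theorem1}, that being étale is inherited by the pushout of a morphism of Ferrand pushout data all of whose components are étale when the two legs of the diagram are a closed immersion and a finite morphism: concretely one pulls $\pi^{\tilde{E}}_{l}$ back along the smooth surjective atlas $\mathfrak{B}^{l}_{atl}\to\tilde{\CE}_{l}$ built from the $\CS''_{\gamma}$, over which it is comparable, through the towers $\mathfrak{B}^{l}_{atl}$ and $\mathfrak{B}^{l}_{gen}$, to the disjoint union of the étale surjective maps $\pi''_{\gamma}\colon\CS''_{\gamma}\to\CM'_{\gamma}$, using the fixed-point-reflecting property of $\pi''_{\gamma}$ to control behaviour along the $O(N_{l},\gamma)$-fixed loci. \emph{This descent of étaleness through the Ferrand gluing is the step I expect to be the main obstacle}: one must genuinely produce an étale morphism rather than merely verify étaleness stratum by stratum, and the auxiliary pushout towers of Theorem~\ref{theorem1}, which already certify the corresponding smoothness and surjectivity statements, are precisely the tool for doing this honestly.

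\textbf{Identification of the strata.} By the preceding proposition $\eta_{\sigma}^{-1}(V_{l}\setminus V_{l-1})$ is the substack of $\tilde{\mathfrak{E}}_{\sigma}$ of constant Artin invariant $l$ and $\eta_{\sigma}$ is an isomorphism there, so, combining Theorem~\ref{theorem2} with Proposition~\ref{coar}, the stratum $V_{l}\setminus V_{l-1}\subseteq\tilde{\CE}_{\sigma}$ is the appropriate gluing of the schemes $\tilde{\CS}''_{\gamma,l}$, $\gamma\in R_{l}$, representing $\Gamma(2)$-marked families of constant Artin invariant $l$. On the strictly-characteristic locus the $O(N_{l},\gamma)$-action on $\CM'_{\gamma,l}$ factors through a free action (the freeness proved inside Proposition~\ref{coar}), so $\CM'_{\gamma,l}\to\widetilde{\CM}''_{\gamma,l}$ is étale and $\widetilde{\CM}''_{\gamma,l}$ represents the $\gamma$-part of ${\underline{\CQ}_{l}}^{str}$ by \cite[Theorem~2.16]{MR3084720}; since $\pi^{\tilde{E}}_{\sigma}$ restricts to $\tilde{\pi}''_{\gamma,l}$ on each chart, gluing over $\gamma\in R_{l}$ shows that the image of $V_{l}\setminus V_{l-1}$ in $\CQ_{\sigma}$ represents ${\underline{\CQ}_{l}}^{str}$. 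Finally the image of $V_{l}=\tilde{\CE}_{l}$ under $\pi^{\tilde{E}}_{\sigma}$ is $\CQ_{l}$, which is a closed (in particular locally closed) subscheme of $\CQ_{\sigma}$ and contains the above stratum as its open part of maximal Artin invariant, completing the verification.
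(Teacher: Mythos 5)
Your proposal follows essentially the same route as the paper: the base case $\CQ_1=\coprod_{\gamma\in R_1}\widetilde{\CM}''_{\gamma}$, the inductive Ferrand pushout of $\CQ_{\sigma-1}\leftarrow p_{\sigma}^{-1}(\CQ_{\sigma-1})\rightarrow\coprod_{\gamma\in R_{\sigma}}\widetilde{\CM}''_{\gamma}$ receiving a morphism of pushout data from the one defining $\tilde{\CE}_{\sigma}$, and the identification of the strata exactly as in Proposition \ref{coar}. The only step you flag as a serious obstacle, descent of étaleness through the gluing, is handled in the paper without any atlas-by-atlas argument, simply by quoting \cite[Théorème 5.4]{MR2044495} for existence of the pushout as an AF scheme, \cite[Theorem 6.8]{MR3572553} for separatedness, and \cite[Theorem 6.4]{MR3572553} for the fact that the étale surjective morphism of pushout data induces an étale surjective morphism $\tilde{\CE}_{\sigma}\rightarrow\CQ_{\sigma}$.
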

\begin{proof} For $\sigma=1$ we can take the quasi-projective scheme $\CQ_{\sigma}= \coprod_{\gamma \in R_1} \widetilde{\CM}''_{\gamma}$. This proves the assertion in this case.

We now do induction over $\sigma$ and assume that the assertion is true for $\sigma-1$. The pushout diagram of $\mathbb{F}_p$-schemes spaces 
\begin{align*} \xymatrix{p^{-1}_{\sigma}(\tilde{\CE}_{\sigma-1}) \ar[r]_{\iota} \ar[d]_{p_{\sigma}}\po & \coprod_{\gamma \in R_{\sigma}} \tilde{\CS}''_{\gamma}\ar[d]\\
    \tilde{\CE}_{\sigma-1} \ar[r]& \tilde{\CE}_{\sigma}}\end{align*}
 induces a pushout diagram of separated $\mathbb{F}_p$-schemes of finite type and AF
 \begin{align*} \xymatrix{p^{-1}_{\sigma}(\CQ_{\sigma-1}) \ar[r]_{\iota} \ar[d]_{p_{\sigma}}\po & \coprod_{\gamma \in R_{\sigma}} \widetilde{\CM}''_{\gamma}\ar[d]\\
    \CQ_{\sigma-1} \ar[r]& \CQ_{\sigma}}\end{align*}
together with an étale and surjective morphism of pushout data 
\begin{align*} \left(\tilde{\CE}_{\sigma-1} \leftarrow p^{-1}_{\sigma}(\tilde{\CE}_{\sigma-1}) \rightarrow \coprod_{\gamma \in R_{\sigma}} \tilde{\CS}''_{\gamma}\right) \lra \left(\CQ_{\sigma-1} \leftarrow p^{-1}_{\sigma}(\CQ_{\sigma-1}) \rightarrow \coprod_{\gamma \in R_{\sigma}} \widetilde{\CM}''_{\gamma}\right). \end{align*}
By \cite[Théorème 5.4.]{MR2044495}, the pushout $\CQ_{\sigma}$ exists as an AF scheme and the induced morphism $\CQ_{\sigma-1} \amalg \coprod_{\gamma \in R_{\sigma}} \widetilde{\CM}''_{\gamma} \rightarrow \CQ_{\sigma}$ is finite surjective. Since $\coprod_{\gamma \in R_{\sigma}} \widetilde{\CM}''_{\gamma}$ is of finite type over $\mathbb{F}_p$ and we may inductively assume that $\CQ_{\sigma-1}$ is of finite type over $\mathbb{F}_p$ as well, it follows that $\CQ_{\sigma}$ is of finite type over $\mathbb{F}_p$. That $\CQ_{\sigma}$ is separated follows from \cite[Theorem 6.8.]{MR3572553} and by \cite[Theorem 6.4.]{MR3572553} the induced morphism $\tilde{\CE}_{\sigma} \rightarrow \CQ_{\sigma}$ is étale and surjective.  The assertion on the functor that is represented by the locally closed subsets $\pi^{\tilde{E}}_{\sigma}(V_l)$ can be proved in the same way as Proposition \ref{coar}. \end{proof}
\begin{remark} We will prove in Section \ref{tor} that the scheme $\CQ_{\sigma}$ constructed in the proof of Proposition \ref{per} is a coarse moduli scheme for supersingular Enriques surfaces. \end{remark}
\section{From \texorpdfstring{$\Gamma(2)$-marked}{marked} K3 surfaces to \texorpdfstring{$\Gamma'$-marked}{marked} Enriques surfaces}\label{enr}
Although we want to construct a moduli space for Enriques surfaces, we have only discussed K3 surfaces so far. In this section we establish the connection between $\Gamma(2)$-marked supersingular K3 surfaces and $\Gamma'$-marked Enriques surfaces that are quotients of supersingular K3 surfaces. 

\begin{definition} If $X$ is a supersingular K3 surface and $\iota \colon X \rightarrow X$ is a fixed point free involution, we write $G=\langle \iota \rangle$ for the cyclic group of order $2$ which is generated by $\iota$. A quotient of surfaces $X \rightarrow X/G=Y$ defined by such a pair $(X, \iota)$ is called a \textit{supersingular Enriques surface} $Y$. The \emph{Artin invariant} of a supersingular Enriques surface $Y$ is the Artin invariant of the supersingular K3 surface $X$ that universally covers $Y$. A \textit{family of supersingular Enriques surfaces} is a smooth and proper morphism of algebraic spaces $f \colon \CY \rightarrow S$ over $\mathbb{F}_p$ such that for each field $k$ and each $s \colon \Spec k \rightarrow S$ the fiber $f_s \colon \CY_s \rightarrow \Spec k$ is a supersingular Enriques surface. \end{definition}

Recall from Section \ref{aux} that we defined $\Gamma$ to be the lattice $\Gamma= U_2 \oplus E_8(-1)$. If $Y$ is a supersingular Enriques surface, then there exists an isomorphism of lattices \mbox{$\mathrm{Pic}(Y) \cong \Gamma \oplus \mathbb{Z}/2\mathbb{Z}$} and we denote the latter lattice by $\gls{Gamma'}$. In arbitrary characteristic, by \cite[Proposition 4.4]{MR3393362}, if $\CY \rightarrow S$ is a family of supersingular Enriques surfaces, then the torsion part $\Pic_{\CY/S}^{\tau}$ of the Picard scheme is a finite flat group scheme of length $2$ over $S$. In particular, when $p \geq 3$ we have an equality of sheaves of groups $\Pic_{\CY/S}^{\tau} = \underline{\mathbb{Z}/2\mathbb{Z}}$ with generator $\omega_{\CY/S}$. Further, in arbitrary characteristic, the quotient $\Pic_{\CY/S}/ \Pic_{\CY/S}^{\tau}$ is a locally constant sheaf of torsion-free finitely generated abelian groups. In characteristic $p \geq 3$ this implies that there exists an étale covering $\{U_i \rightarrow S\}_{i \in I}$ such that we have an isomorphism $\Pic_{\CY_{U_i}/U_i} \cong \underline{\Gamma \oplus \mathbb{Z}/2\mathbb{Z}}$ for each $i \in I$. 

\begin{definition} A \emph{$\Gamma$-marking} of a family $f \colon \CY \rightarrow S$ of supersingular Enriques surfaces is the choice of a morphism $\tilde{\gamma} \colon \underline{\Gamma} \rightarrow \Pic_{\CY/S}$ of group objects in the category of algebraic spaces compatible with the intersection forms. Analogously we define the notion of a \emph{$\Gamma'$-marking}. There are obvious notions of morphisms of families of marked supersingular Enriques surfaces.  \end{definition}

As before, we will in the following always assume that $p \neq 2$. In general, it is not true that a family of $\Gamma'$-marked supersingular Enriques surfaces $\mathcal{Y} \rightarrow S$ has a canonical family of supersingular K3 surfaces that covers it. Depending on $S$ such a family might not exist and even if it exists it might not be unique. For details we refer to \cite[Section 5]{schrer2020enriques}. We therefore enrich our families of supersingular Enriques surfaces with a datum that gives a canonical K3 cover.
\begin{definition} Let $\mathcal{Y} \rightarrow S$ be a family of supersingular Enriques surfaces. A \emph{canonizing datum} on $\mathcal{Y}$ is a pair $(\mathcal{L}, \mu)$, where $\mathcal{L} \in \Pic(\mathcal{Y})$ is a line bundle such that for each field $k$ and each $s \colon \Spec k \rightarrow S$ there is an isomorphism $\mathcal{L}|_{\mathcal{Y}_s} \cong \omega_{\mathcal{Y}_s}$ and $\mu$ is an isomorphism $\mu \colon \mathcal{L} \otimes \mathcal{L} \rightarrow \CO_{\CY}$. A \emph{family of $\Gamma'$ marked supersingular Enriques surfaces with canonizing datum} is a quadruple $(\tilde{f} \colon \CY \rightarrow S, \tilde{\gamma} \colon \Gamma' \rightarrow \Pic_{\CY/S}, \CL, \mu)$ such that $(\CL, \mu)$ is a canonizing datum. There is an obvious notion of isomorphisms of such families.
\end{definition} 
\begin{remark} Let $Y \rightarrow \Spec k$ be a supersingular Enriques surface. Then the canonical bundle $\omega_{Y}$ together with the choice of an isomorphism $\mu \colon \omega_{Y} \otimes \omega_{Y} \rightarrow \CO_Y$ yields a canonizing datum $(\omega_{Y/k}, \mu)$ on $Y$ and this datum up to isomorphism the only canonizing datum on $Y$.
\end{remark}
\begin{proposition}[and Definition] Given a family of $\Gamma'$-marked supersingular Enriques surfaces with canonizing datum $(\tilde{f} \colon \CY \rightarrow S, \tilde{\gamma} \colon \underline{\Gamma}' \rightarrow \Pic_{\CY/S}, \CL, \mu)$ we let $\CX = \underline{\Spec}_{\CY}(\CO_Y \oplus \CL)$. Then $\CX$ is a family of supersingular K3 surfaces $f \colon \CX \rightarrow S$ together with a morphism $\CX \rightarrow \CY$ that is a finite étale cover of $\CY$ of degree $2$. Further, this family carries a canonical $\Gamma(2)$-marking $\gamma \colon \underline{\Gamma}(2) \hookrightarrow \Pic_{\CX/S}$ induced from the $\Gamma'$-marking on $\CY$. We call $\CX \rightarrow \CY$ the \emph{canonical K3 cover of }$(\CY, \CL, \mu)$. \end{proposition}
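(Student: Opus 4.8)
The statement packages three claims --- that $\CX \to S$ is a family of supersingular K3 surfaces, that $\pi\colon \CX \to \CY$ is a finite \'etale cover of degree $2$, and that $\tilde\gamma$ induces a canonical $\Gamma(2)$-marking on $\CX$ --- and I would prove them in that order, starting from the \'etaleness. First I would spell out the $\CO_\CY$-algebra structure on $\mathcal A \coloneqq \CO_\CY \oplus \CL$: the multiplication $\CL \otimes \CL \to \CO_\CY$ is the isomorphism $\mu$. Zariski-locally on $\CY$ where $\CL$ is trivial we get $\mathcal A \cong \CO_\CY[t]/(t^2-a)$ with $a$ a unit; since $p \neq 2$ the discriminant $4a$ is a unit, so $\pi = \underline{\Spec}_\CY(\mathcal A) \to \CY$ is finite \'etale of degree $2$ (equivalently, $(\CL,\mu)$ is the Kummer datum of a $\mu_2 \cong \underline{\mathbb{Z}/2\mathbb{Z}}$-torsor). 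In particular $\CX$ is an algebraic space, affine over $\CY$, and $f = (\CY \to S)\circ\pi$ is smooth and proper, being the composite of a finite \'etale and a smooth proper morphism.

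Next I would analyse the geometric fibres. Over a geometric point $s$ of $S$ the cover $\CX_s \to \CY_s$ is the \'etale double cover attached to $\CL|_{\CY_s} \cong \omega_{\CY_s}$; it does not split because $\omega_{\CY_s} \not\cong \CO_{\CY_s}$, so $\CX_s$ is connected, and it is a smooth projective surface since $\CY_s$ is. From $\pi_{s*}\CO_{\CX_s} \cong \CO_{\CY_s} \oplus \omega_{\CY_s}$ and Serre duality $H^i(\CY_s,\omega_{\CY_s}) \cong H^{2-i}(\CY_s,\CO_{\CY_s})^\vee$ one gets $H^1(\CX_s,\CO_{\CX_s}) = 0$, while $\omega_{\CX_s} = \pi_s^*\omega_{\CY_s} \cong \pi_s^*(\CL|_{\CY_s}) \cong \CO_{\CX_s}$ (a pulled-back torsor line bundle is canonically trivial); hence $\CX_s$ is a K3 surface. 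Finally, by the definition of a supersingular Enriques surface $\CY_s$ is the quotient $X_s/\iota_s$ of a supersingular K3 surface $X_s$ by a fixed-point-free involution, so the quotient map $X_s \to \CY_s$ is itself a connected \'etale double cover; in odd characteristic such a cover is unique, so $\CX_s \cong X_s$ is supersingular. Thus $f\colon \CX \to S$ is a family of supersingular K3 surfaces.

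For the marking I would set $\gamma \coloneqq \pi^* \circ \tilde\gamma|_{\underline\Gamma}\colon \underline\Gamma \to \Pic_{\CX/S}$, where $\underline\Gamma \subset \underline\Gamma' = \underline\Gamma \oplus \underline{\mathbb{Z}/2\mathbb{Z}}$ is the free summand and $\pi^*$ is pullback of line bundles along $\pi$. On each geometric fibre the projection formula for the finite flat degree-$2$ map $\pi_s$ gives $(\pi_s^*D_1, \pi_s^*D_2) = 2(D_1,D_2)$, so $\gamma$ is compatible with intersection forms once the source carries the rescaled form, i.e. it is a morphism $\underline\Gamma(2) \to \Pic_{\CX/S}$. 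To see it is an embedding, observe that for $0 \neq v \in \Gamma$ the locus in $S$ where $\gamma(v)$ equals the zero section is closed, because $\Pic_{\CX/S}$ is a separated algebraic space; and it is empty on geometric fibres, since $\ker\pi_s^* = \langle\omega_{\CY_s}\rangle = \mathrm{tors}\,\Pic(\CY_s)$ while $\tilde\gamma_s|_\Gamma$ induces an isometry of $\Gamma$ onto $\Pic(\CY_s)/\mathrm{tors}$, so that $\gamma_s$ agrees, up to this isometry, with the classical primitive embedding $\Gamma(2) \hookrightarrow \mathrm{NS}(\CX_s)$ attached to the Enriques quotient (whose primitivity, the presence of an ample class in its image, and the absence of $(-2)$-vectors in its orthogonal complement are the facts recorded in \cite{2013arXiv1301.1118J}). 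This produces the asserted $\Gamma(2)$-marking and, together with $\pi\colon\CX\to\CY$, the object named in the definition.

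The main obstacle is not a single deep input but the passage from the geometric-fibre picture to the family: one must check that $\pi^*$ is a genuine morphism of the group-sheaves $\Pic_{\CY/S} \to \Pic_{\CX/S}$, that "compatible with intersection forms" --- and, if one also wants this marking to land in $\tilde{\mathfrak E}_\sigma$, the ampleness and $(-2)$-freeness conditions --- can be verified fibrewise, and that these are open and closed conditions so that separatedness of the relative Picard spaces propagates the fibre statements. The conceptual core reduces to two classical observations: that $\mu_2$ is \'etale in odd characteristic (which gives the \'etaleness of $\pi$ and the triviality of $\omega_{\CX_s}$), and that the connected \'etale double cover of an Enriques surface is unique (which lets us read off the supersingularity of $\CX_s$ directly from the definition of a supersingular Enriques surface).
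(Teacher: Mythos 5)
Your proposal is correct and follows essentially the same route as the paper: the cover $\CX=\underline{\Spec}_{\CY}(\CO_{\CY}\oplus\CL)$ is finite \'etale of degree $2$ because $p\neq 2$, the geometric fibres are identified with the universal K3 covers of the Enriques fibres (hence supersingular K3 surfaces), and the marking is obtained by pulling back line bundles, noting the intersection form gets multiplied by $2$, and precomposing with $\tilde\gamma|_{\underline\Gamma}$. You merely spell out in more detail steps the paper leaves implicit (the local description of the double-cover algebra, the direct verification that the fibres are K3, and the injectivity of $\gamma$), which is harmless.
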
 
\begin{proof} The canonical morphism $\CX \rightarrow \CY$ is finite of degree $2$ and since $p \neq 2$ it is also étale. It follows that $\CX \rightarrow S$ is proper and smooth. Further, every fiber $\CX_s \rightarrow \CY_s$ is just the universal K3 cover of  the Enriques surface $\CY_s$ and it follows that $\CX \rightarrow S$ is a family of supersingular K3 surfaces. 

Pullback of line bundles induces a morphism $\Pic_{\CY/S} \rightarrow \Pic_{\CX/S}$ of group objects in the category of algebraic spaces over $S$, and because the morphism $\CX \rightarrow \CY$ is unramified and $2$-to-$1$, the intersection form under this morphism gets multiplied by $2$. In other words, after twisting the intersection form of $\Pic_{\CY/S}$ by the factor $2$, we obtain a morphism $\Pic_{\CY/S}(2) \rightarrow \Pic_{\CX/S}$ of group objects in the category of algebraic spaces over $S$ compatible with intersection forms. Now precomposing with the marking $\tilde{\psi}_{|_{\Gamma}}(2) \colon \underline{\Gamma}(2) \rightarrow \Pic_{\CY/S}(2)$ yields an embedding $\gamma \colon \underline{\Gamma}(2) \hookrightarrow \Pic_{\CX/S}$. \end{proof}
Next, we show that any $\Gamma$-marking on a family of supersingular Enriques surfaces extends in a unique way to a $\Gamma'$-marking.
\begin{lemma}\label{equivgamma} Let $S$ be an algebraic space over $\mathbb{F}_p$. The forgetful functor
\begin{align*} \left\{\begin{array}{l}\text{Families of $\Gamma'$-marked} \\ \text{supersingular Enriques surfaces} \\ \text{$(\tilde{f} \colon \CY \rightarrow S, \tilde{\gamma} \colon \Gamma' \rightarrow \Pic_{\CY/S})$} \end{array} \right\} \lra  \left\{\begin{array}{l}\text{Families of $\Gamma$-marked} \\ \text{supersingular Enriques surfaces} \\ \text{$(\tilde{f} \colon \CY \rightarrow S, \tilde{\gamma}_{|_{\Gamma}} \colon \Gamma \rightarrow \Pic_{\CY/S})$} \end{array} \right\}\end{align*}
is an equivalence of categories. \end{lemma}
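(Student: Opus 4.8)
The plan is to produce an explicit quasi-inverse to the forgetful functor by \emph{canonically} extending a $\Gamma$-marking to a $\Gamma'$-marking using the torsion part of the relative Picard sheaf. Recall that $\Gamma' = \Gamma \oplus \mathbb{Z}/2\mathbb{Z}$ as a group, the second summand being the torsion, on which the intersection form vanishes; and recall from the discussion preceding the lemma that for a family of supersingular Enriques surfaces $\tilde f \colon \CY \to S$ with $p \geq 3$ one has a canonical identification $\Pic^{\tau}_{\CY/S} = \underline{\mathbb{Z}/2\mathbb{Z}}$ with distinguished generator $\omega_{\CY/S}$, while $\Pic_{\CY/S}/\Pic^{\tau}_{\CY/S}$ is a (locally constant) torsion-free sheaf.

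First I would define the extension functor $E$ on objects: given $(\tilde f \colon \CY \to S, \tilde\gamma_{|\Gamma} \colon \underline{\Gamma} \to \Pic_{\CY/S})$, set $E(\CY, \tilde\gamma_{|\Gamma}) = (\CY, \tilde\gamma)$, where $\tilde\gamma \colon \underline{\Gamma'} = \underline{\Gamma} \oplus \underline{\mathbb{Z}/2\mathbb{Z}} \to \Pic_{\CY/S}$ restricts to $\tilde\gamma_{|\Gamma}$ on $\underline{\Gamma}$ and to the canonical inclusion $\underline{\mathbb{Z}/2\mathbb{Z}} = \Pic^{\tau}_{\CY/S} \hookrightarrow \Pic_{\CY/S}$ on the torsion summand; the universal property of the direct sum guarantees this glues to a morphism of group objects. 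I would then check that $\tilde\gamma$ is indeed a $\Gamma'$-marking: compatibility with the intersection forms reduces to that of $\tilde\gamma_{|\Gamma}$, because the form on $\Gamma'$ vanishes on torsion and the form on $\Pic_{\CY/S}$ kills $\Pic^{\tau}_{\CY/S}$; and $\tilde\gamma$ is injective because $\tilde\gamma_{|\Gamma}$ is injective (the form on $\Gamma$ being nondegenerate), the torsion part is injective, and the two images meet trivially — any section of $\underline{\Gamma}$ landing in $\Pic^{\tau}_{\CY/S}$ would be orthogonal to all of $\Gamma$ and hence zero by nondegeneracy. On morphisms, I would use that a morphism of families induces an isomorphism of relative Picard sheaves, which preserves the torsion subsheaf and restricts to the only automorphism of $\underline{\mathbb{Z}/2\mathbb{Z}}$, namely the identity; hence the compatibility $g^{\ast}\tilde\gamma' = \tilde\gamma$ of the extended markings follows from the compatibility $g^{\ast}\tilde\gamma'_{|\Gamma} = \tilde\gamma_{|\Gamma}$ of the $\Gamma$-markings, so $E$ is well defined on morphisms.

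Then I would verify that $E$ is a two-sided inverse of the forgetful functor — in fact that the forgetful functor is an isomorphism of categories. The composite \emph{forget then extend} is the identity on objects and morphisms by construction. For \emph{extend then forget}, the only point is that the torsion part of an arbitrary $\Gamma'$-marking $\tilde\gamma$ is forced to be the canonical inclusion: since $\tilde\gamma$ is a monomorphism of sheaves of groups and $\Pic_{\CY/S}/\Pic^{\tau}_{\CY/S}$ is torsion-free, $\tilde\gamma$ carries the torsion summand $\underline{\mathbb{Z}/2\mathbb{Z}}$ injectively into $\Pic^{\tau}_{\CY/S} = \underline{\mathbb{Z}/2\mathbb{Z}}$, hence isomorphically, with the generator sent to $\omega_{\CY/S}$; thus $\tilde\gamma$ is recovered from its restriction to $\underline{\Gamma}$. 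On morphisms nothing more is needed, since the Hom-sets on either side are cut out by the same condition on the free part.

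I do not anticipate a genuine obstacle here: the argument is essentially bookkeeping once the structure of $\Pic_{\CY/S}$ is in hand, and the only substantive inputs are the canonical identification $\Pic^{\tau}_{\CY/S} = \underline{\mathbb{Z}/2\mathbb{Z}}$ in characteristic $\geq 3$ together with the torsion-freeness of the quotient, both quoted from \cite{MR3393362}. The one point that wants a little care is to use ``marking'' in the sense of an injective morphism of group sheaves compatible with intersection forms — injectivity of a $\Gamma$-marking being automatic from nondegeneracy, while injectivity of a $\Gamma'$-marking is precisely what pins down its torsion part — so that the extension in the previous paragraph is genuinely unique and $E$ is honestly inverse to forgetting.
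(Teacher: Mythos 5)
Your proposal is correct and rests on exactly the same idea as the paper's (very terse) proof: the torsion summand of a $\Gamma'$-marking is forced, because $\Pic^{\tau}_{\CY/S}=\underline{\mathbb{Z}/2\mathbb{Z}}$ has trivial automorphism group, so a $\Gamma$-marking extends uniquely to a $\Gamma'$-marking. The only difference is presentational: you build the extension globally from the canonical generator $\omega_{\CY/S}$, while the paper extends \'etale locally and glues by uniqueness; your explicit remark that injectivity of the $\Gamma'$-marking is what pins down its torsion part is a point the paper leaves implicit.
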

\begin{proof} The automorphism group of the constant group scheme $\underline{\mathbb{Z}/2\mathbb{Z}}$ is trivial. Thus, every $\Gamma$-marking extends étale locally in a unique way to a $\Gamma'$-marking and by uniqueness to a global $\Gamma'$-marking. \end{proof}
\begin{remark} The result from the lemma obviously extends to families of marked supersingular Enriques surfaces with canonizing datum.
\end{remark}
We now consider the stack
\begin{align*} \mathfrak{E}_{\sigma} \colon \CA_{\mathbb{F}_p}^{\mathrm{op}} &\lra \left(\text{Sets} \right) \\
S &\longmapsto \left\{\begin{array}{l}\text{Tuples $(\tilde{f}, \tilde{\gamma}, \CL, \mu)$, where} \\ \text{$\tilde{f} \colon \CY \rightarrow S$ is a family of supersingular Enriques surfaces} \\  \text{with $\Gamma'$-marking $\tilde{\gamma} \colon \Gamma' \rightarrow \Pic_{\CY/S}$ and canonizing datum $(\CL, \mu)$} \\
\text{such that the canonical K3 cover $\CX \rightarrow \CY$} \\
\text{fppf locally admits an $N_{\sigma}$-marking} \end{array} \right\}.\end{align*}
We are interested in the representability of the moduli stack $\mathfrak{E}_{\sigma}$. In the following proposition we show that the stack $\mathfrak{E}_{\sigma}$ is isomorphic to the functor $\tilde{\mathfrak{E}}_{\sigma}$ from Section \ref{secmark}. 
\begin{proposition}\label{EandK} There exists an isomorphism of stacks $\mathrm{cov} \colon \mathfrak{E}_{\sigma} \rightarrow \tilde{\mathfrak{E}}_{\sigma}$. \end{proposition}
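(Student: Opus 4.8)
The plan is to realise $\mathrm{cov}$ as the morphism sending a family of $\Gamma'$-marked supersingular Enriques surfaces with canonizing datum to its canonical K3 cover equipped with the induced $\Gamma(2)$-marking (constructed in the preceding proposition--and--definition), and then to produce a quasi-inverse $\mathrm{quot}\colon\tilde{\mathfrak{E}}_{\sigma}\to\mathfrak{E}_{\sigma}$ by taking an Enriques quotient. First I would check that $\mathrm{cov}$ actually lands in $\tilde{\mathfrak{E}}_{\sigma}$: given $(\tilde f,\tilde\gamma,\CL,\mu)\in\mathfrak{E}_{\sigma}(S)$ the cover $\CX=\underline{\Spec}_{\CY}(\CO_{\CY}\oplus\CL)\to S$ is a family of supersingular K3 surfaces which fppf locally admits an $N_{\sigma}$-marking by hypothesis, and the induced $\gamma\colon\underline{\Gamma}(2)\hookrightarrow\Pic_{\CX/S}$ is primitive by Lemma \ref{primi}. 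On a geometric fibre $\CX_s\to\CY_s$ the map is the universal K3 cover of an Enriques surface, so $\CX_s$ carries a fixed-point free involution with invariant lattice $\gamma_s(\Gamma(2))$; pulling back an ample class from $\CY_s$ shows $\gamma_s(\Gamma(2))$ meets the ample cone, and by the lattice-theoretic criterion for fixed-point freeness (cf.\ \cite{2013arXiv1301.1118J}) $\gamma_s(\Gamma(2))^{\perp}$ contains no $(-2)$-vector. Compatibility with base change and with isomorphisms is immediate from the functoriality of $\underline{\Spec}$, so $\mathrm{cov}$ is a well-defined morphism of stacks.

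For the quasi-inverse, given $(f\colon\CX\to S,\gamma)\in\tilde{\mathfrak{E}}_{\sigma}(S)$ I would argue exactly as in the proof of Proposition \ref{amp}: Lemma \ref{primi} produces a canonical involution $\iota^{\ast}_{\gamma}$ of $\Pic_{\CX/S}$ acting by $\mathrm{id}$ on $\gamma(\Gamma(2))$ and by $-\mathrm{id}$ on the orthogonal complement, and since $\gamma_s(\Gamma(2))$ meets the ample cone in every fibre, Ogus' Torelli theorem \cite{MR717616} together with \cite[Lemma 4.3]{2013arXiv1301.1118J} shows that $\iota^{\ast}_{\gamma}$ is induced by an $S$-involution $\iota_{\gamma}\colon\CX\to\CX$; as $\iota^{\ast}_{\gamma}$ is not the identity, $\iota_{\gamma}$ has order $2$. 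Its fixed locus is closed in $\CX$ and proper over $S$, and because $\gamma_s(\Gamma(2))^{\perp}$ contains no $(-2)$-vector the involution acts freely on each geometric fibre, so the fixed locus is empty; since $p\neq2$ the action of $G=\langle\iota_{\gamma}\rangle$ on $\CX$ is free. Hence $\CY\coloneqq\CX/G$ exists as an algebraic space, $\CX\to\CY$ is finite étale of degree $2$, and $\tilde f\colon\CY\to S$ is a family of supersingular Enriques surfaces whose cover fppf locally admits an $N_{\sigma}$-marking. The decomposition $f_{\ast}\CO_{\CX}=\CO_{\CY}\oplus\CL$ into $\iota_{\gamma}$-eigensheaves yields a line bundle $\CL$ with $\CL|_{\CY_s}\cong\omega_{\CY_s}$ (it is the canonical cover fibrewise) and the algebra multiplication restricts to an isomorphism $\mu\colon\CL^{\otimes2}\to\CO_{\CY}$ because $\CX\to\CY$ is étale; finally the $\iota^{\ast}_{\gamma}$-invariant sublattice $\gamma(\Gamma(2))$ is, via the relative pullback $\Pic_{\CY/S}(2)\to\Pic_{\CX/S}$ appearing in the canonical-cover construction, the image of a $\Gamma$-marking of $\CY$, which extends uniquely to a $\Gamma'$-marking $\tilde\gamma$ by Lemma \ref{equivgamma}. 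This defines $\mathrm{quot}(f,\gamma)=(\tilde f,\tilde\gamma,\CL,\mu)$, functorially in $S$.

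It then remains to produce natural isomorphisms $\mathrm{cov}\circ\mathrm{quot}\cong\mathrm{id}$ and $\mathrm{quot}\circ\mathrm{cov}\cong\mathrm{id}$ compatible with morphisms. In one direction, $\underline{\Spec}_{\CY}(\CO_{\CY}\oplus\CL)$ recovers $\CX$ with its involution $\iota_{\gamma}$ and the marking $\gamma$ from the output of $\mathrm{quot}$; in the other, the Enriques quotient of the canonical cover of $(\CY,\CL,\mu)$ is canonically $\CY$, the induced involution is the deck transformation, and the eigensheaf decomposition of the pushforward of $\CO$ reconstructs $(\CL,\mu)$, using the uniqueness of the canonizing datum recorded in the remark after the definition of canonizing datum. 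The main obstacle I anticipate is the relative, family-level bookkeeping: checking that the fibrewise involution glues to an honest $S$-morphism and that the geometric-fibre criteria (meeting the ample cone, absence of $(-2)$-vectors, fixed-point freeness) propagate to statements over all of $S$, which is precisely the spreading-out and separatedness argument already carried out in Proposition \ref{amp}; once that is granted, the equivalence is essentially formal, resting on the identification $\mathrm{NS}(\text{Enriques})/\mathrm{tors}\cong\mathrm{NS}(\text{K3})^{\iota^{\ast}}$ (twisted by $2$) and on Lemma \ref{equivgamma}.
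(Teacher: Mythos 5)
Your proposal is correct and follows essentially the same route as the paper: $\mathrm{cov}$ is defined by passing to the canonical K3 cover $\underline{\Spec}_{\CY}(\CO_{\CY}\oplus\CL)$ with its induced $\Gamma(2)$-marking (ampleness pulled back along the finite cover, no $(-2)$-vectors in the complement), and the quasi-inverse $\mathrm{quot}$ is built from the involution $\iota_{\gamma}$ of Proposition \ref{amp}, the free quotient $\CY=\CX/\langle\iota_{\gamma}\rangle$, the eigensheaf/quotient line bundle $\CL$ with its multiplication map $\mu$, and Lemma \ref{equivgamma} to upgrade the $\Gamma$-marking to a $\Gamma'$-marking. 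Your added remarks on fixed-point freeness and on the mutual-inverse check only flesh out steps the paper leaves implicit.
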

\begin{proof} We first define the morphism $\mathrm{cov} \colon \mathfrak{E}_{\sigma} \rightarrow \tilde{\mathfrak{E}}_{\sigma}$. To this end, we consider a family of $\Gamma'$-marked supersingular Enriques surfaces with canonizing datum $y=(\tilde{f} \colon \CY \rightarrow S, \tilde{\gamma} \colon \Gamma' \rightarrow \Pic_{\CY/S}, \CL, \mu) \in \mathfrak{E}_{\sigma}(S)$ that has the canonical K3 cover $(f \colon \CX \rightarrow S, \gamma \colon \Gamma(2) \hookrightarrow \Pic_{\CX/S})$. If $s \colon \Spec \overline{k} \rightarrow S$ is a geometric point, then the orthogonal complement of $\gamma_s(\Gamma(2))$ in $\mathrm{NS}(\CX_s)$ contains no $(-2)$-vector. Since the fiber $\CY_s$ is projective, it has an ample divisor. Pullback along finite morphisms preserves ampleness of divisors, so the sublattice $\gamma_s(\Gamma(2)) \hookrightarrow \mathrm{NS}(\CX_s)$ also contains an ample divisor. We can thus define $\mathrm{cov}(S)(y)=(f \colon \CX \rightarrow S, \gamma \colon \underline{\Gamma}(2) \hookrightarrow \Pic_{\CX/S})$. The definition of $\mathrm{cov}(S)$ on morphisms is clear and we obtain a morphism of stacks.

We will now define another morphism of stacks $\mathrm{quot} \colon \tilde{\mathfrak{E}}_{\sigma} \rightarrow \mathfrak{E}_{\sigma}$ such that the morphisms $\mathrm{quot}$ and $\mathrm{cov}$ are mutually quasi-inverse to each other. To this end, we let $S$ be a scheme and let $x=(f \colon \CX \rightarrow S, \gamma \colon \Gamma(2) \hookrightarrow \Pic_{\CX/S}) \in \tilde{\mathfrak{E}}_{\sigma}(S)$. We consider the involution $\iota_{\gamma} \colon \CX \rightarrow \CX$ from the proof of Proposition \ref{amp}. Then $\iota_{\gamma}$ induces a free $\langle \iota_{\gamma} \rangle$-action on $\CX$ and thus the quotient $\CY= \CX/ \langle \iota_{\gamma} \rangle$ exists as an algebraic space over $S$ and the morphism $c \colon \CX \rightarrow \CY$ makes $\CX$ into a $\mathbb{Z}/2\mathbb{Z}$-torsor over $\CY$. Thus, for every $s \in S$, $\CX_s$ is a $\mathbb{Z}/2\mathbb{Z}$-torsor over $\CY_s$ and it follows that $\CY_s$ is a supersingular Enriques surface for each $s \in S$. Further, the canonical morphism $\Pic_{\CY/S} \rightarrow \Pic_{\CX/S}$ induces an isomorphism $\psi \colon \Pic_{\CY/S}(2) \lra \gamma(\underline{\Gamma}(2))$. We define $\tilde{\gamma} \colon \Gamma' \rightarrow \Pic_{\CY/S}$ to be the unique $\Gamma'$-marking of $\Pic_{\CY/S}$ which is induced from $\psi^{-1}$ using Lemma \ref{equivgamma}. 

Since $c$ is finite flat of degree $2$, the $\CO_{\CY}$-algebra $c_{\ast}\CO_{\CX}$ is a locally free $\CO_{\CY}$-module of rank $2$. We set $\CL = c_{\ast}\CO_{\CX}/\CO_{Y}$. Then $\CL$ is a line bundle on $\CY$ with $\CL|_{\CY_s} \cong \omega_{\CY_s}$ for all $s \in S$. Because $c$ is étale, the multiplication map $\CL \times \CL \rightarrow c_{\ast}\CO_{\CX}/ \CL \cong \CO_{\CY}$ is surjective and we can take $\mu \colon \CL \otimes \CL \rightarrow \CO_{\CY}$ to be the induced isomorphism. 

Now setting $\mathrm{quot}(S)(x)= (\tilde{f} \colon \CY \rightarrow S, \tilde{\gamma} \colon \Gamma' \rightarrow \Pic_{\CY/S}, \CL, \mu)$ yields the desired inverse. \end{proof}
The functor of sets corresponding to $\mathfrak{E}_{\sigma}$ is
\begin{align*} \underline{\CE}_{\sigma} \colon \CA_{\mathbb{F}_p}^{\mathrm{op}} &\lra \left(\text{Sets} \right) \\
S &\longmapsto \left\{\begin{array}{l}\text{Isomorphism classes of tuples $(\tilde{f}, \tilde{\gamma}, \CL, \mu)$, where} \\ \text{$\tilde{f} \colon \CY \rightarrow S$ is a family of supersingular Enriques surfaces} \\  \text{with $\Gamma'$-marking $\tilde{\gamma} \colon \Gamma' \rightarrow \Pic_{\CY/S}$ and canonizing datum $(\CL, \mu)$} \\
\text{such that the canonical K3 cover $\CX \rightarrow \CY$} \\
\text{fppf locally admits an $N_{\sigma}$-marking} \end{array} \right\}.\end{align*}
The following theorem, which is one of the main results in this work, can be seen as a supersingular version of the results on complex Enriques surfaces in \cite{MR771979} or as a version for Enriques surfaces of the results on supersingular K3 surfaces in \cite{MR717616}.
\begin{theorem}\label{main} The functor $\underline{\CE}_{\sigma}$ is represented by a scheme $\gls{E_s}$ which is locally of finite type over $\mathbb{F}_p$ and there exists an étale surjective morphism $\pi^E_{\sigma} \colon \CE_{\sigma} \rightarrow \CQ_{\sigma}$.  \end{theorem}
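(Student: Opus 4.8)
The strategy is to transport the results of Section \ref{secmark} across the equivalence established in Section \ref{enr}. Concretely, I would set $\CE_{\sigma} \coloneqq \tilde{\CE}_{\sigma}$, the scheme from Theorem \ref{theorem2}, and take $\pi^E_{\sigma} \coloneqq \pi^{\tilde{E}}_{\sigma} \colon \tilde{\CE}_{\sigma} \to \CQ_{\sigma}$, the étale surjective morphism from Proposition \ref{per}. Everything then reduces to identifying the functor $\underline{\CE}_{\sigma}$ with the functor $\underline{\tilde{\CE}}_{\sigma}$ that $\tilde{\CE}_{\sigma}$ represents.

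For this identification I would use the isomorphism of stacks $\mathrm{cov} \colon \mathfrak{E}_{\sigma} \stackrel{\sim}{\lra} \tilde{\mathfrak{E}}_{\sigma}$ of Proposition \ref{EandK}. Since $\mathrm{cov}$ is an equivalence of groupoid-valued functors, it induces for every $S$ a bijection on isomorphism classes of objects, functorially in $S$, that is, an isomorphism $\underline{\CE}_{\sigma} \stackrel{\sim}{\lra} \underline{\tilde{\CE}}_{\sigma}$. Here one should note that $\underline{\CE}_{\sigma}$ is genuinely the sheaf of isomorphism classes of $\mathfrak{E}_{\sigma}$: we have already shown that $\tilde{\mathfrak{E}}_{\sigma}$ is an algebraic space, so its objects have no non-trivial automorphisms, and hence, via $\mathrm{cov}$, neither do the objects of $\mathfrak{E}_{\sigma}$. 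Thus, by Theorem \ref{theorem2}, the functor $\underline{\CE}_{\sigma}$ is represented by the scheme $\CE_{\sigma} = \tilde{\CE}_{\sigma}$, which is locally of finite type over $\mathbb{F}_p$ by its construction via iterated Ferrand pushouts of $\mathbb{F}_p$-schemes locally of finite type. Transporting $\pi^{\tilde{E}}_{\sigma}$ along $\CE_{\sigma} = \tilde{\CE}_{\sigma}$ gives the required étale surjective morphism $\pi^E_{\sigma} \colon \CE_{\sigma} \to \CQ_{\sigma}$.

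The substantive content of the theorem has already been carried out in Theorems \ref{theorem1} and \ref{theorem2}, Proposition \ref{EandK} and Proposition \ref{per}, so the remaining work is bookkeeping. The one point that I expect to require genuine care is the stack-to-functor passage above: one must verify that a family of $\Gamma'$-marked supersingular Enriques surfaces with canonizing datum admits no non-trivial automorphisms, so that $\underline{\CE}_{\sigma}$ really is a sheaf equal to the functor of isomorphism classes of $\mathfrak{E}_{\sigma}$ and the comparison with $\underline{\tilde{\CE}}_{\sigma}$ is legitimate. This is where the rigidity provided by the $\Gamma'$-marking, together with Ogus' Torelli theorem applied to the canonical K3 cover (as in the proof of Proposition \ref{amp} and Theorem \ref{theorem2}), is used; once that is in place, the theorem follows immediately.
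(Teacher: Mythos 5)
Your proposal is correct and follows essentially the same route as the paper, whose proof simply cites Theorem \ref{theorem2}, Proposition \ref{per} and Proposition \ref{EandK} to identify $\CE_{\sigma}$ with $\tilde{\CE}_{\sigma}$ and transport the period morphism. The extra care you take about passing from the stack $\mathfrak{E}_{\sigma}$ to its functor of isomorphism classes (using that $\tilde{\mathfrak{E}}_{\sigma}$ is an algebraic space, hence objects are automorphism-free) is exactly the bookkeeping the paper leaves implicit.
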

\begin{proof} This follows directly from Theorem \ref{theorem2}, Proposition \ref{per} and Proposition \ref{EandK}. \end{proof}
\begin{remark} It follows from \cite[Proposition 3.5]{2013arXiv1301.1118J} that for any $\sigma \geq 5$ we have a canonical isomorphism $\mathfrak{E}_{\sigma} \stackrel{\sim}\lra \mathfrak{E}_5$. \end{remark}
The previous remark motivates the following definition.
\begin{definition} We call $\CE_{5}$ the \emph{moduli space} of $\Gamma'$-marked supersingular Enriques surfaces and $\CQ_{5}$ the \emph{period space} of $\Gamma'$-marked supersingular Enriques surfaces. \end{definition}
\begin{remark} From the constructions it follows directly that, similar to the case of marked supersingular K3 surfaces, there are canonical stratifications $\CE_1 \hookrightarrow \CE_2 \hookrightarrow \CE_3 \hookrightarrow \CE_4 \hookrightarrow \CE_5$ and $\CQ_1 \hookrightarrow \CQ_2 \hookrightarrow \CQ_3 \hookrightarrow \CQ_4 \hookrightarrow \CQ_5$ via closed immersions. However, the latter are not sections to fibrations of the form $\CQ_{\sigma} \rightarrow \CQ_{\sigma-1}$. The main difference to the situation for marked supersingular K3 surfaces, and therefore the reason why such a fibration does not exist, is the following. While the embedding $\CM_{\sigma-1} \hookrightarrow \CM_{\sigma}$ depends on the choice of an embedding $j \colon N_{\sigma} \hookrightarrow N_{\sigma-1}$, the embedding $\CQ_{\sigma-1} \hookrightarrow \CQ_{\sigma}$ corresponds to the union over all images of such embeddings $\CM_{\sigma-1} \hookrightarrow \CM_{\sigma}$, but the inclusion $\bigcup_{j \in R_{\sigma-1,\sigma}} \Phi_j(\CM_{\sigma-1}) \hookrightarrow \CM_{\sigma}$ does not have an inverse.
\end{remark}
\begin{remark} The period spaces $\CQ_{\sigma}$ come with canonical compactifications which we denote $\gls{Qtd_s}$. Namely, we consider the functor 
\begin{align*} \tilde{\CE}_{\sigma}^{\dag} \colon \CA_{\mathbb{F}_p}^{\mathrm{op}} &\lra \left(\text{Sets} \right) \\
S &\longmapsto \left\{\begin{array}{l}\text{Isomorphism classes of tuples $(f, \gamma)$, where} \\ \text{$f \colon \CX \rightarrow S$ is a family of supersingular K3 surfaces} \\
\text{that fppf locally admit an $N_{\sigma}$-marking} \\
\text{and $\gamma$ is an embedding $\gamma \colon \underline{\Gamma}(2) \hookrightarrow \Pic_{\CX/S}$} \\
\text{such that for each geometric point $s \in S$} \\
\text{the sublattice $\gamma_s(\Gamma(2)) \hookrightarrow \mathrm{NS}(\CX_s)$} \\
\text{contains an ample line bundle} \end{array} \right\}.  \end{align*}
By an argument analogous to the proof of Theorem \ref{theorem1} it follows that the functor $\tilde{\CE}^{\dag}_{\sigma}$ is representable by a scheme $\gls{Etd_s}$ which is locally of finite type over $\mathbb{F}_p$. Further, there exists a proper $\mathbb{F}_p$-scheme $\CQ_{\sigma}^{\dag}$ and a canonical étale surjective morphism $\tilde{\CE}_{\sigma}^{\dag} \rightarrow \CQ_{\sigma}^{\dag}$ by an argument analogous to the one in the proof of Proposition \ref{per}. 

The scheme $\CQ_{\sigma}^{\dag}$ is indeed proper because inductively there exists a finite surjection of the proper $\mathbb{F}_p$-scheme $\CQ_{\sigma-1}^{\dag} \amalg \coprod_{\gamma \in R_{\sigma}} \widetilde{\CM}'_{\gamma}$ onto $\CQ_{\sigma}^{\dag}$. The canonical morphism of schemes $\CQ_{\sigma} \rightarrow \CQ_{\sigma}^{\dag}$ is an open immersion and a subscheme of the closed locus $\CQ_{\sigma}^{\dag} \backslash \CQ_{\sigma}$ corresponds to quotients of K3 surfaces by involutions that fix a divisor. This is an analogue to the so-called Coble locus in the characteristic zero setting, see \cite{MR3098788}. \end{remark}
\section{Some remarks about the geometry of the moduli space \texorpdfstring{$\CE_{\sigma}$}{}}\label{geom}
The geometry of $\CE_{\sigma}$ is quite complicated. We have shown that the scheme $\CE_{\sigma}$ is reduced, but in general it will not be connected, since already in the case $\sigma=1$ it has multiple connected components.

Moreover, we can not expect the connected components of $\CE_{\sigma}$ to be irreducible, since they are glued together from the schemes $\tilde{\CS}''_{\gamma}$ with $\gamma \in R_{\sigma}$ and we can not expect the irreducible components to be smooth: a priori the action of $O(N_{\sigma},\gamma)$ on $\CS'_{\gamma}$ which we took the quotient by is not free and we do not expect it to factorize over a free action. 

Further, when taking the pushout in the proof of Theorem \ref{theorem1}, we expect more singularities to show up. However, there are some simple general observations on the geometry of the scheme $\CE_{\sigma}$.

 We will first introduce a subfunctor $\tilde{\CE}'_{\sigma}$ of $\tilde{\CE}_{\sigma}$ to help us understand the geometry of the scheme $\CE_{\sigma} \cong \tilde{\CE}_{\sigma}$. We define
\begin{align*} \tilde{\CE}'_{\sigma} \colon \CA_{\mathbb{F}_p}^{\mathrm{op}} &\lra \left(\text{Sets} \right) \\
S &\longmapsto \left\{\begin{array}{l}\text{Isomorphism classes of tuples $(f, \gamma)$, where} \\ \text{$f \colon \CX \rightarrow S$ is a family of supersingular K3 surfaces} \\
\text{that fppf locally admit an $N_{\sigma}$-marking} \\
\text{and $\gamma$ is an embedding $\gamma \colon \underline{\Gamma}(2) \hookrightarrow \Pic_{\CX/S}$} \\
\text{such that $\gamma(\underline{\Gamma}(2)) \subset \psi(\underline{N}_{\sigma})$ for some} \\
\text{fppf local marking $\psi \colon \underline{N}_{\sigma} \hookrightarrow \Pic_{\CX/S}$ and} \\
\text{such that for each geometric point $s \in S$} \\
\text{the sublattice $\gamma_s(\Gamma(2)) \hookrightarrow \mathrm{NS}(\CX_s)$} \\
\text{contains an ample line bundle} \end{array} \right\}.  \end{align*}
The proof of the following proposition goes similarly to the proof of Theorem \ref{theorem1}. We therefore only highlight the main differences in the proof.
\begin{proposition} The functor $\tilde{\CE}'_{\sigma}$ is representable by a closed algebraic subspace $\gls{Et'_s}$ of $\tilde{\CE}_{\sigma}$.
\end{proposition}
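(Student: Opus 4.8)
The plan is to run the same kind of argument as in the proof of Theorem \ref{theorem1}, the essential input from there being that the canonical morphism
\[
p_{\sigma}\colon \coprod_{\gamma\in R_{\sigma}}\tilde{\mathfrak{S}}''_{\gamma}\lra\tilde{\mathfrak{E}}_{\sigma}\cong\tilde{\CE}_{\sigma},
\]
which forgets the sublattice $\underline{\CR}$, is \emph{finite}. A finite morphism is proper, so the scheme-theoretic image of $p_{\sigma}$ is a closed algebraic subspace of $\tilde{\CE}_{\sigma}$, and since the source is reduced — it is assembled from open subspaces and finite quotients of the smooth (hence reduced) scheme $\CS_{\sigma}$ — this image is reduced. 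I would define $\tilde{\CE}'_{\sigma}$ to be this closed subspace. On geometric points it has the expected description: over an algebraically closed field, $(\CX_{s},\gamma_{s})$ lies in the image of $p_{\sigma}$ precisely when $\mathrm{NS}(\CX_{s})$ has a sublattice isometric to $N_{\sigma}$ containing $\gamma_{s}(\Gamma(2))$, i.e.\ precisely when $\gamma_{s}$ factors through an $N_{\sigma}$-marking; here primitivity of the induced embedding $\Gamma(2)\hookrightarrow N_{\sigma}$ and the absence of $(-2)$-vectors in its complement — which is what lets one realise it by some representative $\gamma_{0}\in R_{\sigma}$ — are automatic from Lemma \ref{primi} and the defining conditions of $\tilde{\CE}_{\sigma}$. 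For $\sigma=1$ every fibre has Artin invariant $1$, so $\gamma$ always factors through $\mathrm{NS}(\CX_{s})\cong N_{1}$ and $\tilde{\CE}'_{1}=\tilde{\CE}_{1}$; one can also, staying closer to the proof of Theorem \ref{theorem1}, exhibit $\tilde{\CE}'_{\sigma}$ inductively as a pushout of a closed subspace of $\tilde{\CE}_{\sigma-1}$ with $\coprod_{\gamma\in R_{\sigma}}\tilde{\mathfrak{S}}''_{\gamma}$, but the image description makes closedness immediate.

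The substantive point is then to check that $\tilde{\CE}'_{\sigma}$ represents $\underline{\tilde{\CE}}'_{\sigma}$. Let $S$ be an algebraic space over $\mathbb{F}_{p}$ and let $\xi=(f\colon\CX\to S,\gamma)\in\underline{\tilde{\CE}}_{\sigma}(S)$, which by Theorem \ref{theorem2} we regard as a morphism $S\to\tilde{\CE}_{\sigma}$. If $\gamma$ fppf-locally on $S$ factors through an $N_{\sigma}$-marking $\psi$, then fppf-locally the datum $\bigl(f,\psi(\underline{N}_{\sigma}),\gamma\bigr)$ lifts $\xi$ to some $\tilde{\mathfrak{S}}''_{\gamma_{0}}$ — using Lemma \ref{primi} to see $\psi^{-1}\circ\gamma$ is a primitive embedding whose complement has no $(-2)$-vector, hence is isomorphic to some $\gamma_{0}\in R_{\sigma}$ — so $\xi$ factors fppf-locally through $p_{\sigma}$, a fortiori through the closed subspace $\tilde{\CE}'_{\sigma}$; and since factoring through a closed subspace is fppf-local on the base, $\xi$ itself factors through $\tilde{\CE}'_{\sigma}$. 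Conversely, passing to the atlas $\coprod_{\gamma\in R_{\sigma}}\CS''_{\gamma}\to\coprod_{\gamma\in R_{\sigma}}\tilde{\mathfrak{S}}''_{\gamma}$ of Proposition \ref{quoti}, the morphism $\coprod_{\gamma\in R_{\sigma}}\CS''_{\gamma}\to\tilde{\CE}'_{\sigma}$ is finite and surjective between algebraic spaces locally of finite type over $\mathbb{F}_{p}$, hence locally of finite presentation and surjective, and therefore admits quasi-sections and is an epimorphism of fppf sheaves; as every $S$-point of $\CS''_{\gamma}$ manifestly gives a point of $\underline{\tilde{\CE}}'_{\sigma}(S)$ and $\underline{\tilde{\CE}}'_{\sigma}$ is an fppf subsheaf of $\underline{\tilde{\CE}}_{\sigma}$, every $\xi$ that factors through $\tilde{\CE}'_{\sigma}$ lies in $\underline{\tilde{\CE}}'_{\sigma}(S)$. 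Finally $\underline{\tilde{\CE}}'_{\sigma}$ is insensitive to nilpotents: one transports $\underline{\CR}$ across the universal homeomorphism $S_{\mathrm{red}}\hookrightarrow S$ using the equivalence of small étale topoi, exactly as in Step 1 of the proof of Theorem \ref{theorem1}, so the \emph{reduced} closed subspace above indeed represents the functor.

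The step I expect to need the most care is precisely this functor identification — in particular the converse direction, where one must know that a finite surjective morphism of algebraic spaces locally of finite type over $\mathbb{F}_{p}$ is an epimorphism of fppf sheaves (existence of quasi-sections), combined with the fppf-sheaf property of $\underline{\tilde{\CE}}'_{\sigma}$ and the nilpotent transport — and, if one instead sets up the pushout presentation, the bookkeeping to identify which closed subspace of $\tilde{\CE}_{\sigma-1}$ is being glued in (it is cut out by ``$\gamma$ factors through an $N_{\sigma}$-marking'' and is in general not $\tilde{\CE}'_{\sigma-1}$) and to check that the relevant legs are finite, respectively closed immersions, so that the pushout theorems \cite[Theorem A.4]{ryd2011} and \cite[Theorems 6.4 and 6.6]{MR3572553} apply. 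Everything else is routine once the corresponding steps of Theorem \ref{theorem1} are in hand, which is why only the differences would be written out.
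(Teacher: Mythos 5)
Your construction of the underlying space is consistent with the paper's (the paper builds $\tilde{\CE}'_{\sigma}$ as an inductive Ferrand pushout of the closed subspace $\tilde{\CE}'^{s}_{\sigma-1}$ --- the image of $p_{\sigma}^{-1}(\tilde{\CE}_{\sigma-1})$, exactly the subspace you identify in your last paragraph --- with $\coprod_{\gamma\in R_{\sigma}}\tilde{\CS}''_{\gamma}$, and this has the same reduced closed image you propose), and your forward direction is fine. But the converse direction of your functor identification has a genuine gap: you invoke the claim that the finite surjective, locally finitely presented morphism $\coprod_{\gamma\in R_{\sigma}}\CS''_{\gamma}\rightarrow\tilde{\CE}'_{\sigma}$ ``admits quasi-sections and is an epimorphism of fppf sheaves.'' This is false without flatness. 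Quasi-sections (EGA IV 17.16.2) and fppf-local sections require the morphism to be flat (surjective and locally of finite presentation alone do not suffice): the closed immersion $\Spec k\rightarrow\Spec k[\epsilon]/(\epsilon^{2})$ is finite, surjective and of finite presentation but no fppf cover of the target factors through it, and --- so that your separate nilpotent transport does not rescue the argument --- the same failure occurs over reduced bases, e.g.\ the normalization of a nodal curve: the identity of the nodal curve admits no lift to the normalization after any flat base change, since a flat local factorization through one branch would force the equation of the other branch to become a unit. The map $\coprod_{\gamma}\tilde{\CS}''_{\gamma}\rightarrow\tilde{\CE}'_{\sigma}$ is precisely of this pinching type (an isomorphism over a dense open, collapsing boundary strata), so it is not flat and you cannot conclude that an arbitrary $S$-point of the image lifts fppf-locally to the atlas.

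This is why the paper does not argue via the image alone but keeps the pushout presentation: the functor of points of the pushout is computed by rerunning Steps 1--4 of the proof of Theorem \ref{theorem1} (and the coarse-space comparison of Theorem \ref{theorem2}), where over the open locus of Artin invariant $\sigma$ the lift of a point to $\tilde{\mathfrak{S}}''_{\gamma}$ exists and is unique by separatedness of $\Pic_{\CX/S}$, and over the closed complement the point lands in $\tilde{\CE}'^{s}_{\sigma-1}$, where the required factorization $\Gamma(2)\hookrightarrow N_{\sigma}\hookrightarrow N_{\sigma-1}\hookrightarrow\Pic_{\CX/S}$ holds by definition and induction; closedness in $\tilde{\CE}_{\sigma}$ then follows from the compatibility of the two pushout constructions rather than from properness of $p_{\sigma}$ alone. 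To repair your write-up you would have to replace the fppf-epimorphism step by exactly this stratified uniqueness-of-lift argument, i.e.\ essentially carry out the pushout route you relegate to a parenthetical.
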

\begin{proof}
We do induction over $\sigma$. The case $\sigma=1$ is clear, because in this case we have $\tilde{\CE}_{1}= \tilde{\CE}'_1$. 

We write $\tilde{\CE}'^s_{\sigma-1}$ for the subfunctor of $\tilde{\CE}'_{\sigma-1}$ which is defined to be as follows: the $S$-valued points of $\tilde{\CE}'^s_{\sigma-1}$ are the families $f \colon \CX \rightarrow S$ in $\tilde{\CE}'_{\sigma-1}(S)$ that admit markings of the form $\gamma \colon \underline{\Gamma}(2) \hookrightarrow N_{\sigma-1} \hookrightarrow \Pic_{\CX/S}$ such that there is a factorization \mbox{$\gamma \colon \underline{\Gamma}(2) \hookrightarrow N_{\sigma} \hookrightarrow N_{\sigma-1} \hookrightarrow \Pic_{\CX/S}$}.
 
Then $\tilde{\CE}'^s_{\sigma-1} \subset \tilde{\CE}'_{\sigma-1}$ is a closed subfunctor, since $\tilde{\CE}'^s_{\sigma-1}$ is representable by the image of the finite morphism $p_{\sigma} \colon p_{\sigma}^{-1}(\tilde{\CE}_{\sigma-1}) \rightarrow \tilde{\CE}_{\sigma-1}$. We consider the pushout diagram
\begin{align*} \xymatrix{p^{-1}_{\sigma}(\tilde{\CE}'^{s}_{\sigma-1}) \ar[r]_{\iota} \ar[d]_{p_{\sigma}}\po & \coprod_{\gamma \in R_{\sigma}} \tilde{\CS}''_{\gamma}\ar[d]\\
   \tilde{\CE}'^{s}_{\sigma-1} \ar[r]& \CP.}\end{align*}
We note that $p_{\sigma} \colon p^{-1}_{\sigma}(\tilde{\CE}'^{s}_{\sigma-1}) \rightarrow   \tilde{\CE}'^{s}_{\sigma-1}$ is finite surjective and therefore also $\coprod_{\gamma \in R_{\sigma}} \tilde{\CS}''_{\gamma} \rightarrow \CP$ is finite surjective. Analogously to the proofs of Theorem \ref{theorem1} and \ref{theorem2}  we can show that $\CP$ exists as a scheme and represents the functor $\tilde{\CE}'_{\sigma}$. Thus, we set $\tilde{\CE}'_{\sigma}= \CP$. Since $\tilde{\CE}'^{s}_{\sigma-1}$ is closed in $\tilde{\CE}_{\sigma-1}$ it follows from the construction of the scheme $\tilde{\CE}_{\sigma}$ that $\tilde{\CE}'_{\sigma}$ is a closed subscheme of $\tilde{\CE}_{\sigma}$. 
\end{proof}
Again, the functor $\tilde{\CE}'_{\sigma}$ has a description in terms of Enriques surfaces. Namely, we define
\begin{align*} \underline{\CE}'_{\sigma} \colon \CA_{\mathbb{F}_p}^{\mathrm{op}} &\lra \left(\text{Sets} \right) \\
S &\longmapsto \left\{\begin{array}{l}\text{Isomorphism classes of tuples $(\tilde{f}, \tilde{\gamma}, \CL, \mu)$, where} \\ \text{$\tilde{f} \colon \CY \rightarrow S$ is a family of supersingular Enriques surfaces} \\  \text{with $\Gamma'$-marking $\tilde{\gamma} \colon \Gamma' \rightarrow \Pic_{\CY/S}$ and canonizing datum $(\CL, \mu)$} \\
\text{such that the canonical K3 cover $\CX \rightarrow \CY$} \\
\text{fppf locally admits an $N_{\sigma}$-marking such that the induced map}  \\
\text{$\Gamma(2) \rightarrow \Pic_{\CX/S}$ factorizes through $\underline{N}_{\sigma}$} \end{array} \right\}.\end{align*}
The proof of the following proposition goes completely analogously to the proof of Proposition \ref{EandK} and we therefore leave it to the reader.
\begin{proposition} There exists an isomorphism of functors $\mathrm{cov} \colon \underline{\CE}'_{\sigma} \rightarrow \tilde{\CE}'_{\sigma}$.
\end{proposition}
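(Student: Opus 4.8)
The plan is to derive this from the isomorphism of stacks already constructed in Proposition~\ref{EandK}. By Theorem~\ref{theorem2} the stack $\tilde{\mathfrak{E}}_{\sigma}$ is an algebraic space representing $\underline{\tilde{\CE}}_{\sigma}$, so the isomorphism $\mathrm{cov}\colon\mathfrak{E}_{\sigma}\to\tilde{\mathfrak{E}}_{\sigma}$ of Proposition~\ref{EandK} induces an isomorphism of functors of sets $\mathrm{cov}\colon\underline{\CE}_{\sigma}\stackrel{\sim}{\lra}\underline{\tilde{\CE}}_{\sigma}=\tilde{\CE}_{\sigma}$, with quasi-inverse $\mathrm{quot}$ as in the proof of Proposition~\ref{EandK}. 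The whole task is then to check that $\mathrm{cov}$ and $\mathrm{quot}$ restrict to mutually inverse maps between the subfunctors $\underline{\CE}'_{\sigma}\subseteq\underline{\CE}_{\sigma}$ and $\tilde{\CE}'_{\sigma}\subseteq\tilde{\CE}_{\sigma}$.

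The key observation, which is immediate from the way $\mathrm{cov}$ is built, is that if $y=(\tilde{f}\colon\CY\to S,\tilde\gamma,\CL,\mu)\in\underline{\CE}_{\sigma}(S)$ has canonical K3 cover $(f\colon\CX\to S,\gamma\colon\underline{\Gamma}(2)\hookrightarrow\Pic_{\CX/S})$, then $\gamma$ is exactly the embedding obtained by pulling $\tilde\gamma|_{\underline{\Gamma}}$ back along $\CX\to\CY$ and twisting the intersection form by $2$. Consequently the defining condition of $\underline{\CE}'_{\sigma}(S)$ — that the cover $\CX\to\CY$ fppf locally admits an $N_{\sigma}$-marking $\psi$ through which the induced map $\underline{\Gamma}(2)\to\Pic_{\CX/S}$ factorizes — is literally the defining condition of $\tilde{\CE}'_{\sigma}(S)$, namely that $\gamma(\underline{\Gamma}(2))\subseteq\psi(\underline{N}_{\sigma})$ for some fppf local $N_{\sigma}$-marking $\psi$. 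Hence $\mathrm{cov}(S)$ carries $\underline{\CE}'_{\sigma}(S)$ into $\tilde{\CE}'_{\sigma}(S)$. For the reverse inclusion I would rerun the construction of $\mathrm{quot}$ from the proof of Proposition~\ref{EandK}: given $(f\colon\CX\to S,\gamma)\in\tilde{\CE}'_{\sigma}(S)$, form $\CY=\CX/\langle\iota_{\gamma}\rangle$, recover the $\Gamma'$-marking via Lemma~\ref{equivgamma} and the canonizing datum from $c_{\ast}\CO_{\CX}/\CO_{\CY}$, and note that the assumed factorization $\underline{\Gamma}(2)\hookrightarrow\underline{N}_{\sigma}\hookrightarrow\Pic_{\CX/S}$ is retained by the output Enriques family; thus $\mathrm{quot}(S)$ carries $\tilde{\CE}'_{\sigma}(S)$ into $\underline{\CE}'_{\sigma}(S)$. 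As $\mathrm{cov}$ and $\mathrm{quot}$ are mutually quasi-inverse on the ambient functors, their restrictions are mutually inverse, which gives the claim.

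The only point I expect to need genuine care is that in both factorization conditions the $N_{\sigma}$-marking is available merely fppf locally on $S$, and that over the locus where the K3 cover degenerates to smaller Artin invariant this marking comes packaged with a further embedding $N_{\sigma}\hookrightarrow N_{\sigma'}$ (so that one is really working with the subsheaves of lattices $\underline{\CR}\subseteq\Pic_{\CX/S}$ of Section~\ref{aux}); the compatibility of such local data with the period map is exactly what was established in the proof of Theorem~\ref{theorem1}, so no new idea is required. If one prefers not to quote Proposition~\ref{EandK} as a black box, an entirely equivalent route is to redo the pushout construction of the preceding proposition with $\tilde{\CE}_{\bullet}$ replaced by $\mathfrak{E}_{\bullet}$ throughout, carrying the extra factorization condition along at each inductive step; the verification is then word-for-word that of Proposition~\ref{EandK}.
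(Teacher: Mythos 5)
Your proposal is correct and matches the paper's intent: the paper simply states that the proof goes completely analogously to Proposition \ref{EandK} and leaves it to the reader, and your argument---restricting the isomorphism $\mathrm{cov}$/$\mathrm{quot}$ of Proposition \ref{EandK} to the subfunctors and observing that the factorization condition through $\underline{N}_{\sigma}$ on the Enriques side is literally the condition $\gamma(\underline{\Gamma}(2))\subseteq\psi(\underline{N}_{\sigma})$ on the K3 side---is exactly the content of that analogy. No essential difference in approach.
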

We will write $\gls{E'_s}$ for the scheme representing the functor $\underline{\CE}'_{\sigma}$. Coming back to the discussion of the geometry of the space $\CE_{\sigma}$, we note that the space $\CE_{\sigma}$ is of dimension $\sigma-1$, but its irreducible components might in general not be equidimensional. The upshot of constructing the functor $\tilde{\CE}'_{\sigma}$ lies in the following result.
\begin{proposition}\label{upshot} For any $\sigma' \leq \sigma$, the scheme $\CE'_{\sigma'}$ is a closed subscheme of $\CE_{\sigma}$ and we have the equality
\begin{align*}
\bigcup_{\sigma' \leq \sigma} \CE'_{\sigma'} = \CE_{\sigma}.
\end{align*}
Further, $\CE'_{\sigma}$ is the maximal closed subspace in $\CE_{\sigma}$ with the property that all of its irreducible components are of dimension $\sigma-1$.
\end{proposition}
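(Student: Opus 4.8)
The plan is to transport the whole statement to the K3 side. By Theorem~\ref{theorem2}, Proposition~\ref{EandK} and the analogous isomorphism between $\underline{\CE}'_{\sigma'}$ and $\tilde{\CE}'_{\sigma'}$, we identify $\CE_\sigma$ with $\tilde{\CE}_\sigma$ and each $\CE'_{\sigma'}$ with $\tilde{\CE}'_{\sigma'}$, and then work entirely with the Artin stratification $V_1 \subseteq \cdots \subseteq V_\sigma = \tilde{\CE}_\sigma$ constructed together with $\tilde{\CE}_\sigma$ (recall $V_l = \tilde{\CE}_l$, and $V_l \setminus V_{l-1}$ is the locus of constant Artin invariant $l$), together with the pushout descriptions of $\tilde{\CE}_\sigma$ and $\tilde{\CE}'_\sigma$ from the proofs of Theorem~\ref{theorem1} and of the representability of $\tilde{\CE}'_\sigma$.

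Two structural facts will do the work. \emph{(a) Dimension of the strata:} $\dim(V_l \setminus V_{l-1}) = l-1$. Topologically $|V_l| = |\tilde{\CE}_l|$ is the pushout $|\tilde{\CE}_{l-1}| \amalg |U_l|$, where $U_l$ is an open subspace of $\coprod_{\gamma \in R_l} \tilde{\CS}''_\gamma$; each $\tilde{\CS}''_\gamma$ is a finite quotient (Proposition~\ref{quoti}) of the open subscheme $\CS''_\gamma$ of the smooth $(l-1)$-dimensional scheme $\CS_l$, hence of pure dimension $l-1$, and $U_l$, being the Artin-exactly-$l$ locus, is dense in each of them; induction on $l$ then gives $\dim V_l = l-1$. \emph{(b) The factorization is vacuous on top strata:} on $V_l \setminus V_{l-1}$ every fibre has Artin invariant exactly $l$, so an fppf-local $N_l$-marking $\psi \colon \underline{N}_l \hookrightarrow \Pic_{\CX/S}$ is automatically an isomorphism, since $N_l$ and $\mathrm{NS}(\CX_s)$ have equal rank and equal discriminant $-p^{2l}$, forcing the index $[\mathrm{NS}(\CX_s):\psi(N_l)]$ to be $1$. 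Consequently $V_l \setminus V_{l-1} \subseteq \tilde{\CE}'_l$ for every $l$.

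The first two assertions then follow immediately. $\CE'_{\sigma'}$ is a closed subscheme of $\CE_\sigma$, being the composite of the closed immersions $\tilde{\CE}'_{\sigma'} \hookrightarrow \tilde{\CE}_{\sigma'} = V_{\sigma'} \hookrightarrow \tilde{\CE}_\sigma$. For the equality $\bigcup_{\sigma' \le \sigma} \CE'_{\sigma'} = \CE_\sigma$, decompose $\CE_\sigma = \bigsqcup_{l=1}^{\sigma} (V_l \setminus V_{l-1})$ into its Artin strata; by (b) each stratum lies in $\CE'_l$, so $\CE_\sigma \subseteq \bigcup_{l \le \sigma} \CE'_l$, while the reverse inclusion is clear.

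For maximality I argue on two sides. First, $\CE'_\sigma$ itself has pure dimension $\sigma - 1$: from the pushout construction, $|\CE'_\sigma|$ is the image of $\coprod_{\gamma \in R_\sigma} \tilde{\CS}''_\gamma$ under a finite morphism to $\CE_\sigma$, and a finite morphism carries the pure-$(\sigma-1)$-dimensional source onto a finite union of irreducible closed subsets of dimension $\sigma - 1$. Second, let $Z \subseteq \CE_\sigma$ be any closed subspace all of whose irreducible components have dimension $\sigma - 1$; it suffices to show $Z \subseteq \CE'_\sigma$ when $Z$ is irreducible of dimension $\sigma - 1$. Its generic point $\eta$ lies in a unique stratum $V_l \setminus V_{l-1}$; the case $l < \sigma$ is excluded, as it would give $Z = \overline{\{\eta\}} \subseteq V_l$ with $\dim V_l = l-1 < \sigma - 1 = \dim Z$ by (a), so $l = \sigma$, and then $\eta \in V_\sigma \setminus V_{\sigma-1} \subseteq \CE'_\sigma$ by (b), whence $Z = \overline{\{\eta\}} \subseteq \CE'_\sigma$ because $\CE'_\sigma$ is closed. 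Thus $\CE'_\sigma$ is the maximal such closed subspace. I expect the only genuine obstacle to be the bookkeeping behind (a) and the purity of $\CE'_\sigma$: one has to unwind that the topological space underlying the Rydh/Ferrand pushouts is the pushout of the underlying topological spaces, pin down $|\CE'_\sigma|$ precisely as the finite image of $\coprod_{\gamma \in R_\sigma} \tilde{\CS}''_\gamma$, and confirm that the relevant quotient and gluing morphisms are finite so that dimension is preserved throughout the induction; everything else is short and formal.
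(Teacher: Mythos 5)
Your argument is correct and follows essentially the same route as the paper, whose proof is a terse appeal to the inductive pushout construction and to the finiteness and surjectivity of $\coprod_{\gamma \in R_{\sigma}} \tilde{\CS}''_{\gamma} \rightarrow \CE'_{\sigma}$; your stratification by Artin invariant, the observation that an $N_l$-marking is forced to be an isomorphism on the exact-invariant locus (equal rank and discriminant), and the resulting dimension count are precisely the details the paper leaves implicit.
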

\begin{proof}
The first statement follows from the construction of the space $\CE_{\sigma}$ via induction over $\sigma$ and the second statement follows directly from the construction of $\CE_{\sigma}$ and $\CE'_{\sigma}$ and the fact that the morphism $\coprod_{\gamma \in R_{\sigma}} \CS''_{\gamma} \rightarrow \CE'_{\sigma}$ is a finite surjection.
\end{proof}
\begin{remark} We do not know if the functors $\underline{\CE}_{\sigma}$ and $\underline{\CE}'_{\sigma}$ are unequal in general. This boils down to asking whether there exist embeddings $\Gamma(2) \hookrightarrow N_{\sigma-1}$ that do not factorize over an embedding $j \colon N_{\sigma} \hookrightarrow N_{\sigma-1}$. However, we suspect that such embeddings may exist and that for $\sigma > 1$ we should have $\underline{\CE}_{\sigma} \neq \underline{\CE}'_{\sigma}$. \end{remark}
There exists a scheme lying under $\CE'_{\sigma}$ in analogy to Proposition \ref{per}.
\begin{proposition}
There exists a separated $\mathbb{F}_p$-scheme $\gls{Q'_s}$, which is a closed subscheme of $\CQ_{\sigma}$, and a canonical étale surjective morphism $\tilde{\CE}'_{\sigma} \rightarrow \CQ'_{\sigma}$.
\end{proposition}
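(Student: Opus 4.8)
The plan is to build $\CQ'_{\sigma}$ by the same inductive pushout procedure that produced $\CQ_{\sigma}$ in the proof of Proposition~\ref{per}, but feeding in at each stage the closed subfunctor $\tilde{\CE}'^s_{\sigma-1} \subseteq \tilde{\CE}_{\sigma-1}$ introduced in the proof of the preceding proposition (families whose $\Gamma(2)$-marking factors through some $N_{\sigma} \hookrightarrow N_{\sigma-1}$) in place of $\tilde{\CE}_{\sigma-1}$ — which is exactly the shape of the construction of $\tilde{\CE}'_{\sigma}$ itself. For $\sigma = 1$ one has $\tilde{\CE}'_{1} = \tilde{\CE}_{1}$, so one may take $\CQ'_{1} = \CQ_{1} = \coprod_{\gamma \in R_{1}} \widetilde{\CM}''_{\gamma}$, which is quasi-projective, hence separated, and $\tilde{\CE}'_{1} \to \CQ'_{1}$ is the étale surjection of Proposition~\ref{per}.

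For the inductive step, assume the statement for $\sigma-1$, so $\CQ'_{\sigma-1}$ is a separated closed subscheme of $\CQ_{\sigma-1}$ with an étale surjection $\tilde{\CE}'_{\sigma-1} \to \CQ'_{\sigma-1}$. Since $\tilde{\CE}'^s_{\sigma-1}$ is closed in $\tilde{\CE}'_{\sigma-1}$, it descends along this étale surjection to a closed subscheme $\CQ'^s_{\sigma-1} \subseteq \CQ'_{\sigma-1} \subseteq \CQ_{\sigma-1}$, and $p_{\sigma}^{-1}(\tilde{\CE}'^s_{\sigma-1})$ descends to a closed subscheme $p_{\sigma}^{-1}(\CQ'^s_{\sigma-1})$ of $\coprod_{\gamma \in R_{\sigma}} \widetilde{\CM}''_{\gamma}$ with $p_{\sigma} \colon p_{\sigma}^{-1}(\CQ'^s_{\sigma-1}) \to \CQ'^s_{\sigma-1}$ finite and surjective (by fppf descent from the finite surjection $p_{\sigma} \colon p_{\sigma}^{-1}(\tilde{\CE}'^s_{\sigma-1}) \to \tilde{\CE}'^s_{\sigma-1}$ of the preceding proof). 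The étale surjections of Proposition~\ref{per} then give an étale surjective morphism of pushout data
\[
\Big(\tilde{\CE}'^s_{\sigma-1} \leftarrow p_{\sigma}^{-1}(\tilde{\CE}'^s_{\sigma-1}) \rightarrow \coprod_{\gamma \in R_{\sigma}} \tilde{\CS}''_{\gamma}\Big) \longrightarrow \Big(\CQ'^s_{\sigma-1} \leftarrow p_{\sigma}^{-1}(\CQ'^s_{\sigma-1}) \rightarrow \coprod_{\gamma \in R_{\sigma}} \widetilde{\CM}''_{\gamma}\Big),
\]
the pushout of the source being $\tilde{\CE}'_{\sigma}$ by construction. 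By \cite[Théorème~5.4.]{MR2044495} the pushout $\CQ'_{\sigma}$ of the target exists as an AF scheme, of finite type over $\mathbb{F}_p$ (since $\CQ'_{\sigma-1}$ is so by induction and $\coprod_{\gamma} \widetilde{\CM}''_{\gamma}$ is), with $\CQ'_{\sigma-1} \amalg \coprod_{\gamma} \widetilde{\CM}''_{\gamma} \to \CQ'_{\sigma}$ finite surjective; separatedness follows from \cite[Theorem~6.8.]{MR3572553}, and the induced morphism $\tilde{\CE}'_{\sigma} \to \CQ'_{\sigma}$ is étale and surjective by \cite[Theorem~6.4.]{MR3572553}, verbatim as in Proposition~\ref{per}.

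Finally, to identify $\CQ'_{\sigma}$ with a closed subscheme of $\CQ_{\sigma}$: the three closed immersions $\CQ'^s_{\sigma-1} \hookrightarrow \CQ_{\sigma-1}$ (induction), $p_{\sigma}^{-1}(\CQ'^s_{\sigma-1}) \hookrightarrow p_{\sigma}^{-1}(\CQ_{\sigma-1})$ and $\mathrm{id}$ of $\coprod_{\gamma} \widetilde{\CM}''_{\gamma}$ constitute a morphism from the pushout datum defining $\CQ'_{\sigma}$ to the one defining $\CQ_{\sigma}$ in Proposition~\ref{per}, whence by the universal property a canonical morphism $\CQ'_{\sigma} \to \CQ_{\sigma}$. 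To see it is a closed immersion I would check that the square with edges $\tilde{\CE}'_{\sigma} \hookrightarrow \tilde{\CE}_{\sigma}$, $\tilde{\CE}'_{\sigma} \to \CQ'_{\sigma}$, $\tilde{\CE}_{\sigma} \to \CQ_{\sigma}$, $\CQ'_{\sigma} \to \CQ_{\sigma}$ is cartesian — this propagates along the induction from termwise cartesianity of the two pushout data and the compatibility of Ferrand-type pushouts with base change — and then descend the property of being a closed immersion along the étale (hence fpqc) surjection $\tilde{\CE}_{\sigma} \to \CQ_{\sigma}$, using that its base change $\tilde{\CE}_{\sigma} \times_{\CQ_{\sigma}} \CQ'_{\sigma} = \tilde{\CE}'_{\sigma} \hookrightarrow \tilde{\CE}_{\sigma}$ is a closed immersion by the preceding proposition. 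I expect this last step — establishing that the pushout construction of $\CQ'_{\sigma}$ is genuinely compatible with that of $\CQ_{\sigma}$, in particular that $\CQ'^s_{\sigma-1}$ and $p_{\sigma}^{-1}(\CQ'^s_{\sigma-1})$ descend as claimed and that the resulting comparison square is cartesian — to be the one real difficulty; everything else runs parallel, mutatis mutandis, to the proofs of Proposition~\ref{per} and of the preceding proposition, and the étale surjection $\tilde{\CE}'_{\sigma} \to \CQ'_{\sigma}$ thereby produced is the asserted morphism.
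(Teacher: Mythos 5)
Your overall strategy is the paper's: rerun the inductive pushout construction of Proposition \ref{per} with the primed data, starting from $\CQ'_1=\CQ_1=\coprod_{\gamma\in R_1}\widetilde{\CM}''_{\gamma}$, and invoke Ferrand's theorem together with Rydh's results to get AF-ness, finite type, separatedness and the étale surjection $\tilde{\CE}'_{\sigma}\to\CQ'_{\sigma}$. The gap lies in how you produce the bottom-left corner of the new pushout datum. You assert that, since $\tilde{\CE}'^{s}_{\sigma-1}$ is closed in $\tilde{\CE}'_{\sigma-1}$, it ``descends'' along the étale surjection $\tilde{\CE}'_{\sigma-1}\to\CQ'_{\sigma-1}$ to a closed subscheme $\CQ'^{s}_{\sigma-1}$, and likewise that $p_{\sigma}^{-1}(\tilde{\CE}'^{s}_{\sigma-1})$ descends to a closed subscheme of $\coprod_{\gamma\in R_{\sigma}}\widetilde{\CM}''_{\gamma}$. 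Closed subschemes do not descend along an étale surjection in general: one needs invariance under the induced equivalence relation, i.e.\ the two pullbacks to $\tilde{\CE}'_{\sigma-1}\times_{\CQ'_{\sigma-1}}\tilde{\CE}'_{\sigma-1}$ must coincide, and you neither state nor verify this (it is plausible here because the condition defining $\tilde{\CE}'^{s}_{\sigma-1}$ is insensitive to the data distinguishing points in a fibre of the period map, but that is precisely an argument you would have to supply). The paper sidesteps the issue entirely: it replaces $\CQ_{\sigma-1}$ by the image of the finite morphism $p_{\sigma}\colon p_{\sigma}^{-1}(\CQ_{\sigma-1})\to\CQ_{\sigma-1}$ --- the period-space counterpart of the fact that $\tilde{\CE}'^{s}_{\sigma-1}$ is the image of $p_{\sigma}$ upstairs --- which is closed simply because finite morphisms are closed, and whose source $p_{\sigma}^{-1}(\CQ_{\sigma-1})$ is already available as part of the pushout datum used to build $\CQ_{\sigma}$; no descent is needed.

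The same remark affects your final step. Realizing $\CQ'_{\sigma}$ as a closed subscheme of $\CQ_{\sigma}$ via a cartesian comparison square and fppf descent of the property ``closed immersion'' depends on exactly the termwise compatibilities you flag as ``the one real difficulty'', so as written it is a plan rather than a proof. With the paper's choice of pushout datum this step comes for free, exactly as in the proof that $\tilde{\CE}'_{\sigma}$ is closed in $\tilde{\CE}_{\sigma}$: the datum defining $\CQ'_{\sigma}$ consists of closed subschemes of the datum defining $\CQ_{\sigma}$, with the same closed immersion into $\coprod_{\gamma\in R_{\sigma}}\widetilde{\CM}''_{\gamma}$, and the construction of the Ferrand pushout then yields the closed immersion $\CQ'_{\sigma}\hookrightarrow\CQ_{\sigma}$ directly. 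So: right architecture, but the one new ingredient your argument needs is left unproved, whereas the paper's formulation makes it unnecessary.
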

\begin{proof}
The proof goes analogously to the proof of Proposition \ref{per} by replacing $\CQ_{\sigma-1}$ with the image of $p_{\sigma}^{-1}(\CQ_{\sigma-1})$ in $\CQ_{\sigma-1}$ in the pushout construction.
\end{proof}
The following proposition is an analogue to Proposition \ref{upshot}.
\begin{proposition}\label{upshotper} For any $\sigma' \leq \sigma$, the scheme $\CQ'_{\sigma}$ is a closed subscheme of $\CQ_{\sigma}$ and we have an equality
\begin{align*}
\bigcup_{\sigma' \leq \sigma} \CQ'_{\sigma'} = \CQ_{\sigma}.
\end{align*}
Further, $\CQ'_{\sigma}$ is the maximal closed subscheme in $\CQ_{\sigma}$ whose irreducible components are all of dimension $\sigma-1$.
\end{proposition}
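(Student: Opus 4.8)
The plan is to imitate the proof of Proposition~\ref{upshot}, running an induction on $\sigma$ in parallel with the inductive construction of $\CQ_{\sigma}$ from Proposition~\ref{per} and transporting all dimension bookkeeping along the finite surjection $\coprod_{\gamma \in R_{\sigma}} \widetilde{\CM}''_{\gamma} \to \CQ'_{\sigma}$ (this finite surjection exists exactly as the analogous one in Proposition~\ref{per}, obtained by the same argument as in Proposition~\ref{upshot}). The base case $\sigma=1$ is immediate: $\CM_1$ has dimension $0$, so $\CQ_1 = \coprod_{\gamma \in R_1}\widetilde{\CM}''_{\gamma}$ is of pure dimension $0$ and $\CQ'_1 = \CQ_1$, while for $\sigma>5$ all the relevant index sets are empty and the statement is vacuous.

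For the closedness assertion, the previous proposition already gives that $\CQ'_{\sigma'} \hookrightarrow \CQ_{\sigma'}$ is a closed immersion, so it suffices to see that $\CQ_{\sigma'} \hookrightarrow \CQ_{\sigma}$ is one for every $\sigma' \leq \sigma$. In the Ferrand pushout square of Proposition~\ref{per} the morphism $\iota\colon p_{\sigma}^{-1}(\CQ_{\sigma-1}) \hookrightarrow \coprod_{\gamma} \widetilde{\CM}''_{\gamma}$ is a closed immersion, hence its pushout leg $\CQ_{\sigma-1} \to \CQ_{\sigma}$ is again a closed immersion, and composing gives $\CQ_{\sigma'} \hookrightarrow \cdots \hookrightarrow \CQ_{\sigma}$. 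For the equality $\bigcup_{\sigma' \leq \sigma} \CQ'_{\sigma'} = \CQ_{\sigma}$: since $\CQ_{\sigma}$ is reduced (it lies étale under the reduced scheme $\tilde{\CE}_{\sigma}\cong\CE_{\sigma}$), it is enough to argue on underlying spaces, and by induction it suffices to show $\CQ_{\sigma-1} \cup \CQ'_{\sigma} = \CQ_{\sigma}$. As in the proof of Theorem~\ref{theorem1}, the underlying space of the Ferrand pushout is the pushout of topological spaces, so $|\CQ_{\sigma}| = |\CQ_{\sigma-1}| \amalg |U_{\sigma}|$ with $U_{\sigma} = \coprod_{\gamma}\widetilde{\CM}''_{\gamma} \setminus p_{\sigma}^{-1}(\CQ_{\sigma-1})$; and the construction of $\CQ'_{\sigma}$ (replacing $\CQ_{\sigma-1}$ by the image of $p_{\sigma}^{-1}(\CQ_{\sigma-1})$ in $\CQ_{\sigma-1}$, but keeping the same right-hand leg) yields $|\CQ'_{\sigma}| = |p_{\sigma}(p_{\sigma}^{-1}(\CQ_{\sigma-1}))| \amalg |U_{\sigma}|$ with the same $U_{\sigma}$. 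As $|p_{\sigma}(p_{\sigma}^{-1}(\CQ_{\sigma-1}))| \subseteq |\CQ_{\sigma-1}|$ and $|U_{\sigma}|$ is disjoint from $|\CQ_{\sigma-1}|$ inside $|\CQ_{\sigma}|$, we get $|\CQ_{\sigma-1}| \cup |\CQ'_{\sigma}| = |\CQ_{\sigma-1}| \cup |U_{\sigma}| = |\CQ_{\sigma}|$.

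For maximality, note that $\CM'_{\gamma}$ is open in the smooth connected scheme $\CM_{\sigma}$ of dimension $\sigma-1$, hence irreducible of dimension $\sigma-1$, and therefore so is its strongly geometric (finite) quotient $\widetilde{\CM}''_{\gamma}$ by the finite group $O(N_{\sigma},\gamma)$; thus $\coprod_{\gamma}\widetilde{\CM}''_{\gamma}$ is of pure dimension $\sigma-1$. On the other hand $p_{\sigma}^{-1}(\CQ_{\sigma-1})$ is built from the locally closed pieces $W^{\gamma}_{j}$, which live over $\CS_{\sigma-1}$-type loci, so $\dim p_{\sigma}^{-1}(\CQ_{\sigma-1}) \leq \sigma-2$; hence $p_{\sigma}^{-1}(\CQ_{\sigma-1})$ is nowhere dense in $\coprod_{\gamma}\widetilde{\CM}''_{\gamma}$ and $U_{\sigma}$ is dense therein. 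Its image under the finite surjection $\coprod_{\gamma}\widetilde{\CM}''_{\gamma} \to \CQ'_{\sigma}$ is then dense in $\CQ'_{\sigma}$ and of pure dimension $\sigma-1$, so every irreducible component of $\CQ'_{\sigma}$ has dimension $\sigma-1$. Conversely, by the equality just proved $\CQ_{\sigma} = \CQ'_{\sigma} \cup \CQ_{\sigma-1}$, and by induction $\dim\CQ_{\sigma-1} \leq \sigma-2$ since every component of each $\CQ'_{\sigma''}$ with $\sigma'' \leq \sigma-1$ has dimension $\sigma''-1$. Hence if $Z\subseteq\CQ_{\sigma}$ is closed with all irreducible components of dimension $\sigma-1$, then for each component $Z_i$ one has $Z_i = (Z_i\cap\CQ'_{\sigma}) \cup (Z_i\cap\CQ_{\sigma-1})$ with $\dim(Z_i\cap\CQ_{\sigma-1}) \leq \sigma-2 < \dim Z_i$, so irreducibility forces $Z_i \subseteq \CQ'_{\sigma}$; therefore $Z\subseteq\CQ'_{\sigma}$, which is the claimed maximality.

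The only delicate point is the dimension bookkeeping: one must make sure $\widetilde{\CM}''_{\gamma}$ is genuinely of pure dimension $\sigma-1$ (no spurious lower-dimensional components arising from the quotient by $O(N_{\sigma},\gamma)$ nor from the pushout defining $\CQ'_{\sigma}$) and that $p_{\sigma}^{-1}(\CQ_{\sigma-1})$ really sits in codimension at least one. Everything else is a routine transfer of the $\CE$-side argument of Proposition~\ref{upshot} through the étale period morphisms $\pi^{\tilde{E}}_{\sigma}$.
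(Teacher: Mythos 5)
Your argument is correct and follows exactly the route the paper intends: the paper gives no separate proof for this proposition, referring implicitly to the analogue Proposition \ref{upshot}, whose proof is the same induction over $\sigma$ through the pushout construction together with the finite surjection $\coprod_{\gamma \in R_{\sigma}} \widetilde{\CM}''_{\gamma} \rightarrow \CQ'_{\sigma}$ that you use. Your fleshing out of the closed-immersion leg of the Ferrand pushout, the topological decomposition $|\CQ_{\sigma}| = |\CQ_{\sigma-1}| \amalg |U_{\sigma}|$, and the dimension bookkeeping via the irreducibility of the $\widetilde{\CM}''_{\gamma}$ is consistent with the facts established earlier in the paper (reducedness from Theorem \ref{theorem1}, irreducibility as in Proposition \ref{irredS}).
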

In the following, we give some results on the geometry of the spaces $\CE'_{\sigma}$ and $\CQ'_{\sigma}$. It follows from Proposition \ref{upshot} and Proposition \ref{upshotper} that the geometry of these spaces is intimately related to the geometry of the spaces $\CE_{\sigma}$ and $\CQ_{\sigma}$. 
\begin{definition} We write $\varepsilon_{\sigma}$ for the number of irreducible components of $\CE'_{\sigma}$. \end{definition}
\begin{remark} We recall from Section \ref{secclasmark} that the $\mathbb{F}_p$-scheme $\CS_{\sigma}$ is smooth. In particular each of its connected components is irreducible. From its description as the moduli space of characteristic subspaces together with ample cones it is clear that $\CS_{\sigma}$ only has finitely many connected components. \end{remark}
\begin{proposition}\label{irredA} The morphism $p_{\sigma} \colon \coprod_{\gamma \in R_{\sigma}} \tilde{\CS}''_{\gamma} \rightarrow \CE'_{\sigma}$ induces a bijection between the sets of irreducible components of $\coprod_{\gamma \in R_{\sigma}} \tilde{\CS}''_{\gamma}$ and $\CE'_{\sigma}$. If we write $\tau_{\sigma}$ for the number of connected components of $\CS_{\sigma}$, we obtain the inequality 
$$\varepsilon_{\sigma} \leq \tau_{\sigma} \cdot |R_{\sigma}|.$$   \end{proposition}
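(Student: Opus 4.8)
The plan is to reduce the claim to a statement about irreducible components of schemes and to exploit that, over the locus of constant maximal Artin invariant, the forgetful morphism is an open immersion. First I would record the inputs: by the proof of Proposition~\ref{upshot} the morphism $p_{\sigma}\colon\coprod_{\gamma\in R_{\sigma}}\tilde{\CS}''_{\gamma}\to\CE'_{\sigma}$ is finite and surjective (it factors the finite surjection $\coprod_{\gamma}\CS''_{\gamma}\to\CE'_{\sigma}$ through the finite surjection $\coprod_{\gamma}\CS''_{\gamma}\to\coprod_{\gamma}\tilde{\CS}''_{\gamma}$, so Lemma~\ref{sweet} applies). We may assume $\sigma\le 5$, since otherwise $R_{\sigma}=\emptyset$ and all spaces in sight are empty. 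Because $\CS_{\sigma}$ is smooth of pure dimension $\sigma-1$, its connected and irreducible components agree; hence each open subscheme $\CS''_{\gamma}$ has at most $\tau_{\sigma}$ irreducible components, all of dimension $\sigma-1$, and since $\CS''_{\gamma}\to\tilde{\CS}''_{\gamma}$ is finite and surjective the same holds for $\tilde{\CS}''_{\gamma}$. In particular $\coprod_{\gamma\in R_{\sigma}}\tilde{\CS}''_{\gamma}$ is equidimensional of dimension $\sigma-1$ with at most $|R_{\sigma}|\cdot\tau_{\sigma}$ irreducible components.

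Next I would set $U=\coprod_{\gamma\in R_{\sigma}}\tilde{\CS}''_{\gamma}\setminus p_{\sigma}^{-1}(\tilde{\mathfrak{E}}_{\sigma-1})$, the open locus parametrising K3 surfaces of Artin invariant exactly $\sigma$. From the description of $p_{\sigma}^{-1}(\tilde{\mathfrak{E}}_{\sigma-1})$ given in Section~\ref{secmark}, its complement $U^{c}$ is contained in a finite union of images of the closed subschemes $\Phi_{j}(\CS_{\sigma-1})$, so $\dim U^{c}\le\sigma-2$; as the source is equidimensional of dimension $\sigma-1$, the open set $U$ is dense and meets every irreducible component. By the argument in the proof of Theorem~\ref{theorem1}, $p_{\sigma}$ restricts on $U$ to an open immersion into $\tilde{\CE}_{\sigma}\cong\CE_{\sigma}$ whose image lies in $\CE'_{\sigma}$; thus $p_{\sigma}|_{U}$ identifies $U$ with a dense open subscheme $p_{\sigma}(U)$ of $\CE'_{\sigma}$ (denseness of $p_{\sigma}(U)$ follows from that of $U$ together with surjectivity and continuity of $p_{\sigma}$). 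Now I would invoke the elementary fact that, for a scheme with finitely many irreducible components, taking closures gives a bijection between the irreducible components of any dense open subscheme and those of the whole scheme. Applied to $U\subseteq\coprod_{\gamma\in R_{\sigma}}\tilde{\CS}''_{\gamma}$ and to $p_{\sigma}(U)\subseteq\CE'_{\sigma}$, and combined with the homeomorphism $p_{\sigma}|_{U}\colon U\xrightarrow{\sim}p_{\sigma}(U)$, this yields a bijection between the irreducible components of $\coprod_{\gamma\in R_{\sigma}}\tilde{\CS}''_{\gamma}$ and those of $\CE'_{\sigma}$; unwinding the identifications shows it sends a component $Z$ to $\overline{p_{\sigma}(Z\cap U)}=p_{\sigma}(Z)$, i.e. it is the map induced by $p_{\sigma}$.

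Combining these two points, $\varepsilon_{\sigma}$ equals the number of irreducible components of $\coprod_{\gamma\in R_{\sigma}}\tilde{\CS}''_{\gamma}$, which is $\sum_{\gamma\in R_{\sigma}}(\text{number of irreducible components of }\tilde{\CS}''_{\gamma})\le\sum_{\gamma\in R_{\sigma}}\tau_{\sigma}=|R_{\sigma}|\cdot\tau_{\sigma}$, giving the stated inequality. The one genuinely delicate point is the open-immersion property of $p_{\sigma}$ on the constant-Artin-$\sigma$ locus $U$ together with the fact that $U$ is dense (its complement being of strictly smaller dimension); both are essentially extracted from the proof of Theorem~\ref{theorem1}, and the remaining steps are routine manipulations with irreducible components.
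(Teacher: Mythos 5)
Your proposal is correct and follows essentially the same route as the paper's proof: both rest on the facts that $\coprod_{\gamma \in R_{\sigma}} \tilde{\CS}''_{\gamma}$ is equidimensional of dimension $\sigma-1$ and that $p_{\sigma}$ restricts to an open immersion on the dense open locus of Artin invariant exactly $\sigma$, and both obtain the inequality by bounding the components of each $\tilde{\CS}''_{\gamma}$ via the finite surjection from the open subscheme $\CS''_{\gamma} \subseteq \CS_{\sigma}$. The only difference is bookkeeping: you organize the bijection through the closure correspondence for dense open subschemes (and you spell out the density and open-immersion inputs, which the paper merely asserts), while the paper argues that images of distinct components intersect in codimension at least one.
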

\begin{proof} For $\gamma \in R_{\sigma}$, each irreducible component of the scheme $\tilde{\CS}''_{\gamma}$ over $\mathbb{F}_p$ is of dimension $\sigma - 1$. Since there exists a dense open subspace $U \subset \coprod_{\gamma \in R_{\sigma}} \tilde{\CS}''_{\gamma}$ such that the restriction $p_{\sigma}|_{U} \colon U \rightarrow \CE'_{\sigma}$ is an open immersion, it follows that if $E_1, E_2 \subset \coprod_{\gamma \in R_{\sigma}} \tilde{\CS}''_{\gamma}$ are two different irreducible components, then the intersection $p_{\sigma}(E_1) \cap p_{\sigma}(E_2)$ is at least of codimension $1$. Thus, the morphism $p_{\sigma}$ induces a bijection between the sets of irreducible components of $\coprod_{\gamma \in R_{\sigma}} \tilde{\CS}''_{\gamma}$ and $\CE'_{\sigma}$. The inequality follows from the fact that the open subscheme $\CS''_{\gamma} \subset \CS_{\sigma}$ surjects onto $\tilde{\CS}''_{\gamma}$ and each connected component of $\CS_{\sigma}$ is irreducible.   \end{proof}
\begin{proposition}\label{irredS} There is an equality 
$$\# \{\text{irreducible components of $\CQ'_{\sigma}$}\} = |R_{\sigma}|.$$ \end{proposition}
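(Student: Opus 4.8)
The plan is to rerun the argument of Proposition \ref{irredA}, this time exploiting that the building blocks $\widetilde{\CM}''_{\gamma}$ are themselves irreducible, so that no multiplicity factor like $\tau_{\sigma}$ appears. First I recall that, by the construction of $\CQ'_{\sigma}$ in analogy with Proposition \ref{per}, there is a finite surjective morphism onto $\CQ'_{\sigma}$ from the disjoint union of $\coprod_{\gamma \in R_{\sigma}} \widetilde{\CM}''_{\gamma}$ with the image of $p_{\sigma}^{-1}(\CQ_{\sigma-1})$ in $\CQ_{\sigma-1}$, the latter piece having dimension at most $\sigma - 2$. By Proposition \ref{upshotper} the scheme $\CQ'_{\sigma}$ is equidimensional of dimension $\sigma - 1$, and $\coprod_{\gamma \in R_{\sigma}} \widetilde{\CM}''_{\gamma}$ is equidimensional of dimension $\sigma-1$ as well, so every irreducible component of $\CQ'_{\sigma}$ is the image of an irreducible component of $\coprod_{\gamma \in R_{\sigma}} \widetilde{\CM}''_{\gamma}$. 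Now $\CM_{\sigma}$ is smooth and connected, hence irreducible; $\CM'_{\gamma}$ is a nonempty open subscheme of $\CM_{\sigma}$, hence irreducible; and $\widetilde{\CM}''_{\gamma}$, being the strongly geometric quotient of the irreducible scheme $\CM'_{\gamma}$ by the finite group $O(N_{\sigma}, \gamma)$, is irreducible too. Therefore $\coprod_{\gamma \in R_{\sigma}} \widetilde{\CM}''_{\gamma}$ has exactly $|R_{\sigma}|$ irreducible components, which gives $\#\{\text{irreducible components of } \CQ'_{\sigma}\} \le |R_{\sigma}|$.

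For the reverse inequality I show that for $\gamma_1 \neq \gamma_2$ in $R_{\sigma}$ the images of $\widetilde{\CM}''_{\gamma_1}$ and $\widetilde{\CM}''_{\gamma_2}$ in $\CQ'_{\sigma}$ are distinct. Let $\widetilde{\CM}''_{\gamma, \sigma} \subseteq \widetilde{\CM}''_{\gamma}$ denote the locus of Artin invariant exactly $\sigma$, that is, the image of the open subscheme of $\CM'_{\gamma}$ parametrizing strictly characteristic generatrices. Since the generatrices of Artin invariant $< \sigma$ form the closed subscheme $\CM^{\sigma-1}_{\sigma} \subset \CM_{\sigma}$ of dimension at most $\sigma - 2$, the locus $\widetilde{\CM}''_{\gamma, \sigma}$ is a nonempty dense open subscheme of the irreducible scheme $\widetilde{\CM}''_{\gamma}$. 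By Proposition \ref{per} and the construction of $\CQ'_{\sigma}$, the image $Z_{\gamma} \subseteq \CQ'_{\sigma}$ of $\widetilde{\CM}''_{\gamma, \sigma}$ lies inside the locally closed locus of $\CQ'_{\sigma}$ representing $\underline{\CQ}_{\sigma}^{str}$, and a geometric point of $Z_{\gamma}$ corresponds to a tuple $([G], \gamma')$ whose embedding $\gamma'$ is isomorphic to $\gamma$. As the embeddings in $R_{\sigma}$ are pairwise non-isomorphic, $Z_{\gamma_1} \cap Z_{\gamma_2} = \emptyset$ whenever $\gamma_1 \neq \gamma_2$.

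It remains to conclude. The image of $\widetilde{\CM}''_{\gamma}$ in $\CQ'_{\sigma}$ equals $\overline{Z_{\gamma}}$, which is irreducible of dimension $\sigma - 1$ and hence an irreducible component of $\CQ'_{\sigma}$ by Proposition \ref{upshotper}. If $\overline{Z_{\gamma_1}} = \overline{Z_{\gamma_2}}$ for $\gamma_1 \neq \gamma_2$, then the two dense open subsets $Z_{\gamma_1}$ and $Z_{\gamma_2}$ of this common irreducible scheme would meet, contradicting the previous paragraph; hence the $\overline{Z_{\gamma}}$, $\gamma \in R_{\sigma}$, are $|R_{\sigma}|$ pairwise distinct irreducible components of $\CQ'_{\sigma}$, and since they cover $\CQ'_{\sigma}$ they are all of them. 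This proves $\#\{\text{irreducible components of } \CQ'_{\sigma}\} = |R_{\sigma}|$. (For $\sigma > 5$ one has $R_{\sigma} = \emptyset$ and $\CQ'_{\sigma} = \emptyset$, so the equality reads $0 = 0$.)

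The step I expect to require the most care is the identification of $Z_{\gamma}$ inside the $\underline{\CQ}_{\sigma}^{str}$-stratum: one has to check that the iterated pushout construction of $\CQ'_{\sigma}$ is compatible with the modular interpretation of that stratum furnished by Proposition \ref{per}, so that the component indexed by $\gamma$ genuinely records the isomorphism class of $\gamma$ and distinct $\gamma \in R_{\sigma}$ land in disjoint parts of $\CQ'_{\sigma}$. Everything else is formal once one has the irreducibility of $\CM_{\sigma}$ together with the finiteness and surjectivity statements already established.
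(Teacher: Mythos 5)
Your proof is correct and takes essentially the same route as the paper: the paper's own (two-line) argument rests exactly on the irreducibility of the schemes $\widetilde{\CM}''_{\gamma}$ together with the fact that a dense open subscheme of $\coprod_{\gamma \in R_{\sigma}} \widetilde{\CM}''_{\gamma}$ (the Artin-invariant-$\sigma$ locus, away from which the pushout gluing happens) maps isomorphically onto a dense open subscheme of $\CQ'_{\sigma}$, which is precisely the disjointness of your loci $Z_{\gamma}$. You have simply made explicit the dimension count ruling out components coming from $\CQ_{\sigma-1}$ and the modular interpretation via $\underline{\CQ}_{\sigma}^{str}$ that the paper leaves implicit.
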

\begin{proof} This follows since the schemes $\widetilde{\CM}''_{\gamma}$ are irreducible and there is a dense open subscheme of $\coprod_{\gamma \in R_{\sigma}} \widetilde{\CM}''_{\gamma}$ which is isomorphic to a dense open subscheme of $\CQ'_{\sigma}$. \end{proof}
\begin{definition} On the set $R_{\sigma}$ of isomorphism classes $[\gamma \colon \Gamma(2) \hookrightarrow N_{\sigma}]$ of embeddings of lattices we define an equivalence relation via 
\begin{align*} [\gamma \colon \Gamma(2) \hookrightarrow N_{\sigma}] &\sim [\gamma' \colon \Gamma(2) \hookrightarrow N_{\sigma}] 
\intertext{if and only if there exists a positive integer $\sigma' \leq \sigma$ and embeddings $j \colon N_{\sigma} \hookrightarrow N_{\sigma'}$ and $j' \colon N_{\sigma} \hookrightarrow N_{\sigma'}$, such that the sublattice $j(\gamma(\Gamma(2)))^{\perp} \subset N_{\sigma'}$ contains no $(-2)$-vectors and such that there is an equality}
[j \circ \gamma \colon \Gamma(2) \hookrightarrow N_{\sigma'}] &= [j' \circ \gamma' \colon \Gamma(2) \hookrightarrow N_{\sigma'}]\end{align*}
of elements in $R_{\sigma'}$. \end{definition}
 Using this equivalence relation we obtain the following results.
\begin{proposition} There is an equality 
$$\# \{\text{connected components of $\CQ'_{\sigma}$}\} =  \left| R_{\sigma} / \sim \right|.$$ \end{proposition}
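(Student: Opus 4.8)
The plan is to reduce the count to the intersection pattern of the irreducible components of $\CQ'_\sigma$ and then to recognise that pattern as the relation $\sim$. First I would record the topology of $\CQ'_\sigma$: as in the proof of the preceding proposition (cf.\ the proof of Proposition \ref{per}), the canonical morphism $\nu\colon \coprod_{\gamma \in R_\sigma}\widetilde{\CM}''_\gamma \to \CQ'_\sigma$ is finite and surjective, so that, each $\widetilde{\CM}''_\gamma$ being irreducible and $\nu$ closed, $Z_\gamma := \nu(\widetilde{\CM}''_\gamma)$ is a closed irreducible subset; by Proposition \ref{irredS} the $Z_\gamma$, $\gamma\in R_\sigma$, are precisely the (pairwise distinct) irreducible components of $\CQ'_\sigma$ and $\CQ'_\sigma = \bigcup_{\gamma\in R_\sigma}Z_\gamma$. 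It is then a general topological fact that every connected component of $\CQ'_\sigma$ is a union of some $Z_\gamma$, and that $Z_\gamma, Z_{\gamma'}$ lie in one connected component exactly when there is a chain $\gamma = \gamma_0,\dots,\gamma_n=\gamma'$ in $R_\sigma$ with $Z_{\gamma_i}\cap Z_{\gamma_{i+1}}\neq\emptyset$ for all $i$. Hence the number of connected components of $\CQ'_\sigma$ equals the number of connected components of the graph on the vertex set $R_\sigma$ recording nonempty intersection of components, and everything reduces to describing that graph.

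The main step is to compute $Z_\gamma\cap Z_{\gamma'}$ for $\gamma\neq\gamma'$. I would first observe that $Z_\gamma$ meets the Artin-invariant-$\sigma$ locus of $\CQ'_\sigma$ in exactly $\nu(M^\sigma_\gamma)$, where $M^\sigma_\gamma\subseteq\widetilde{\CM}''_\gamma$ is the dense open locus of Artin invariant $\sigma$ (the Artin invariant is $\nu$-invariant, and its complement in the irreducible $\widetilde{\CM}''_\gamma$ has smaller dimension); and by the moduli description of the Artin-invariant-$\sigma$ stratum of $\CQ_\sigma$ as the object representing ${\underline{\CQ}_{\sigma}}^{str}$ (Proposition \ref{per}), distinct $\gamma\in R_\sigma$ carry distinct markings, so $\nu(M^\sigma_\gamma)\cap\nu(M^\sigma_{\gamma'})=\emptyset$. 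Thus $Z_\gamma\cap Z_{\gamma'}$ with $\gamma\neq\gamma'$ lies in the closed locus of Artin invariant $<\sigma$. A point $q$ of $Z_\gamma$ of Artin invariant $\sigma'<\sigma$ corresponds, via the associated lattice $N(s)$ of the definition of ample cones, to an embedding $j\colon N_\sigma\hookrightarrow N_{\sigma'}$ making $N_{\sigma'}$ an overlattice of $N_\sigma$; membership of $q$ in $\widetilde{\CM}''_\gamma$ forces $j(\gamma(\Gamma(2)))^\perp\subset N_{\sigma'}$ to contain no $(-2)$-vector (this is the condition $\gamma(\Gamma(2))^\perp\cap\Delta_{N_\sigma(s)}=\emptyset$ in the definition of $\tilde{\mathfrak{M}}''_\gamma$), and the image of $q$ in the Artin-invariant-$\sigma'$ stratum of $\CQ_\sigma$ lies in the piece indexed by $[j\circ\gamma]\in R_{\sigma'}$. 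Conversely, whenever such a $j$ exists I would lift characteristic generatrices of $pN_{\sigma'}^{\vee}/pN_{\sigma'}$ along $j$ to characteristic generatrices of $pN_\sigma^{\vee}/pN_\sigma$ and deduce that the whole Artin-invariant-$\sigma'$ stratum indexed by $[j\circ\gamma]$ lies in $Z_\gamma$, using the explicit shape of the closed immersions $\Psi_j\colon\CM_{\sigma'}\hookrightarrow\CM_\sigma$ from \cite[Remark 4.8]{MR563467} together with Lemma \ref{primi}.

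Putting these together, for $\gamma\neq\gamma'$ one gets $Z_\gamma\cap Z_{\gamma'}\neq\emptyset$ if and only if there are $\sigma'\leq\sigma$ and embeddings $j,j'\colon N_\sigma\hookrightarrow N_{\sigma'}$ with $j(\gamma(\Gamma(2)))^\perp$ free of $(-2)$-vectors and $[j\circ\gamma]=[j'\circ\gamma']$ in $R_{\sigma'}$ --- exactly the one-step relation defining $\sim$ on $R_\sigma$ (it is reflexive and symmetric, the $(-2)$-freeness of $j'(\gamma'(\Gamma(2)))^\perp$ being automatic from membership in $R_{\sigma'}$). Forming connected components of the intersection graph is forming the equivalence relation generated by this relation, i.e.\ $\sim$, so $\#\{\text{connected components of }\CQ'_\sigma\}=|R_\sigma/\!\sim|$. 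I expect the main obstacle to be the converse direction of the middle step: checking that the Artin-invariant-$\sigma'$ stratum of $\CQ_\sigma$ indexed by $[j\circ\gamma]$ is \emph{entirely} contained in $Z_\gamma$, i.e.\ that every admissible characteristic generatrix of $pN_{\sigma'}^{\vee}/pN_{\sigma'}$ lifts along $j$ to one of $pN_\sigma^{\vee}/pN_\sigma$ with the prescribed intersection $\Lambda$, and that the resulting point of $\CQ'_\sigma$ remembers the class $[j\circ\gamma]$ and nothing finer --- precisely where the structure of $\CM_{\sigma'}\hookrightarrow\CM_\sigma$ and Lemma \ref{primi} enter.
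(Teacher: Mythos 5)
Your argument is correct and is essentially the paper's own proof in expanded form: the paper simply observes, citing the pushout construction of $\CQ'_{\sigma}$, that two components $\widetilde{\CM}''_{\gamma_1}$, $\widetilde{\CM}''_{\gamma_2}$ map to the same connected component under the finite surjection $\coprod_{\gamma \in R_{\sigma}} \widetilde{\CM}''_{\gamma} \rightarrow \CQ'_{\sigma}$ if and only if $\gamma_1 \sim \gamma_2$, which is exactly the intersection-graph analysis you carry out in detail (images meeting only along lower Artin invariant strata indexed by classes $[j \circ \gamma]$). Your extra verifications (disjointness of the Artin-invariant-$\sigma$ strata, nonemptiness of the gluing loci) are just the details the paper leaves implicit in the construction, so no genuinely different route or gap is involved.
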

\begin{proof} It follows from the construction in the proof of Proposition \ref{upshotper} that under the surjection of schemes $\coprod_{\gamma \in R_{\sigma}} \widetilde{\CM}''_{\gamma} \rightarrow \CQ'_{\sigma}$ two connected components $\widetilde{\CM}''_{\gamma_1}$ and $\widetilde{\CM}''_{\gamma_2}$ map to the same connected component of $\CQ'_{\sigma}$ if and only if $\gamma_1 \sim \gamma_2$.  \end{proof}
\begin{proposition} We write $\tau_{\sigma}$ for the number of connected components of $\CS_{\sigma}$ and $\varepsilon_{\sigma}^c$ for the number of connected components of $\CE'_{\sigma}$. There is an inequality 
$$\varepsilon_{\sigma}^c \leq \tau_{\sigma} \cdot \left| R_{\sigma} / \sim \right|.$$ \end{proposition}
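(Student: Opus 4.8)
The plan is to use the period morphism to break $\CE'_\sigma$ into open and closed pieces indexed by $R_\sigma/\sim$, and then to bound the number of connected components of each piece by $\tau_\sigma$. Concretely, I would start from the identification $\CE'_\sigma\cong\tilde{\CE}'_\sigma$ and the preceding proposition, which supplies an étale surjective morphism $\phi\colon\CE'_\sigma\to\CQ'_\sigma$, together with the proposition computing $\#\{\text{connected components of }\CQ'_\sigma\}$, which shows that $\CQ'_\sigma$ has exactly $|R_\sigma/\sim|$ connected components, one component $Q_{[\gamma_0]}$ for each class, characterised by the fact that the (irreducible) image of $\widetilde{\CM}''_{\gamma}$ in $\CQ'_\sigma$ lies in $Q_{[\gamma_0]}$ precisely when $\gamma\sim\gamma_0$. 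Since $\phi$ is continuous, the subschemes $E_{[\gamma_0]}:=\phi^{-1}(Q_{[\gamma_0]})$ are open and closed and partition $\CE'_\sigma$; since $\phi$ is obtained by gluing the morphisms $\tilde{\CS}''_{\gamma}\to\widetilde{\CM}''_{\gamma}$ and $\widetilde{\CM}''_{\gamma}$ is connected, each $\mathrm{im}(\tilde{\CS}''_{\gamma})\subseteq\CE'_\sigma$ lies in $E_{[\gamma]}$; and since the pushout presentation of $\tilde{\CE}'_\sigma$ exhibits $\CE'_\sigma$ as the union of the $\mathrm{im}(\tilde{\CS}''_{\gamma})$, one gets $E_{[\gamma_0]}=\bigcup_{\gamma\sim\gamma_0}\mathrm{im}(\tilde{\CS}''_{\gamma})$. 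Hence $\varepsilon_\sigma^c=\sum_{[\gamma_0]}\#\pi_{0}(E_{[\gamma_0]})$, and it suffices to prove $\#\pi_{0}(E_{[\gamma_0]})\le\tau_\sigma$ for every class.

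\textbf{Labelling components and the gluing.} Fix a class $[\gamma_0]$; I would argue by induction on $\sigma$ using the pushout construction of Theorem \ref{theorem1}. For each $\gamma$, the scheme $\tilde{\CS}''_{\gamma}$ is the geometric quotient of the open subscheme $\CS''_{\gamma}\subseteq\CS_\sigma$ by the finite group $O(N_\sigma,\gamma)$; as $\CS_\sigma$ is smooth its connected and irreducible components coincide and an open subscheme of an irreducible scheme is connected, so the connected components of $\CS''_{\gamma}$ are labelled by the subset $I_\gamma\subseteq\pi_{0}(\CS_\sigma)$ of components of $\CS_\sigma$ meeting $\CS''_{\gamma}$, and $\pi_{0}(\tilde{\CS}''_{\gamma})=I_\gamma/O(N_\sigma,\gamma)$; in particular $\#\pi_{0}(\tilde{\CS}''_{\gamma})\le\tau_\sigma$. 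The crucial observation is that the pushout gluing respects these labels. Indeed, if $\gamma_1\sim\gamma_2$ is witnessed by $\sigma'\le\sigma$ and embeddings $j_1,j_2\colon N_\sigma\hookrightarrow N_{\sigma'}$ with $[j_1\circ\gamma_1]=[j_2\circ\gamma_2]\in R_{\sigma'}$, then by Remark \ref{w} and its evident analogue for general $\sigma'$ the locus $\Phi_{j_i}(\CS_{\sigma'})\cap\CS''_{\gamma_i}$ is canonically isomorphic to the open subscheme $\CS''_{j_i\circ\gamma_i}\subseteq\CS_{\sigma'}$, and the two opens $\CS''_{j_1\circ\gamma_1}$, $\CS''_{j_2\circ\gamma_2}$ of $\CS_{\sigma'}$ are carried onto one another by an automorphism of $N_{\sigma'}$. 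Consequently, in $\tilde{\CE}'_\sigma$ — where the $N_\sigma$-marking is forgotten and only the $\Gamma(2)$-embedding is retained — the images of $\Phi_{j_1}(\CS_{\sigma'})\cap\CS''_{\gamma_1}$ and of $\Phi_{j_2}(\CS_{\sigma'})\cap\CS''_{\gamma_2}$ coincide, both equal to the image of $\tilde{\CS}''_{\gamma_{\ast}}$ for the common class $\gamma_{\ast}\in R_{\sigma'}$, sitting inside the closed substack of families of Artin invariant $\le\sigma'$. This common nonempty locus $D$ is exactly the boundary along which $\mathrm{im}(\tilde{\CS}''_{\gamma_1})$ and $\mathrm{im}(\tilde{\CS}''_{\gamma_2})$ are glued in $\CE'_\sigma$.

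\textbf{Counting, and the main obstacle.} Passing to $\pi_{0}$, the set $\pi_{0}(E_{[\gamma_0]})$ is a quotient of $\coprod_{\gamma\sim\gamma_0}I_\gamma/O(N_\sigma,\gamma)$; the gluings from the previous step, together with the fact that each $\Phi_{j_i}$ sends connected components of $\CS_{\sigma'}$ into connected components of $\CS_\sigma$, identify the components of $\tilde{\CS}''_{\gamma_1}$ and $\tilde{\CS}''_{\gamma_2}$ lying over corresponding parts of $D$ according to a bijection between subsets of $\pi_{0}(\CS_\sigma)$. Iterating over all $\gamma\sim\gamma_0$ collapses all of the $I_\gamma/O(N_\sigma,\gamma)$ onto a single subquotient of $\pi_{0}(\CS_\sigma)$, which has at most $\tau_\sigma$ elements, so $\#\pi_{0}(E_{[\gamma_0]})\le\tau_\sigma$ and the asserted inequality follows. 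The delicate point — the step I expect to be the real obstacle — is making this last collapse precise: one must show that the boundary locus $D$ is \emph{rich enough}, i.e.\ that every connected component of each $\tilde{\CS}''_{\gamma}$ with $\gamma\sim\gamma_0$ whose image is to be merged really does contain in its closure a point of the appropriate component of $D$, so that the naive estimate $\sum_{\gamma\sim\gamma_0}\#\pi_{0}(\tilde{\CS}''_{\gamma})\le|[\gamma_0]|\cdot\tau_\sigma$ sharpens to $\tau_\sigma$. I would establish this by the same induction on $\sigma$ that underlies Theorem \ref{theorem1}, tracking how each connected component of $\CS_\sigma$ acquires lower-Artin-invariant boundary strata (inherited from the $\CS_{\sigma'}$ through the closed immersions $\Phi_j$) and verifying that these strata are exactly the ones that get glued into $D$.
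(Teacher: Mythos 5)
Your strategy is essentially the paper's: split $\CE'_{\sigma}$ according to the $\sim$-classes, observe that each $\tilde{\CS}''_{\gamma}$ has at most $\tau_{\sigma}$ connected components, and merge the images of $\tilde{\CS}''_{\gamma_1}$ and $\tilde{\CS}''_{\gamma_2}$ for $\gamma_1\sim\gamma_2$ along the common subspace $\tilde{\CS}''_{j_1\circ\gamma_1}\cong\tilde{\CS}''_{j_2\circ\gamma_2}$. The one step you explicitly defer --- that this glued locus is \emph{rich enough} --- is exactly the ingredient the paper uses to close the argument: it states that $\tilde{\CS}''_{j_1\circ\gamma_1}\cong\tilde{\CS}''_{j_2\circ\gamma_2}$ sits inside both $\tilde{\CS}''_{\gamma_1}$ and $\tilde{\CS}''_{\gamma_2}$ and touches each of their connected components, and concludes that the image of $\tilde{\CS}''_{\gamma_1}\amalg\tilde{\CS}''_{\gamma_2}$ in $\CE'_{\sigma}$ has at most $\tau_{\sigma}$ components; so you have reproduced the paper's argument but left its key claim as an unexecuted induction (the paper, for its part, asserts it without further justification). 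Two smaller points: once that touching statement is granted, your label-matching device --- identifying components via a bijection between subsets of $\pi_0(\CS_{\sigma})$ --- is both unnecessary and not really available, since the identification passes through level $\sigma'$ and the two maps $\pi_0(\CS_{\sigma'})\to\pi_0(\CS_{\sigma})$ induced by $\Phi_{j_1}$ and $\Phi_{j_2}$ need not be compatible; the simpler absorption argument (every component of the image of $\tilde{\CS}''_{\gamma_2}$ meets the image of $\tilde{\CS}''_{\gamma_1}$, so their union has no more components than the latter, hence at most $\tau_{\sigma}$) suffices. Likewise the initial detour through $\CQ'_{\sigma}$ to separate the classes is harmless but not needed; the paper works directly with the surjection $\coprod_{\gamma\in R_{\sigma}}\tilde{\CS}''_{\gamma}\to\CE'_{\sigma}$.
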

\begin{proof} We consider the surjection $\coprod_{\gamma \in R_{\sigma}} \tilde{\CS}''_{\gamma} \rightarrow \CE'_{\sigma}$. For each $\gamma \in R_{\sigma}$ the scheme $\tilde{\CS}''_{\gamma}$ has at most $\tau_{\sigma}$ many connected components. If $\gamma_1 \sim \gamma_2$, say with $[j_1 \circ \gamma_1]= [j_2 \circ \gamma_2]$, then $\tilde{\CS}''_{j_1 \circ \gamma_1} \cong \tilde{\CS}''_{j_2 \circ \gamma_2}$ is a subspace of both $\tilde{\CS}''_{\gamma_1}$ and $\tilde{\CS}''_{\gamma_2}$ which touches each of the connected components of the $\tilde{\CS}''_{\gamma_i}$. Thus, the image of $\tilde{\CS}''_{\gamma_1} \amalg \tilde{\CS}''_{\gamma_2}$ in $\CE'_{\sigma}$ has at most $\tau_{\sigma}$ many connected components and this implies the statement of the proposition.\end{proof}
\begin{proposition} We denote by $\alpha_{\sigma}$ the number of isomorphism classes  $[\gamma \colon \Gamma(2) \hookrightarrow N_{\sigma}]$ in $R_{\sigma}$ such that that for each positive integer $\sigma' < \sigma$ and each embedding of lattices $j \colon N_{\sigma} \hookrightarrow N_{\sigma'}$ there is a $-2$-vector in the sublattice $j(\gamma(\Gamma(2)))^{\perp} \subset N_{\sigma'}$. Then we have an inequality 
$$ \alpha_{\sigma} \leq \varepsilon_{\sigma}^c \leq \tau_{\sigma} \cdot (\alpha_{\sigma} + \varepsilon_{\sigma - 1}^c). $$\end{proposition}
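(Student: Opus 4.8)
The plan is to induct on $\sigma$ using the Ferrand pushout presentation of $\CE'_{\sigma}$ obtained in the construction of $\tilde{\CE}'_{\sigma}$: namely $\CE'_{\sigma}$ is the pushout of $\tilde{\CE}'^{s}_{\sigma-1}$ and $\coprod_{\gamma\in R_{\sigma}}\tilde{\CS}''_{\gamma}$ along the finite surjective morphism $p_{\sigma}\colon p_{\sigma}^{-1}(\tilde{\CE}'^{s}_{\sigma-1})\to\tilde{\CE}'^{s}_{\sigma-1}$ and the closed immersion $p_{\sigma}^{-1}(\tilde{\CE}'^{s}_{\sigma-1})\hookrightarrow\coprod_{\gamma}\tilde{\CS}''_{\gamma}$, so that $\coprod_{\gamma}\tilde{\CS}''_{\gamma}\to\CE'_{\sigma}$ is finite surjective and $\tilde{\CE}'^{s}_{\sigma-1}\hookrightarrow\CE'_{\sigma}$ is a closed immersion. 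Write $A_{\sigma}\subseteq R_{\sigma}$ for the set of isomorphism classes counted by $\alpha_{\sigma}$ and put $B_{\sigma}=R_{\sigma}\setminus A_{\sigma}$. The first point is that $[\gamma]\in A_{\sigma}$ if and only if $R_{\gamma}=\bigcup_{\sigma'<\sigma}R_{\sigma',\sigma}$, i.e.\ every embedding of $N_{\sigma}$ into a smaller K3 lattice is deleted in the definition of $\CS'_{\gamma}$; equivalently $\CS'_{\gamma}=\CS_{\sigma}\setminus(\text{locus of Artin invariant}\le\sigma-1)$, so that $\CS''_{\gamma}$ and $\tilde{\CS}''_{\gamma}$ parametrize only families of constant Artin invariant $\sigma$. (That the condition defining $A_{\sigma}$ may be tested with $\sigma'=\sigma-1$ alone follows since every embedding $N_{\sigma}\hookrightarrow N_{\sigma'}$ factors through an intermediate copy of $N_{\sigma-1}$ — take an isotropic line in the discriminant quotient — and a $(-2)$-vector in an orthogonal complement survives passing to a sublattice.)

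For the lower bound, take $[\gamma]\in A_{\sigma}$. The nonempty image of $\tilde{\CS}''_{\gamma}$ in $\CE'_{\sigma}$ lies in the open subscheme of $\CE'_{\sigma}$ on which every fibre has Artin invariant $\sigma$; there the N\'eron--Severi lattice is forced to be $N_{\sigma}$ and the $N_{\sigma}$-marking is unique up to $O(N_{\sigma})$, so on that locus $\coprod_{[\gamma']\in R_{\sigma}}\tilde{\CS}''_{\gamma',\sigma}$ maps isomorphically onto a disjoint union. Hence $\mathrm{im}(\tilde{\CS}''_{\gamma})$ is open in $\CE'_{\sigma}$; being the image of a finite morphism it is also closed; so it is a nonempty clopen subset, pairwise disjoint as $[\gamma]$ runs through $A_{\sigma}$ and disjoint from the images of all other $\tilde{\CS}''_{\gamma'}$ and from $\mathrm{im}(\tilde{\CE}'^{s}_{\sigma-1})$. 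This exhibits $\alpha_{\sigma}$ distinct connected components of $\CE'_{\sigma}$, so $\alpha_{\sigma}\le\varepsilon_{\sigma}^{c}$.

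For the upper bound, the clopen complement $\CE'^{\mathrm{rest}}_{\sigma}$ of $\bigcup_{[\gamma]\in A_{\sigma}}\mathrm{im}(\tilde{\CS}''_{\gamma})$ in $\CE'_{\sigma}$ is covered by $\mathrm{im}(\tilde{\CE}'^{s}_{\sigma-1})$ together with the $\mathrm{im}(\tilde{\CS}''_{\gamma})$, $[\gamma]\in B_{\sigma}$, while the clopen $A_{\sigma}$-part contributes at most $\sum_{[\gamma]\in A_{\sigma}}\#\pi_{0}(\tilde{\CS}''_{\gamma})\le\tau_{\sigma}\alpha_{\sigma}$ connected components, since each $\tilde{\CS}''_{\gamma}$, being a quotient of an open subscheme of $\CS_{\sigma}$, has at most $\tau_{\sigma}$ of them. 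For $[\gamma]\in B_{\sigma}$ pick $j\in R_{\sigma-1,\sigma}\setminus R_{\gamma}$ (possible by the reduction above); then $q(\Phi_{j}(\CS_{\sigma-1})\cap\CS''_{\gamma})$ — a nonempty closed subspace of $\tilde{\CS}''_{\gamma}$ which, as in Remark~\ref{w}, is identified with a building block $\tilde{\CS}''_{j\circ\gamma}$ of $\CE'_{\sigma-1}$ — meets every connected component of $\tilde{\CS}''_{\gamma}$ (the fact already used in the proof of the preceding proposition, resting on the irreducibility of the connected components of $\CS_{\sigma}$), and its image in $\CE'_{\sigma}$ lies in $\mathrm{im}(\tilde{\CE}'^{s}_{\sigma-1})$. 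Hence every connected component of $\coprod_{[\gamma]\in B_{\sigma}}\tilde{\CS}''_{\gamma}$ is glued, inside $\CE'_{\sigma}$, onto $\mathrm{im}(\tilde{\CE}'^{s}_{\sigma-1})$, so every connected component of $\CE'^{\mathrm{rest}}_{\sigma}$ meets $\mathrm{im}(\tilde{\CE}'^{s}_{\sigma-1})$, giving $\#\pi_{0}(\CE'^{\mathrm{rest}}_{\sigma})\le\#\pi_{0}(\tilde{\CE}'^{s}_{\sigma-1})$ and therefore $\varepsilon_{\sigma}^{c}\le\tau_{\sigma}\alpha_{\sigma}+\#\pi_{0}(\tilde{\CE}'^{s}_{\sigma-1})$.

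The remaining estimate $\#\pi_{0}(\tilde{\CE}'^{s}_{\sigma-1})\le\tau_{\sigma}\varepsilon^{c}_{\sigma-1}$ is the step I expect to be the main obstacle, precisely because $\tilde{\CE}'^{s}_{\sigma-1}$ is only a \emph{closed} subscheme of $\tilde{\CE}'_{\sigma-1}$ and could a priori have more connected components than the ambient space. Here I would show that, up to lower-dimensional pieces contained in closures, $\tilde{\CE}'^{s}_{\sigma-1}$ is the union of those building blocks $\tilde{\CS}''_{\delta}$, $\delta\in R_{\sigma-1}$, for which $\delta\colon\Gamma(2)\hookrightarrow N_{\sigma-1}$ factors through a copy of $N_{\sigma}$; inside a fixed connected component $C$ of $\CE'_{\sigma-1}$ these are exactly the boundary strata $\tilde{\CS}''_{i\circ\varepsilon}\subseteq\tilde{\CS}''_{\varepsilon}$ for the $\varepsilon\in R_{\sigma}$ with $\mathrm{im}(\tilde{\CS}''_{\varepsilon})$ meeting $C$, and since each $\tilde{\CS}''_{\varepsilon}$ has at most $\tau_{\sigma}$ connected components, each absorbing all of its relevant boundary strata into one piece, $\tilde{\CE}'^{s}_{\sigma-1}\cap C$ has at most $\tau_{\sigma}$ connected components; summing over the $\varepsilon^{c}_{\sigma-1}$ choices of $C$ gives the bound, and hence $\varepsilon_{\sigma}^{c}\le\tau_{\sigma}(\alpha_{\sigma}+\varepsilon^{c}_{\sigma-1})$. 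The base case $\sigma=1$ is immediate: $B_{1}=\emptyset$, $\alpha_{1}=|R_{1}|$ and $\CE'_{1}=\coprod_{\gamma\in R_{1}}\tilde{\CS}''_{\gamma}$, from which $\alpha_{1}\le\varepsilon_{1}^{c}\le\tau_{1}\alpha_{1}$.
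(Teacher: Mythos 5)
Your overall strategy is the same as the paper's: split $R_{\sigma}$ into the classes counted by $\alpha_{\sigma}$ and the rest, obtain the lower bound from the observation that for such $\gamma$ the block $\tilde{\CS}''_{\gamma}$ has constant Artin invariant $\sigma$, so its image in $\CE'_{\sigma}$ is clopen and disjoint from all other images, and obtain the upper bound by gluing the remaining blocks onto the lower Artin-invariant stratum. The lower bound, and your reduction of the defining condition of $\alpha_{\sigma}$ to $\sigma'=\sigma-1$ (via an intermediate index-$p$ overlattice of $N_{\sigma}$, which is even, $p$-elementary of the right signature and discriminant, hence isomorphic to $N_{\sigma-1}$), are fine, and are in line with (indeed more explicit than) the paper, which argues disjointness via the equivalence relation $\sim$ instead. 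The genuine gap is the step you yourself flag: the inequality $\#\pi_{0}(\tilde{\CE}'^{s}_{\sigma-1})\le\tau_{\sigma}\,\varepsilon^{c}_{\sigma-1}$ is never proved, and the sketch offered for it does not work. You argue that, inside a fixed connected component $C$ of $\CE'_{\sigma-1}$, all the boundary strata $\tilde{\CS}''_{i\circ\varepsilon}$ belonging to one connected component of a block $\tilde{\CS}''_{\varepsilon}$ are \emph{absorbed into one piece}. But the connectivity you invoke takes place in the ambient $\tilde{\CS}''_{\varepsilon}$ (equivalently in $\CE'_{\sigma}$), i.e.\ through points of Artin invariant $\sigma$, which do not lie in $\tilde{\CE}'^{s}_{\sigma-1}$ at all: two disjoint closed strata sitting inside one irreducible component of $\tilde{\CS}''_{\varepsilon}$ need not be connected to each other inside $\tilde{\CE}'^{s}_{\sigma-1}\cap C$. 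So the claim that $\tilde{\CE}'^{s}_{\sigma-1}\cap C$ has at most $\tau_{\sigma}$ connected components does not follow from $\#\pi_{0}(\tilde{\CS}''_{\varepsilon})\le\tau_{\sigma}$, and with it the right-hand inequality $\varepsilon_{\sigma}^{c}\le\tau_{\sigma}(\alpha_{\sigma}+\varepsilon_{\sigma-1}^{c})$ is not established. (There is also a bookkeeping slip in that paragraph: ``the $\varepsilon\in R_{\sigma}$ with $\mathrm{im}(\tilde{\CS}''_{\varepsilon})$ meeting $C$'' does not parse, since $\mathrm{im}(\tilde{\CS}''_{\varepsilon})$ lives in $\CE'_{\sigma}$, not in $\CE'_{\sigma-1}$.)

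For comparison, the paper's upper bound does not pass through $\#\pi_{0}(\tilde{\CE}'^{s}_{\sigma-1})$: it notes that every class outside the $\alpha_{\sigma}$ ones has its block glued, inside $\CE'_{\sigma}$, to the image of a block $\tilde{\CS}''_{\gamma'}$ with $\gamma'\in R_{\sigma'}$, $\sigma'<\sigma$, i.e.\ to a building block of the lower space, so that the non-$\alpha_{\sigma}$ components are counted against the components of $\CE'_{\sigma-1}$, each contributing at most $\tau_{\sigma}$ pieces; your route makes the hidden counting explicit and thereby exposes exactly the place where an argument is still needed. To repair your write-up you must either prove the component bound for the closed stratum $\tilde{\CE}'^{s}_{\sigma-1}$ by an argument that stays inside $\CE'_{\sigma-1}$ (for instance by controlling, per connected component of $\CE'_{\sigma-1}$ and per block, which strata actually become connected there), or reorganize the count as the paper does, attaching each non-$\alpha_{\sigma}$ component of $\CE'_{\sigma}$ to a component of $\CE'_{\sigma-1}$ rather than to a component of $\tilde{\CE}'^{s}_{\sigma-1}$. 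As it stands, the final inequality is not proved.
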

\begin{proof} The lower bound is a very weak estimate: if $\gamma$ is such that for each positive integer $\sigma' < \sigma$ and each $j \colon N_{\sigma} \hookrightarrow N_{\sigma'}$ there is a $(-2)$-vector in the sublattice $j(\gamma(\Gamma(2)))^{\perp} \subset N_{\sigma'}$, then $[\gamma]$ is the only element in its equivalence class of $\sim$. Hence, the image of $\tilde{\CS}''_{\gamma}$ in $\CE'_{\sigma}$ is disjoint from the image of any $\tilde{\CS}''_{\gamma'}$ in $\CE'_{\sigma}$ for all $\gamma' \neq \gamma$. 

For the upper bound, we remark that each $\gamma \in R_{\sigma}$ is either as above, or there exists a positive integer $\sigma' < \sigma$ and an element $\gamma' \in R_{\sigma'}$ such that the images of $\tilde{\CS}''_{\gamma'}$ in $\CE'_{\sigma'} \subset \CE'_{\sigma}$ and $\tilde{\CS}''_{\gamma}$ in $\CE'_{\sigma}$ intersect non-trivially. \end{proof}
Analogously to the compactification $\CE^{\dag}_{\sigma}$ of $\CE_{\sigma}$, we can construct a compactification $\CE'^{\dag}_{\sigma}$ of $\CE'_{\sigma}$. In analogy to Proposition \ref{upshot} we have the following proposition.
\begin{proposition} For any $\sigma' \leq \sigma$, the scheme ${\CE'}^{\dag}_{\sigma'}$ is a closed subspace of $\CE_{\sigma}^{\dag}$ and we have the equality
$$\bigcup_{\sigma' \leq \sigma} {\CE'}^{\dag}_{\sigma'} = \CE^{\dag}_{\sigma}.$$
Further, ${\CE'}^{\dag}_{\sigma}$ is the maximal closed subspace in $\CE^{\dag}_{\sigma}$ with the property that all of its irreducible components are of dimension $\sigma-1$. \end{proposition}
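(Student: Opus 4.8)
The plan is to mirror the proof of Proposition~\ref{upshot} step by step, replacing every scheme by its $\dag$-compactification and inducting on $\sigma$. For $\sigma=1$ one has $\CE^{\dag}_1={\CE'}^{\dag}_1$, since any $N_1$-marking of a supersingular K3 surface is already an isomorphism onto its Néron-Severi group and hence the $\Gamma(2)$-marking automatically factors through it; so all three assertions are trivial. For the inductive step I would first record the two Ferrand pushout presentations, both built exactly as in the proof of Theorem~\ref{theorem1} with all $(-2)$-vector conditions removed throughout (so the index sets $R_\sigma$ are replaced by the sets of all primitive embeddings $\Gamma(2)\hookrightarrow N_\sigma$, and the building blocks $\tilde{\CS}''_\gamma$ by their analogues $\tilde{\CS}^{\dag}_{\gamma}$, which require only that the $\Gamma(2)$-marking contain an ample class): the scheme $\CE^{\dag}_{\sigma}$ is the pushout of $\CE^{\dag}_{\sigma-1}\leftarrow p_\sigma^{-1}(\CE^{\dag}_{\sigma-1})\to\coprod_\gamma\tilde{\CS}^{\dag}_{\gamma}$ along the finite morphism $p_\sigma\colon p_\sigma^{-1}(\CE^{\dag}_{\sigma-1})\to\CE^{\dag}_{\sigma-1}$, while ${\CE'}^{\dag}_{\sigma}$ is the pushout of the same right-hand diagram along $p_\sigma$ restricted to its closed image in $\CE^{\dag}_{\sigma-1}$, i.e. the locus where the $\Gamma(2)$-marking factors through an $N_\sigma$-sublattice (a closed subscheme of both $\CE^{\dag}_{\sigma-1}$ and ${\CE'}^{\dag}_{\sigma-1}$). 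By construction ${\CE'}^{\dag}_{\sigma}$ is then a closed subscheme of $\CE^{\dag}_{\sigma}$, and combining this with the inductive hypothesis applied inside the closed immersion $\CE^{\dag}_{\sigma-1}\hookrightarrow\CE^{\dag}_{\sigma}$ produced by the pushout shows that ${\CE'}^{\dag}_{\sigma'}$ is closed in $\CE^{\dag}_{\sigma}$ for every $\sigma'\le\sigma$.

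For the equality $\bigcup_{\sigma'\le\sigma}{\CE'}^{\dag}_{\sigma'}=\CE^{\dag}_{\sigma}$ it suffices, since $\CE^{\dag}_{\sigma}$ is reduced (again by the construction analogous to Theorem~\ref{theorem1}), to compare underlying sets. A geometric point of $\CE^{\dag}_{\sigma}$ is a $\Gamma(2)$-marked supersingular K3 surface $(\CX_s,\gamma_s)$ whose Néron-Severi lattice has some Artin invariant $\sigma_0\le\sigma$; as the Artin invariant determines a supersingular K3 lattice up to isometry, one may fix an isometry $N_{\sigma_0}\cong\mathrm{NS}(\CX_s)$, which exhibits $\gamma_s$ as factoring through $N_{\sigma_0}$, so the point lies on the closed subscheme ${\CE'}^{\dag}_{\sigma_0}$. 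This gives $\supseteq$, while $\subseteq$ is contained in the previous paragraph; since the scheme-theoretic union of the reduced closed subschemes ${\CE'}^{\dag}_{\sigma'}$ is reduced and has the same support as the reduced scheme $\CE^{\dag}_{\sigma}$, the two agree.

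For the maximality clause, each $\tilde{\CS}^{\dag}_{\gamma}$ is a finite group quotient of an open subscheme of the smooth $(\sigma-1)$-dimensional scheme $\CS_{\sigma}$, hence is equidimensional of dimension $\sigma-1$; arguing as in Proposition~\ref{irredA}, the finite surjection $\coprod_\gamma\tilde{\CS}^{\dag}_{\gamma}\to{\CE'}^{\dag}_{\sigma}$ forces ${\CE'}^{\dag}_{\sigma}$ to have all irreducible components of dimension $\sigma-1$. By induction $\dim{\CE'}^{\dag}_{\sigma'}=\sigma'-1\le\sigma-2$ for $\sigma'<\sigma$, so the closed subset $Z_0\coloneqq\bigcup_{\sigma'<\sigma}{\CE'}^{\dag}_{\sigma'}$ has dimension at most $\sigma-2$ and, by the union equality just proved, its complement in $\CE^{\dag}_{\sigma}$ is contained in ${\CE'}^{\dag}_{\sigma}$. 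Now if $Z\subseteq\CE^{\dag}_{\sigma}$ is a closed subspace all of whose irreducible components have dimension $\sigma-1$, then no component of $Z$ can lie in $Z_0$ for dimension reasons, so each component meets the open set $\CE^{\dag}_{\sigma}\setminus Z_0\subseteq{\CE'}^{\dag}_{\sigma}$; being irreducible with ${\CE'}^{\dag}_{\sigma}$ closed, it is contained in ${\CE'}^{\dag}_{\sigma}$, and therefore $Z\subseteq{\CE'}^{\dag}_{\sigma}$.

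The one step that requires genuine care rather than routine bookkeeping is the compatibility of the two pushout presentations used in the first paragraph: one must check that forgetting the chosen $N_\sigma$-sublattice commutes with both Ferrand pushouts and that the sublocus ``the $\Gamma(2)$-marking factors through an $N_\sigma$-sublattice'' is precisely the closed image of $p_\sigma$ in $\CE^{\dag}_{\sigma-1}$. This is the \emph{compactified} mirror of the analysis already carried out for $\CE'_\sigma$, now using the isomorphism $\mathrm{cov}$ of Proposition~\ref{EandK} (together with its $\dag$- and $\CE'$-variants) to pass between the K3 and Enriques descriptions; once this is in place everything else is formal, and I expect no obstruction beyond the one already overcome in the uncompactified case.
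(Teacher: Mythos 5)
Your proposal is correct and follows exactly the route the paper intends: the paper leaves this proof to the reader as the $\dag$-analogue of Proposition~\ref{upshot}, whose own proof rests on the inductive pushout construction and the finite surjection $\coprod_{\gamma}\tilde{\CS}''_{\gamma}\rightarrow\CE'_{\sigma}$, and you carry out precisely that analogy (base case $\sigma=1$, pushout along the closed image of $p_{\sigma}$, reducedness plus comparison of geometric points for the union, and the dimension count via the finite surjection for maximality). The only difference is that you spell out the details the paper suppresses, which if anything makes the argument more complete than the published one.
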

We leave the proof to the reader and obtain the following result.
\begin{proposition} There are inequalities
\begin{align*} \# \{\text{connected components of $\CE_{\sigma}'^{\dag}$}\} \leq \# \{\text{connected components of $\CE_{\sigma - 1}'^{\dag}$}\} \end{align*}
and
\begin{align*} \# \{\text{irreducible components of $\CE'_{\sigma}$}\} \leq \# \{\text{irreducible components of $\CE_{\sigma}'^{\dag}$}\}. \end{align*} \end{proposition}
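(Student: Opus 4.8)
The plan is to treat the two inequalities separately; the second is elementary and the first requires the inductive pushout description of $\CE'^{\dag}_{\sigma}$. For the second inequality, recall that $\CE'^{\dag}_{\sigma}$ is the compactification of $\CE'_{\sigma}$ obtained by dropping the condition that $\gamma(\Gamma(2))^{\perp}$ carry no $(-2)$-vector, and that this is an open condition (it cuts out the complement of finitely many closed loci of the type $\Phi_{j}(\cdots)$ occurring in Proposition~\ref{open1}). Hence $\CE'_{\sigma}$ is an open subscheme of $\CE'^{\dag}_{\sigma}$, every irreducible component of $\CE'_{\sigma}$ is the trace on $\CE'_{\sigma}$ of a unique irreducible component of $\CE'^{\dag}_{\sigma}$, and distinct components have distinct traces; this gives an injection from the set of irreducible components of $\CE'_{\sigma}$ into that of $\CE'^{\dag}_{\sigma}$, which is the second inequality.

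For the first inequality I would argue by induction on $\sigma$, the case $\sigma=1$ (where $\CE'^{\dag}_{1}=\CE'_{1}$ is zero-dimensional) being trivial. As in the proof that $\CE'_{\sigma}$ is closed in $\CE_{\sigma}$, and as in Theorem~\ref{theorem1}, the scheme $\CE'^{\dag}_{\sigma}$ is the Ferrand pushout of
\[
\CE'^{\dag,s}_{\sigma-1}\ \xleftarrow{\ p_{\sigma}\ }\ p_{\sigma}^{-1}\bigl(\CE'^{\dag,s}_{\sigma-1}\bigr)\ \longhookrightarrow\ \coprod_{\gamma\in R_{\sigma}}\tilde{\CS}''^{\dag}_{\gamma},
\]
in which $p_{\sigma}$ is finite surjective, the right-hand map is a closed immersion, the structure morphism $\coprod_{\gamma\in R_{\sigma}}\tilde{\CS}''^{\dag}_{\gamma}\to\CE'^{\dag}_{\sigma}$ is finite surjective, and $\CE'^{\dag,s}_{\sigma-1}\hookrightarrow\CE'^{\dag}_{\sigma}$ is a closed immersion with image the locus where the covering K3 surface has Artin invariant $\le\sigma-1$; in particular the underlying space of $\CE'^{\dag}_{\sigma}$ is the pushout of the underlying topological spaces.

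The heart of the matter is the claim that \emph{every connected component of $\coprod_{\gamma\in R_{\sigma}}\tilde{\CS}''^{\dag}_{\gamma}$ contains a point parametrizing a K3 surface of Artin invariant $\le\sigma-1$}. Indeed, a connected component $C$ of $\tilde{\CS}''^{\dag}_{\gamma}$ is the quotient by the finite group $O(N_{\sigma},\gamma)$ of a connected component of the smooth scheme $\CS''^{\dag}_{\gamma}$, hence is irreducible of dimension $\sigma-1$. The canonical morphism $\tilde{\CS}''^{\dag}_{\gamma}\to\widetilde{\CM}'_{\gamma}$ onto the irreducible projective scheme $\widetilde{\CM}'_{\gamma}$ out of which $\CQ^{\dag}_{\sigma}$ is glued is étale, and it is universally closed — this is exactly what passing to the $\dag$-compactification achieves, in the same way that $\CS_{\sigma}\to\CM_{\sigma}$ is universally closed because $\CS_{\sigma}$ satisfies the valuative criterion for universal closedness and $\CM_{\sigma}$ is separated. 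Thus the restriction of this morphism to $C$ is open and closed onto its image, so that image is all of the connected scheme $\widetilde{\CM}'_{\gamma}$, i.e.\ $C$ surjects onto $\widetilde{\CM}'_{\gamma}$; since for $\sigma\ge2$ the Artin-invariant-$\le\sigma-1$ locus of $\widetilde{\CM}'_{\gamma}$ is a nonempty closed subscheme (the image of $\bigcup_{j\in R_{\sigma-1,\sigma}}\Psi_{j}(\CM_{\sigma-1})$), its preimage meets $C$. Granting this, every connected component of $\coprod_{\gamma}\tilde{\CS}''^{\dag}_{\gamma}$ meets $p_{\sigma}^{-1}(\CE'^{\dag,s}_{\sigma-1})$, whose image lies in $\CE'^{\dag,s}_{\sigma-1}\subseteq\CE'^{\dag}_{\sigma}$; since $\coprod_{\gamma}\tilde{\CS}''^{\dag}_{\gamma}\to\CE'^{\dag}_{\sigma}$ is surjective, the closed subscheme $\CE'^{\dag,s}_{\sigma-1}$ meets every connected component of $\CE'^{\dag}_{\sigma}$, whence $\#\{\text{cc of }\CE'^{\dag}_{\sigma}\}\le\#\{\text{cc of }\CE'^{\dag,s}_{\sigma-1}\}$.

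It remains to bound $\#\{\text{cc of }\CE'^{\dag,s}_{\sigma-1}\}$ by $\#\{\text{cc of }\CE'^{\dag}_{\sigma-1}\}$, and here I would strengthen the inductive hypothesis to include this very statement: $\CE'^{\dag,s}_{\sigma-1}$ is itself obtained, by a pushout of the same shape, from the pieces $\tilde{\CS}''^{\dag}_{j\circ\gamma}$ ($\gamma\in R_{\sigma}$, $j\in R_{\sigma-1,\sigma}$) glued along the Artin-$\le\sigma-2$ stratum, and running the claim one level down — together with the observation that two such pieces which are glued inside $\CE'^{\dag,s}_{\sigma-1}$ are a fortiori glued inside $\CE'^{\dag}_{\sigma-1}$ — should yield the desired bound; chaining the two inequalities then finishes the induction. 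The main obstacle is precisely this last comparison: a closed subscheme can in general have strictly more connected components than the ambient space, so one genuinely has to exploit that the Artin-$\le\sigma-1$ stratum is "connected enough", i.e.\ that restricting to those components which factor through some $N_{\sigma}$ never disconnects a connected component of $\CE'^{\dag}_{\sigma-1}$. Making this precise, rather than the now-routine pushout and universal-closedness formalism, is where the real work lies.
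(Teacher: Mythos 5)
Your handling of the second inequality is correct and is exactly the paper's argument: ${\CE'}_{\sigma}$ is open in ${\CE'}^{\dag}_{\sigma}$, and taking closures of irreducible components gives the injection.

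The problem is the first inequality, and your own write-up concedes it. After the pushout formalism and your key claim, what you obtain is at best $\#\{\mathrm{cc}\text{ of }{\CE'}^{\dag}_{\sigma}\}\le\#\{\mathrm{cc}\text{ of }{\CE'}^{\dag,s}_{\sigma-1}\}$, where ${\CE'}^{\dag,s}_{\sigma-1}$ is the closed gluing stratum. The passage from this to $\#\{\mathrm{cc}\text{ of }{\CE'}^{\dag}_{\sigma-1}\}$ is precisely what you leave open (``where the real work lies''), and it is not a finishing touch but the actual content of the statement: a closed subscheme can have strictly more connected components than its ambient scheme, and ${\CE'}^{\dag}_{\sigma-1}$ is \emph{not} contained in ${\CE'}^{\dag}_{\sigma}$ in general --- that containment is equivalent to the paper's open lattice question of whether every embedding $\Gamma(2)\hookrightarrow N_{\sigma-1}$ factors through some $N_{\sigma}\hookrightarrow N_{\sigma-1}$ --- so paths in ${\CE'}^{\dag}_{\sigma-1}$ connecting two strata components need not be visible in ${\CE'}^{\dag}_{\sigma}$. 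A proposal that stops at this point has not proved the inequality.

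There is also a flaw in the justification of your key claim. You assert that the period morphism of the $\dag$-pieces is universally closed ``in the same way that $\CS_{\sigma}\to\CM_{\sigma}$ is universally closed''; but $\pi_{\sigma}$ is not universally closed: universally closed morphisms are quasi-compact, while the fibres of $\pi_{\sigma}$ are the sets $C_{N(s)}$ of ample cones, torsors under the infinite groups $\pm W_{N(s)}$ (Ogus' ``valuative criterion for universal closedness'' is only the lifting property). Any closedness of the image of a component must be argued for the restricted pieces themselves, e.g.\ via the two-element set $C_{\Gamma(2)}$ of Proposition \ref{amplec} and the chart description of Remark \ref{struct}, which you do not do. Note finally that the paper's own (very brief) proof takes a different route: it repeats the combinatorial argument behind the bound $\varepsilon^c_{\sigma}\le\tau_{\sigma}(\alpha_{\sigma}+\varepsilon^c_{\sigma-1})$, observing that in the $\dag$-setting the isolated classes counted by $\alpha_{\sigma}$ disappear because the no-$(-2)$-vector condition is dropped; no properness of period maps is invoked.
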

\begin{proof} The proof of the first inequality goes analogously to the proof of the upper bound in the previous proposition. The second inequality is clear since $\CE'_{\sigma}$ is an open algebraic subspace in $\CE_{\sigma}'^{\dag}$. \end{proof}
\section{Torelli theorems for supersingular Enriques surfaces}\label{tor}
The schemes $\CE_{\sigma}$ are fine moduli spaces for $\Gamma'$-marked supersingular Enriques surfaces with Artin invariant at most $\sigma$, but their geometry is very complicated. However, it turns out that the much nicer schemes $\CQ_{\sigma}$ from Proposition \ref{per} are coarse moduli spaces for this moduli problem. The next proposition is a direct consequence of the Torelli theorem for supersingular K3 surfaces \cite{MR717616} and does not use any of our prior results.
\begin{proposition} Let $Y$ and $Y'$ be supersingular Enriques surfaces over an algebraically closed field $k$ of characteristic $p \geq 3$ which have universal K3 covers $X$ and $X'$ respectively. Let $\tilde{\phi} \colon \mathrm{NS}(Y) \rightarrow \mathrm{NS}(Y')$ be a morphism of lattices that maps the ample cone of $Y$ to the ample cone of $Y'$ and such that the induced morphism of lattices $\phi \colon \mathrm{NS}(X) \rightarrow \mathrm{NS}(X')$ extends via the first Chern map to an isomorphism $H^2_{\mathrm{crys}}(X/W) \rightarrow H^2_{\mathrm{crys}}(X'/W)$. Then $\tilde{\phi}$ is induced from an isomorphism $\tilde{\Phi} \colon Y \rightarrow Y'$ of supersingular Enriques surfaces.  \end{proposition}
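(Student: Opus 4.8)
The plan is to lift $\tilde\phi$ to an isomorphism $\Phi$ of the covering supersingular K3 surfaces by Ogus' Torelli theorem, and then to show that $\Phi$ is equivariant for the covering involutions, so that it descends to an isomorphism $\tilde\Phi\colon Y\to Y'$. Write $f\colon X\to Y$ and $g\colon X'\to Y'$ for the canonical \'etale double covers and $\iota,\iota'$ for the associated fixed-point-free involutions, so that $Y=X/\langle\iota\rangle$ and $Y'=X'/\langle\iota'\rangle$. Since $\phi$ extends via the first Chern class to an isomorphism $H^2_{\mathrm{crys}}(X/W)\to H^2_{\mathrm{crys}}(X'/W)$, the K3 crystals of $X$ and $X'$, and hence the lattices $\mathrm{NS}(X)$ and $\mathrm{NS}(X')$, share the same Artin invariant and in particular the same discriminant; as $\phi$ is form-preserving, injective, and an isomorphism after tensoring with $\mathbb{Q}$, it is an isometry $\mathrm{NS}(X)\xrightarrow{\sim}\mathrm{NS}(X')$, and consequently $\tilde\phi$ is an isometry as well, the numerical lattice $\Gamma$ of an Enriques surface being unimodular. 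I read ``$\phi$ is induced by $\tilde\phi$'' as the relation $\phi\circ f^{\ast}=g^{\ast}\circ\tilde\phi$ of maps $\mathrm{NS}(Y)\to\mathrm{NS}(X')$, recalling that $f^{\ast}$ kills the torsion of $\mathrm{NS}(Y)$ and multiplies the intersection form by $2$.

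First I would record two consequences of this setup. (i) The map $\phi$ intertwines $\iota^{\ast}$ and $\iota'^{\ast}$: indeed $\iota^{\ast}$ acts as $+1$ on the subspace $f^{\ast}(\mathrm{NS}(Y))_{\mathbb{Q}}\subset\mathrm{NS}(X)_{\mathbb{Q}}$ and as $-1$ on its orthogonal complement, and similarly for $\iota'^{\ast}$; as $\phi$ is an isometry carrying $f^{\ast}(\mathrm{NS}(Y))_{\mathbb{Q}}$ onto $g^{\ast}(\mathrm{NS}(Y'))_{\mathbb{Q}}$, it respects both eigenspace decompositions. (ii) The map $\phi$ sends the ample cone of $X$ onto that of $X'$: since $f$ is finite, $f^{\ast}$ sends the ample cone of $Y$ into the ample cone of $X$, so $\mathrm{Amp}(X)$ is the unique connected component of $V_{\mathrm{NS}(X)}$ meeting $f^{\ast}(\mathrm{Amp}(Y))$, and likewise for $X'$; applying $\phi$ and using that $\tilde\phi$ maps $\mathrm{Amp}(Y)$ onto $\mathrm{Amp}(Y')$ yields $\phi(f^{\ast}(\mathrm{Amp}(Y)))=g^{\ast}(\mathrm{Amp}(Y'))\subseteq\mathrm{Amp}(X')$, and since $\phi$ permutes the connected components of $V_{\mathrm{NS}(X')}$ this forces $\phi(\mathrm{Amp}(X))=\mathrm{Amp}(X')$.

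By (ii) and the crystalline hypothesis, Ogus' Torelli theorem for supersingular K3 surfaces in characteristic $p\ge 3$ (\cite{MR717616}, valid also for $p=3$ by \cite{2018arXiv180407282B}) provides a unique isomorphism of supersingular K3 surfaces inducing $\phi$; call it $\Phi\colon X\to X'$. Then $\Phi\circ\iota\circ\Phi^{-1}$ and $\iota'$ are automorphisms of $X'$ which, by (i), induce the same isometry $\phi\circ\iota^{\ast}\circ\phi^{-1}=\iota'^{\ast}$ of $\mathrm{NS}(X')$, and therefore the same automorphism of $H^2_{\mathrm{crys}}(X'/W)$ (an automorphism of a supersingular K3 crystal being determined by its restriction to the Tate module through the equivalence $\mathrm{K}3(k)\simeq\mathbb{C}3(k)$); by the uniqueness in Ogus' theorem they coincide, so $\Phi\circ\iota=\iota'\circ\Phi$. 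Hence $\Phi$ is equivariant for the two free $\mathbb{Z}/2\mathbb{Z}$-actions and descends to an isomorphism $\tilde\Phi\colon Y\to Y'$ of supersingular Enriques surfaces, and unwinding the identifications gives $\tilde\Phi^{\ast}=\tilde\phi$.

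I expect the only substantive point to be the ample-cone compatibility in (ii): one must be sure that the hypothesis that $\tilde\phi$ preserves the ample cone of the Enriques surface genuinely forces $\phi$ to preserve the ample cone of its K3 cover. This rests on the elementary facts that the pullback of an ample class along a finite morphism is ample and that $\phi$ is a bona fide isometry, hence permutes the chambers of $V_{\mathrm{NS}(X')}$; that $\mathrm{Amp}(X)$ is singled out among those chambers by $f^{\ast}(\mathrm{Amp}(Y))$ is exactly the phenomenon underlying the bijection between the ample cones of $\Gamma(2)$ and of $N_\sigma$ used earlier in the paper. After Ogus' theorem everything is formal, modulo the standard fact that a supersingular K3 surface in characteristic $\ge 3$ has no nontrivial automorphism acting trivially on both its N\'eron-Severi lattice and its K3 crystal.
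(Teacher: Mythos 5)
Your proposal is correct and follows essentially the same route as the paper, whose proof is just a one-line appeal to Ogus' crystalline Torelli theorem (Theorem II of \cite{MR717616}) together with the fact that pullback along finite morphisms preserves ampleness. You simply spell out the details the paper leaves implicit — the ample-cone compatibility at the K3 level, the intertwining of the involutions via the eigenspace decomposition, and the descent of $\Phi$ to the Enriques quotients — all of which match the argument the paper itself uses in its proof of Theorem \ref{torelli}.
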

\begin{proof} This follows immediately from a version of the Torelli theorem for supersingular K3 surfaces \cite[cf.\ Theorem II]{MR717616} and the fact that pullback along finite morphisms preserves ampleness of divisors. \end{proof}
We next want to show that the schemes $\CQ_{\sigma}$ are coarse moduli spaces for Enriques surfaces in the sense that their points parametrize isomorphism classes of Enriques surfaces without having to choose any kind of marking.
\begin{definition}  Recall from Theorem \ref{main} that there is a canonical étale surjective morphism $\pi^E_{\sigma} \colon \CE_{\sigma} \rightarrow \CQ_{\sigma}$. If $Y$ is a supersingular Enriques surface of Artin invariant $\sigma' \leq \sigma$ over an algebraically closed field $k$ of characteristic $p \geq 3$, we define the \emph{period $\pi^E_{\sigma}$ of $Y$ in $\CQ_{\sigma}$} to be $\pi^E_{\sigma}(Y)= \pi^E_{\sigma}(Y, \gamma)$, where $\gamma$ is any $\Gamma$-marking of $Y$. \end{definition}
The following proposition shows that $\pi^E_{\sigma}$ is well-defined and does not depend on the chosen marking. 
\begin{proposition}\label{nomark} Let $k$ be an algebraically closed field of characteristic $p \geq 3$, let $\sigma \leq 5$ be a positive integer and let $Y$ be a supersingular Enriques surface of Artin invariant at most $\sigma$ over $k$. For any choice of markings $\tilde{\gamma}_1 \colon \Gamma \rightarrow \mathrm{NS}(Y)$ and $\tilde{\gamma}_2 \colon \Gamma \rightarrow \mathrm{NS}(Y)$ we have an equality $\pi_{\sigma}^E(Y, \tilde{\gamma}_1) = \pi_{\sigma}^E(Y, \tilde{\gamma}_2)$. In other words, the period of $Y$ in $\CQ_{\sigma}$ is independent of the choice of a marking.  \end{proposition}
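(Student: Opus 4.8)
The plan is to translate the statement into a question about the K3 crystal of the universal cover and to settle it there. Let $X$ be the universal K3 cover of $Y$ with its deck involution $\iota$, let $\sigma'\le\sigma$ be the common Artin invariant, and write $H=H^2_{\mathrm{crys}}(X/W)$, with Tate module $T_H$ and characteristic subspace $G=\ker\!\big(T_H\otimes_{\mathbb{Z}_p}k\to H\otimes_{\mathbb{Z}_p}k\big)\subseteq T_0\otimes_{\mathbb{Z}_p}k$. By Lemma \ref{equivgamma} a $\Gamma$-marking of $Y$ extends uniquely to a $\Gamma'$-marking, and pulling back along $c\colon X\to Y$ produces a $\Gamma(2)$-marking $\gamma_{X}\colon\Gamma(2)\hookrightarrow\mathrm{NS}(X)\hookrightarrow T_H$; tracing through Proposition \ref{EandK}, Theorem \ref{main} and the construction of $\pi^E_{\sigma}$ in Proposition \ref{per} identifies $\pi^E_{\sigma}(Y,\tilde\gamma_j)$ with the point of $\CQ_{\sigma}$ classifying the pair $(H,\gamma_{X,j})$. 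Since $\mathrm{NS}(Y)$ has torsion subgroup $\mathbb{Z}/2\mathbb{Z}$ (with no nontrivial automorphisms) and $\mathrm{NS}(Y)/\mathrm{tors}\cong\Gamma$, two $\Gamma$-markings differ by a unique $\phi\in O(\Gamma)=O(\Gamma(2))$ up to a homomorphism $\Gamma\to\mathbb{Z}/2\mathbb{Z}$; the latter is killed by $c^{\ast}$ because $c^{\ast}\omega_Y=\CO_X$, so $\gamma_{X,2}=\gamma_{X,1}\circ\phi$, and we are reduced to showing that $(H,\gamma_{X,1})$ and $(H,\gamma_{X,1}\circ\phi)$ define the same point of $\CQ_{\sigma}$.

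For this I would produce a crystal automorphism of $H$ realizing the reparametrization $\phi$. Because $p\ge 3$ and $\Gamma(2)$ is $2$-elementary, $A:=\gamma_{X,1}(\Gamma(2))\otimes\mathbb{Z}_p$ is a unimodular sublattice of $T_H$, hence an orthogonal direct summand: $T_H=A\perp A^{\perp}$. Choosing a $p$-adic Jordan splitting $A^{\perp}=T'_1\perp T'_0$ into its unimodular block $T'_1$ and its block $T'_0$ of scale $p$, uniqueness of the scale-$p$ block together with the fact that $T_0\otimes\mathbb{F}_p$ is the canonical functorial quotient in which $G$ lives (as recalled above, \cite[Remark 3.16]{MR563467}) lets us take $T_0=T'_0$; in particular $G\subseteq T_0\otimes k$ with $T_0\perp A$. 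Now set $\theta:=\big(\gamma_{X,1}\circ\phi\circ\gamma_{X,1}^{-1}\big)\perp\mathrm{id}_{T'_1}\perp\mathrm{id}_{T_0}\in O(T_H)$; then $\theta\circ\gamma_{X,1}=\gamma_{X,1}\circ\phi=\gamma_{X,2}$ and $\theta$ restricts to the identity on $T_0$, so the induced map on $T_0\otimes k$ fixes $G$. By Ogus' equivalence $\mathrm{K}3(k)\simeq\mathbb{C}3(k)$ \cite[Theorem 3.20]{MR563467}, an isometry of $T_H$ preserving $G$ comes from an automorphism $\tilde\theta$ of the K3 crystal $H$ restricting to $\theta$ on $T_H$; this $\tilde\theta$ is an isomorphism $(H,\gamma_{X,1})\stackrel{\sim}\lra(H,\gamma_{X,2})$, so both pairs define the same point of $\CQ_{\sigma}$, that is $\pi^E_{\sigma}(Y,\tilde\gamma_1)=\pi^E_{\sigma}(Y,\tilde\gamma_2)$.

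I expect the delicate step to be the bookkeeping of the first paragraph: identifying $\pi^E_{\sigma}(Y,\tilde\gamma_j)$ — obtained by unwinding the inductive pushout construction of $\pi^E_{\sigma}$ through $\tilde{\mathfrak{E}}_{\sigma}$ and $\tilde{\CE}_{\sigma}$ — with the point classifying $(H,\gamma_{X,j})$, and in particular checking that the period depends only on the fixed sublattice $\mathrm{NS}(X)^{\iota^{\ast}}$ and the choice of a crystal isomorphism, not on a parametrization of that sublattice by $\Gamma(2)$. An equivalent, perhaps more robust, framing is to argue stratum by stratum: $\pi^{\tilde{E}}_{\sigma}(V_{\sigma'})$ represents ${\underline{\CQ}_{\sigma'}}^{str}$ by Proposition \ref{per}, the two periods are the classes of tuples $([G_j],\gamma^{(j)})$ obtained by transporting $G$ along any isometry $N_{\sigma'}\xrightarrow{\ \sim\ }\mathrm{NS}(X)$ carrying a chosen representative $\gamma^{(j)}\in R_{\sigma'}$ to $\gamma_{X,j}$, and one checks these tuples are isomorphic using the same construction (note that $\gamma(\Gamma(2))$ maps to zero in $pN_{\sigma'}^{\vee}/pN_{\sigma'}$, so the reparametrization acts trivially on the generatrix). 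The remaining inputs — $p$-unimodularity of $\Gamma(2)$ for odd $p$ and uniqueness of the Jordan block $T_0$ — are routine lattice theory.
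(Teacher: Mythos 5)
Your crystalline construction in the second paragraph is internally sound (since $p\geq 3$, $\gamma_{X,1}(\Gamma(2))\otimes\mathbb{Z}_p$ is indeed unimodular, splits off orthogonally, and your $\theta$ preserves $G$ and lifts to a crystal automorphism by \cite[Theorem 3.20]{MR563467}), but there is a genuine gap, and it sits exactly at the step you flag as ``delicate'' and then defer. As $\CQ_{\sigma}$ is actually constructed in the paper, its geometric points on the stratum of Artin invariant $\sigma'$ are pairs consisting of a representative $\gamma\in R_{\sigma'}$ and an orbit $[G]$ of strictly characteristic generatrices under the finite group $O(N_{\sigma'},\gamma)$ of \emph{integral} isometries (Proposition \ref{per}); the crystal-pair description you quote from the introduction is not what is proved in the body, and identifying the two descriptions is essentially the content of the proposition itself, so it cannot simply be assumed. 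Concretely, $\pi^E_{\sigma}(Y,\tilde\gamma_i)$ is the class $[\psi_i^{-1}(G_X)]$ for any isometry $\psi_i\colon N_{\sigma'}\rightarrow \mathrm{NS}(X)$ with $\psi_i\circ\gamma=\gamma_{X,i}$, so equality of the two periods amounts to an \emph{integral} isometry of $\mathrm{NS}(X)$ carrying $\gamma_{X,2}$ to $\gamma_{X,1}$ and carrying $G_X$ into its $O(N_{\sigma'},\gamma)$-orbit. Your $\theta$ is only a $\mathbb{Z}_p$-isometry of the Tate module, and a crystal automorphism $\tilde\theta$ of $H$ need not be induced by any integral isometry of $\mathrm{NS}(X)$; nothing in your argument bridges this. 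The parenthetical justification in your alternative stratum-by-stratum framing (``$\gamma(\Gamma(2))$ maps to zero in $pN_{\sigma'}^{\vee}/pN_{\sigma'}$, so the reparametrization acts trivially on the generatrix'') is not correct as stated: precisely because $\gamma(\Gamma(2))\otimes\mathbb{Z}_p$ is unimodular, the action on $pN_{\sigma'}^{\vee}/pN_{\sigma'}$ of any integral isometry extending the reparametrization is governed entirely by its restriction to the orthogonal complement $K=\gamma(\Gamma(2))^{\perp}$, over which you have no control; triviality holds for your $\theta$ only because you chose the extension $p$-adically to be the identity on the complement, a choice that is not available integrally.

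For comparison, the paper's proof works integrally from the start: via Proposition \ref{per} it reduces the statement to producing an isometry $\varphi$ of $\mathrm{NS}(X)$ with $\gamma_1=\varphi\circ\gamma_2$, and obtains one by extending the reparametrization $\psi$ of the common image $\gamma_1(\Gamma(2))$ by a suitable automorphism $\psi'$ of $K$, using \cite[Theorem 1.14.2]{1980IzMat..14..103N}. So the integral extension you are missing is exactly where the paper does its work. Your route does have the merit of explicitly tracking the characteristic subspace, which the Nikulin extension does not control; but to turn it into a proof you would have to show that your $p$-adic isometry can be replaced by an integral one whose induced action on $pN_{\sigma'}^{\vee}/pN_{\sigma'}\otimes k$ carries $G$ into its $O(N_{\sigma'},\gamma)$-orbit (compare the stabilizer computation in Proposition \ref{coar}), and that is not routine lattice theory.
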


\begin{proof}
Let $Y$ be a supersingular Enriques surface which has the universal K3 covering $X \rightarrow Y$, let $\iota^{\ast} \colon \mathrm{NS}(Y)(2) \hookrightarrow \mathrm{NS}(X)$ be the associated embedding and let $\tilde{\gamma}_1 \colon \Gamma \stackrel{\simeq}\rightarrow \mathrm{NS}(Y)$ and $\tilde{\gamma}_2 \colon \Gamma \stackrel{\simeq}\rightarrow \mathrm{NS}(Y)$ be two choices of markings. Writing $\gamma_i = \iota^{\ast} \circ \tilde{\gamma}_i(2) \colon \Gamma(2) \hookrightarrow \mathrm{NS}(X)$, it follows from Proposition \ref{per} that the two points $\pi_{\sigma}^E(Y, \tilde{\gamma}_1)$ and $\pi_{\sigma}^E(Y, \tilde{\gamma}_2)$ are equal if and only if there exists some isometry $\varphi$ of $\mathrm{NS}(X)$ such that $\gamma_1 = \varphi \circ \gamma_2$. Let $K = \gamma_1(\Gamma(2))^{\perp} = \gamma_2(\Gamma(2))^{\perp}$ be the orthogonal complement of the image of $\Gamma(2)$ in $\mathrm{NS}(X)$. By \cite[Theorem 1.14.2]{1980IzMat..14..103N} we can choose for any automorphism $\psi \colon \gamma_1(\Gamma(2)) \rightarrow \gamma_1(\Gamma(2))$ an automorphism $\psi' \colon K \rightarrow K$ such that $(\psi, \psi')$ extends to an automorphism of $N_{\sigma}$. We take $\psi = \gamma_2 \circ \gamma_1$ and choose such a $\psi'$. Then $\varphi = (\psi, \psi')$ does the job.
\end{proof}
\begin{theorem}\label{torelli} Let $Y_1$ and $Y_2$ be supersingular Enriques surfaces. Then $Y_1$ and $Y_2$ are isomorphic if and only if  $\pi_{\sigma}^E(Y_1)=\pi_{\sigma}^E(Y_2)$ for some $\sigma \leq 5$. \end{theorem}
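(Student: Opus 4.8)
The plan is to prove the two implications separately, reducing the ``if'' direction to Ogus' Torelli theorem for supersingular K3 surfaces through the description of the period scheme $\CQ_\sigma$ given in Proposition \ref{per}.

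For the forward implication, suppose $Y_1 \cong Y_2$. First I would observe that every supersingular Enriques surface has Artin invariant at most $5$: its universal K3 cover $X$ carries a primitive embedding $\gamma \colon \Gamma(2) \hookrightarrow \mathrm{NS}(X)$ with no $(-2)$-vector in $\gamma(\Gamma(2))^{\perp}$, and by Corollary 2.4 of \cite{MR3350105} this forces $\sigma(X) \leq 5$. Hence both $Y_i$ give $k$-points of $\CE_5$. Transporting a $\Gamma$-marking of $Y_1$ along an isomorphism $Y_1 \cong Y_2$, and invoking Lemma \ref{equivgamma} together with the uniqueness of the canonizing datum, the two marked surfaces become isomorphic objects of $\underline{\CE}_5(k)$ and hence determine the same $k$-point of $\CE_5$; applying $\pi^E_5$ and then Proposition \ref{nomark} to discard the markings gives $\pi^E_5(Y_1) = \pi^E_5(Y_2)$, which is the assertion for $\sigma = 5$.

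For the converse, assume $\pi^E_\sigma(Y_1) = \pi^E_\sigma(Y_2)$ with $\sigma \leq 5$. Let $X_i \to Y_i$ be the universal K3 covers with covering involutions $\iota_i$ and $\gamma_i \colon \Gamma(2) \hookrightarrow \mathrm{NS}(X_i)$ the induced embeddings (Proposition \ref{EandK}). By Proposition \ref{per}, a $k$-point of $\CQ_\sigma$ in the stratum of Artin invariant $\sigma_0$ is an isomorphism class of a pair consisting of an embedding $\Gamma(2) \hookrightarrow N_{\sigma_0}$ with no $(-2)$-vector in the complement and an $O(N_{\sigma_0}, \gamma)$-orbit of strictly characteristic generatrices in $pN_{\sigma_0}^{\vee}/pN_{\sigma_0}$. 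Equality of periods then gives $\sigma(X_1) = \sigma(X_2) =: \sigma_0$, puts $\gamma_1$ and $\gamma_2$ in one isomorphism class of embeddings, and---after choosing $N_{\sigma_0}$-markings that align the $\gamma_i$ and absorbing the $O(N_{\sigma_0}, \gamma)$-ambiguity---produces an isometry $\phi \colon \mathrm{NS}(X_1) \to \mathrm{NS}(X_2)$ with $\phi \circ \gamma_1 = \gamma_2$ carrying the characteristic subspace cut out by $X_1$ to that cut out by $X_2$. By Ogus' equivalence $\mathrm{K}3(k) \simeq \mathbb{C}3(k)$ \cite[Theorem 3.20]{MR563467}, $\phi$ extends via the first Chern map to an isomorphism of K3 crystals $H^2_{\mathrm{crys}}(X_1/W) \to H^2_{\mathrm{crys}}(X_2/W)$. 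I then correct the ample cones: since $\Gamma(2)$ is even it contains no vector of square $-2$, so $V_{\Gamma(2)}$---and hence, by the Lemma preceding Proposition \ref{amplec}, the set $C_{N_{\sigma_0}}^{\gamma}$ of ample cones of $N_{\sigma_0}$ meeting $\gamma(\Gamma(2))$---has exactly two elements, swapped by $-\mathrm{id}$; replacing $\phi$ by $-\phi$ if necessary (still a crystal isomorphism, still respecting the sublattices $\gamma_i(\Gamma(2))$) arranges $\phi(A_1) = A_2$ for the ample cones $A_i$ of $X_i$. Restricting $\phi$ along the canonical identifications $\mathrm{NS}(Y_i)(2) \cong \gamma_i(\Gamma(2))$ yields $\tilde\phi \colon \mathrm{NS}(Y_1) \to \mathrm{NS}(Y_2)$ preserving ample cones, whose extension to the K3 covers is a crystal isomorphism; the Torelli theorem for supersingular K3 surfaces \cite{MR717616}, in its form for Enriques quotients stated above, then produces an isomorphism $Y_1 \cong Y_2$. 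Equivalently, Ogus' Torelli theorem realizes $\phi$ by an isomorphism $X_1 \to X_2$, which intertwines $\iota_1$ and $\iota_2$ because $\iota_i^{*}$ is $+\mathrm{id}$ on $\gamma_i(\Gamma(2))$ and $-\mathrm{id}$ on its complement, and so descends.

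The step I expect to be delicate is reading off, from the coarse and $O(N_\sigma, \gamma)$-quotiented space $\CQ_\sigma$---which moreover is assembled by the iterated pushout of Proposition \ref{per}---an honest lattice isometry carrying all the structure ($\gamma$, the characteristic subspace, and, after the $-\mathrm{id}$ adjustment, the ample cone) needed to apply Ogus' Torelli theorem. Once that isometry is available, the descent of the surface isomorphism from the K3 covers to the Enriques quotients is routine.
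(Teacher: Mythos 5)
Your proposal is correct and follows essentially the same route as the paper: reduce equality of periods in $\CQ_{\sigma}$ to equality of K3 periods after adjusting the $N_{\sigma}$-markings by an element of $O(N_{\sigma},\gamma)$, apply Ogus' crystalline Torelli theorem to get an isomorphism $X_1 \rightarrow X_2$, and descend it to the Enriques quotients using that $\iota_i^{\ast}$ is determined by the sublattice $\gamma_i(\Gamma(2))$. In fact your treatment is slightly more careful than the paper's, which cites Theorem II of \cite{MR717616} without addressing the ample-cone hypothesis: your normalization via $\pm\mathrm{id}$, using that only the two chambers corresponding to $C_{\Gamma(2)}$ meet $\gamma(\Gamma(2))$, fills exactly that implicit step.
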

\begin{proof} It follows from Proposition \ref{nomark} that writing $\pi_{\sigma}^E(Y)$ makes sense since the period of $Y$ does not depend on the choice of a marking. We also directly obtain the 'only if' part of the theorem as a consequence of Proposition \ref{nomark}. We now let $Y_1$ and $Y_2$ be supersingular Enriques surfaces with the same period point and let $X_1 \rightarrow Y_1$ and $X_2 \rightarrow Y_2$ be their canonical K3 covers. We choose two markings $\tilde{\gamma}_1 \colon \Gamma \rightarrow \mathrm{NS}(Y_1)$ and $\tilde{\gamma}_2 \colon \Gamma \rightarrow \mathrm{NS}(Y_2)$. These induce $\Gamma(2)$-markings $\gamma_1 \colon \Gamma(2) \hookrightarrow \mathrm{NS}(X_1)$ and $\gamma_2 \colon \Gamma(2) \hookrightarrow \mathrm{NS}(X_2)$, and we may choose extensions of the morphisms $\gamma_i$ that are $N_{\sigma}$-markings $\eta_1 \colon N_{\sigma} \rightarrow \mathrm{NS}(X_1)$ and $\eta_2 \colon N_{\sigma} \rightarrow \mathrm{NS}(X_2)$, and after applying some isometry $\varphi \in O(N_{\sigma},\gamma)$ we may assume that the marked K3 surfaces $(X_1, \eta_1)$ and $(X_2, \eta_2)$ have the same period in $\CM_{\sigma}$. Hence, there exists an isomorphism of K3 crystals $\psi \colon H^2_{\mathrm{crys}}(X_1) \lra H^2_{\mathrm{crys}}(X_2)$ and a commutative diagram
\\
	\noindent\hspace*{\fill}
\begin{tikzpicture}
 \matrix (m) [matrix of math nodes, row sep=3em, column sep=3em]
    { \Gamma(2) & N_{\sigma}  & \mathrm{NS}(X_1)  & H^2_{\mathrm{crys}}(X_1) \\
      \Gamma(2) & N_{\sigma} & \mathrm{NS}(X_2) & H^2_{\mathrm{crys}}(X_2) \makebox[0pt][l]{.}  \\ };
	\path[-stealth]
 (m-1-1) edge node [above] {$\gamma$} (m-1-2) edge node [right] {$\operatorname{id}$} (m-2-1)
 (m-1-2) edge node [above] {$\eta_1$} (m-1-3) edge node [right] {$\operatorname{id}$} (m-2-2)
 (m-1-3) edge (m-1-4) edge node [right] {$\psi$} (m-2-3)
 (m-1-4) edge node [right] {$\psi$} (m-2-4)
 (m-2-1) edge node [above] {$\gamma$} (m-2-2)
 (m-2-2) edge node [above] {$\eta_2$} (m-2-3)
 (m-2-3) edge (m-2-4);
		\end{tikzpicture} 
		\hspace{\fill} \\
By a version of the Torelli theorem \cite[cf.\ Theorem II]{MR717616} the isomorphism $\psi$ is induced by some isomorphism of K3 surfaces $\Psi \colon X_1 \rightarrow X_2$. Since $\psi(\gamma_1(\Gamma(2))) = \gamma_2(\Gamma(2))$, if $\iota_1 \colon X_1 \rightarrow X_1$ and $\iota_2 \colon X_2 \rightarrow X_2$ are the involutions induced by the $\gamma_i$, we have that $\Psi \circ \iota_1 = \iota_2 \circ \Psi$ and it follows that the morphism $\Psi$ descends to an isomorphism of the Enriques quotients $\tilde{\Psi} \colon Y_1 \rightarrow Y_2$. 	 \end{proof}
\newpage
\section*{Notation and list of symbols}
For reference, here is a list of some of the objects we introduce. In general, if $\underline{\CF}$ is a representable functor, we write $\CF$ for the object representing this functor. 
\setlength{\glsdescwidth}{12.0cm}
\printglossary[type=symbols, style=super3col, title={}]

\printbibliography

\end{document}